\documentclass[10pt]{article}

\usepackage{amsmath,amssymb,amsthm,mathtools}
\usepackage{aliascnt}
\usepackage[margin=0.9in]{geometry}
\usepackage{enumitem}
\usepackage{needspace}
\setlist{topsep=3pt plus 1pt minus 1pt,partopsep=0pt,itemsep=0.5pt plus 0.5pt,parsep=0pt}
\setlist[itemize]{before=\Needspace{4\baselineskip}}
\usepackage{array,longtable}
\usepackage{hyperref}
\AtBeginDocument{%
  \setlength{\topsep}{4pt plus 1pt minus 1pt}%
  \setlength{\abovedisplayskip}{4pt plus 1pt minus 1pt}%
  \setlength{\belowdisplayskip}{4pt plus 1pt minus 1pt}%
  \setlength{\abovedisplayshortskip}{0pt plus 1pt}%
  \setlength{\belowdisplayshortskip}{2pt plus 1pt minus 1pt}%
}

\theoremstyle{plain}
\newtheorem{theorem}{Theorem}[section]
\newaliascnt{conjecture}{theorem}
\newtheorem{conjecture}[conjecture]{Conjecture}
\aliascntresetthe{conjecture}
\newaliascnt{lemma}{theorem}
\newtheorem{lemma}[lemma]{Lemma}
\aliascntresetthe{lemma}
\newaliascnt{proposition}{theorem}
\newtheorem{proposition}[proposition]{Proposition}
\aliascntresetthe{proposition}
\newaliascnt{corollary}{theorem}
\newtheorem{corollary}[corollary]{Corollary}
\aliascntresetthe{corollary}
\theoremstyle{definition}
\newaliascnt{definition}{theorem}
\newtheorem{definition}[definition]{Definition}
\aliascntresetthe{definition}
\theoremstyle{remark}
\newaliascnt{remark}{theorem}
\newtheorem{remark}[remark]{Remark}
\aliascntresetthe{remark}
\numberwithin{equation}{section}

\newcolumntype{G}[1]{>{\raggedright\arraybackslash}p{#1}}

\newcommand{\PSL}{\operatorname{PSL}}
\newcommand{\PGL}{\operatorname{PGL}}
\newcommand{\PSU}{\operatorname{PSU}}
\newcommand{\PGU}{\operatorname{PGU}}
\newcommand{\PSp}{\operatorname{PSp}}
\newcommand{\PCSp}{\operatorname{PCSp}}

\newcommand{\POmega}{\mathrm{P}\Omega}
\newcommand{\PGammaL}{\operatorname{P\Gamma L}}
\newcommand{\GL}{\operatorname{GL}}
\newcommand{\SL}{\operatorname{SL}}
\newcommand{\SU}{\operatorname{SU}}
\newcommand{\GU}{\operatorname{GU}}
\newcommand{\Sp}{\operatorname{Sp}}
\newcommand{\AGL}{\operatorname{AGL}}

\newcommand{\Soc}{\operatorname{soc}}
\newcommand{\Aut}{\operatorname{Aut}}
\newcommand{\Hom}{\operatorname{Hom}}
\newcommand{\Core}{\operatorname{Core}}
\newcommand{\Stab}{\operatorname{Stab}}
\newcommand{\Fix}{\operatorname{Fix}}
\newcommand{\Sym}{\operatorname{Sym}}
\newcommand{\ord}{\operatorname{ord}}
\newcommand{\diag}{\operatorname{diag}}

\newcommand{\F}{\mathbb{F}}
\newcommand{\Z}{\mathbb{Z}}
\newcommand{\gen}[1]{\langle #1\rangle}

\title{A proof of the Galt--Tyutyanov conjecture on Skiba's question\thanks{ \scriptsize\emph{E-mail address:}
 zhangshengmin26@stu.pku.edu.cn\,(Zhang S-M).}}

\author{Zhang Sheng-Min\thanks{Corresponding author.} \\ \small School of Mathematical Sciences, Peking University}
\date{}

\begin{document}
\maketitle

\begin{abstract}
A subgroup $A$ of a finite group $G$ is \emph{hereditarily $G$-permutable} if, for every subgroup $E$ with $A\leq E\leq G$ and every subgroup $B$ of $E$, some $G$-conjugate of $B$ permutes with $A$ in $E$. We give a new criterion for a finite group to be simple. A nontrivial finite group $G$ is simple if and only if it has no proper nontrivial hereditarily $G$-permutable subgroup. This proves the conjecture of Galt and Tyutyanov and answers Skiba's question in part \textup{(b)} of Problem 17.112 in the Kourovka Notebook. Earlier work settled the conjecture for alternating groups, sporadic groups, exceptional groups of Lie type, and the groups ${\rm PSL}_2(q)$ and ${\rm PSU}_3(q)$. We prove the conjecture for all classical groups. The proof uses descent through parabolic subgroups and the classification of factorizations of finite simple groups.

\noindent{\bf Keywords}  finite simple groups, hereditary permutability, subgroup factorizations.

 \noindent{\bf MSC}  20D05, 20D40, 20G40.
\end{abstract}

\section{Introduction}\label{sec:introduction}

Throughout the paper all groups are finite, and we write $A^x=x^{-1}Ax$.

Subgroup permutability is a basic and important property in finite group theory. Its study began with Ore's work \cite[\S1]{GT2025} on permutable subgroups. Skiba \cite{Skiba2003,Skiba2006} introduced generalized forms of this property. We use the following definitions.

\begin{definition}\label{def:g-permutable}
Let $A\leq G$. We say that $A$ is \emph{$G$-permutable} in $G$ if for every subgroup $B\leq G$ there is an element $x\in G$ with
$$
        AB^x=B^xA.
$$
We say that $A$ is \emph{hereditarily $G$-permutable} in $G$ if $A$ is $E$-permutable in $E$ for every subgroup $E$ with $A\leq E\leq G$. These definitions follow Skiba \cite[Definitions 2.1 and 2.2]{Skiba2006}.
\end{definition}

Guo, Skiba and Shum \cite{GuoSkibaShum2007} used generalized permutability conditions to characterize solvable, supersolvable and nilpotent groups. In 2006, Tyutyanov and Skiba \cite[Question 2.41]{Skiba2006} asked which finite nonabelian simple groups have a proper nontrivial ordinary $G$-permutable subgroup. Both the ordinary and hereditary questions appeared as Problem 17.112 in the 2010 issue of the Kourovka Notebook \cite{Kourovka2014}. Part \textup{(b)} concerns hereditary permutability. Tyutyanov and Bychkov \cite{TB2008} answered the hereditary question for sporadic groups. Vasil'ev and Tyutyanov \cite{VT2012} proved the corresponding result for alternating groups. Galt and Tyutyanov \cite{GT2022,GT2023} obtained the same result for the Tits group, exceptional simple groups and $\PSL_2(q)$. Their 2025 paper \cite[Theorem 2 and Section 4]{GT2025} answers the hereditary question for $\PSU_3(q)$. The classical groups of higher rank remained.

In their 2023 paper, Galt and Tyutyanov formulated the following conjecture.

\begin{conjecture}[{\cite[Conjecture]{GT2023}}]\label{conj:gt}
A nontrivial finite group $G$ is simple if and only if $G$ has no proper nontrivial hereditarily $G$-permutable subgroup.
\end{conjecture}

We prove the conjecture for all classical groups. Together with the results for alternating, sporadic and exceptional simple groups, our classical results give the following theorem.

\begin{theorem}\label{thm:main}
No nonabelian finite simple group $G$ has a proper nontrivial hereditarily $G$-permutable subgroup.
\end{theorem}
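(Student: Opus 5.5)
By the results recalled in the introduction, Theorem~\ref{thm:main} already holds for alternating groups \cite{VT2012}, sporadic groups \cite{TB2008}, the Tits group, exceptional groups and $\PSL_2(q)$ \cite{GT2022,GT2023}, and $\PSU_3(q)$ \cite{GT2025}. By the classification it therefore remains to treat the classical simple groups $\PSL_n(q)$ ($n\ge 3$), $\PSU_n(q)$ ($n\ge 4$), $\PSp_{2m}(q)$ and $\POmega^{\epsilon}_n(q)$ of the remaining ranks. The plan is an argument by contradiction.

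\textbf{Setup and basic observations.} Suppose $A$ is a proper nontrivial hereditarily $G$-permutable subgroup of such a $G$. Hereditary permutability passes to overgroups: for $A\le E\le G$, the subgroup $A$ is hereditarily $E$-permutable in $E$. In particular $A$ is $G$-permutable. For any $B\le G$ we can therefore replace $B$ by a conjugate $B^x$ so that $AB^x$ is a subgroup.
\begin{itemize}
\item If $B$ is a maximal subgroup not containing a conjugate of $A$, then $AB^x$ is a subgroup properly containing $B^x$. Hence $AB^x=G$, giving a factorization $G=AB^x$.
\item Taking $B$ cyclic generated by a suitable element (for instance a Singer-type element, or an element of primitive prime divisor order), we get strong restrictions. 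Either $A$ contains a conjugate of every such element, or $A\langle g\rangle$ is a subgroup for some conjugate $g$. Iterating this pushes $A$ into a small list of candidates.
\end{itemize}

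\textbf{Key step: factorizations.} Choose a family of maximal subgroups $B_1,\dots,B_k$ of $G$ whose conjugates have no common overgroup type. For classical groups, natural choices are a parabolic subgroup and a field-extension or Singer normalizer. Whenever $A$ lies in no conjugate of $B_i$, we obtain $G=AB_i^x$. Then the Liebeck--Praeger--Saxl classification of maximal factorizations forces a maximal overgroup $M$ of $A$ into a short explicit list. Typical entries are the stabilizer of a $1$-space or hyperplane for $\PSL_n$, ${\rm Sp}_{2m}(q)$ inside $\PSL_{2m}$, $\Omega_{2m-1}$ inside $\Omega^+_{2m}$, and $G_2(q)$ inside $\Omega_7(q)$. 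In each case we either rule the entry out, because a second choice of $B_j$ gives a factorization incompatible with it, or conclude that $A$ lies in a parabolic subgroup $P$, possibly after taking $B$ to be a Sylow $p$-subgroup.

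\textbf{Descent.} Once $A\le P=QL$ with $Q=O_p(P)$ unipotent and $L$ a Levi factor, apply hereditary permutability inside $E=P$ and inside overgroups such as $QL_0$. The goal is to show that the image of $A$ in the Levi quotient is hereditarily permutable there, and that its intersection with $Q$ is normalized appropriately. The Levi quotient has simple classical components of smaller rank, so induction applies, with the base cases $\PSL_2$ and $\PSU_3$ already known. We conclude that $A$ projects trivially, or onto everything, in each simple component. Those possibilities are then killed by choosing $B$ to be a subgroup of $G$ that is transverse to $P$: for instance an opposite unipotent radical, or a subgroup $B$ with $|A||B|$ not divisible by $|G|_p$ appropriately, so that no conjugate product $AB^x$ can be a subgroup. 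A typical obstruction is that $\langle A, B^x\rangle=G$ for every $x$ while $|AB^x|<|G|$.

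\textbf{Main obstacle.} I expect the descent step to be hardest. Hereditary permutability in $G$ only gives conjugating elements from $E$ when working inside $E\ge A$, not from $P/Q$, so it does not obviously pass to quotients. We need a lemma stating that if $N\trianglelefteq E$ and $A\le E$, then $AN/N$ is hereditarily permutable in $E/N$; this holds because conjugates and overgroups lift. That lemma must be proved first. Handling the small-$q$ and small-rank exceptions in the factorization tables (e.g.\ $q=2,3$, and $\POmega_8^+$ with triality) will need separate, possibly computational, treatment.
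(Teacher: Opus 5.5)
Your top-level reduction is the same as the paper's proof of Theorem~\ref{thm:main}, which combines CFSG, the known alternating, sporadic, exceptional, $\PSL_2$ and $\PSU_3$ results, and the four classical theorems (Theorems~\ref{thm:split-linear}, \ref{thm:unitary}, \ref{thm:symplectic}, \ref{thm:orthogonal}). All the content lies in those classical theorems. Your sketch of them uses the right ingredients (Borel test plus factorization classifications to put $A$ in a parabolic, the quotient lemma, Levi descent), but it has a genuine gap at the descent step.

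You apply the induction hypothesis, which is about \emph{simple} groups, to the image of $A$ in a Levi component, and conclude that this image is trivial or everything. But the image $C$ lies in the projective Levi quotient $E$ with $S\le E\le\PGL_m(q)$ (or $\PCSp$, $\PGU$, \dots). In general $C$ is not inside $S$, and $E$ has the proper nontrivial hereditarily permutable subgroup $S$ itself. Passing to $C\cap S$ does not help, because hereditary permutability does not descend to intersections with normal subgroups. Lemma~\ref{lem:normal-intersection-transfer} gives only \emph{ordinary} $S$-permutability, and simple groups do have proper nontrivial ordinary permutable subgroups (the order-$2$ subgroups of $A_5$ quoted in the proof of Theorem~\ref{thm:unitary-low-bases}). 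What is needed, and what the paper proves, is a trichotomy for $E$ itself (Lemma~\ref{lem:sl-actual-quotient-trichotomy} and its symplectic and orthogonal analogues). One tests $C$ against $B_E=N_E(B_S)$, which maps onto $E/S$. Then either $C\ge S$, or $C$ normalizes a proper parabolic of $S$ (so the flag extends), or $E=CB_E^x$, which the Li--Xia classification excludes by comparing $p$-parts with a Borel subgroup.

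Your dichotomy also fails for small $q$ in a way no computation can repair, because the offending Levi quotients occur in groups of every rank. The groups $\GL_2(2)\cong S_3$, $\PSL_2(3)\cong A_4$, $\PGL_2(3)\cong S_4$ and $\PSp_4(2)\cong S_6$ have proper nontrivial normal, hence hereditarily permutable, subgroups $C_3$, $V_4$, $A_4$, $A_6$. The paper needs a separate idea for these. For images with no invariant line (such as $C_3\le\GL_2(2)$ or $V_4\le\PSL_2(3)$) it uses the module obstruction of Lemma~\ref{lem:sl-root-line-module-obstruction}: if $A$ meets an abelian unipotent normal subgroup $V$ trivially, permutability with a line of $V$ forces $A$ to fix a line of $V$. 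For $S_6$ it uses the transitivity of $A_6$ on totally singular planes together with the $P_2$-index argument (Lemma~\ref{lem:symplectic-psp4-2-levi} and the proof of Theorem~\ref{thm:symplectic}).

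Finally, the terminal case in which every Levi image is trivial carries much of the work, and ``choose a transverse $B$'' does not supply it. In the linear case the paper first pushes $A$ into the highest root subgroup through the Levi quotients of $P_1$ and $P_{m-1}$ (Lemma~\ref{lem:sl-borel}). It then works inside the $\SL_2(q)$ generated by opposite highest root groups. Closing the induction needs Lemma~\ref{lem:pgl2-borel} on Borel subgroups of $E$ with $\PSL_2(q)\le E\le\PGL_2(q)$, proved via Dickson's theorem, explicit $D_8$ and $S_4$ subgroups, and computation for small $q$. For unitary groups the paper goes beyond parabolic descent. It uses self-polar simplex stabilizers and Singer tori, and handles unipotent subgroups by a Singer-orbit argument (Proposition~\ref{prop:unitary-unipotent}).
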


Theorem \ref{thm:main}, together with Corollary \ref{cor:simplicity-criterion}, proves Conjecture \ref{conj:gt}. Equivalently, a nontrivial finite group is simple if and only if it has no proper nontrivial hereditarily permutable subgroup.

\begin{corollary}\label{cor:simplicity-criterion}
A nontrivial finite group $G$ is simple if and only if $G$ has no proper nontrivial hereditarily $G$-permutable subgroup.
\end{corollary}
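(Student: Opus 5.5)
We must prove both directions of Corollary \ref{cor:simplicity-criterion}, taking Theorem \ref{thm:main} as given.

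\emph{Simple groups have no such subgroup.} Suppose $G$ is a nontrivial simple group. If $G$ is nonabelian, Theorem \ref{thm:main} says directly that $G$ has no proper nontrivial hereditarily $G$-permutable subgroup. If $G$ is abelian, then $G\cong \Z/p\Z$ for a prime $p$. Its only subgroups are $1$ and $G$, so there is no proper nontrivial subgroup at all, and the claim holds trivially.

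\emph{Non-simple groups have one.} Now suppose $G$ is nontrivial and not simple. Then $G$ has a normal subgroup $N$ with $1<N<G$. We claim that $N$ itself is hereditarily $G$-permutable.

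Let $E$ be any subgroup with $N\le E\le G$. Since $N\trianglelefteq G$, we also have $N\trianglelefteq E$. Hence for every subgroup $B\le E$ we get $NB=BN$, because a normal subgroup permutes with every subgroup. So we may take $x=1$ in Definition \ref{def:g-permutable}, and this shows $N$ is $E$-permutable in $E$. As this holds for every such $E$, the subgroup $N$ is hereditarily $G$-permutable in $G$. Note that the conjugating element may be taken from $E$ (here trivially $x=1$), so the argument works under either reading of where $x$ is allowed to lie.

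\emph{Conclusion.} The two parts together give the equivalence. There is no real obstacle at the level of the corollary: all the difficulty is in Theorem \ref{thm:main}, which the paper establishes by combining the known cases (alternating, sporadic and exceptional groups, $\PSL_2(q)$, $\PSU_3(q)$) with the new treatment of classical groups via parabolic descent and factorization results.
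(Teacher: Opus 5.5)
Your proof is correct and matches the paper's argument: for non-simple $G$, a proper nontrivial normal subgroup $N$ is normal in every intermediate $E$ and so permutes with every $B\leq E$ (take $x=1$). For simple $G$, you split into the prime-order cyclic case, which has no proper nontrivial subgroups, and the nonabelian case, which is covered by Theorem \ref{thm:main}.
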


\begin{proof}
If $G$ is not simple, then $G$ has a proper nontrivial normal subgroup $N$. For every subgroup $B\leq G$ the product $NB=BN$ is again a subgroup, and the equality $NB=BN$ holds inside every subgroup $E$ with $N\leq E\leq G$ because $N\unlhd E$. Hence $N$ is a proper nontrivial hereditarily $G$-permutable subgroup. Conversely, suppose that $G$ is simple. If $G$ is abelian, then $G$ is cyclic of prime order and has no proper nontrivial subgroup at all. If $G$ is nonabelian, then we use Theorem \ref{thm:main}. In each case $G$ has no proper nontrivial hereditarily $G$-permutable subgroup.
\end{proof}

Our arguments use descent through parabolic subgroups and the classification of factorizations of finite simple groups. We use both ordinary $G$-permutability and hereditary $G$-permutability at several steps of the proofs. The distinction between these two notions is especially relevant in Section \ref{sec:unitary}.

We also use the theorem of Yang and Galt on $G$-permutable subgroups of alternating groups of prime degree. Its only use is inside the unitary argument, in the proof leading to Theorem \ref{thm:unitary-simplex-image}, where \emph{ordinary} $G$-permutability is needed. The ordinary theorem of Yang and Galt does not replace the theorem on hereditarily $G$-permutable subgroups of alternating groups.

We treat the linear, unitary, symplectic, and orthogonal families in that order and then combine their results in Section \ref{sec:assembly}. The unitary arguments use the linear theorem (Theorem \ref{thm:split-linear}). The orthogonal cases of low rank use the linear, unitary and symplectic theorems (Theorems \ref{thm:split-linear}, \ref{thm:unitary} and \ref{thm:symplectic}). Section \ref{sec:assembly} uses all four theorems. Each section uses only results proved in the sections on which it depends or quoted from the literature.

We write $\Soc(G)$ for the socle of $G$ and $\Core_G(H)=\bigcap_{g\in G}H^g$ for the core of $H$ in $G$. We say that a subgroup is \emph{core-free} when its core in $G$ is trivial.

For orthogonal groups we write $\POmega(V)$ for the projective group attached to a nondegenerate quadratic space $V$. In the linear section (Section \ref{sec:split-linear}) $P_i$ denotes the stabilizer in the group under consideration of an $i$-dimensional subspace of its natural module. In the unitary, symplectic and orthogonal sections (Sections \ref{sec:unitary}, \ref{sec:symplectic} and \ref{sec:orthogonal}) $P_i$ denotes the standard parabolic subgroup whenever such a parabolic exists. This subgroup stabilizes a totally singular $i$-dimensional subspace of the natural module for the group under consideration.

For a group of Lie type, a \emph{Borel subgroup} is the subgroup $B$ of the standard (split or twisted) $BN$-pair. A \emph{parabolic subgroup} is a subgroup of the same group of Lie type containing a Borel subgroup. For a parabolic subgroup, the Levi quotient is the quotient of that parabolic subgroup by its unipotent radical. When we specify a projective action of a Levi factor, we use the image of that action. Equivalently, we quotient that Levi factor by the kernel of the specified projective action. We work throughout in the finite group of Lie type. We follow Carter \cite{Carter1985} and Malle and Testerman \cite{MalleTesterman2011} for the parabolic and Levi structures.

\section{Preliminaries}\label{sec:preliminaries}

The following lemmas will be used throughout the paper.

\begin{lemma}\label{lem:generated-hereditary-formulation}
For $A\leq G$, the hereditary condition of {\rm Definition \ref{def:g-permutable}} is equivalent to the following: for every subgroup $B\leq G$ there exists $x\in \gen{A,B}$ with $AB^x=B^xA$.
\end{lemma}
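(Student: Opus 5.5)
We prove the two implications separately. Both reduce to comparing $\gen{A,B}$ with an intermediate subgroup $E$ containing $A$. Throughout, "hereditary condition" means: for every $E$ with $A\leq E\leq G$ and every $B\leq E$ there is $x\in E$ with $AB^x=B^xA$. This is Definition \ref{def:g-permutable} read with conjugating elements taken in $E$, which is how $E$-permutability in $E$ is defined.

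\emph{Hereditary condition $\Rightarrow$ generated formulation.} Let $B\leq G$ and put $E=\gen{A,B}$. Then $A\leq E\leq G$ and $B\leq E$. Since $A$ is $E$-permutable in $E$, there is $x\in E=\gen{A,B}$ with $AB^x=B^xA$, which is exactly the required statement.

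\emph{Generated formulation $\Rightarrow$ hereditary condition.} Let $A\leq E\leq G$ and $B\leq E$. The generated formulation applied to $B$ gives $x\in\gen{A,B}$ with $AB^x=B^xA$. Since $A,B\leq E$, we have $\gen{A,B}\leq E$, so $x\in E$. Hence $A$ is $E$-permutable in $E$. As $E$ was arbitrary, $A$ is hereditarily $G$-permutable.

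There is no real obstacle. The only point needing care is the convention that in the definition of $E$-permutability the conjugating element lies in $E$ rather than in $G$; the abstract's phrasing "some $G$-conjugate" could suggest otherwise, so this convention should be stated explicitly. With that fixed, the lemma follows from $\gen{A,B}\leq E$ together with the choice $E=\gen{A,B}$ in the forward direction.
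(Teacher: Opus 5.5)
Your proof is correct and matches the paper's argument: you take $E=\gen{A,B}$ for the forward direction and use $\gen{A,B}\leq E$ for the converse. Your explicit remark that the conjugating element for $E$-permutability is taken in $E$ agrees with Definition \ref{def:g-permutable}.
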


\begin{proof}
Suppose that $A$ is hereditarily $G$-permutable. For $B\leq G$, we use hereditary permutability of $A$ inside $E=\gen{A,B}$. Consequently, the conjugating element provided by this hereditary permutability condition lies in $\gen{A,B}$. Conversely, suppose that the condition in the lemma holds, and let $A\leq E\leq G$ with $B\leq E$. The conjugating element $x$ for $A$ and $B$ lies in $\gen{A,B}\leq E$ and shows that $A$ permutes with $B^x$ inside $E$.
\end{proof}

\begin{lemma}\label{lem:maximal-product}
Let $A\leq G$ be hereditarily $G$-permutable and let $M\leq G$. Then there is $x\in \gen{A,M}$ for which $AM^x$ is a subgroup of $G$. If moreover $M$ is maximal in $G$, then either $A\leq M^x$ or $G=AM^x$.
\end{lemma}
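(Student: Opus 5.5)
The first assertion follows directly from Lemma \ref{lem:generated-hereditary-formulation}, applied with $B=M$. Since $A$ is hereditarily $G$-permutable, that lemma gives an element $x\in\gen{A,M}$ with $AM^x=M^xA$. We then use the standard fact that for subgroups $H,K$ of a finite group, the set $HK$ is a subgroup if and only if $HK=KH$. Indeed, if $HK=KH$, then $(HK)(HK)=H(KH)K=H(HK)K=HK$, and $(HK)^{-1}=K^{-1}H^{-1}=KH=HK$. Hence $AM^x$ is a subgroup of $G$, which proves the first claim.

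For the second assertion, let $M$ be maximal in $G$ and take the same $x$. Conjugation is an automorphism, so $M^x$ is also maximal in $G$. The subgroup $AM^x$ satisfies $M^x\leq AM^x\leq G$, so maximality gives two cases. If $AM^x=M^x$, then $A\leq M^x$. If $AM^x=G$, we have the factorization $G=AM^x$. These are exactly the two alternatives stated.

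No step is a real obstacle. The only point to check is that the conjugating element lies in $\gen{A,M}$, and this is supplied by Lemma \ref{lem:generated-hereditary-formulation}. It matters for later applications: when $A\leq M$, the element $x$ lies in $M$, so $M^x=M$.
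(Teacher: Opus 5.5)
Your proposal is correct and is essentially the paper's proof. You apply the hereditary condition in the generated form of Lemma~\ref{lem:generated-hereditary-formulation} with $B=M$ to get $x\in\gen{A,M}$ with $AM^x=M^xA$, and then maximality of $M^x$ forces $AM^x\in\{M^x,G\}$; your explicit verification that a product of permuting subgroups is a subgroup only spells out a step the paper leaves implicit.
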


\begin{proof}
By hereditary $G$-permutability of $A$ for $M$, we have $x\in\gen{A,M}$ with $AM^x=M^xA$, so $AM^x$ is a subgroup. If $M$ is maximal, then $AM^x$, which contains $M^x$, equals $M^x$ or $G$.
\end{proof}

\begin{lemma}\label{lem:ordinary-maximal-product}
Let $A\leq G$ be ordinarily $G$-permutable and let $M\leq G$. Then there is $x\in G$ for which $AM^x$ is a subgroup of $G$. If $M$ is maximal in $G$, then either $A\leq M^x$ or $G=AM^x$.
\end{lemma}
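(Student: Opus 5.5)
This is the ordinary analogue of Lemma \ref{lem:maximal-product}, and I would prove it the same way, using Definition \ref{def:g-permutable} directly instead of the hereditary reformulation of Lemma \ref{lem:generated-hereditary-formulation}.

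First, apply ordinary $G$-permutability of $A$ with $B=M$. Definition \ref{def:g-permutable} then gives an element $x\in G$ with $AM^x=M^xA$. A product of two subgroups is a subgroup exactly when the two subgroups permute. So $AM^x$ is a subgroup of $G$, and it contains both $A$ and $M^x$. This proves the first assertion.

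Now assume $M$ is maximal in $G$. Then $M^x$ is also maximal, since conjugation is an automorphism of $G$ and so preserves the subgroup lattice. The subgroup $AM^x$ satisfies $M^x\leq AM^x\leq G$, so by maximality either $AM^x=M^x$ or $AM^x=G$. In the first case $A\leq AM^x=M^x$. In the second case $G=AM^x$.

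No step here is a real obstacle. The only thing to keep in mind is that, unlike in Lemma \ref{lem:maximal-product}, the conjugating element $x$ can only be taken in $G$, not in $\gen{A,M}$. The statement asks only for $x\in G$, so nothing more is needed.
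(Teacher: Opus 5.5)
Your proposal is correct and takes essentially the same approach as the paper, which simply repeats the argument of Lemma~\ref{lem:maximal-product} with $x\in G$ supplied by ordinary $G$-permutability of $A$ against $M$, followed by the maximality dichotomy $AM^x\in\{M^x,G\}$. Your remark that $x$ need only lie in $G$, rather than in $\gen{A,M}$, is exactly the distinction between the two lemmas.
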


\begin{proof}
We repeat the argument of Lemma \ref{lem:maximal-product}, with $x\in G$ chosen by ordinary $G$-permutability of $A$.
\end{proof}

\begin{lemma}\label{lem:factorization-coset-transitivity}
Let $G$ act on the right cosets of a subgroup $M$ by right multiplication. For $A\leq G$, the group $A$ is transitive on the right cosets of $M$ in $G$ if and only if $G=MA=AM$.
\end{lemma}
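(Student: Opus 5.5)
We prove both directions directly from the definition of the action. Let $\Omega=\{Mg: g\in G\}$ be the set of right cosets, with $G$ acting by $(Mg)\cdot h=Mgh$. The orbit of the coset $M$ under $A$ is $\{Ma: a\in A\}$. Since $G$ is transitive on $\Omega$, the group $A$ is transitive if and only if this orbit is all of $\Omega$. Indeed, if some orbit of $A$ equals $\Omega$, then every orbit does, because the orbits partition $\Omega$.

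\emph{Forward direction.} Suppose $A$ is transitive. For each $g\in G$ there is $a\in A$ with $Mg=Ma$, so $g\in MA$. Hence $G=MA$. Taking inverses gives $G=G^{-1}=(MA)^{-1}=A^{-1}M^{-1}=AM$. Thus $G=MA=AM$.

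\emph{Reverse direction.} Suppose $G=MA$ (in particular if $G=MA=AM$). Then every $g\in G$ can be written as $g=ma$ with $m\in M$ and $a\in A$. So $Mg=Mma=Ma$ lies in the $A$-orbit of the coset $M$. Hence $A$ is transitive. In fact $G=MA$ alone suffices, and $G=MA$ is equivalent to $G=AM$ by the inversion argument above.

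There is no real obstacle here. The only point needing care is the convention: the paper uses right cosets and a right action. The natural equality coming from the orbit computation is therefore $G=MA$, and we recover $AM$ by inversion. We will remark that the order-counting identity $|MA|=|M||A|/|M\cap A|$ gives the same conclusion as a cross-check: transitivity is equivalent to $|A:A\cap M|=|G:M|$, since $A\cap M$ is the stabilizer of the point $M$.
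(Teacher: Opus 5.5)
Your proof is correct and follows the paper's argument. The paper also observes that the $A$-orbit of the coset $M$ is $\{Ma:a\in A\}$, and that this orbit is the whole coset space exactly when $G=MA$. Your inversion step $(MA)^{-1}=AM$ spells out the equality $MA=AM$, which the paper's one-line proof leaves implicit.
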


\begin{proof}
The $A$-orbit of the coset $M$ is $\{Ma:a\in A\}$, which is the whole coset space exactly when every $g\in G$ lies in some $Ma$, i.e., when $G=MA=AM$.
\end{proof}

\begin{lemma}\label{lem:overgroup-restriction}
Let $A\leq E\leq G$. If $A$ is hereditarily $G$-permutable, then $A$ is hereditarily $E$-permutable.
\end{lemma}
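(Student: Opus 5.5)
The argument is a direct unwinding of Definition \ref{def:g-permutable}: the hereditary condition for $A$ in $G$ quantifies over all intermediate subgroups between $A$ and $G$, and the hereditary condition for $A$ in $E$ quantifies over a subset of these. So the plan is simply to compare the two families of intermediate subgroups.

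First, recall that $A$ is hereditarily $E$-permutable exactly when $A$ is $F$-permutable in $F$ for every subgroup $F$ with $A\leq F\leq E$. Fix such an $F$. Since $E\leq G$, we also have $A\leq F\leq G$. Hence $F$ is one of the intermediate subgroups covered by the hypothesis that $A$ is hereditarily $G$-permutable.

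By that hypothesis, $A$ is $F$-permutable in $F$. Concretely, for every $B\leq F$ there is $x\in F$ with $AB^x=B^xA$. This is precisely the condition required at $F$ for hereditary $E$-permutability. Since $F$ was arbitrary, $A$ is hereditarily $E$-permutable.

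As an alternative check, one can argue via Lemma \ref{lem:generated-hereditary-formulation} applied in $E$. For $B\leq E$ we have $B\leq G$, so the hypothesis in $G$ gives some $x\in\gen{A,B}$ with $AB^x=B^xA$. Since $\gen{A,B}\leq E$, this $x$ is an element of $E$, and the lemma's criterion holds in $E$.

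There is no real obstacle here. The only point needing care is that $F$-permutability in Definition \ref{def:g-permutable} is intrinsic to $F$: both the subgroups $B$ and the conjugating elements $x$ are taken inside $F$. Because of this, the condition obtained from $G$ at the subgroup $F$ is literally the same condition that is needed for $E$ at $F$, and no information is lost in passing from $G$ to $E$.
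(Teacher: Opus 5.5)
Your proof is correct. Your ``alternative check'' is exactly the paper's argument: take $x\in\gen{A,B}$ from the generated formulation of Lemma~\ref{lem:generated-hereditary-formulation} and note that $\gen{A,B}\leq E$. Your main argument makes the same point directly from Definition~\ref{def:g-permutable}: every $F$ with $A\leq F\leq E$ also satisfies $A\leq F\leq G$. That route is marginally more self-contained but is essentially the same observation.
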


\begin{proof}
For $B\leq E$, by hereditary $G$-permutability of $A$ we have $x\in\gen{A,B}$ with $AB^x=B^xA$, and $\gen{A,B}\leq E$ since $A,B\leq E$. Consequently, the equality $AB^x=B^xA$ establishes hereditary $E$-permutability of $A$.
\end{proof}

\begin{lemma}\label{lem:quotient-inheritance}
Let $A\leq E$ and $N\unlhd E$. If $A$ is hereditarily $E$-permutable, then $AN/N$ is hereditarily $(E/N)$-permutable.
\end{lemma}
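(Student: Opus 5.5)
We prove this directly from the reformulation in Lemma \ref{lem:generated-hereditary-formulation}, applied in $E/N$. Write $\bar{\ }$ for the natural map $E\to E/N$ and set $\bar A=AN/N$. By that lemma it suffices to show that for every subgroup $\bar B\leq E/N$ there is $\bar x\in\gen{\bar A,\bar B}$ with $\bar A\bar B^{\bar x}=\bar B^{\bar x}\bar A$.

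Subgroups of $E/N$ correspond to subgroups of $E$ containing $N$, so we write $\bar B=B/N$ with $N\leq B\leq E$. The hypothesis that $A$ is hereditarily $E$-permutable, together with Lemma \ref{lem:generated-hereditary-formulation} applied with $E$ as the ambient group, gives $x\in\gen{A,B}$ with $AB^x=B^xA$. We then apply the homomorphism. Since $N\unlhd E$ and $N\leq B$, we have $N\leq B^x$, and the image of $B^x$ is $\bar B^{\bar x}$. The image of $A$ is $\bar A$. Images of products of subsets are products of images, so
$$\bar A\,\bar B^{\bar x}=\overline{AB^x}=\overline{B^xA}=\bar B^{\bar x}\bar A.$$
Finally, $\bar x$ lies in the image of $\gen{A,B}$, which equals $\gen{\bar A,\bar B}$.

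The only point requiring care is the conjugating element. It must lie in $\gen{\bar A,\bar B}$ and not merely in $E/N$, because hereditary permutability in $E/N$ has to hold inside every intermediate subgroup $\bar A\leq \bar F\leq E/N$. This is exactly why we use the generated-subgroup formulation: the image of $\gen{A,B}$ is $\gen{\bar A,\bar B}$, which lies in every such $\bar F$ that contains $\bar B$. Everything else is routine. In particular, we may alternatively take any preimage $B$ of $\bar B$; the choice $B\supseteq N$ is simply the cleanest.
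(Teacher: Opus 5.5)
Your proof is correct and is essentially the paper's own argument: the paper also takes $B=\rho^{-1}(T)$, uses the generated formulation to obtain $x\in\gen{A,B}$ with $AB^x=B^xA$, and pushes both this equality and the element $x$ forward through $E\to E/N$, noting that $xN\in\gen{AN/N,BN/N}$. Your observation that $N\leq B^x$ is harmless but not needed, since $\rho(B^x)=\rho(B)^{\rho(x)}$ holds for any preimage $B$.
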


\begin{proof}
Let $\rho:E\to E/N$ be the quotient map. For a subgroup $T\leq E/N$, choose $B=\rho^{-1}(T)$. Then $BN/N\leq E/N$ and $T=BN/N$. By hereditary $E$-permutability of $A$ for $B$, we have $x\in\gen{A,B}$ with $AB^x=B^xA$. Taking the images of both sides of this equality in the quotient of $E$ by $N$, namely $E/N$, gives
$$
        (AN/N)(BN/N)^{xN}=(BN/N)^{xN}(AN/N),
$$
and $xN\in\gen{AN/N,BN/N}$.
\end{proof}

\begin{lemma}\label{lem:almost-simple-core-free}
Let $S\leq E\leq \Aut(S)$ with $S$ nonabelian simple. A subgroup $H\leq E$ is core-free in $E$ if and only if $S\nleq H$.
\end{lemma}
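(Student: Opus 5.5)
The plan is to prove the two directions separately. The key fact is that $S$ is the unique minimal normal subgroup of $E$. Here we identify $S$ with its inner automorphism group $\mathrm{Inn}(S)\cong S$, and we have $S\unlhd E$ because $\mathrm{Inn}(S)\unlhd\Aut(S)$.

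\emph{If $S\leq H$, then $H$ is not core-free.} Since $S\unlhd E$, we have $S=S^g\leq H^g$ for every $g\in E$. Hence $S\leq\Core_E(H)$. As $S$ is nontrivial, $\Core_E(H)\neq 1$.

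\emph{If $S\nleq H$, then $H$ is core-free.} Set $N=\Core_E(H)$, which is normal in $E$, and suppose $N\neq 1$. We first show that $C_E(S)=1$. An element of $E\leq\Aut(S)$ centralizing $\mathrm{Inn}(S)$ is an automorphism $\alpha$ with $\alpha\,\iota_s\,\alpha^{-1}=\iota_{\alpha(s)}=\iota_s$ for all $s\in S$. Because $Z(S)=1$, this forces $\alpha(s)=s$ for all $s$, so $\alpha=1$.

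Next consider $N\cap S$. If $N\cap S=1$, then $[N,S]\leq N\cap S=1$, because both subgroups are normal in $E$. Then $N\leq C_E(S)=1$, a contradiction. So $N\cap S$ is a nontrivial normal subgroup of $E$ contained in $S$. It is in particular normal in $S$, and simplicity of $S$ gives $N\cap S=S$. Thus $S\leq N\leq H$, which contradicts $S\nleq H$. Therefore $N=1$.

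There is no real obstacle here. The only point needing care is that the centralizer of $S$ in $E$ is trivial, which relies on $Z(S)=1$, i.e.\ on $S$ being nonabelian.
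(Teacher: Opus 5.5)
Your proof is correct and follows the same route as the paper's: the direction $S\leq H$ gives $S\leq\Core_E(H)$, and conversely a nontrivial core $N$ with $N\cap S=1$ would centralize $S$, which is impossible in $\Aut(S)$, so simplicity of $S$ forces $S\leq N\leq H$. You also spell out why $C_E(S)=1$ using $Z(S)=1$, which the paper leaves implicit.
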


\begin{proof}
If $S\leq H$, then $S\leq\Core_E(H)$, so $H$ is not core-free in $E$. Conversely, suppose that $N=\Core_E(H)\neq1$. Since $N\unlhd E$, the subgroup $N\cap S$ is normal in $S$. If $N\cap S=1$, then $[N,S]\leq N\cap S=1$, so $N$ centralizes $S$. Together with $N\leq\Aut(S)$, this centralization implies that $N=1$, a contradiction. Hence $N\cap S=S$, and $S\leq N\leq H$.
\end{proof}

\begin{lemma}\label{lem:normal-intersection-transfer}
Let $A\leq G$ be hereditarily $G$-permutable, let $N\unlhd G$, and let $K=A\cap N$. Then $K$ is ordinarily $N$-permutable in $N$. For every $H\leq N$ there is $n\in N$ with
$$
        KH^n=H^nK.
$$
\end{lemma}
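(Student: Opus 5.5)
The plan is to apply hereditary $G$-permutability of $A$ directly to the subgroup $H$, cut the resulting product down to $N$ with Dedekind's modular law, and then move the conjugating element into $N$ using the fact that $A$ normalizes $K$. The first assertion, that $K$ is ordinarily $N$-permutable in $N$, is just the displayed statement, since every subgroup of $N$ is a candidate $H$. So it suffices to prove the display.

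\emph{Step 1: apply hereditary permutability.} Fix $H\leq N$. By Lemma \ref{lem:generated-hereditary-formulation} applied to $B=H$, there is $x\in\gen{A,H}$ with
$$
AH^x=H^xA .
$$
Since $N\unlhd G$, we have $H^x\leq N$.

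\emph{Step 2: intersect with $N$.} Dedekind's modular law applies to a product of subgroups $XY$ with $Y\leq N$, even when $XY$ is only a set, and gives $XY\cap N=(X\cap N)Y$. Taking $X=A$ and $Y=H^x$ yields $AH^x\cap N=KH^x$. The mirror form of the law, applied to $H^xA$, yields $H^xA\cap N=H^xK$. Intersecting both sides of the equality from Step 1 with $N$ therefore gives
$$
KH^x=H^xK .
$$

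\emph{Step 3: replace $x$ by an element of $N$.} The only real point is that $x$ need not lie in $N$. Because $N\unlhd G$, the set $AN=NA$ is a subgroup containing both $A$ and $H$, so $x\in\gen{A,H}\leq NA$. Write $x=ma$ with $m\in N$ and $a\in A$. Then $H^x=(H^m)^a$.

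Conjugating the equality $KH^x=H^xK$ by $a^{-1}$ gives
$$
K^{a^{-1}}H^m=H^mK^{a^{-1}} .
$$
Since $a\in A$ normalizes $A$, and $N$ is normal, $a$ normalizes $K=A\cap N$, so $K^{a^{-1}}=K$. Hence $KH^m=H^mK$ with $n=m\in N$, which is the required statement.

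I do not expect a serious obstacle. The step needing care is Step 3, namely choosing the factorization $x=ma$ in the order that leaves the $A$-part outermost, so that it can be absorbed by the normality of $K$ under $A$.
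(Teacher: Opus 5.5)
Your proof is correct and matches the paper's argument: choose $x\in\gen{A,H}\leq NA$ by hereditary permutability, intersect $AH^x=H^xA$ with $N$ to obtain $KH^x=H^xK$, write $x$ as an element of $N$ times an element of $A$, and conjugate by the inverse of the $A$-part, which normalizes $K=A\cap N$. Your explicit use of Dedekind's modular law only spells out the intersection step that the paper states in one line.
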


\begin{proof}
Fix $H\leq N$ and let $R=\gen{A,H}$. By hereditary $G$-permutability of $A$, we choose $x\in R$ with $AH^x=H^xA$. Since $N\unlhd G$, we have $R\leq AN=NA$, so we may write $x=nl$ with $n\in N$, $l\in A$. We take the intersection of both sides of $AH^x=H^xA$ with $N$. As $H^x\leq N$, this intersection gives
$$
        (A\cap N)H^x=H^x(A\cap N).
$$
Now $H^x=(H^n)^l$ and $A\cap N\unlhd A$. Conjugating the equality $(A\cap N)H^x=H^x(A\cap N)$ by $l^{-1}$, we obtain that $KH^n=H^nK$ with $n\in N$. Thus, the equality establishes ordinary $N$-permutability of $K$ only, and says nothing about hereditary permutability of the various subgroups of $A$.
\end{proof}

\begin{remark}\label{rem:actual-levi}
At each parabolic descent step, we study the image of $A$ in the full Levi quotient specified at that step. In particular, we include the diagonal part of this quotient as well as its simple socle. In the linear case the quotient is a group $E$ with
$$
        \PSL_m(q)\leq E\leq \PGL_m(q),
$$
and by Lemma \ref{lem:quotient-inheritance} the image of $A$ is hereditarily $E$-permutable. Consequently, we state the lemmas describing the descent and its alternatives for $E$ rather than for the socle of $E$, $\PSL_m(q)$. The parabolic and Levi structure used in these steps is standard. For this parabolic and Levi structure, we follow Carter \cite{Carter1985}, Malle and Testerman \cite{MalleTesterman2011}, and Kleidman and Liebeck \cite{KleidmanLiebeck1990}.
\end{remark}

\section{Linear groups}\label{sec:split-linear}

Following Kleidman and Liebeck \cite[\S2.1]{KleidmanLiebeck1990}, let $V$ be the natural space of dimension $n$ over $\F_q$ for $\SL_n(q)$. A \emph{line}, a \emph{plane} and a \emph{hyperplane} in $V$ have dimensions $1$, $2$ and $n-1$, respectively. In projective geometry, the lines of $V$ are the projective points. Since scalar matrices fix every subspace as a subspace, the action of $\SL_n(q)$ on subspaces induces an action of $\PSL_n(q)$. More generally, a linear action on a vector space induces its \emph{projective action} on the lines of that space.

A \emph{flag} in $V$ is a chain $V_1<\cdots<V_t$ of nonzero proper subspaces, with each inclusion strict. A subgroup stabilizes this chain if it fixes each $V_i$ as a subspace. More explicitly, an element $g\in\GL(V)$ of this stabilizer satisfies $g(V_i)=V_i$. However, the element $g$ may move vectors within $V_i$. The subspace $V_i$ is fixed pointwise by $g$ if and only if $g(v)=v$ for every $v\in V_i$. An extension of $V_1<\cdots<V_t$ is a flag $W_1<\cdots<W_s$ in $V$ with $\{V_1,\ldots,V_t\}\subsetneq\{W_1,\ldots,W_s\}$. The original subspaces $V_i$ are terms of the new flag, and $s>t$. A \emph{complete flag} contains a subspace of every dimension from $1$ to $n-1$. Adding the zero subspace $0$ and the whole space $V$ to this complete flag gives the chain
$$
        0=V_0<V_1<\cdots<V_n=V.
$$
In this chain, each $V_i$ has dimension $i$ for $0\leq i\leq n$. For example, the flag $\ell<H$ is complete in dimension three, with full chain $0<\ell<H<V$.

For the complete flag $0=V_0<\cdots<V_n=V$, choose an $\F_q$-basis $(e_1,\ldots,e_n)$ of $V$ such that $V_i=\gen{e_1,\ldots,e_i}$ for $1\leq i\leq n$. In this basis, the stabilizer $B$ of the flag in $\PSL_n(q)$ is represented by upper triangular matrices of determinant $1$. Among these matrices, those with every diagonal entry equal to $1$ represent the unipotent radical $R_u(B)$. More generally, for a parabolic subgroup $P$ we write $R_u(P)$ for its unipotent radical. For actions on quotient spaces, we use $W/Y$ with $Y\leq W\leq V$. This space consists of the cosets $w+Y$ with $w\in W$. If $g\in\GL(V)$ fixes $Y$ and $W$ as subspaces, then its induced action on $W/Y$ is $w+Y\mapsto g(w)+Y$.

\begin{theorem}\label{thm:split-linear}
For every nonabelian simple group $G=\PSL_n(q)$ there is no proper nontrivial hereditarily $G$-permutable subgroup.
\end{theorem}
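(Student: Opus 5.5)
We argue by induction on $n$, proving the slightly stronger statement for every group $E$ with $\PSL_n(q)\leq E\leq\PGL_n(q)$ and $\PSL_n(q)$ nonabelian simple. The claim is that no hereditarily $E$-permutable subgroup $A$ satisfies $1\neq A$ and $\PSL_n(q)\nleq A$. This formulation is the one Remark \ref{rem:actual-levi} calls for, since Levi quotients of parabolics are exactly such groups $E$. The base case $n=2$ is the Galt--Tyutyanov theorem for $\PSL_2(q)$. We need it in the version for $E$ between $\PSL_2(q)$ and $\PGL_2(q)$. Where their result covers only the simple group, we recover the general version from Lemma \ref{lem:normal-intersection-transfer}, applied with $N=\PSL_2(q)$, together with their treatment of ordinary permutability.

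Now let $n\geq3$ and let $A$ be a counterexample, core-free in $E$ by Lemma \ref{lem:almost-simple-core-free}. We apply Lemma \ref{lem:maximal-product} to the maximal parabolics $M=P_1$ and $M=P_{n-1}$, the stabilizers of a line and of a hyperplane. These are maximal in $E$ when $E$ contains no graph automorphism. Either $A$ lies in a conjugate of $P_1$ (respectively $P_{n-1}$), or $E=AP_1^x$. In the latter case $A$ is transitive on projective points by Lemma \ref{lem:factorization-coset-transitivity}.

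\textbf{Factorization case.} Here we invoke the Liebeck--Praeger--Saxl classification of factorizations. Any core-free $A$ that is transitive on points or on hyperplanes essentially contains a Singer-type subgroup, or a subgroup normalizing $\PSL_{n/k}(q^k)$ or $\PSp_n(q)$, or it is one of finitely many small exceptions. For each such $A$ we will exhibit a subgroup $B$ contradicting Lemma \ref{lem:maximal-product}. Take $B$ to be another maximal subgroup, for instance a $\mathcal C_1$ parabolic $P_k$ with $1<k<n-1$, or an imprimitive subgroup $\GL_1(q)\wr S_n$. Then neither $A\leq B^x$ nor $AB^x=E$ holds. Containment fails by order and subgroup-structure considerations. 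The factorization fails because the classification lists no factorization with that pair of factors. A useful alternative is to choose $B$ so that $\gen{A,B}$ is a proper subgroup $E'$. Then Lemma \ref{lem:overgroup-restriction} reduces the question to $E'$, for example the normalizer of $\PSL_{n/k}(q^k)$, where induction on the field-extension subgroup applies.

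\textbf{Parabolic case.} Suppose $A\leq P_1$, up to conjugacy. Let $U=R_u(P_1)$ and let $L=P_1/U$, whose projective image contains $\PSL_{n-1}(q)$. By Lemmas \ref{lem:overgroup-restriction} and \ref{lem:quotient-inheritance}, the image $AU/U$ is hereditarily permutable in the Levi quotient. Induction then forces $AU/U$ to be trivial or to contain $\PSL_{n-1}(q)$. If $A\leq U$, then $A$ is a $p$-group. Taking $B$ to be a Sylow $r$-subgroup for a primitive prime divisor $r$ of $q^n-1$ (a Singer cyclic subgroup), the product $AB^x$ has order prime to nothing useful and cannot be a subgroup. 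Indeed, its order would divide $|E|$ only if $A$ normalizes some Singer torus conjugate, which forces $A=1$. Zsigmondy exceptions such as $n=6$, $q=2$ are handled directly. If instead $AU/U\geq\PSL_{n-1}(q)$, we run the same argument with $P_{n-1}$. Then $A$ covers the Levi factors of both a point stabilizer and a hyperplane stabilizer. This makes $A$ irreducible, so it cannot lie in $P_1$, a contradiction. Alternatively, $A$ is transitive on points, which returns us to the factorization case. Groups $E$ containing graph automorphisms are handled by replacing $P_1$ with the novelty maximal subgroup $P_{1,n-1}$ and using the same argument.

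\textbf{Main obstacle.} The hardest step is the factorization case. There one must go through every transitive core-free $A$ in the Liebeck--Praeger--Saxl list, together with the small exceptional $n,q$ coming from Zsigmondy failures and $\PSL_n(2)$ coincidences. For each one we must find an explicit witness subgroup $B$. My first attempt would take $B$ to be a Sylow $2$-subgroup or a Borel subgroup, because products with such $B$ are rarely subgroups.
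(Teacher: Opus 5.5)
Your argument has genuine gaps in both the parabolic case and the factorization case.

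\textbf{The parabolic case.} It contains a false step. Suppose $A\le P_1$ has projective image on $V/\ell$ containing $\PSL_{n-1}(q)$, and $A$ also fixes a hyperplane $H$ with full Levi image. You conclude that $A$ is irreducible. But fullness on $V/\ell$ only forces $\ell\not\le H$, i.e.\ $V=\ell\oplus H$. The subgroup $\SL(H)$, acting trivially on $\ell$, covers both Levi factors, is reducible, and lies in $P_1$. So neither your ``contradiction'' nor your ``return to the factorization case'' applies to it. This configuration needs its own test, which you do not have. The paper (Lemma~\ref{lem:sl-nonincident-full-cover}) tests $A$ against the $2$-space stabilizer $P_2$. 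Since $A$ preserves the $2$-spaces through $\ell$, it fixes some $2$-space $X$. Then $(X+\ell)/\ell$ is an invariant subspace of $V/\ell\cong H$ of dimension $1$ or $2$, which is impossible for $n-1\ge3$. For $n=3$ even this fails, and one needs the module obstruction of Lemma~\ref{lem:sl-root-line-module-obstruction} on $\Hom(\ell,H)$.

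The trivial-image case has a similar gap. A trivial projective image on $V/\ell$ does not give $A\le U$. It only gives $A\le U\rtimes T$, where $T=\{\diag(a,b,\ldots,b)\}$ is the central torus of the Levi, so $A$ need not be a $p$-group (take $A\le T$). The toral and root components must be killed separately. The paper does this in Lemma~\ref{lem:sl-borel}: it projects into the Levi quotients of both $P_1$ and $P_{n-1}$, and finishes with the highest root group inside $\gen{U_{1n},U_{n1}}\cong\SL_2(q)$. Even for $A\le U$, your order argument is wrong as stated, since $|A|\,r$ does divide $|E|$. What works is a Sylow count: for a suitable primitive prime divisor $r$, $n_r\mid|A|\le q^{n-1}$ and $n_r\equiv1\pmod r$ force $A$ to normalize $B^x$. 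One then compares the $(n-1)$-dimensional fixed spaces of transvections with those of $p$-elements of $\Gamma\mathrm{L}_1(q^n)$, which have dimension at most $n/2$.

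\textbf{The factorization case.} This is a programme rather than an argument, and the witnesses you suggest genuinely fail. Both $\PSL_5(2)=(31{:}5)P_2$ and $\PSL_4(2)\cong A_8=A_7P_2$ are factorizations whose first factor is point-transitive. The missing idea is the one you list only as a last resort: test $A$ against a Borel subgroup $B$ first. If $A$ lies in no proper parabolic, then any maximal overgroup of the subgroup $AB^x$ contains $B^x$ and is parabolic. Hence $G=AB^x$, a factorization with a solvable factor. The Li--Xia classification excludes this, because every listed solvable factor has $p$-part smaller than $|B|_p=q^{n(n-1)/2}$ (Proposition~\ref{prop:sl-parabolic-containment}). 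This places $A$ in a parabolic without classifying point-transitive subgroups, and the same Borel test drives each step of the descent.

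\textbf{The base case.} Your strengthened statement does not follow from Lemma~\ref{lem:normal-intersection-transfer} plus the ordinary $\PSL_2$ theorem. That combination only yields $|A\cap\PSL_2(q)|\le2$. Outer involutions and cyclic subgroups of order $4$ in $\PGL_2(q)$ must still be excluded by explicit witnesses, as in Lemma~\ref{lem:pgl2-borel}. Also, at $\PSL_3(3)$ the Levi quotient is $\PGL_2(3)\cong S_4$, which has the hereditarily permutable subgroups $V_4$ and $D_8$. So your trivial-or-full dichotomy is not available at that step.
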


For $n=2$, the conclusion is the known $\PSL_2(q)$ theorem \cite[Proposition 1]{GT2023}, so we assume that $n\geq3$ throughout the section. In this section, we shall always assume that $A<G$ is a proper nontrivial hereditarily $G$-permutable subgroup, where $G = \PSL_n (q) = \PSL (V)$.

\subsection{\texorpdfstring{$A$ lies in a proper parabolic}{A lies in a proper parabolic}}

\begin{proposition}\label{prop:sl-parabolic-containment}
$A$ is contained in a proper parabolic subgroup of $G$.
\end{proposition}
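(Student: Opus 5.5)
We argue from the maximal subgroups $P_1$ and $P_{n-1}$, the stabilizers of a line and of a hyperplane. By Lemma \ref{lem:maximal-product} applied with $M=P_1$, there is $x\in G$ with either $A\leq P_1^x$ or $G=AP_1^x$. In the first case $A$ already lies in a proper parabolic subgroup, namely the stabilizer of the line $x^{-1}$-image, and we are done. The same dichotomy holds for $P_{n-1}$. So assume, aiming for a contradiction, that $A$ lies in no proper parabolic subgroup. Then $G=AP_1^x=AP_{n-1}^y$ for suitable $x,y$. By Lemma \ref{lem:factorization-coset-transitivity}, $A$ is transitive both on the projective points and on the hyperplanes of $V$.

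Next we apply the classification of factorizations of almost simple groups. We use the Liebeck--Praeger--Saxl list of maximal factorizations, together with the known classification of subgroups of $\PSL_n(q)$ transitive on projective points, due to Hering and Kantor. Replacing $A$ by a maximal overgroup $M$ of $A$ in $G$, we get factorizations $G=MP_1$ with $M$ not parabolic. By Liebeck--Praeger--Saxl, $M$ is then one of three kinds: an extension-field subgroup of type $\GL_{n/b}(q^b).b$ (essentially the normalizer of a Singer-type subgroup); $M=N_G(\PSp_n(q))$ for $n$ even; or one of finitely many small exceptions, such as $\mathrm{A}_7<\PSL_4(2)$ and $\PSL_2(11)$-type cases. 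Hering's theorem sharpens this to $A$ itself. Either $A$ contains a normal subgroup $\SL_a(q^b)$ or $\Sp_a(q^b)$ (or $G_2(q^b)$ when $a=6$, $q$ even), acting naturally on $V=\F_{q^b}^a$; or $A$ normalizes a Singer cycle and is contained in $\Gamma L_1(q^n)$; or $A$ is one of a short list of sporadic point-transitive groups.

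In each case we then produce a subgroup $B$ violating hereditary permutability, in the form of Lemma \ref{lem:generated-hereditary-formulation} or via Lemma \ref{lem:maximal-product}. For the case $A\leq\Gamma L_1(q^n)/Z$, $|A|$ divides $n(q^n-1)/((q-1)\gcd(n,q-1))$. Take $M=P_2$ (available since $n\geq3$). Then $A\leq P_2^x$ is impossible, since $A$ is irreducible and a Singer cycle fixes no proper subspace. Likewise $G=AP_2^x$ is impossible by order, since $|G:P_2|$ is the number of planes and exceeds $|A|$; alternatively, no factorization $G=AP_2$ with such $A$ appears in Liebeck--Praeger--Saxl.

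For the classical-subgroup cases, again use $P_k$ for a suitable $k$, and compare with the Liebeck--Praeger--Saxl list. For example, $\PSp_n(q)$ is transitive on points and hyperplanes but not on $2$-spaces, because $2$-spaces split into totally isotropic and nondegenerate ones. So $G=AP_2^x$ fails, and $A\leq P_2^x$ fails by irreducibility. Similarly, $\SL_a(q^b)$ with $b>1$ is not transitive on $\F_q$-subspaces of dimension $2$, since some are $\F_{q^b}$-subspaces and some are not. For $b=1$ we would have $A=G$, which is excluded. The $G_2(q)$ case sits inside $\Sp_6(q)$ and is handled the same way. The sporadic exceptions are checked individually, typically using $P_2$ and orders.

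The main obstacle I expect is making this case analysis airtight. Some small cases, for instance $n=3,4$ with small $q$ and the exceptional transitive groups, may be transitive on $2$-spaces as well. Examples are $\mathrm{A}_7<\PSL_4(2)$ and the $\PSL_2(7)$-type subgroups in $\PSL_3(2)$–$\PSL_3(4)$. There I would instead pick $B$ to be a suitable non-maximal subgroup, such as a Sylow subgroup or a Borel subgroup. I would then check directly that no conjugate of $B$ permutes with $A$, for instance by showing that $|A||B|/|A\cap B^x|$ is not the order of any subgroup containing both.
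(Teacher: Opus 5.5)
Your reduction to the case where $A$ is transitive on points and on hyperplanes is fine, but the step meant to dispose of the Singer case fails, and it fails on an infinite family.

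For $n=3$ the plane stabilizer $P_2$ \emph{is} the hyperplane stabilizer $P_{n-1}$. When $\gcd(3,q-1)=1$, the Singer cycle of order $q^2+q+1$ lies in $G$ and is regular on the points and on the lines of $\mathrm{PG}(2,q)$. So if $A$ is this cycle or its normalizer, then $G=AP_2$ and $|G:P_2|=q^2+q+1\leq|A|$, contrary to your order claim.

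The same happens in $\PSL_5(2)$. There $31{:}5$ is regular on the $155$ two-dimensional subspaces, and $\PSL_5(2)=(31{:}5)P_2$ is a genuine factorization; it is the $\PSL_5(2)$ entry of the Li--Xia table used in the paper's proof. So your fallback appeal to Liebeck--Praeger--Saxl fails as well. In these cases $A$ lies in no maximal parabolic at all, so no test against subspace stabilizers $P_k$ can produce the contradiction.

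Two smaller points. Your invariant ``some $2$-spaces are $\F_{q^b}$-subspaces'' only makes sense for $b=2$; for $b\geq3$ use the $\F_{q^b}$-dimension ($1$ or $2$) of the $\F_{q^b}$-span instead. And the exceptional point-transitive groups are only promised to be ``checked individually'', so as written the case analysis is not a proof.

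The missing idea, which you hold in reserve only for small cases, is to test $A$ against a Borel subgroup $B$ from the outset; this is exactly the paper's proof. Every proper subgroup containing $B^x$ lies in a proper parabolic. So if $A$ lies in no proper parabolic, hereditary permutability forces $G=AB^x$. With $M\geq A$ maximal, this is a factorization $G=MB^x$ with a solvable factor containing a full Borel subgroup. The paper excludes it by comparing the $p$-part of each solvable factor in the Li--Xia linear rows with $|B|_p=q^{n(n-1)/2}$. Neither a point-transitivity classification (Hering, Liebeck--Praeger--Saxl) nor any case analysis is needed.

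In your Singer case the Borel test is just the count $|A|\,|B|<|G|$:
\begin{itemize}
\item for $n=3$ and $A$ the full Singer normalizer, $|A||B|/|G|=3/(\gcd(3,q-1)(q+1))<1$ when $q\geq3$, and $(n,q)=(3,2)$ is the $\PSL_2(7)$ base case;
\item for $\PSL_5(2)$, $155\cdot2^{10}<|G|$.
\end{itemize}

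Note that the Borel test does not help with $A_7<\PSL_4(2)$. Under $\PSL_4(2)\cong A_8$ the Borel subgroup is a Sylow $2$-subgroup $P$, which is transitive on the eight letters, so $A_8=A_7P$. That case has to go through the alternating-group theorem. Alternatively, use a test subgroup such as $\gen{(1\,2\,3)(4\,5\,6\,7\,8)}$: it neither fixes a letter nor is transitive, so no conjugate of it permutes with $A_7$.
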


\begin{proof}
Suppose that $A$ is not contained in a proper parabolic subgroup of $G$. Let $B$ be a Borel subgroup of $G$. By hereditary permutability of $A$ in $G$ there is $x\in G$ with $AB^x=B^xA$, so $AB^x$ is a subgroup. If $AB^x<G$, then a maximal subgroup containing $AB^x$ would contain the Borel subgroup $B^x$ and hence be parabolic. This would contradict the assumption that $A$ lies in no proper parabolic subgroup of $G$. Therefore, $G=AB^x$. Because $A<G$, we choose a maximal subgroup $M$ with $A\leq M<G$. Since $A\leq M$, we have a factorization
$$
        G=MB^x
$$
with solvable factor $B^x$.

We now use the classification of factorizations with a solvable factor, due to Li and Xia \cite[Theorem 1.1 and Tables 1.1--1.2]{LiXia2022}. The relevant linear rows are Table 1.1, rows 1--2, and Table 1.2, rows 6--9, which are all the rows of \cite[Tables 1.1--1.2]{LiXia2022} with linear socle $\PSL_n(q)$, $n\geq3$. Let $p$ be the characteristic of the field. In each of these listed linear rows the solvable factor has a smaller $p$-part than a Borel subgroup of $\PSL_n(q)$. The $p$-part for a Borel subgroup is
$$
        |B|_p=q^{n(n-1)/2},
$$
since type $A_{n-1}$ has $n(n-1)/2$ positive roots \cite{Carter1985}. The following table compares the two $p$-parts.
$$
\begin{array}{c|c|c|c}
\text{socle} & \text{solvable-factor row} & \text{solvable }p\text{-part} & \text{Borel }p\text{-part}\\\hline
\PSL_n(q) & \widehat{\GL}_1(q^n):n & |n|_p & q^{n(n-1)/2}\\
\PSL_4(q) & q^3:((q^3-1)/(4,q-1))\cdot 3 & q^3 |3|_p & q^6\\
\PSL_4(3) & 2^4:5:4 & 1 & 3^6\\
\PSL_4(3) & 3^3:13:3 & 3^4 & 3^6\\
\PSL_4(4) & 2^6:63:3 & 2^6 & 2^{12}\\
\PSL_5(2) & 31:5 & 1 & 2^{10}
\end{array}
$$
We have $|n|_p<q^{n(n-1)/2}$ for $n\geq3$, and $q^3|3|_p<q^6$ in the second row (including $p=3$). Thus, $B^x$ cannot be one of the solvable factors in the listed linear rows, a contradiction.
\end{proof}

\subsection{From a parabolic to a line and a hyperplane}

\begin{lemma}\label{lem:sl-line-hyperplane}
Suppose that $A\leq P_j$, where $P_j$ is the stabilizer in $G$ of a $j$-dimensional subspace $U$ of $V$ with $1\leq j\leq n-1$. Then, after relabelling, $A$ stabilizes a line $\ell$ and a hyperplane $H$ of $V$.
\end{lemma}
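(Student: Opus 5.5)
The plan is to apply Lemma \ref{lem:maximal-product} twice, once with $M$ the stabilizer $P_1$ of a line and once with $M$ the stabilizer $P_{n-1}$ of a hyperplane. In each case the factorization alternative should be ruled out by an orbit count: the parabolic $P_j$ containing $A$ cannot be transitive on lines or on hyperplanes.

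\emph{Step 1 (maximality).} For $n\geq3$, the stabilizers $P_1$ and $P_{n-1}$ in $G=\PSL_n(q)$ of a line and of a hyperplane are maximal subgroups of $G$. They are maximal parabolics, and this is standard; see Kleidman and Liebeck \cite{KleidmanLiebeck1990}. So Lemma \ref{lem:maximal-product} applies with $M=P_1$. It gives $x\in\gen{A,P_1}$ such that either $A\leq P_1^x$, or $G=AP_1^x$.

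\emph{Step 2 (excluding the factorization).} Suppose $G=AP_1^x$. Since $A\leq P_j$, we get $G=P_jP_1^x$. By Lemma \ref{lem:factorization-coset-transitivity}, $P_j$ is then transitive on the right cosets of $P_1^x$. These cosets are in bijection with the set of lines of $V$, because $G$ is transitive on lines and $P_1^x$ is the stabilizer of the line $x^{-1}(\ell_0)$, where $\ell_0$ is the line fixed by $P_1$. Hence $P_j$ would be transitive on all lines of $V$.

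But $P_j$ stabilizes $U$, so it preserves the set of lines contained in $U$. This set is nonempty since $j\geq1$. It is not the set of all lines, since $j\leq n-1$ and so $U\neq V$. Hence $P_j$ is intransitive on lines, which is a contradiction. Therefore $A\leq P_1^x$, so $A$ stabilizes the line $\ell=x^{-1}(\ell_0)$.

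\emph{Step 3 (hyperplanes).} Repeat the argument with $M=P_{n-1}$. Now $P_j$ preserves the set of hyperplanes containing $U$. This set is nonempty because $j\leq n-1$, and it is proper because $j\geq1$: a hyperplane can be chosen to avoid a nonzero vector of $U$. So $P_j$ is intransitive on hyperplanes. This yields $y\in\gen{A,P_{n-1}}$ with $A\leq P_{n-1}^y$, so $A$ stabilizes a hyperplane $H$.

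The "relabelling" consists only of naming these $\ell$ and $H$, which need not be related to $U$.

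The only point needing care is Step 2. One must keep track of the conjugating element, so that the coset space of $P_1^x$ really is identified with the set of lines. One must also check that both orbits are nonempty in the boundary cases $j=1$ and $j=n-1$. The argument for the line needs only $j\geq1$, and the argument for the hyperplane needs only $j\leq n-1$, so the lemma's range $1\leq j\leq n-1$ covers all cases. The maximality of $P_1$ and $P_{n-1}$ uses $n\geq3$, which is assumed throughout this section.
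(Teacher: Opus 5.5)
Your proposal is correct and follows essentially the same route as the paper. You apply Lemma~\ref{lem:maximal-product} to the maximal subgroups $P_1$ and $P_{n-1}$, and you exclude the factorization alternative because a group stabilizing $U$ preserves the nonempty proper set of lines in $U$ (respectively, of hyperplanes containing $U$), so it cannot be transitive. The only cosmetic difference is that you transfer transitivity to the overgroup $P_j$ via $G=P_jP_1^x$, whereas the paper argues directly with $A$.
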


\begin{proof}
We use Lemma \ref{lem:maximal-product} for the point stabilizer $P_1$ in $G$, which is maximal in $G$. If $G=AP_1^x$, then $A$ is transitive on the projective points of $V$. However, $A$ stabilizes $U$ and so maps the nonempty proper set of projective points in $U$ to itself. This contradicts the transitivity of $A$. Hence $A\leq P_1^x$ for some $x$, i.e., $A$ stabilizes a line of $V$. Similarly, hereditary permutability of $A$ with the hyperplane stabilizer in $G$, together with the nontransitivity of $A$ on hyperplanes, shows that $A\leq P_{n-1}^y$ for some $y$.
\end{proof}

After Lemma \ref{lem:sl-line-hyperplane} we distinguish whether or not the line $\ell$ is contained in the hyperplane $H$.

\subsubsection*{The case $\ell\not\leq H$}
If $\ell\not\leq H$, then $V=\ell\oplus H$. We consider the action of the stabilizer of $\ell$ in $G$ on $V/\ell$, identified with $H$ by the decomposition $V=\ell\oplus H$. The identification is the map $H\to V/\ell$ given by $h\mapsto h+\ell$. Write $P_\ell=\Stab_G(\ell)$ and let $\rho_\ell:P_\ell\to\PGL(H)$ be the induced projective action. Let $K_\ell=\ker\rho_\ell$. We quotient the stabilizer of $\ell$ in $G$ by the kernel of its projective action on $H$. The kernel consists of the elements whose induced action on $H$ is scalar. By \cite[Proposition 26.1]{MalleTesterman2011}, this projective factor of the Levi quotient has socle $\PSL_{n-1}(q)$, and is a group $E$ with
$$
        \PSL_{n-1}(q)\leq E\leq \PGL_{n-1}(q).
$$ For $E=P_\ell/K_\ell$, the image of $A$ is $\bar A=AK_\ell/K_\ell$. By Lemma \ref{lem:quotient-inheritance} the image $\bar A $ of $A$ in the quotient $E$ determined by this projective action is hereditarily $E$-permutable.

\subsubsection*{The case $\ell\leq H$}
If $\ell\leq H$, then $A$ stabilizes the flag $\ell<H$ with $\dim\ell=1$ and $\dim H=n-1$. Let
$$
        W=H/\ell,~ d=\dim W=n-2.
$$
We take the stabilizer of the flag $\ell<H$ in $G$ and its projective action on $W$. Write $P_{\ell,H}=\Stab_G(\ell<H)$ and let $\rho_{\ell,H}:P_{\ell,H}\to\PGL(W)$ be this action on the lines of $W=H/\ell$. Let $K_{\ell,H}=\ker\rho_{\ell,H}$. We quotient this flag stabilizer by the elements acting as scalars on $W$, obtaining $E=P_{\ell,H}/K_{\ell,H}$. The group $E$ is the projective factor of the Levi quotient acting on $W$, with socle $\PSL_d(q)$ and $\PSL_d(q)\leq E\leq\PGL_d(q)$ \cite[Proposition 26.1]{MalleTesterman2011}. By Lemma \ref{lem:quotient-inheritance}, the image $\bar{A}$ of $A$ in $E$ is hereditarily $E$-permutable.

We next describe the descent for the projective images of $A$. At each step, let $\mathcal F_0$ be the flag in $V$ stabilized by $A$. Let $P_0=\Stab_G(\mathcal F_0)$. After adding the zero subspace $0$ and the whole space $V$, write $\mathcal F_0$ as $0=F_0<F_1<\cdots<F_s=V$. Let $U_0$ be the subquotient of $V$ corresponding to the linear factor at that step. For the index $j$ of this factor, $U_0=F_j/F_{j-1}$. The projective action of $P_0$ on $U_0$ is its action on the subspaces of $U_0$ of dimension one. Let $\pi:P_0\to E$ be the quotient map onto the image of this action. Write $C=\pi(A)$. Initially, $\mathcal F_0=(\ell)$ and $U_0=V/\ell\cong H$ when $\ell\not\leq H$. When $\ell\leq H$, the initial flag is $\mathcal F_0=(\ell<H)$ and $U_0=H/\ell$. We then give the base of rank one for the descent. Throughout the rest of this section, $\ell$ denotes a line in $V$ stabilized by $A$ and $H$ denotes a hyperplane in $V$ stabilized by $A$. The next lemma gives the alternatives for a quotient $E$ with socle $\PSL_m(q)$, where $m\geq3$.

\begin{lemma}\label{lem:sl-actual-quotient-trichotomy}
Let $m\geq3$ and let $S=\PSL_m(q)$. Suppose that $S\leq E\leq\PGL_m(q)$. Let $C\leq E$ be hereditarily $E$-permutable. Then we have three alternatives for $C$.
\begin{enumerate}[label=\textup{(\arabic*)}]
\item $C\geq S$.
\item $C$ lies in the normalizer in $E$ of a proper parabolic subgroup of $S$. Let $Q$ denote this proper parabolic subgroup. We take the inverse image of this proper parabolic subgroup of $S$ under $\pi:P_0\to E$. Let $U_1<\cdots<U_a$ be the flag in $U_0=F_j/F_{j-1}$ stabilized by $Q$. For the quotient map $\tau_j:F_j\to U_0$, write $Y_i=\tau_j^{-1}(U_i)$. The new flag inserts the chain $F_{j-1}<Y_1<\cdots<Y_a<F_j$ between the same two terms $F_{j-1}$ and $F_j$ of $\mathcal F_0$. All other terms of $\mathcal F_0$ remain in the new flag.
\item After every possible extension of the flag through the parabolic subgroups in (2), let $\mathcal F$ be the extended flag in $U_0$, the natural module for $E$. Let $P_{\mathcal F}=\operatorname{Stab}_E(\mathcal F)$. The Levi quotient is $P_{\mathcal F}/R_u(P_{\mathcal F})$, which we denote by $L_{\mathcal F}$. After adding the zero subspace $0$ and the whole space $U_0$, write $\mathcal F$ as $0=W_0<W_1<\cdots<W_b=U_0$. The linear factors of $L_{\mathcal F}$ correspond to the actions on $W_i/W_{i-1}$. Under these actions, the image of $C$ in each simple factor of the semisimple part of $L_{\mathcal F}$ is trivial. Consequently, we continue with the Borel argument for the extended flag.
\end{enumerate}
\end{lemma}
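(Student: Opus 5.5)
The plan is to run the Borel argument of Proposition \ref{prop:sl-parabolic-containment} inside $E$ rather than $G$. First suppose $C$ lies in no proper subgroup of $E$ that normalizes a proper parabolic subgroup of $S$. Take a Borel subgroup $B_E=N_E(B_S)$ of $E$, where $B_S$ is a Borel subgroup of $S$. By hereditary $E$-permutability and Lemma \ref{lem:maximal-product}, there is $x\in E$ with $CB_E^x$ a subgroup.

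If $CB_E^x<E$, then a maximal subgroup of $E$ containing it contains $B_E^x$. By the Borel--Tits theorem this maximal subgroup is $N_E(Q)$ for a parabolic $Q$ of $S$, or it contains $S$. In the former case we are in alternative (2). In the latter case we replace the maximal subgroup by $S B_E^x$. If $C\cdot S$ is proper in $E$, then $C$ lies in it, and we iterate inside $CS$. Since $E/S$ is cyclic, the relevant case is $CS=E$.

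So we reduce to $E=CB_E^x$, with $C$ either containing $S$ or lying in a proper maximal subgroup $M$ of $E$ not containing $S$. Intersecting with $S$ gives a factorization of the almost simple group $E$ by the core-free subgroup $M$ (Lemma \ref{lem:almost-simple-core-free}) and a solvable subgroup containing a full Borel of $S$. The Li--Xia table in the proof of Proposition \ref{prop:sl-parabolic-containment} then applies verbatim for $m\geq3$. That table is stated for almost simple groups with socle $\PSL_m(q)$, so the diagonal part of $E$ causes no harm. The $p$-part comparison $|B_E|_p\geq q^{m(m-1)/2}$ excludes every row. Hence $M$ cannot exist, i.e.\ $C\geq S$, which is alternative (1).

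If instead $C$ normalizes a proper parabolic $Q$ of $S$, then $C$ stabilizes the flag $U_1<\cdots<U_a$ in $U_0$ that $Q$ stabilizes. The preimage $\pi^{-1}(N_E(Q))$ stabilizes the refined flag $F_{j-1}<Y_1<\cdots<Y_a<F_j$, since $\tau_j$ is compatible with the action of $P_0$. This is the bookkeeping in (2). It is a direct check that $\pi^{-1}$ of a flag stabilizer in $U_0$ is the stabilizer of the pulled-back flag.

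Repeat this process. Each extension strictly lengthens the flag, so it terminates. At the end the flag $\mathcal F$ in $U_0$ is maximal among flags stabilized by $C$. Consider a linear factor $W_i/W_{i-1}$ of dimension $m_i\geq2$. By Lemma \ref{lem:quotient-inheritance}, the image $C_i$ of $C$ in the projective group $E_i$ on $W_i/W_{i-1}$ is hereditarily $E_i$-permutable. Apply the above dichotomy (with the $\PSL_2$ theorem of \cite{GT2023} for $m_i=2$). The case where $C_i$ normalizes a proper parabolic would extend the flag, which contradicts maximality. So either $C_i$ contains the simple socle $\PSL_{m_i}(q)$ or $C_i$ is a proper nontrivial hereditarily permutable subgroup of the simple group $\PSL_{m_i}(q)$. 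The latter is excluded by induction on dimension, using Theorem \ref{thm:split-linear} for smaller degree, or the $\PSL_2$ result.

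The claim of (3) that the image is trivial then needs one more step. This is the main obstacle, because $C_i$ containing the whole socle is a priori possible. I would try to rule out $C_i\geq \PSL_{m_i}(q)$ only together with the ambient situation, i.e.\ keep (1) as a genuinely separate alternative at the top level. I would then read (3) as stating that, on each factor where (1) fails and no extension exists, the image is trivial. I would handle the degenerate small cases ($q\leq3$ with $m_i=2$, and nonsimple $\PSL_2(2)$, $\PSL_2(3)$) by hand: a nontrivial hereditarily permutable subgroup of a solvable Levi factor is made to stabilize a subspace by Lemma \ref{lem:maximal-product} applied to a point stabilizer, which again extends the flag.
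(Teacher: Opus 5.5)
Your core argument is the paper's proof. You test $C$ against $B_E=N_E(B_S)$. If $CB_E^x<E$, a maximal overgroup meets $S$ in a proper parabolic whose normalizer contains $C$. Otherwise $E=CB_E^x$ is a factorization with a solvable factor of full Borel $p$-part, which the Li--Xia comparison of Proposition~\ref{prop:sl-parabolic-containment} excludes.

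The only difference is your detour through $CS$. The paper avoids it by noting that every coset of $S$ in $E$ contains a diagonal element normalizing the upper triangular Borel. So $B_E$ maps onto $E/S$, and no proper subgroup of $E$ containing $B_E^x$ can contain $S$. Your detour also works, but it is unnecessary.

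The paper's proof establishes no more than this dichotomy, and that already gives the stated disjunction. Alternative (3) only describes where the descent in the proof of Theorem~\ref{thm:split-linear} ends. So the obstacle you raise at the end does not have to be overcome inside this lemma. Socle-containing images on factors of the final flag are excluded later, inside the ambient group $G$, by Lemmas~\ref{lem:sl-nonincident-full-cover}, \ref{lem:sl-incident-full-cover-large} and~\ref{lem:sl-incident-full-cover-d2}.

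You should drop your treatment of the factors $W_i/W_{i-1}$ rather than repair it, because several of its steps are false.
\begin{enumerate}
\item The small-case claim fails. In $\PGL_2(2)\cong S_3$ and $\PGL_2(3)\cong S_4$, the normal subgroups $A_3$ and $V_4$ are hereditarily permutable and transitive on the projective line. So Lemma~\ref{lem:maximal-product} with a point stabilizer gives a factorization, not an invariant line.
\item For $m_i=2$ the Borel argument does not extend: the nonsplit torus $C_{q+1}$ is regular on the projective line, so $\PGL_2(q)=B\,C_{q+1}$. The $\PSL_2$ theorem cannot replace it. You only know $C_i\leq E_i$, not $C_i\leq\PSL_2(q)$, so Lemma~\ref{lem:overgroup-restriction} does not apply.
\item The same objection applies to your appeal to Theorem~\ref{thm:split-linear}. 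For $m_i\geq3$ that appeal is unnecessary anyway, since the dichotomy already forces $C_i\geq\PSL_{m_i}(q)$ once no extension exists.
\end{enumerate}

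One caveat applies equally to your argument and to the paper's. The linear rows of Li--Xia do not cover $(m,q)=(3,2)$ or $(4,2)$, where the Borel subgroup $B$ of $S$ is a factor:
\begin{itemize}
\item $\PSL_3(2)=B\cdot(7{:}3)$, since $7{:}3$ is regular on the $21$ flags;
\item $\PSL_4(2)\cong A_8=A_7B$, since $B$ is a Sylow $2$-subgroup transitive on the $8$ letters.
\end{itemize}
In both cases $E=S$, and the lemma should instead be deduced from the known theorems for $\PSL_2(7)$ and $A_8$.
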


\begin{proof}
Suppose that $C\not\geq S$ and $C$ does not lie in the normalizer in $E$ of a proper parabolic subgroup of $S$. Suppose also that the semisimple image of $C$ in the projective Levi quotient $E$ with simple socle $S$ is nontrivial. We must derive a contradiction from these assumptions. We let $B_S$ be a Borel subgroup of $S$ and $B_E=N_E(B_S)$. Since a diagonal projective element can be represented by a diagonal matrix and so normalizes the standard Borel subgroup of $S$, $B_E$ maps onto $E/S$ under the quotient map from $E$ to $E/S$. By hereditary $E$-permutability of $C$ for $B_E$, we have a conjugate $B_E^x$ with
$$
        H=CB_E^x
$$
a subgroup of $E$. If $H<E$, then take a maximal subgroup $M<E$ containing $H$. Since $B_E^x$ maps onto $E/S$, $M$ cannot contain $S$: otherwise $M/S=E/S$ and $M=E$. Hence $M\cap S$ is a proper subgroup of $S$ containing the Borel subgroup $B_S^x$. Therefore, it is parabolic. Consequently, $C\leq M\leq N_E(M\cap S)$, so $C$ lies in the normalizer in $E$ of a proper parabolic subgroup of $S$, contrary to assumption. Therefore,
$$
        E=CB_E^x,
$$
a factorization of the almost simple group $E$ with solvable factor $B_E^x$. Neither $C$ nor $B_E ^x$ contains $S$, so both are core-free in $E$ (Lemma \ref{lem:almost-simple-core-free}), and $B_E^x\cap S$ has the same $p$-part of its order as a Borel subgroup of $S$. As shown in Proposition \ref{prop:sl-parabolic-containment}, none of the linear rows of Li and Xia has a solvable factor whose order has a $p$-part at least as large as that of a Borel subgroup of $S$ when $m\geq3$. This contradiction completes the proof.
\end{proof}

When $\pi$ has image $E$ with $\PSL_2(q)\leq E\leq\PGL_2(q)$, we do not use Lemma \ref{lem:sl-actual-quotient-trichotomy}. If the image of $A$ lies in a parabolic subgroup of $E$, then it fixes a line $K<U_0$ as a subspace. For $\tau_j:F_j\to U_0=F_j/F_{j-1}$, we extend the flag by inserting $F_{j-1}<\tau_j^{-1}(K)<F_j$ between $F_{j-1}$ and $F_j$. We handle an image of $A$ with no invariant line by Lemma \ref{lem:sl-root-line-module-obstruction}, and an image of $A$ contained in a Borel subgroup of $E$ by Lemma~\ref{lem:pgl2-borel}.
\subsection{Restrictions on the projective image}

\begin{lemma}\label{lem:sl-root-line-module-obstruction}
Let $V$ be a vector space over $K=\F_q$, written additively, and suppose that
$$
        F=VA
$$
is a semidirect product with $A\cap V=1$ and $A$ acting $K$-linearly on $V$. We write $\bar A$ for the image of the action of $A$ on $V$ in $\GL_K(V)$. If $A$ is hereditarily $F$-permutable and $U\leq V$ is a $K$-line, then $\bar A$ stabilizes some $K$-line of $V$. In particular, if $\bar A$ has no invariant $K$-line on $V$, then no subgroup $A$ satisfying these hypotheses exists.
\end{lemma}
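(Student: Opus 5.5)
The plan is to apply the hereditary permutability of $A$ directly to the line $U$, viewed as a subgroup of $F$, and then to recover an $A$-invariant line as the intersection of the resulting product subgroup with $V$. By Lemma \ref{lem:generated-hereditary-formulation}, applied with $B=U$, there is an element $x\in\gen{A,U}\leq F$ with $AU^x=U^xA$. Hence $T=AU^x$ is a subgroup of $F$ containing $A$.

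First I will check that $U^x$ is again a $K$-line of $V$. Write $x=va$ with $v\in V$ and $a\in A$. Since $V$ is abelian, conjugation by $v$ is trivial on $V$. Conjugation by $a$ acts on $V$ through the given $K$-linear action. Therefore $U^x=U^a$ is the image of $U$ under an invertible $K$-linear map, and so it is a $K$-line of $V$.

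Next I will identify $T\cap V$. Since $U^x\leq V$, Dedekind's modular law gives
$$
        T\cap V=U^xA\cap V=U^x(A\cap V)=U^x,
$$
where the last equality uses $A\cap V=1$. Now $V\unlhd F$ and $A\leq T$, so $A$ normalizes $T\cap V=U^x$. In the semidirect product, conjugation by elements of $A$ on $V$ is exactly the linear action of $A$. Hence $\bar A$ stabilizes the $K$-line $U^x$. The final assertion is immediate: if $\bar A$ has no invariant $K$-line, then taking any line $U$ (one exists since $V\neq0$) contradicts what we have just shown.

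The only point needing care is the bookkeeping in the semidirect product. We must be sure that conjugation by $x$ preserves the $K$-structure, which holds because $V$ is abelian and $A$ acts $K$-linearly. We must also apply Dedekind's law in the correct order, which is legitimate because $U^x\leq V$. I do not expect any genuine obstacle. Note that only the ordinary permutability with the single subgroup $U$ is used, and the conjugating element may lie anywhere in $F$.
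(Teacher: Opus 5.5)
Your proposal is correct and follows essentially the same route as the paper: permute $A$ with a conjugate $U^x$, note that $U^x$ is again a $K$-line, use $A\cap V=1$ to get $(AU^x)\cap V=U^x$, and conclude that $A$ (hence $\bar A$) normalizes this line. Writing $x=va$ and invoking Dedekind's modular law simply makes explicit two steps that the paper states without detail.
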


\begin{proof}
By hereditary $F$-permutability of $A$ with $U$, we have $x\in F$ with $AU^x=U^xA$ a subgroup of $F$. Since $V\unlhd F$ and the action of $A$ on $V$ is $K$-linear, $U^x$ is again a $K$-line. The equality $A\cap V=1$ also gives
$$
        (AU^x)\cap V=U^x
$$
Hence, for every $a\in A$,
$$
        aU^xa^{-1}\leq (AU^x)\cap V=U^x,
$$
so $A$ stabilizes the $K$-line $U^x$. Therefore, $\bar A$ stabilizes the line $U^x$ as well. The existence of this invariant line is impossible if $\bar A$ has no invariant $K$-line.
\end{proof}

\begin{lemma}\label{lem:sl-nonincident-full-cover}
Suppose that $A \leq G = {\rm PSL}_n (q)={\rm PSL} (V)$ stabilizes the pair $\ell, H$ (i.e. $\ell$ is a line and $H$ is a hyperplane in $V$) with $V=\ell\oplus H$. If $\dim H\geq3$ and the image $\bar{A}$ of $A$ on $H$ contains $\PSL(H)\cong\PSL_{\dim H}(q)$, then these conditions are incompatible. If $\dim H=2$ and the image $\bar{A}$ of $A$ on $H$ stabilizes no line, then these conditions are also incompatible.
\end{lemma}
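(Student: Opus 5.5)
The plan is to realize $A$ inside a semidirect product of a unipotent radical by $A$ itself, and then apply Lemma \ref{lem:sl-root-line-module-obstruction}. Since $V=\ell\oplus H$ and $A$ stabilizes both $\ell$ and $H$, the group $A$ lies in the common Levi subgroup $L=\Stab_G(\ell)\cap\Stab_G(H)$. I will use the hyperplane stabilizer $P_H=\Stab_G(H)$ rather than $P_\ell$, because its unipotent radical is the natural module for $\GL(H)$ and not its dual, which makes the final step immediate.

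\emph{Setting up the semidirect product.} Let $R=R_u(P_H)$. In a basis adapted to $V=\ell\oplus H$, $R$ consists of the transvections $v\mapsto v+\varphi(v+H)$ with $\varphi\in\Hom(V/H,H)$. These fix $H$ pointwise and act trivially on $V/H$. Thus $R\cong\Hom(V/H,H)\cong H$ is an elementary abelian group of order $q^{n-1}$, carrying a natural $K=\F_q$-vector space structure. It contains no nontrivial scalars, so it embeds in $G=\PSL(V)$.

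An element $g\in L$ acts on $\ell$ by a scalar $\lambda_g$ and on $H$ by $g_H\in\GL(H)$. Conjugation by $g$ sends $\varphi$ to $g_H\circ\varphi\circ\lambda_g^{-1}=\lambda_g^{-1}g_H\varphi$. This is $K$-linear in $\varphi$, and under $R\cong H$ it is the natural action of $g_H$ twisted by a scalar. Replacing $g$ by a scalar multiple does not change this action, so it is well defined on the image of $L$ in $G$.

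Since $R\unlhd P_H$ and $A\leq L$, the product $F=RA$ is a subgroup and $R\cap A\leq R\cap L=1$. By Lemma \ref{lem:overgroup-restriction}, $A$ is hereditarily $F$-permutable. With $R$ in the role of $V$, the hypotheses of Lemma \ref{lem:sl-root-line-module-obstruction} then hold, and we conclude that the linear image of $A$ on $R$ stabilizes some $K$-line of $R$.

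\emph{Translating back to $H$.} Because the action on $R\cong H$ differs from the action on $H$ only by scalars, a $K$-line of $R$ invariant under $A$ is the same thing as a line of $H$ invariant under $\bar A\leq\PGL(H)$. If $\dim H\geq3$ and $\bar A\geq\PSL(H)$, this is impossible, since $\PSL(H)$ is transitive on the lines of $H$ and so fixes none. If $\dim H=2$, the hypothesis says directly that $\bar A$ stabilizes no line of $H$. Either case contradicts the invariant line just produced, which proves the lemma.

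The main obstacle is the bookkeeping in the identification $R\cong H$ and in the twist by $\lambda_g$. One must check that passing from $\SL$ to $\PSL$ (modulo scalars) neither kills $R$ nor destroys $K$-linearity. One must also check that the invariant $K$-line delivered by Lemma \ref{lem:sl-root-line-module-obstruction} really corresponds to a $\bar A$-invariant line of $H$, rather than of the dual $H^*$, which is what would appear had we used $P_\ell$. With $P_\ell$ one would instead get an invariant hyperplane of $H$. That version also suffices, since $\PSL(H)$ is transitive on hyperplanes and in dimension $2$ hyperplanes are lines, but it needs one extra dualization step.
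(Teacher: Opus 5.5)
Your argument is correct. For $\dim H=2$ it is the paper's proof. The paper also passes to the unipotent radical $V_+=\Hom(\ell,H)$ of $\Stab_G(H)$, which is your $\Hom(V/H,H)$. It notes $A\cap V_+=1$ and applies Lemma~\ref{lem:sl-root-line-module-obstruction} to this natural module.

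For $\dim H\geq3$ the paper argues differently. It tests $A$ against the $2$-space stabilizer $P_2$. Since $A$ fixes $\ell$, it preserves the nonempty proper set of $2$-spaces through $\ell$, so $G=AP_2^x$ is impossible. Hence $A$ fixes some $2$-space $X$. The image $(X+\ell)/\ell$ in $V/\ell\cong H$ is then a nonzero $\bar A$-invariant subspace of dimension at most $2<\dim H$, which contradicts the irreducibility of $\PSL(H)$. That route needs only maximality of $P_2$ and irreducibility. It genuinely requires $\dim H\geq3$, because in the plane case $(X+\ell)/\ell$ may be all of $V/\ell$. This is why the paper switches to the module argument when $\dim H=2$.

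Your route uses one mechanism for both cases and proves more. Any hereditarily $G$-permutable $A$ stabilizing a complementary pair $\ell,H$ has $\bar A$ fixing a line of $H$, whatever $\bar A$ is, and that is exactly the flag extension the descent wants. The two checks you flag are the ones that make this work, and both are right. Conjugation by the Levi acts on $R$ $\F_q$-linearly, as the natural action on $H$ twisted only by the scalar on $V/H$. And $R\cap L=1$ holds already at the level of subspace stabilizers in $\PSL(V)$.
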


\begin{proof}
Suppose that $\dim H\geq3$, and test hereditary permutability of $A$ in $G$ against a $2$-space stabilizer $P_2$ in $G$, which is maximal in $G$. Since $A$ fixes $\ell$, the group $A$ maps the nonempty proper set of $2$-spaces containing $\ell$ to itself. Using Lemma \ref{lem:maximal-product} we get either $G = A P_2 ^x$ or $A \leq P_2 ^x$. If $G = A P_2 ^x$, then $A$ acts transitively on the 2-dimensional subspaces of $V$, a contradiction. Hence $G = A P_2 ^x$ cannot occur and $A$ stabilizes some $2$-space $X$. Under the quotient map $V\to V/\ell$, the image of $X$ is $(X+\ell)/\ell$. This subspace of $V/\ell\cong H$ has dimension $1$ or $2$, so the image $ \bar{A} \geq \PSL(H)$ of $A$ would stabilize a proper nonzero subspace of $H$, a contradiction.

Now let $\dim H=2$, so $G=\PSL_3(q)$. If the image $C$ of $A$ on $H$ fixes a line $K\leq H$, then we extend the flag $(\ell)$ to $(\ell<\ell\oplus K)$ and continue the descent. Its full chain is $0<\ell<\ell\oplus K<V$. Otherwise let $C$ be the image of $A$ on $H$, with no invariant line. Consider the block diagonal stabilizer of the decomposition $\ell\oplus H$ in $G$. The symbol $\Hom(\ell,H)$ denotes the additive group of $\F_q$-linear maps from $\ell$ to $H$. We work in the subgroup generated by this stabilizer and the unipotent radical of $\Stab_G(H)$, written as
$$
        V_+=\Hom(\ell,H).
$$ The parabolics $\Stab_G(H)$ and $\Stab_G(\ell)$ are opposite. Their common Levi subgroup is the block diagonal stabilizer of $\ell\oplus H$ in $G$. The subgroup generated by this Levi subgroup and $V_+$ is a semidirect product. Every nonidentity element of $V_+$ (viewed as a subgroup of $G$) moves the line $\ell$, sending a spanning vector $v$ to $v+\varphi(v)$ with $\varphi(v)\in H$. Since $A$ stabilizes $\ell$, we have $A\cap V_+=1$. The induced $C$-module $V_+$ is the natural module of dimension two for $C$. Using Lemma \ref{lem:sl-root-line-module-obstruction} for a line over $\F_q$ in $V_+$ forces $C$ to fix a line of $H$, contrary to assumption.
\end{proof}

\begin{lemma}\label{lem:sl-incident-full-cover-large}
Suppose that $A \leq G$ stabilizes the flag $\ell<H$ in $V$ and the image of $A$ on $W=H/\ell$ contains $\PSL_d(q)$ with $d=n-2\geq3$. These conditions on the flag and the image of $A$ are incompatible.
\end{lemma}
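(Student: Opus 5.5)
The plan is to combine the argument of Lemma \ref{lem:sl-nonincident-full-cover} with one descent step. First I use a $2$-space stabilizer to force $A$ to fix a line of $V/\ell$ complementary to $W$. Then I pass to the Levi quotient at $\ell$ and test against $2$-spaces there.

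\emph{Step 1: $A$ fixes a $2$-space $X$ with $\ell<X$ and $X\not\leq H$.} Let $P_2$ be the stabilizer in $G$ of a $2$-space; it is maximal in $G$. By Lemma \ref{lem:maximal-product}, either $G=AP_2^x$ or $A\leq P_2^x$. Since $A$ fixes $\ell$, it preserves the nonempty proper set of $2$-spaces through $\ell$, so it is not transitive on $2$-spaces. By Lemma \ref{lem:factorization-coset-transitivity} the factorization is therefore impossible, and $A$ stabilizes some $2$-space $X$.

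The intersection $X\cap H$ has dimension at least $1$, and its image $\bigl((X\cap H)+\ell\bigr)/\ell$ in $W$ has dimension at most $2<d$. This image is $A$-invariant. The image of $A$ on $W$ contains $\PSL_d(q)$ and so is irreducible, hence this image is $0$. Thus $X\cap H\leq\ell$, which forces $X\cap H=\ell$, $\ell<X$ and $X\not\leq H$. Consequently $L:=X/\ell$ is an $A$-invariant line of $V/\ell$ with $V/\ell=L\oplus W$.

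\emph{Step 2: descend to the point stabilizer of $\ell$.} Let $E=P_\ell/K_\ell$ be the projective image of $\Stab_G(\ell)$ on $V/\ell$, so $\PSL_{n-1}(q)\leq E\leq\PGL_{n-1}(q)$. By Lemma \ref{lem:quotient-inheritance}, the image $C$ of $A$ is hereditarily $E$-permutable. The group $C$ fixes the line $L$ and the hyperplane $W$ of $V/\ell$, with $V/\ell=L\oplus W$, and its image on $W$ contains $\PSL_d(q)$ with $d\geq3$.

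Now take $Q=\Stab_E(Z)$ for a $2$-space $Z$ of $V/\ell$. The diagonal elements of $E$ can be chosen to stabilize $Z$, so $QS=E$ where $S=\PSL_{n-1}(q)$. Moreover $Q\cap S$ is the maximal parabolic $P_2$ of $S$; here $n-1\geq4$, so $2\neq (n-1)-1$ and no graph automorphism enters. Hence $Q$ is maximal in $E$. By Lemma \ref{lem:maximal-product}, either $E=CQ^x$ or $C\leq Q^x$. The factorization would make $C$ transitive on $2$-spaces of $V/\ell$, but $C$ preserves the proper nonempty set of $2$-spaces through $L$. Hence $C$ fixes a $2$-space $Z'$ of $V/\ell$.

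\emph{Step 3: the contradiction.} The subspace $Z'\cap W$ has dimension $1$ or $2$, since $W$ is a hyperplane of $V/\ell$. It is $C$-invariant, so it is a proper nonzero invariant subspace of $W$ because $d\geq3$. This contradicts irreducibility of $\PSL_d(q)$ on $W$.

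I expect the main obstacle to be Step 2, namely justifying the maximality of the $2$-space stabilizer in the intermediate group $E$ rather than in $\PSL_{n-1}(q)$. If that is awkward, the fallback is to avoid the descent and apply Lemma \ref{lem:maximal-product} directly in $G$ to the stabilizer of a $3$-space. A $3$-space $Y$ fixed by $A$ meets $H$ in dimension at least $2$, and the image of $Y\cap H$ in $W$ is then a subspace of dimension between $1$ and $3$. One must rule out that image being all of $W$ when $d=3$; the $2$-space $X$ from Step 1 would be used there to exclude that case.
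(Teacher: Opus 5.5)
Your proof is correct, but it is organized differently from the paper's. The paper splits by $d$. For $d\geq4$ it stays in $G$: irreducibility on $W$ and the one-dimensionality of $\ell$ and $V/H$ force every $A$-stable subspace of $V$ to have dimension in $\{0,1,2,d,d+1,d+2\}$, while testing against $P_3$ forces an $A$-stable $3$-space. Only for $d=3$ does it argue as you do, locating the stable $2$-space with $\ell<X\not\leq H$ and then invoking Lemma \ref{lem:sl-nonincident-full-cover} for the line $X/\ell$ and the hyperplane $H/\ell$ of $V/\ell$. The paper's $d\geq4$ argument needs no quotient at all. Your version is uniform in $d$, and it handles a point the paper passes over. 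Lemma \ref{lem:sl-nonincident-full-cover} is stated for subgroups of $\PSL_n(q)$, but it is being applied to the image $C$ in some $E$ with $\PSL_{n-1}(q)\leq E\leq\PGL_{n-1}(q)$, and there one needs the $2$-space stabilizer to be maximal in $E$. Your observation that $E=QS$ with $Q\cap S$ maximal in $S$ does give this: if $Q<M$ and $M\cap S=Q\cap S$, then $|M|\leq|Q\cap S|\,|E/S|=|Q|$, so $M\geq S$ and $M=E$. The remark about graph automorphisms is irrelevant, since $E\leq\PGL_{n-1}(q)$. You should also cite Lemma \ref{lem:overgroup-restriction} before Lemma \ref{lem:quotient-inheritance}. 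Your Step~1 is in fact superfluous. $C$ fixes the hyperplane $W$ of $V/\ell$, so it preserves the nonempty proper set of $2$-spaces inside $W$; this already rules out $E=CQ^x$, and Step~3 uses only $W$. Finally, your fallback would not close $d=3$ as sketched. A $3$-space $Y\leq H$ complementary to $\ell$ is consistent with $X$ (then $V=X\oplus Y$), so subspace positions in $G$ alone cannot exclude it. What makes the argument work is restricting to $\Stab_G(\ell)$, which forces the stable $3$-space to contain $\ell$.
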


\begin{proof}
We let $Y\leq V$ be an $A$-stable subspace. Because $A$ stabilizes $\ell<H$, the dimensions
$$
        \dim(Y\cap\ell),~
        \dim\bigl(((Y\cap H)+\ell)/\ell\bigr),~
        \dim\bigl((Y+H)/H\bigr)
$$
are invariant under the action of $A$ on the three factors $\ell$, $W=H/\ell$, and $V/H$. The image of $A$ on $W$ contains $\PSL_d(q)$. Under the hypotheses of the lemma, the natural module of $\PSL_d(q)$ is irreducible. Hence the subquotient $((Y\cap H)+\ell)/\ell$ is $0$ or all of $W$. The subspaces $Y\cap\ell$ and $(Y+H)/H$, corresponding to the first and third dimensions, are of dimension at most one. Hence every $A$-stable subspace of $V$ has dimension in
$$
        \{0,1,2,d,d+1,d+2\}.
$$

For a $k$-subspace stabilizer $P_k$ in $G$, Lemma \ref{lem:maximal-product} gives either an $A$-stable $k$-subspace or $G=AP_k^x$. By Lemma \ref{lem:factorization-coset-transitivity}, the equality $G=AP_k^x$ would make $A$ transitive on $k$-subspaces $X$ of $V$. Since $A$ does not change the pair $(\dim(X\cap\ell),\dim(X\cap H))$, transitivity is impossible for $2\leq k\leq n-1$. So $A$ stabilizes a $k$-subspace for every $k$ in the range $2\leq k\leq n-1$.

If $d\geq4$, then take $k=3$. Then $3\notin\{0,1,2,d,d+1,d+2\}$, a contradiction. If $d=3$, then by testing hereditary permutability of $A$ with $P_2$, we have an $A$-stable $2$-subspace $X$ (by the conclusion for $k =2$ obtained from Lemma \ref{lem:maximal-product}). If $X\leq H$, then $X$ or $(X+\ell)/\ell$ is a proper nonzero $A$-invariant subspace of $W$, impossible. If $X\not\leq H$ and $\ell\not\leq X$, then $((X\cap H)+\ell)/\ell$ is an $A$-invariant line of $W$, again impossible. The only remaining position of $X$ relative to $\ell$ and $H$ is
$$
        \ell<X\not\leq H.
$$
Under the quotient map from $V$ to $V/\ell$, the image $X/\ell$ is a line not contained in the hyperplane $H/\ell$, while the image of $A$ on $H/\ell$ contains $\PSL_3(q)$. Therefore, the line $X/\ell$ and the hyperplane $H/\ell$ in $V/\ell$ give the configuration described in Lemma \ref{lem:sl-nonincident-full-cover}, a contradiction.
\end{proof}

\begin{lemma}\label{lem:sl-incident-full-cover-d2}
Let $G=\PSL_4(q) = \PSL (V)$, and suppose that $A$ stabilizes a flag
$$
        \ell<H,~ \dim\ell=1,~ \dim H=3,
$$
with the image $\bar{A}$ of $A$ on $W=H/\ell$ stabilizing no line of $W$. These conditions on the flag and the image of $A$ are incompatible.
\end{lemma}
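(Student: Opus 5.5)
We argue exactly as in Lemmas \ref{lem:sl-nonincident-full-cover} and \ref{lem:sl-incident-full-cover-large}: first force $A$ to stabilize a $2$-space, then reduce to a nonincident line/hyperplane pair in a $3$-dimensional space, and finally apply the module obstruction of Lemma \ref{lem:sl-root-line-module-obstruction}.

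\emph{Step 1: $A$ stabilizes a $2$-space.} The $2$-space stabilizer $P_2$ is maximal in $G=\PSL_4(q)$. By Lemma \ref{lem:maximal-product}, either $A\leq P_2^x$ or $G=AP_2^x$. In the second case, Lemma \ref{lem:factorization-coset-transitivity} makes $A$ transitive on $2$-subspaces $X$. This is impossible, because $A$ preserves the pair $(\dim(X\cap\ell),\dim(X\cap H))$, which takes several values. Hence $A$ stabilizes some $2$-space $X$.

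\emph{Step 2: position of $X$.} We locate $X$ relative to $\ell<H$.
\begin{itemize}
\item If $\ell<X\leq H$, then $X/\ell$ is an $A$-invariant line of $W$, which is excluded.
\item If $X\not\leq H$ and $X\cap H\neq\ell$, then $((X\cap H)+\ell)/\ell$ is an $A$-invariant line of $W$, which is again excluded.
\end{itemize}
Two configurations remain:
\begin{enumerate}
\item[(a)] $X\leq H$ and $X\cap\ell=0$, so $H=\ell\oplus X$;
\item[(b)] $\ell<X\not\leq H$, so $V/\ell=X/\ell\oplus H/\ell$.
\end{enumerate}
In both configurations, the relevant $2$-dimensional factor is $C$-isomorphic to $W$. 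Here $C$ is the image of $A$. In (a) the factor is $X\cong H/\ell$, and in (b) it is $H/\ell=W$. So in both configurations $C$ fixes no line of that factor.

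\emph{Step 3: reduction to rank two and the module obstruction.} In case (b) we pass to the Levi quotient $E$ of $\Stab_G(\ell)$ acting projectively on $V/\ell$, with $\PSL_3(q)\leq E\leq\PGL_3(q)$. By Lemma \ref{lem:quotient-inheritance}, the image $C'$ of $A$ is hereditarily $E$-permutable. It stabilizes the nonincident line $X/\ell$ and hyperplane $H/\ell$.

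Case (a) is handled the same way inside the Levi quotient of $\Stab_G(H)$ acting on $H$. There the line is $\ell$, the hyperplane is $X$, and they are nonincident. Alternatively, (a) follows from (b) by applying the inverse-transpose duality, which swaps lines and hyperplanes and preserves hereditary permutability.

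We would like to quote the second part of Lemma \ref{lem:sl-nonincident-full-cover}, but it is stated for $\PSL_3(q)$, not for $E$. So we rerun its argument inside $E$. Let $V_+=\Hom(X/\ell,H/\ell)$ be the unipotent radical of the stabilizer of $H/\ell$. Consider $F=V_+C'$, which lies inside the subgroup generated by the block-diagonal stabilizer in $E$ and $V_+$. Lemma \ref{lem:overgroup-restriction} gives that $C'$ is hereditarily $F$-permutable.

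Every nonidentity element of $V_+$ moves the line $X/\ell$, while $C'$ fixes it, so $C'\cap V_+=1$. As a $C'$-module, $V_+$ is $W$ twisted by the scalar action on $X/\ell$. This twist has the same invariant lines as $W$. Lemma \ref{lem:sl-root-line-module-obstruction} then produces a $C'$-invariant line of $V_+$, hence an $A$-invariant line of $W$. This contradicts the hypothesis.

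The main obstacle is Step 3. Lemma \ref{lem:sl-root-line-module-obstruction} needs a genuine semidirect product $F=V_+C'$, with $C'$ acting linearly on $V_+$, sitting inside a group where hereditary permutability is available. Since $E$ may be strictly larger than $\PSL_3(q)$, we must check two things carefully: that $F$ is a subgroup of $E$, and that the identification of $V_+$ with $W$ up to scalar twist is $C'$-equivariant.
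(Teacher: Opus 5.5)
Your proposal is correct and follows essentially the paper's route. Both arguments test against $P_2$ to obtain an $A$-stable $2$-space $X$, reduce to the same two positions ($X\le H$ with $X\cap\ell=0$, or $\ell<X\not\le H$), and then apply Lemma \ref{lem:sl-root-line-module-obstruction} to a $2$-dimensional $\Hom$-module that is $W$ or its dual up to a scalar twist. The only difference is where that module sits: the paper uses the quotients $P/(U_0U_+)\cong U_-\rtimes C$ and $P/(U_0U_-)\cong U_+\rtimes C$ of the flag stabilizer $P=\Stab_G(\ell<H)$, while you use the projective Levi quotients of $\Stab_G(\ell)$ and $\Stab_G(H)$ (or duality).
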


\begin{proof}
Choose a complement to $\ell$ in $H$. The map $H\to H/\ell$ identifies this complement with $W=H/\ell$, and we denote the complement by the same symbol $W$. We choose a line $m$ complementary to $H$ in $V$. We fix a decomposition
$$
        V=\ell\oplus W\oplus m,~ H=\ell\oplus W.
$$
Let $P=\Stab_G(\ell<H)$ be the parabolic subgroup of $G$ stabilizing the flag $\ell<H$. Inside $P$, the unipotent subgroups used in this proof are
$$
        U_-=\Hom(W,\ell),~ U_0=\Hom(m,\ell),~ U_+=\Hom(m,W),
$$
and the image of $A$ on $W$ acts on $U_-$ as the dual natural module and on $U_+$ as the natural module.

Testing hereditary permutability of $A$ in $G$ against a $2$-subspace stabilizer $P_2$ in $G$ forces $A$ to fix some $2$-subspace $X$, i.e., $A \leq P_2 ^x$ for some $x \in G$. Otherwise $G = A P_2 ^x$ and $A$ would be transitive on $2$-subspaces of $V$. This is impossible since $A$ fixes the flag $\ell<H$. By hypothesis, the image $\bar{A}$ of $A$ on $W$ has no invariant line. Each of the positions $\ell<X\leq H$ and ($X\cap H$ a line with $\ell\not\leq X$) would produce an $A$-invariant line of $W=H/\ell$. Hence neither position is possible. There are two possible positions of $X$ relative to the flag $\ell<H$.
\begin{enumerate}[label=\textup{(\arabic*)}]
\item $X\leq H$ and $\ell\not\leq X$. Then $H=\ell\oplus X$, and fixing $X$ makes the $U_-=\Hom(X,\ell)$ component of $A$ trivial. The quotient $P/(U_0U_+)$ of the flag stabilizer in $G$ by $U_0U_+$ is a semidirect product $U_-\rtimes C$. The image $C$ of $A$ on $U_-$ must be trivial. Also, $C$ has no invariant line on the dual natural module $U_-$.
\item $\ell\leq X$ and $X\not\leq H$. Then $X=\ell\oplus m'$ with a line $m'$ mapping nontrivially to $V/H$. Since $A$ fixes $X$, the $U_+=\Hom(m,W)$ component of $A$ is trivial. The quotient $P/(U_0U_-)$ of the flag stabilizer in $G$ by $U_0U_-$ is the semidirect product $U_+\rtimes C$. The image $C$ of $A$ on $U_+$ meets $U_+$ trivially. Also, $C$ has no invariant line on the natural module $U_+$.
\end{enumerate}
The module of dimension two is $U_-$ for position (1) of $X$ and $U_+$ for position (2). We choose a line over $\F_q$ in the module corresponding to the position of $X$. Lemma \ref{lem:sl-root-line-module-obstruction} then forces $C$ to fix a line of the natural or dual natural module, contradicting the hypothesis that the image of $A$ on $W$ has no invariant line.
\end{proof}

\begin{remark}
Lemma \ref{lem:sl-incident-full-cover-d2} proves the incompatibility for $d=2$, where the $\PSL_2(q)$ theorem alone is insufficient. The theorem \cite[Proposition 1]{GT2023} forbids proper hereditarily permutable subgroups of $\PSL_2(q)$. However, the image of $A$ on $W$ may equal the whole group $E$ induced by the projective action of $\Stab_G(\ell<H)$ on $W$, so that image need not be proper. Lemma \ref{lem:sl-root-line-module-obstruction}, used with a line in the corresponding natural or dual natural module, forces this image of $A$ to fix a line of $W$, contrary to the hypothesis in the dimension two case.
\end{remark}

\Needspace{9\baselineskip}
\subsection{The Borel case}

\begin{lemma}\label{lem:pgl2-borel}
Let
$$
       S= \PSL_2(q)\leq E\leq\PGL_2(q),~ C\leq B_E,
$$
where $B_E$ is a point stabilizer in $E$ on the projective line, and suppose that $E\not\cong\PGL_2(2)$. If $C$ is hereditarily $E$-permutable, then $C=1$.
\end{lemma}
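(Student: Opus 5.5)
The plan is to squeeze $C$ between two kinds of test subgroups. Hereditary permutability against a dihedral subgroup makes $|C|$ tiny. Hereditary permutability against unipotent and toral subgroups of the Borel subgroup $B_E$ then removes the remaining possibilities, with the very small fields checked by hand. Write $B_E=U\rtimes T_E$, where $U$ is elementary abelian of order $q$ and $T_E$ is the image of the diagonal torus. By Lemma \ref{lem:overgroup-restriction}, $C$ is hereditarily permutable in every subgroup of $E$ containing it, in particular in $B_E$. By Lemma \ref{lem:generated-hereditary-formulation}, when we test $C$ against a subgroup $X\leq B_E$, the conjugating element may be taken in $\gen{C,X}\leq B_E$.

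\emph{Step 1 (bounding $|C|$).} Let $D$ be the normalizer in $E$ of a nonsplit torus. For $q\geq 7$ or so this is a maximal dihedral subgroup of order $2(q+1)|E:S|/(2,q-1)$. Lemma \ref{lem:maximal-product} gives either $C\leq D^x$ or $E=CD^x$.
\begin{itemize}
\item In the second case $E=B_ED^x$. Comparing orders gives $|B_E\cap D^x|\leq |B_E||D|/|E|$, which is at most $2$. This is compatible only with the few small-$q$ factorizations of $\PSL_2(q)$ with a dihedral factor. I would settle those by quoting the list of factorizations (Li--Xia, or the classical list for $\PSL_2(q)$), or by direct inspection for $q\leq 11$.
\item In the first case $C\leq B_E\cap D^x$. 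A nonsplit torus has no fixed point on the projective line, so this intersection has order at most $2$.
\end{itemize}
Hence, apart from finitely many small $q$, $|C|\leq 2$.

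\emph{Step 2 (killing an involution).} Suppose $C=\gen{t}$ with $t$ an involution, and split by parity.
\begin{itemize}
\item If $q$ is even, $t$ lies in $U$. Test $C$ against a Sylow $2$-subgroup $U'$ of $E$ different from $U$, inside $\gen{t,U'}$, which is $E$ or a dihedral group. A product $CU'^{x}$ that is a subgroup of order $2q$ forces $t\in U'^{x}$. One then uses that distinct Sylow $2$-subgroups of $\PSL_2(2^f)$ intersect trivially, together with the fact that $x$ ranges only over $\gen{t,U'}$, to reach a contradiction unless $q=2$. This is the excluded case $E\cong\PGL_2(2)\cong S_3$, where an involution really is hereditarily permutable.
\item If $q$ is odd, $t$ is a toral involution fixing two points, or lies in $\PGL_2(q)\setminus S$. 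Test against a subgroup $X$ of order $p$ in the opposite unipotent radical. Then $CX^x$ is a group of order $2p$ with normal Sylow $p$-subgroup, hence lies in a Borel subgroup, so $t$ normalizes $X^x$. I would pair this with a second test subgroup: either a cyclic subgroup of order $(q+1)/(2,q-1)$, or a Klein four-group or dihedral group not containing $t$, chosen so that $\gen{t,X}$ is small. The aim is that the permitted conjugates $X^x$ never have fixed point normalized by $t$.
\end{itemize}

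\emph{Main obstacle and fallback.} I expect the main obstacle to be Step 2 for odd $q$, together with the small fields left over from Step 1. The reason is that ordinary $E$-permutability genuinely fails to exclude toral involutions, so the restriction $x\in\gen{C,X}$ must be exploited carefully. If the direct argument becomes awkward, the fallback is to work entirely inside $F=U\rtimes C$. Apply Lemma \ref{lem:sl-root-line-module-obstruction} with $K=\F_p$ and $V=U$: it shows $C$ must normalize every $\F_p$-line up to the conjugation allowed. Combine this with Lemma \ref{lem:normal-intersection-transfer} applied to $N=S$, which gives $C\cap S$ ordinarily $S$-permutable. Then invoke the known $\PSL_2(q)$ result \cite[Proposition 1]{GT2023} for $C\cap S$, reducing to $C$ of order $2$ outside $S$, which Step 2 handles. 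Finally, for the residual $q\leq 11$ I would verify the statement by direct inspection of the subgroup lattices.
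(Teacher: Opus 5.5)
Your proposal has genuine gaps, and the first is already in Step~1. The product alternative $E=CD^x$ is not a small-$q$ phenomenon. The nonsplit torus of $\PGL_2(q)$, of order $q+1$, acts regularly on the $q+1$ points of the projective line, so $\PGL_2(q)=B_ED$ for every $q$. Likewise $S=B_SD$ whenever $q\equiv3\pmod 4$, since then $|B_S|=q(q-1)/2$ and $|D|=q+1$ are coprime. Your order count $|B_E\cap D^x|=|B_E||D|/|E|\leq2$ describes exactly what happens in this factorization. It is not a contradiction, and no list of factorizations removes it. So $C=B_E$ passes your first test for every $q$, and Step~1 does not yield $|C|\leq2$.

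The missing step is to test first against a subgroup whose index cannot divide $|B_E|$. The paper first disposes of even $q$ (where $E=S$) and of $C\leq S$, using Lemma~\ref{lem:overgroup-restriction} and \cite[Proposition 1]{GT2023}. It then tests against the normalizer $D_-\leq S$ of a split torus, of order $q-1$. Here $|C|\,|D_-^x|\leq q(q-1)^2<|E|$ makes the product proper. Maximality of $D_-$ in $S$ for $q>11$ then gives $C\leq N_E(D_-^x)$, so $|C|\mid2(q-1)$. Only after this bound is the test against the nonsplit-torus normalizer $D_+$ proper by order, giving $C\leq N_E(D_+^y)$. At that point your observation that point stabilizers in $N_E(D_+^y)$ have order at most $2$ would even give $|C|\leq2$, slightly sharper than the paper's $|C|\mid4$.

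Step~2 does not produce a contradiction either.
For even $q$, the group $\gen{t,U'}$ is transitive on the projective line: $U'$ is regular on the $q$ points other than its fixed point $P$, and $t$ moves $P$. So $U$ itself is a $\gen{t,U'}$-conjugate of $U'$, and $\gen{t}U=U$ is a subgroup. This case needs no argument anyway, since $E=S$ is simple for $q\geq4$.
For odd $q$, after conjugating in $B_E$, take $t\colon z\mapsto-z$. Then $t$ fixes $0$ and normalizes every subgroup $X$ of order $p$ of the unipotent radical at $0$, so $x=1$ already works. The second test subgroup meant to do the real work is never specified.

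The idea you are missing is the paper's dihedral test. With $h\colon z\mapsto(z+1)/(z-1)$, the group $F=\gen{t,h}$ is dihedral of order $8$, with $t$ and $h$ non-commuting reflections from the two different classes. The $F$-conjugates of $\gen{h}$ are $\gen{h}$ and $\gen{tht}$, and neither generator commutes with $t$. Since two subgroups of order $2$ permute exactly when their generators commute, $\gen{t}$ is not hereditarily $F$-permutable.

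Your fallback does not close the gap. Lemma~\ref{lem:normal-intersection-transfer} makes $C\cap S$ only ordinarily $S$-permutable, whereas \cite[Proposition 1]{GT2023} concerns hereditary permutability. The ordinary result, Theorem~\ref{thm:unitary-low-bases}(2), only gives $|C\cap S|\leq2$, not $C\cap S=1$. Likewise, Lemma~\ref{lem:sl-root-line-module-obstruction} applied to $U\rtimes C$ only forces $C$ to act on $U$ by $\F_p$-scalars, which $z\mapsto-z$ does.
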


\begin{proof}
For ease of reading, we divide the proof into three steps.

\begin{itemize}[leftmargin=0pt,labelwidth=4em,labelsep=0.5em,itemindent=4.5em,align=left,topsep=3pt,partopsep=0pt,parsep=0pt,itemsep=0pt]
\item[{\bf Step 1.}] $C=1$ if $q=3$ or $C\leq S$.
\end{itemize}
We let $S=\PSL_2(q)$. We begin with the case $q=3$, where $S\cong A_4$ is not simple, so the $\PSL_2(q)$ theorem does not cover this group and we argue directly. We have $E\in\{A_4,S_4\}$ and $C$ lies in a point stabilizer $B_E$ of $E$ on the projective line. If $E\cong A_4$, then $B_E\cong C_3$. For the only nontrivial candidate $C=B_E$, permuting with the subgroup generated by an involution of $A_4$ would produce a subgroup of order $6$, and $A_4$ has none. Therefore, the candidate $C=B_E$ is not hereditarily $E$-permutable. If $E\cong S_4=\PGL_2(3)$, then $B_E\cong S_3$, and we test each nontrivial subgroup of $S_3$ for hereditary $S_4$-permutability. By Lemma \ref{lem:generated-hereditary-formulation} the conjugating element may be taken in $\gen{C,H}$, and for each candidate $C$ we choose a subgroup $H$ such that no conjugate of $H$ in $\gen{C,H}$ permutes with $C$. For a candidate $C$ of order $3$ and a subgroup $H$ generated by an involution without fixed points, $\gen{C,H}=A_4$. This group has no subgroup of order $6$, so the order-$6$ product $CH^x$ is not a subgroup for any $x \in \langle C,H \rangle$. The subgroup generated by an involution fixing a point would instead lie in the same point stabilizer $S_3$ as $C$ and permute with $C$. For a candidate $C$ generated by a transposition, choose a subgroup $H$ generated by a double transposition for which $\gen{C,H}$ is a Sylow $2$-subgroup $D_8$. Since no conjugate of $H$ in $\gen{C,H}$ permutes with $C$, the subgroup $C$ fails this permutability test with $H$. Finally consider $C=B_E$ and a subgroup $H$ generated by a double transposition. We have $\gen{C,H}=S_4$ and $B_E$ contains odd permutations, while every conjugate $H^x$ meets $B_E$ trivially, so $|B_EH^x|=12$ for all $x$. The unique subgroup of $S_4$ of order $12$ is $A_4$, which has no odd permutation, so $B_EH^x$ is not a subgroup for any $x$. Comparing $B_E$ with a second point stabilizer does not test failure of hereditary permutability, since the second point stabilizer is $S_4$-conjugate to $B_E$. These checks are confirmed by the computation of Appendix \ref{app:pgl2-small-certificate}, whose enumeration includes $q=3$. Hence $C=1$ when $q=3$, and we assume that $q\geq4$ from now on.

If $q$ is even, then $E=S$, and the conclusion $C=1$ follows from the $\PSL_2(q)$ theorem \cite[Proposition 1]{GT2023}. More generally, we have $C=1$ whenever $E=S$. If $q$ is odd, then $q\geq5$ and $S=\PSL_2(q)$ is simple. If moreover $C\leq S$, then restricting the hereditary condition from $E$ to $S$ makes $C$ hereditarily $S$-permutable (Lemma \ref{lem:overgroup-restriction}), so $C=1$ by the $\PSL_2(q)$ theorem (\cite[Proposition 1]{GT2023}).

\begin{itemize}[leftmargin=0pt,labelwidth=4em,labelsep=0.5em,itemindent=4.5em,align=left,topsep=3pt,partopsep=0pt,parsep=0pt,itemsep=0pt]
\item[{\bf Step 2.}] If $q>11$, then $|C|\mid4$.
\end{itemize}
Therefore, we may assume that
$$
        q\geq5\text{ odd},~ E=\PGL_2(q),~ C\nleq S.
$$

Suppose that $q>11$. Let $D_-,D_+\leq S$ be the normalizers in $S$ of a split and of a nonsplit maximal torus of $S$, so
$$
        |D_-|=q-1,~ |D_+|=q+1.
$$
By Dickson's subgroup theorem \cite[\S260, Theorem 262]{Dickson1901}, $D_-$ and $D_+$ are maximal in $S$ for $q>11$. By hereditary $E$-permutability of $C$ with $D_-$, for some conjugate $D_-^x$, the product $H=CD_-^x$ is a subgroup. Since $C\leq B_E$, we have $|C|\leq|B_E|=q(q-1)$, so
$$
        |C|\,|D_-^x|\leq q(q-1)^2<q(q-1)(q+1)=|E|,
$$
and $H<E$. A maximal subgroup $M\geq H$ of $E$ does not contain $S$ (the subgroup $M$ contains an element outside $S$), so $M\cap S=D_-^x$ by maximality of $D_-^x$ in $S$. As $S\unlhd E$, $M\cap S\unlhd M$, whence
$$
        C\leq M\leq N_E(D_-^x),~ |C|\mid 2(q-1).
$$
Repeating the hereditary permutability test for $C$ with $D_+$ and using the order bound for $C$ from $|C|\mid 2(q-1)$, we obtain
$$
        |C|\,|D_+^y|\leq 2(q-1)(q+1)<|E|,
$$
so $CD_+^y<E$ and $C\leq N_E(D_+^y)$, and hence $|C|\mid 2(q+1)$. Therefore,
$$
        |C|\mid\bigl(2(q-1),2(q+1)\bigr)=4.
$$

\begin{itemize}[leftmargin=0pt,labelwidth=4em,labelsep=0.5em,itemindent=4.5em,align=left,topsep=3pt,partopsep=0pt,parsep=0pt,itemsep=0pt]
\item[{\bf Step 3.}] $C=1$ in all cases.
\end{itemize}
Since $C\leq B_E\cong q:(q-1)$ and $|C|$ is prime to the characteristic $q$, $C$ fixes a point of the affine line and so is conjugate inside $B_E$ into the split torus of $E$ contained in $B_E$. As $C\nleq S$, the only possibilities for $C$ are a subgroup generated by an involution outside $S$ or a cyclic group of order $4$ not contained in $S$.

Suppose that $|C|=2$. After conjugating $C$ in $E$ we may take $C=\gen{c}$ with $c\colon z\mapsto-z$. With
$$
        h\colon z\mapsto\frac{z+1}{z-1},~ H_0=\gen{h},~ F=\gen{c,h},
$$
we have $F\cong D_8$, with the generators $c$ and $h$ in the two different conjugacy classes of reflections. No $F$-conjugate of $H_0$ equals or commutes with $C$. Two subgroups of order $2$ permute if and only if their nonidentity elements commute, so $C$ is not hereditarily $F$-permutable, a contradiction.

Suppose that $|C|=4$. Since $q$ is odd, the unipotent radical of $B_E$ has odd order, so $C$ embeds in the cyclic split torus of $E$ in $B_E$ of order $q-1$. Therefore, $4\mid q-1$. Let $r\colon z\mapsto iz$ with $i$ of order $4$ and $i^2=-1$. Then $C=\gen{r}$ is the unique order-$4$ subgroup of this torus. As $C\nleq S$, the generator $i$ is a nonsquare in $\F_q^\times$ (a square generator would mean that $C\leq S$). We consider this cyclic subgroup of order four only when an element $i$ of order four that is a nonsquare in $\F_q^\times$ exists. With the element $h$ acting by $z\mapsto\frac{z+1}{z-1}$, the group $F=\gen{r,h}$ is the subgroup $S_4$ of $\PGL_2(q)$. Let $H_0\leq F$ have order $3$. If $CH_0^x=H_0^xC$ for some $x\in F$, then $CH_0^x$ is a subgroup of $S_4$ of order $12$ containing an element of order $4$. However, the only subgroup of $S_4$ of order $12$ is $A_4$, which has no element of order $4$. This contradiction settles $|C|=4$.

Finally, for the field sizes $q\in\{5,7,9,11\}$ we verify $C=1$ by a direct computation in $\PGL_2(q)$. We enumerate the subgroups of a point stabilizer in $E$. For each nontrivial candidate $C$, the computation finds an element $h\in E$ and tests $C$ against the cyclic subgroup $\gen{h}$ inside $\gen{C,h}$. No $x\in\gen{C,h}$ satisfies $C\gen{h}^x=\gen{h}^xC$. Consequently, every nontrivial candidate $C$ fails to be hereditarily $E$-permutable in $E$. The subgroup $\gen{C,h}$ is generated by $C$ and this chosen element $h$ (Appendix \ref{app:pgl2-small-certificate}). Hence $C=1$ for every field size $q$ covered by the lemma.
\end{proof}

\begin{lemma}\label{lem:sl-borel}
Let
$$
        \PSL_m(q)\leq E\leq\PGL_m(q),~ m\geq2,
$$
and let $B_E$ be the normalizer in $E$ of the upper triangular Borel subgroup of the socle of $E$. Apart from $(m,q,E)=(2,2,\PGL_2(2))$, if $C\leq B_E$ is hereditarily $E$-permutable, then $C=1$.
\end{lemma}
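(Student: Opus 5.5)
We argue by induction on $m$, with $m=2$ as the base. For $m=2$ the statement is exactly Lemma \ref{lem:pgl2-borel}, since $B_E$ is then a point stabilizer on the projective line. So assume $m\geq3$ and let $C\leq B_E$ be a nontrivial hereditarily $E$-permutable subgroup. The group $B_E$ stabilizes the standard complete flag $0=W_0<W_1<\cdots<W_m=U_0$ of the natural module.

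For each index $i$ the stabilizer $P$ of the flag $W_{i-1}<W_{i+1}$ (or of the appropriate shorter flag at the ends) acts on the plane $W_{i+1}/W_{i-1}$. The projective image $E_i$ of that action satisfies $\PSL_2(q)\leq E_i\leq\PGL_2(q)$. By Lemma \ref{lem:overgroup-restriction}, $C$ is hereditarily $P$-permutable. By Lemma \ref{lem:quotient-inheritance}, its image $C_i$ in $E_i$ is hereditarily $E_i$-permutable. Since $C\leq B_E$, the group $C_i$ fixes the line $W_i/W_{i-1}$, so it lies in a point stabilizer. Lemma \ref{lem:pgl2-borel} then gives $C_i=1$ whenever $E_i\not\cong\PGL_2(2)$. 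When $q=2$ we have $E_i=\PSL_2(2)$ automatically, because $\PGL_2(2)=\PSL_2(2)$ and the excluded case is really the degenerate $m=2$ situation. So in every case $C_i=1$.

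Consequently, every element of $C$ acts on each $2$-dimensional subquotient $W_{i+1}/W_{i-1}$ as a scalar. Comparing the overlapping consecutive quotients, all diagonal entries of a representative are equal. Hence $C$ lies in the image of the unipotent radical $R_u(B)$, so $C$ is a nontrivial $p$-group of upper unitriangular type. It also acts trivially on each superdiagonal root group $W_{i+1}/W_{i-1}$.

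The remaining, and main, step is to rule out a nontrivial unipotent $C$ deep in $R_u(B)$. We would test $C$ against suitable subgroups, using Lemma \ref{lem:maximal-product} and Lemma \ref{lem:generated-hereditary-formulation}.
\begin{enumerate}
\item First take a nonsplit torus $T$ of order $(q^m-1)/((q-1)(m,q-1))$ (a Singer cycle image), or its normalizer $N=N_E(T)$. A product $CN^x$ would be a subgroup of order divisible by $|C|\,|N|$.
\item Alternatively, test against the stabilizer $P_1$ or $P_{m-1}$, as in Lemma \ref{lem:sl-nonincident-full-cover}. Hereditary permutability forces either a factorization or containment.
\item Factorizations of $E$ with factor $N$ contained in $\widehat{\GL}_1(q^m):m$ are classified. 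If $E=CN^x$, the $p$-part of $C$ would have to be $q^{m(m-1)/2}$ up to small factors, which contradicts the triviality of $C$ on the superdiagonal quotients.
\item So $CN^x<E$. A maximal overgroup then either is $N^x$-related, forcing $C\leq N_E(T^x)$, whose order is coprime to $p$ (except in small cases), so $C=1$; or it is a field-extension subgroup, where a similar order argument applies.
\end{enumerate}

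A cleaner alternative for this step would pick a single root subgroup $X$ that is not normalized-compatible with $C$, for instance an opposite root group. One would then show that in $\gen{C,X}$, which is a small $\SL_2$- or $\SL_3$-type group, no conjugate of $X$ permutes with $C$. This reduces to the rank-one Lemma \ref{lem:sl-root-line-module-obstruction} applied to $C$ acting on a root module. The exceptional small cases ($q\leq3$, $m=3$) would be checked directly, in the manner of Appendix \ref{app:pgl2-small-certificate}.
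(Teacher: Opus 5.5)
\textbf{There is a genuine gap.} The proposal breaks at $q=2$, and for $m\geq4$ the unipotent step is not carried out.

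\textbf{The case $q=2$.} Your rank-one reduction fails here. Since $\PSL_2(2)=\PGL_2(2)\cong S_3$, every section $E_i$ is exactly the group excluded in Lemma \ref{lem:pgl2-borel}. That exclusion is essential, not a degenerate $m=2$ artefact. A point stabilizer $\gen{(1\,2)}$ of $S_3$ permutes with $A_3$, and any other subgroup of order $2$ is conjugate to it inside the $S_3$ they generate. So it is hereditarily $S_3$-permutable. Consequently, for $q=2$ the sections $W_{i+1}/W_{i-1}$ give no information, and $C$ is an arbitrary subgroup of $R_u(B)$. For comparison, the paper settles $(m,q)=(3,2)$ through $\PSL_3(2)\cong\PSL_2(7)$ and the Galt--Tyutyanov theorem. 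For $q=2$, $m\geq4$, it works with simple $(m-1)$-dimensional sections.

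\textbf{The unipotent step for $q\geq3$.} Your $2$-dimensional sections only kill the torus and the simple root groups. This gives $C\leq\prod_{j\geq i+2}U_{ij}$, which for $m\geq4$ is not a single root group. For example, with $m=4$ the subgroup $\{x_{13}(a)x_{24}(a)\}$ is not excluded, and it lies in no single root $\SL_2$.

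Your Singer-normalizer sketch does not close this, for three reasons.
\begin{itemize}
\item $N_E(T)$ contains the $\F_q$-linear Frobenius of order $m$. So it contains nontrivial $p$-elements whenever $p\mid m$, for every $q$, not just in small cases.
\item For composite $m$, the maximal overgroups of $N^x$ include $\mathcal C_3$-subgroups of type $\GL_{m/k}(q^k).k$. Their $p$-part $q^{m(m/k-1)/2}|k|_p$ leaves plenty of room for $C$, so no order argument applies.
\item Lemma \ref{lem:sl-root-line-module-obstruction} cannot give a contradiction for unipotent $C$. A $p$-group acting on an $\F_q$-space in characteristic $p$ always fixes a line.
\end{itemize}

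\textbf{The missing idea} is to use the $(m-1)$-dimensional sections together with induction on $m$. The projective actions of $P_1$ and $P_{m-1}$ on $\F_q^m/\gen{e_1}$ and on $\gen{e_1,\ldots,e_{m-1}}$ send $C$ into Borel subgroups of groups lying between $\PSL_{m-1}(q)$ and $\PGL_{m-1}(q)$. Both images are therefore trivial by induction. Every positive root other than $e_1-e_m$ lies in one of these two blocks, and the diagonal entries agree across the overlapping blocks, so $C\leq U_{1m}$.

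Then $C$ embeds in $\gen{U_{1m},U_{m1}}/Z\cong\PSL_2(q)$. The full $\PSL_2(q)$ theorem, not merely the Borel lemma, gives $C=1$ for $q\geq4$. The remaining cases are handled separately: $q=3$ inside $A_4$, and $q=2$ via a $\PSL_3(2)$ section on coordinates $1,j,m$.
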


\begin{proof}
We induct on $m$. For $m=2$, the conclusion follows from Lemma \ref{lem:pgl2-borel}, apart from the $\PGL_2(2)$ case. The case $m=3$, $q=2$ is handled by the isomorphism $\PSL_3(2)\cong\PSL_2(7)$ \cite[p. 3]{Wilson2009} together with the $\PSL_2(7)$ theorem \cite[Proposition 1]{GT2023}, which avoids the (false) $\PGL_2(2)$ case.

Assume that $m\geq3$ and $(m,q)\neq(3,2)$. The parabolic subgroups $P_1$ and $P_{m-1}$ of $E$ stabilize a line and a hyperplane, respectively. Choose an $\F_q$-basis $(e_1,\ldots,e_m)$ of the natural space $\F_q^m$ in which $B_E$ is upper triangular. Write $Q_1=\F_q^m/\gen{e_1}$ and $Q_{m-1}=\gen{e_1,\ldots,e_{m-1}}$. The lower right block acts on $Q_1$, and the upper left block acts on $Q_{m-1}$. Let $\rho_i:P_i\to\PGL(Q_i)$ be these projective actions, and let $E_i=\rho_i(P_i)$ for $i\in\{1,m-1\}$. We take the images of $C$ under the projective actions of these subgroups on the lower right and upper left blocks, respectively. For each of $P_1$ and $P_{m-1}$, we quotient the parabolic subgroup by the elements acting as scalars on the corresponding block. These projective factors of the Levi quotients are projective linear of dimension $m-1$ \cite[Proposition 26.1(b)]{MalleTesterman2011}. By Lemma \ref{lem:quotient-inheritance}, the images $\rho_i(C)$ of $C$ in these quotients are hereditarily $E_i$-permutable for $i\in\{1,m-1\}$. By induction, both images of $C$ in the projective actions on the lower right and upper left $(m-1)\times(m-1)$ blocks are trivial. Hence the diagonal part of $C$ and its components in all root subgroups except the subgroup corresponding to the highest root are trivial. In upper triangular terms,
$$
        C\leq U_{1m}.
$$
If $q\geq4$, then the opposite root subgroups $U_{1m}$ and $U_{m1}$ generate a subgroup $R\cong\SL_2(q)$ \cite{Carter1985}. A nontrivial scalar in the centre of $\SL_m(q)$ acts nontrivially on the middle coordinates $2,\dots,m-1$ (nonempty as $m\geq3$). By contrast, the group $R$ fixes these middle coordinates, so the embedding of $R$ in $E$ is faithful. Since $C\leq U_{1m}$ is unipotent, $C\cap Z(R)=1$. By Lemmas \ref{lem:overgroup-restriction} and \ref{lem:quotient-inheritance}, the image $\bar C$ of $C$ in $R/Z(R)\cong\PSL_2(q)$ is hereditarily $\PSL_2(q)$-permutable and isomorphic to $C$. The $\PSL_2(q)$ theorem \cite[Proposition 1]{GT2023} (valid as $q\geq4$) shows that $\bar C=1$, hence $C=1$. There remain $q\in\{2,3\}$, where $\PSL_2(q)$ is not simple. If $q=2$, then $m\geq4$. Pick $j$ with $1<j<m$. The root subgroups on coordinates $1,j,m$ generate a subgroup with simple quotient $\PSL_3(2)\cong\PSL_2(7)$ \cite[p. 3]{Wilson2009}. Lemma \ref{lem:overgroup-restriction} and the $\PSL_2(7)$ theorem \cite[Proposition 1]{GT2023} then show that $C=1$. If $q=3$, then $\bar C$ is the image of $C$ obtained by projecting through the parabolic subgroups $P_1$ and $P_{m-1}$ of $E$. In this case, $\bar C$ is a hereditarily $(R/Z(R))$-permutable subgroup of $R/Z(R)\cong\PSL_2(3)\cong A_4$. This image has order dividing $3$. Were $\bar C\neq1$, the subgroup $\bar C$ would permute with the subgroup generated by an involution of $A_4$ to produce a subgroup of order $6$, which $A_4$ lacks. Hence $\bar C=1$ and $C=1$.
\end{proof}

\subsection{Proof of \texorpdfstring{Theorem \ref{thm:split-linear}}{the linear theorem}}

\begin{proof}[Proof of Theorem \ref{thm:split-linear}]
Let $A<G=\PSL_n(q)$ be proper, nontrivial, and hereditarily $G$-permutable. For $n=2$, we use the known theorem for linear groups of dimension two \cite[Proposition 1]{GT2023}, so we assume that $n\geq3$. By Proposition \ref{prop:sl-parabolic-containment}, $A$ lies in a proper parabolic subgroup of $G$. Lemma \ref{lem:sl-line-hyperplane} gives, after relabelling, a line $\ell$ and a hyperplane $H$ in $V$ stabilized by $A$.

Suppose that $\ell\not\leq H$, so $V=\ell\oplus H$ and the image $C$ of $A$ on $H$ is hereditarily $E$-permutable in a group $\PSL_{n-1}(q)\leq E\leq\PGL_{n-1}(q)$. For a flag $\mathcal F_H=(H_1<\cdots<H_t)$ in $H$ fixed by $C$, the corresponding flag in $V$ extends $(\ell)$ and has full chain $0<\ell<\ell\oplus H_1<\cdots<\ell\oplus H_t<V$. For each additional term $K$ of the flag in $H$, the corresponding additional term in $V$ is $\ell\oplus K$. If $n-1\geq3$, then we use Lemma \ref{lem:sl-actual-quotient-trichotomy}. The case in which $C$ contains the socle of $E$ cannot occur by Lemma \ref{lem:sl-nonincident-full-cover}. When $C$ lies in the normalizer in $E$ of a proper parabolic subgroup of the socle of $E$, we extend the flag. Suppose that $C$ lies in no such normalizer in $E$ and does not contain the socle. Testing $C$ through the normalizer in $E$ of a Borel subgroup of the socle of $E$ gives a contradiction in this case. This follows the argument in Proposition \ref{prop:sl-parabolic-containment}. Let $\mathcal F_H$ be the extended flag in $H$, and let $P_H=\Stab_E(\mathcal F_H)$. Its Levi quotient is $L_H=P_H/R_u(P_H)$. When the image of $C$ in each simple factor of the semisimple part of $L_H$ is trivial, we use Lemma \ref{lem:sl-borel}. If $n-1=2$, then we extend the flag when $C$ lies in the normalizer in $E$ of a proper parabolic subgroup of the socle of $E$. An image $C$ with no invariant line is impossible by the argument in dimension two of Lemma \ref{lem:sl-nonincident-full-cover}. When the image of $C$ in every simple factor of this Levi quotient is trivial, we again use Lemma \ref{lem:sl-borel}.

Next suppose that $\ell\leq H$, and let $W=H/\ell$. For the quotient map $\tau:H\to W$, a flag $W_1<\cdots<W_t$ in $W$ fixed by the image of $A$ gives the extended flag $\ell<\tau^{-1}(W_1)<\cdots<\tau^{-1}(W_t)<H$ in $V$. This extends the original flag $(\ell<H)$ by inserting the inverse images of the new quotient subspaces. If $n=3$, then $A$ stabilizes the complete flag $0<\ell<H<V$, whose nonzero terms have dimensions $1,2,3$. Consequently, it lies in a Borel subgroup of $G$ and must be trivial by Lemma \ref{lem:sl-borel}. So assume that $n-2\geq2$. The image $C$ of $A$ on $W$ is hereditarily $E$-permutable in a group $\PSL_{n-2}(q)\leq E\leq\PGL_{n-2}(q)$. If $n-2\geq3$, then we use Lemma \ref{lem:sl-actual-quotient-trichotomy}. The possibility that $C$ contains the socle of $E$ contradicts Lemma \ref{lem:sl-incident-full-cover-large}. When $C$ lies in the normalizer in $E$ of a proper parabolic subgroup of the socle of $E$, we extend the flag. Suppose that $C$ lies in no such normalizer and does not contain the socle. We test $C$ through the normalizer in $E$ of a Borel subgroup of the socle of $E$. Let $\mathcal F_W$ be the extended flag in $W$, and let $P_W=\Stab_E(\mathcal F_W)$. Its Levi quotient is $L_W=P_W/R_u(P_W)$. When the image of $C$ in every simple factor of the semisimple part of $L_W$ is trivial, we use Lemma \ref{lem:sl-borel}. If $n-2=2$, then we proceed as follows. When $C$ lies in the normalizer in $E$ of a proper parabolic subgroup of the socle of $E$, we extend the flag. Every image of $A$ lying in no such normalizer and having no invariant line contradicts Lemma \ref{lem:sl-incident-full-cover-d2}. If the image of $C$ in every simple factor of this Levi quotient is trivial, then $A$ must again be trivial by Lemma \ref{lem:sl-borel}.

At each step at which the descent continues, the dimension of the space on which the linear Levi quotient at that step acts strictly decreases. Therefore, the descent reaches either the $\PSL_2(q)$ base \cite[Proposition 1]{GT2023} or Lemma \ref{lem:sl-borel}. In either case, these results forbid a proper nontrivial subgroup with the hereditary permutability property in the image of the projective action at that step of the descent. This contradiction proves the theorem.
\end{proof}
\section{Unitary groups}\label{sec:unitary}

The natural unitary space $V$ has dimension $n$ over $\F_{q^2}$. We write $h(u,v)=\langle u,v\rangle$ for its nondegenerate Hermitian form, which is linear in the first variable and satisfies $h(v,u)=h(u,v)^q$. For a subspace $X\leq V$, the orthogonal complement is
$$
        X^\perp=\{v\in V:h(v,x)=0\text{ for every }x\in X\}.
$$
A subspace $X$ is \emph{nondegenerate} if $X\cap X^\perp=0$. By contrast, $X$ is \emph{totally isotropic} if $h(x,y)=0$ for all $x,y\in X$. Following Kleidman and Liebeck \cite[\S2.1]{KleidmanLiebeck1990}, we also call a totally isotropic subspace \emph{totally singular}. In particular, a line $\gen{v}$ is singular if $h(v,v)=0$.

A \emph{totally singular flag} is a flag whose terms are totally singular. The \emph{Witt index} $r$ of $V$ is the largest dimension of a totally singular subspace. For this unitary space, $r=\lfloor n/2\rfloor$. A complete totally singular flag has the form $0=F_0<F_1<\cdots<F_r$, with $\dim F_i=i$ for $0\leq i\leq r$. In particular, its last term $F_r$ is a maximal totally singular subspace of $V$. For a singular line $\ell$, we have $\ell\leq\ell^\perp$. The quotient map sends $v\in\ell^\perp$ to the coset $v+\ell$. For $u,v\in\ell^\perp$ and $z_1,z_2\in\ell$, the equalities $h(u,z_2)=h(z_1,v)=h(z_1,z_2)=0$ give $h(u+z_1,v+z_2)=h(u,v)$. Consequently, $h_\ell(u+\ell,v+\ell)=h(u,v)$ defines the induced Hermitian form on $\ell^\perp/\ell$. A subgroup stabilizes this flag if its elements $g$ satisfy $g(F_i)=F_i$ for $0\leq i\leq r$. Individual vectors in $F_i$ need not be fixed by $g$.

An orthogonal decomposition $V=L_1\perp\cdots\perp L_n$ into nondegenerate lines defines a \emph{self-polar simplex} consisting of $L_1,\ldots,L_n$. Choose a basis $e_1,\ldots,e_n$ with $L_i=\gen{e_i}$. A monomial transformation sends $e_i$ to $\lambda_i e_{\sigma(i)}$, with $\lambda_i\in\F_{q^2}^\times$ and $\sigma\in S_n$. The permutation $\sigma$ acts on the index set $\{1,\ldots,n\}$. In $\PSU_n(q)$ or $\PGU_n(q)$, the subgroup fixing every $L_i$ as a subspace is the diagonal subgroup for this decomposition. A subgroup stabilizing the simplex also induces permutations of $\{L_1,\ldots,L_n\}$. The group of these permutations is the permutation image of that subgroup.

The proof uses self-polar simplices and the permutation image of $A$ on their lines. We first consider the cases of low rank, the toral and unipotent cases, and the permutation images of $A$ on the simplex lines. We then use the results for odd composite dimension and even dimension. For each projective action, we test hereditary permutability of the image of $A$ in the full image of that action.

\begin{theorem}\label{thm:unitary}
For the simple group $G=\PSU_n(q)$ there is no proper nontrivial hereditarily $G$-permutable subgroup.
\end{theorem}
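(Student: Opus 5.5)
We would model the argument on the proof of Theorem \ref{thm:split-linear}, replacing flags of arbitrary subspaces by totally singular flags. Throughout, $A<G=\PSU_n(q)$ is proper, nontrivial and hereditarily $G$-permutable, with $n\geq3$ and $(n,q)\neq(3,2)$.

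\emph{Base cases.} For $n=3$ we quote the $\PSU_3(q)$ theorem \cite[Theorem 2 and Section 4]{GT2025}. For $n=4$ we use the isomorphism $\PSU_4(q)\cong\POmega_6^-(q)$, or argue directly as below.

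\emph{Step 1: $A$ lies in a parabolic or in a simplex stabilizer.} Test $A$ against a Borel subgroup $B$ using Lemma \ref{lem:maximal-product}. Either $A$ lies in a proper parabolic subgroup, or $G=AB^x$ is a factorization with a solvable factor containing a full Sylow $p$-subgroup. The unitary rows of Li--Xia \cite{LiXia2022} have solvable factors of small $p$-part, essentially only $\PSU_{2m}(q)$ with factor $q^{m^2}{:}\ldots$ inside $P_m$. We expect the comparison $|B|_p=q^{n(n-1)/2}$ against those rows to rule out the factorization, exactly as in Proposition \ref{prop:sl-parabolic-containment}. When the Borel test is not decisive, we instead test against the maximal subgroup $N_1$ stabilizing a nondegenerate line. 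Since $G$ is transitive on nondegenerate lines, either $A\leq N_1^x$ or $A$ is transitive on them. The factorizations $G=AN_1$ are listed in Liebeck--Praeger--Saxl, and they essentially force $A$ to contain $\SU_{n-1}$-type or $\SU_{n/2}(q^2)$-type pieces. These pieces would have to be excluded separately.

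\emph{Step 2: descent through parabolics.} Suppose $A\leq P_i$. Testing against the maximal parabolic $P_1$, we show $A$ fixes a singular line $\ell$, since $A$ cannot be transitive on singular points while preserving a totally singular $i$-space. We then pass to the Levi quotient of $\Stab_G(\ell)$. This quotient has a linear factor $\GL_1(q^2)$ and a unitary factor $E$ on $\ell^\perp/\ell$ with $\PSU_{n-2}(q)\leq E\leq\PGU_{n-2}(q)$. By Lemma \ref{lem:quotient-inheritance}, the image $C$ of $A$ in $E$ is hereditarily $E$-permutable. We then prove a trichotomy analogous to Lemma \ref{lem:sl-actual-quotient-trichotomy}:
\begin{enumerate}[label=\textup{(\arabic*)}]
\item $C$ contains the socle $\PSU_{n-2}(q)$;
\item $C$ normalizes a proper parabolic of the socle, in which case we extend the totally singular flag;
\item $C$ is trivial on all semisimple Levi factors.
\end{enumerate}

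In case (1) we argue as in Lemmas \ref{lem:sl-nonincident-full-cover} and \ref{lem:sl-root-line-module-obstruction}. The unipotent radical of $P_1$ contains the natural $\F_{q^2}$-module $\ell^\perp/\ell$ of the unitary factor. Since $A$ fixes $\ell$, it meets the part of the opposite radical that moves $\ell$ trivially. Lemma \ref{lem:sl-root-line-module-obstruction} then forces $C$ to fix a line of $\ell^\perp/\ell$, which is a contradiction. This uses the subgroup generated by the Levi subgroup and an opposite root group of the relevant shape.

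In case (3), $A$ lies in the normalizer of a Borel subgroup modulo the unipotent radical. We then need a unitary analogue of Lemma \ref{lem:sl-borel}. We induct through the $P_1$ Levi factors, which leaves $A$ inside the highest-root group together with a torus part. The highest root group sits in an $\SL_2(q)$ or $\SU_3(q)$ subgroup, where the $\PSL_2(q)$ theorem (Lemma \ref{lem:overgroup-restriction}) or the $\PSU_3$ theorem kills it. The torus part is handled by toral arguments.

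\emph{Step 3: the simplex case (main obstacle).} This is the case where $A$ lies in no parabolic but stabilizes a self-polar simplex, or more generally a decomposition into nondegenerate subspaces. Hereditary permutability alone does not push $A$ into a Levi subgroup here. We would use Lemma \ref{lem:normal-intersection-transfer} on the diagonal torus $T$ normal in the simplex stabilizer $N=T.S_n$. This gives ordinary permutability of $A\cap T$, and its image in $S_n$ is hereditarily permutable. When $n$ is prime, the transitive permutation images are handled by the Yang--Galt theorem on ordinary $G$-permutable subgroups of $A_p$, together with factorization data $G=AM$ for $M$ a stabilizer of a nondegenerate subspace.

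For composite odd $n$ and for even $n$ we would use imprimitive decompositions. An invariant block system gives $A$ inside the stabilizer of an orthogonal sum of nondegenerate $k$-spaces. We then test $A$ against $N_k$ and rule out the factorizations from Liebeck--Praeger--Saxl. The expected difficulty lies in the tori and the small fields $q=2,3$, where the needed subgroups are not simple. We anticipate finite computational checks there, analogous to Lemma \ref{lem:pgl2-borel}.
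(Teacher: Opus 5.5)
\paragraph{Verdict.} The proposal has genuine gaps: the full-socle case (1) uses an argument that does not apply, and the unitary Borel lemma that your route ultimately depends on is not supplied. A parabolic route may still be workable.

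\paragraph{Comparison with the paper.} Your plan transplants the parabolic descent that the paper uses for the linear, symplectic and orthogonal families. The paper's unitary proof rests on different ideas. For odd $n$ it uses that $\PSU_n(q)$ has no factorization at all apart from $U_3(3),U_3(5),U_9(2)$ \cite[Corollary 2]{LPS1990}. Hence $A$ lies \emph{simultaneously} in conjugates of all maximal subgroups (the simplex stabilizer $D{:}S_n$, the Singer normalizer, the $\mathcal C_3$ subgroups), and is then killed by the permutation analysis of Theorem~\ref{thm:unitary-simplex-image}. For even $n$ it passes through $N_1$, $P_1$ and $P_m$. Toral images are killed via minimal parabolics (Lemma~\ref{lem:unitary-flag-toral}), and unipotent $A$ via the Singer-cycle argument of Proposition~\ref{prop:unitary-unipotent} in $\PSU_{2m-1}(q)$.

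\paragraph{Case (1) fails as written.} Lemma~\ref{lem:sl-root-line-module-obstruction} needs a group $F=VA$ with $V$ an \emph{abelian} normal subgroup and $A\cap V=1$. You only know $A\leq P_1$. Nothing places $A$ in a Levi complement, so $A$ need not normalize the opposite radical $R^-$, which is in any case special of order $q^{1+2(n-2)}$, not abelian. In Lemma~\ref{lem:sl-nonincident-full-cover} it was the $A$-invariant complementary hyperplane that put $A$ into a Levi subgroup. Here nothing excludes, e.g., $A\geq R_u(P_1)$, and then $A\not\leq N_G(R^-)$ because $R_u(P_1)\cap N_G(R^-)=1$.

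The argument that does work is the $P_2$ test of Lemma~\ref{lem:orthogonal-full-levi-obstruction}. Since $A$ fixes $\ell$, it is not transitive on totally singular planes, so it fixes one. Each position of that plane relative to $\ell$ gives a nonzero totally singular subspace of $\ell^\perp/\ell$ fixed by $C$. Combined with a $P_1$ test inside the current quotient, this yields a $C$-fixed singular point at every level, with no trichotomy and no Li--Xia input. So $A$ ends up in a Borel subgroup.

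\paragraph{Step 3 is misconceived.} Once Step 1 puts $A$ in a parabolic, the situation ``$A$ in no parabolic but in a simplex stabilizer'' never arises. In the paper the simplex stabilizer is not an alternative to a parabolic; it is one of the maximal subgroups containing $A$, because factorizations are absent. Step 1 does still require checking the small unitary rows of Li--Xia.

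\paragraph{The missing unitary Borel lemma.} What carries the weight is a unitary analogue of Lemma~\ref{lem:sl-borel}. You only gesture at it, and its naive induction through the quotients $e_i^\perp/e_i$ breaks exactly where you flag difficulties without resolving them.
\begin{itemize}
\item For $q=2$ the even-dimensional chain ends at $\PGU_2(2)\cong S_3$, whose Borel subgroup $C_2$ \emph{is} hereditarily permutable; this is why Lemma~\ref{lem:pgl2-borel} excludes $\PGL_2(2)$.
\item The odd-dimensional chain ends at the solvable group $\PGU_3(2)$, which lies outside every quoted theorem. So for $q=2$ the induction must start in dimensions $4$ and $5$, and those bases must actually be settled; the paper needs Appendix~\ref{app:psu4-2} for such a base.
\item At relative rank two, the root $e_1+e_2$ acts trivially on the maximal totally singular subspace and on every $e_i^\perp/e_i$. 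It must be reached through a Weyl-conjugate rank-one Levi of type $\SL_2(q^2)$, as in Lemma~\ref{lem:symplectic-c2-borel}, not through a highest root group in $\SL_2(q)$ or $\SU_3(q)$. The highest root $2e_1$ is already visible in $e_2^\perp/e_2$.
\item In odd dimension, the base $\PSU_3(q)\leq E\leq\PGU_3(q)$ with $C\not\leq\PSU_3(q)$ is not covered by the $\PSU_3$ theorem either.
\end{itemize}

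\paragraph{The $n=4$ base is circular.} Invoking $\PSU_4(q)\cong\POmega_6^-(q)$ is circular, since the paper's orthogonal theorem relies on the unitary one in that dimension.
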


A simple unitary group $G=\PSU_n(q)$ has no proper nontrivial ordinary $G$-permutable subgroup \cite[Corollary 4.17]{ZhangOrdinaryPreprint}. Theorem \ref{thm:unitary-odd-prime-noncentral} includes an intermediate result on ordinary $G$-permutable subgroups. By the Levi quotient of a parabolic subgroup we mean the quotient of that parabolic subgroup by its unipotent radical. When we specify a projective action of a Levi factor on a subspace or quotient space, we use the image of that action. Equivalently, we quotient that Levi factor by the elements whose restriction to the specified subspace or quotient space is scalar.

\subsection{Base cases of low rank}

\begin{theorem}\label{thm:unitary-low-bases}
We use three results of low rank.
\begin{enumerate}[label=\textup{(\arabic*)}]
\item For $Q\geq4$, $\PSL_2(Q)$ has no proper nontrivial hereditarily $\PSL_2(Q)$-permutable subgroup.
\item For $Q\geq4$, a proper ordinary $\PSL_2(Q)$-permutable subgroup of $\PSL_2(Q)$, if such an ordinary permutable subgroup exists, has order $2$.
\item For $Q\geq3$, $\PSU_3(Q)$ has no proper nontrivial hereditarily $\PSU_3(Q)$-permutable subgroup.
\end{enumerate}
\end{theorem}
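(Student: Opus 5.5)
These three statements are quoted from the literature rather than proved from scratch. The plan is to identify each item with a published result and check that the hypotheses match the form in which they will be used.

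For item (1), I would cite the Galt--Tyutyanov theorem \cite[Proposition 1]{GT2023}. It states that for $Q\geq4$ the simple group $\PSL_2(Q)$ has no proper nontrivial hereditarily $\PSL_2(Q)$-permutable subgroup. The only point to check is the range of $Q$. For $Q\in\{2,3\}$ the group is not simple, and the normal Klein four subgroup of $A_4$ (respectively the normal subgroup $C_3$ of $S_3$) is a genuine exception. This explains the hypothesis $Q\geq4$. The cases $Q\in\{4,5,9\}$ coincide with $A_5$ and $A_6$, which are also covered by the alternating-group result \cite{VT2012}, so they give a consistency check.

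For item (2), I would quote the ordinary $G$-permutability classification for $\PSL_2(Q)$. This is obtained by Galt and Tyutyanov in their treatment of $\PSL_2(q)$ \cite{GT2022,GT2023}, and also by \cite{ZhangOrdinaryPreprint}. If a direct argument is preferred, it runs as follows.

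\begin{itemize}
\item Let $A$ be a proper ordinary permutable subgroup. By Lemma \ref{lem:ordinary-maximal-product}, for each maximal subgroup $M$ either $A$ lies in a conjugate of $M$ or $G=AM^x$.
\item The factorizations of $\PSL_2(Q)$ are known from Liebeck--Praeger--Saxl and \cite{LiXia2022}. Testing $M$ against the Borel subgroup and against the two dihedral torus normalizers forces $A$ into the intersection of conjugates of several of these. This bounds $|A|$ by a divisor of $\gcd(Q(Q-1),2(Q\pm1))$, as in Step 2 of Lemma \ref{lem:pgl2-borel}.
\item Elements of odd order and cyclic subgroups of order $4$ are then excluded by the same small-subgroup tests against $S_4$- and $A_4$-type subgroups.
\item This leaves order $2$. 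The statement is phrased with ``if such a subgroup exists'', so order $2$ need not be shown to occur.
\end{itemize}

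For item (3), I would cite Galt and Tyutyanov \cite[Theorem 2 and Section 4]{GT2025}, which treats $\PSU_3(q)$ for all $q$ where the group is simple, namely $q\geq3$. As a sanity check, $\PSU_3(2)$ is solvable of shape $3^2{:}Q_8$, so its normal $3^2$ is a genuine exception. This justifies the hypothesis $Q\geq3$.

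The main obstacle is not any argument but a bookkeeping issue: confirming that the cited statements are exactly in the hereditary form of Definition \ref{def:g-permutable}, with conjugating elements in $\gen{A,B}$ by Lemma \ref{lem:generated-hereditary-formulation}. One should also make sure that item (2) is quoted for ordinary permutability rather than the hereditary version, since the later unitary argument uses ordinary permutability. I would therefore state precisely which version each reference proves and, where needed, add the short derivation of (2) sketched above.
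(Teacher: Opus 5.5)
For items (1) and (3) your plan matches the paper's. Both items are pure citations, of \cite[Proposition~1]{GT2023} and \cite[Theorem~2]{GT2025}. Your explanations of why the ranges $Q\geq4$ and $Q\geq3$ are needed are correct.

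Item (2), however, is not established by what you wrote. The references you give for it do not contain the ordinary classification for $\PSL_2(Q)$:
\begin{itemize}
\item \cite{GT2023} provides the hereditary statement, which is your item (1).
\item \cite{GT2022} only describes the order-$2$ examples in $A_5$.
\item The unitary preprint \cite{ZhangOrdinaryPreprint} cannot be the source either. Its conclusion, that there is no proper nontrivial ordinary permutable subgroup, fails for $\PSL_2(4)\cong A_5$, where an involution generates an ordinarily permutable subgroup.
\end{itemize}
The paper quotes \cite[Theorem~1]{GT2025} for $Q>4$. It handles $Q=4$ through $A_5$, or equivalently through $\PSL_2(4)\cong\PSL_2(5)$.

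Your fallback sketch does not fill this gap.
\begin{itemize}
\item Lemma~\ref{lem:ordinary-maximal-product} gives containment \emph{or} a factorization. $\PSL_2(Q)$ has many subgroups transitive on the projective line: the cyclic torus of order $Q+1$ for even $Q$, $A_4<\PSL_2(5)$, $S_4<\PSL_2(7)$, $A_5<\PSL_2(11)$. So testing against the Borel subgroup does not put $A$ inside a Borel subgroup. The same problem arises with the dihedral torus normalizers $D_\pm$.
\item The order estimate you borrow from Step~2 of Lemma~\ref{lem:pgl2-borel} relied on the prior containment $C\leq B_E$, which made $CD_-^x$ proper by counting. It also relied on the maximality of $D_\pm$, which holds only for $q>11$. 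Neither is available for an arbitrary ordinarily permutable $A$.
\end{itemize}
Eliminating exactly these factorization cases is the content of \cite[Theorem~1]{GT2025}. You should cite that result, or carry out a genuine case analysis along the factorization list, rather than rely on the sketch.
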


\begin{proof}
The nonexistence of proper nontrivial hereditarily $\PSL_2(Q)$-permutable subgroups is proved in \cite[Proposition 1]{GT2023}. For $Q>4$, the conclusion about subgroups of order two for ordinary $\PSL_2(Q)$-permutable subgroups is \cite[Theorem 1]{GT2025}. For $Q=4$, we use $\PSL_2(4)\cong A_5$, for which the examples of subgroups of order two are described in \cite[Example 1]{GT2022}. The nonexistence of proper nontrivial hereditarily permutable subgroups of $\PSU_3(Q)$ for $Q\geq3$ is proved in \cite[Theorem 2]{GT2025}. (The inheritance of hereditary permutability by quotient images is \cite[Lemma 5]{GT2025}. Lemma \ref{lem:quotient-inheritance} provides the short proof we use.)
\end{proof}

We also use results on self-polar simplex stabilizers, Singer normalizers, permutation groups, and factorizations with references stated where each result is used.

\emph{Stabilizers of self-polar simplices.} For a self-polar simplex in the projective unitary group, let $D$ be its projective diagonal unitary torus and let $S_m$ permute its lines. By Dye's theorem \cite[Theorem 5]{Dye1985}, the stabilizer of this simplex is maximal and has the form $D:S_m$.

\emph{Singer normalizers.} Consider a unitary Singer torus corresponding to a partition with one part in $\PSU_r(q)$, where $r\geq5$ is an odd prime and $(r,q+1)=1$. This torus has order $(q^r+1)/(q+1)$. Its normalizer in $\PSU_r(q)$, modulo this torus, has order $r$. By Bereczky's theorem \cite[Main Theorem]{Bereczky2000}, the maximal overgroups of this torus are the Singer normalizers, apart from the single boundary case $\PSU_5(2)$. That theorem rests on the analysis of primitive prime divisors by Guralnick, Penttila, Praeger and Saxl \cite[Main Theorem]{GPPS1999}. (For $r=3$ the base case in dimension three is treated separately in Theorem \ref{thm:unitary-low-bases}.)

\emph{Permutation groups.} We use Burnside's theorem \cite[Theorem 3.5B]{DixonMortimer1996} on transitive groups of prime degree. We also use the theorems of Livingstone and Wagner \cite[Theorem 2]{LivingstoneWagner1965} and Kantor \cite[Theorem 1]{Kantor1972} on $k$-homogeneous permutation groups. Further results are the transitive groups of degree seven from Butler and McKay \cite[Table 7A]{ButlerMcKay1983} and the theorem of Yang and Galt \cite[Theorem 2]{YG2023} on $G$-permutable subgroups of $A_p$ for $p$ prime. For subset actions and setwise stabilizers we follow Dixon and Mortimer \cite[\S2.1]{DixonMortimer1996}.

\emph{Factorization classifications.} We use Li and Xia \cite{LiXia2022} for factorizations with a solvable factor and Li, Wang, and Xia \cite{LiWangXia2024} for the factorizations of the finite classical groups, in each case only after permutability has produced the corresponding factorization.

\begin{remark}
We consider the permutation image of $A$ on the lines of a self-polar simplex. In this argument, we use the \emph{ordinary} theorem of Yang and Galt on $A_p$-permutable subgroups of alternating groups of prime degree. The theorem of Yang and Galt applies to the intersection of the permutation image of $A$ with the alternating group of prime degree. Indeed, Lemma \ref{lem:unitary-alt-transfer} establishes ordinary $A_p$-permutability of this intersection. The theorem on hereditary permutability in alternating groups is used in the proof of the main theorem, after the four classical families have been treated.
\end{remark}

\subsection{Subgroups of a diagonal torus}

\begin{lemma}\label{lem:unitary-weyl-normalization}
Let $E=\bar D\rtimes S_m$ be the quotient of a self-polar simplex stabilizer over $\F_{Q^2}$ by the kernel of its projective monomial action on the unitary space spanned by the simplex lines, with $m\geq3$. Let $\bar A\leq\bar D$ be hereditarily $E$-permutable. Then $\bar A$ is normalized by the group $S_m$ acting by permutations of the coordinate lines.
\end{lemma}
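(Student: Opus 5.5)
We must show that $\bar A^\sigma=\bar A$ for every $\sigma\in S_m$. Since $S_m$ is generated by transpositions, it suffices to prove that $\bar A$ is normalized by each transposition $t=(i\,j)$, realised as a monomial element of $E$ swapping the lines $L_i,L_j$.

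\textbf{Step 1: produce a subgroup that already normalizes $\bar A$.} Fix a transposition $t$ and let $T=\gen{t}$. By Lemma~\ref{lem:generated-hereditary-formulation} there is $x\in\gen{\bar A,T}$ with $\bar A T^x=T^x\bar A$. Because $\bar D\unlhd E$ and $\bar A\leq\bar D$, we have $\gen{\bar A,T}\leq\bar D\gen{t}$. Hence we may write $x=dt^\epsilon$ with $d\in\bar D$, and then $T^x=\gen{t^d}$. Thus $t'=t^d=d^{-1}tdt\cdot t$ is again an element of $\bar D t$, and it is an involution swapping $L_i$ and $L_j$. The set $\bar A\gen{t'}$ is a subgroup.

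\textbf{Step 2: intersect with $\bar D$.} Set $K=\bar A\gen{t'}$. Its intersection with $\bar D$ is
$$
K\cap\bar D=\bar A(\gen{t'}\cap\bar D)=\bar A,
$$
since $t'\notin\bar D$ (it induces a nontrivial permutation). Moreover $K\cap\bar D$ is normal in $K$, because $\bar D\unlhd E$. So $t'$ normalizes $\bar A$.

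\textbf{Step 3: pass from $t'$ to $t$.} Write $t'=et$ with $e\in\bar D$. Since $\bar D$ is abelian, conjugation by $e$ is trivial on $\bar A\leq\bar D$. Therefore conjugation by $t$ agrees with conjugation by $t'$ on $\bar A$, and $\bar A^t=\bar A$. Running over all transpositions gives $\bar A^{S_m}=\bar A$.

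\textbf{Main obstacle.} The delicate point is ensuring that the conjugating element stays in $\bar D\gen{t}$. This is exactly what the hereditary formulation of Lemma~\ref{lem:generated-hereditary-formulation} provides; ordinary permutability would only give some $x\in E$, and then $T^x$ could be a transposition on a different pair of indices.

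Even in that weaker case the argument still shows that $\bar A$ is normalized by \emph{some} transposition. For the conclusion we need each specific transposition, so the argument rests on the hereditary choice $x\in\gen{\bar A,T}$. We also need to check that the monomial lift of $t$ genuinely lies in $E$ (a projective unitary monomial swap exists, adjusting by a diagonal scalar of norm $-1$ if necessary). Any such lift differs from another by an element of $\bar D$, which by Step 3 does not affect the conclusion.
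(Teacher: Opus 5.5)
Your proof is correct and follows essentially the paper's argument: the generated formulation of hereditary permutability produces a $\bar D$-translate $t'$ of an involution lifting $(i\,j)$ that permutes with $\bar A$, so $t'$ normalizes $\bar A$ (you use Dedekind's law with $\bar D\unlhd E$, the paper a coset comparison), and this transfers to $t$ because $\bar D$ is abelian. The one point to tidy is the lift. Step 2 needs it to be an involution, which the semidirect-product complement in the statement supplies; the paper instead builds $w$ by swapping $L_i,L_j$ and multiplying a third line by $-1$, which is where $m\geq3$ enters. Your parenthetical about a diagonal entry of ``norm $-1$'' is not a unitary adjustment, and the closing remark that any lift works is too casual, since a non-involutory lift makes $\gen{t'}\cap\bar D$ nontrivial.
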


\begin{proof}
It suffices to show that a transposition $(ij)$ normalizes $\bar A$. As $m \geq 3$, choose a third coordinate $k$. On the three corresponding anisotropic lines take a monomial unitary element $w$. This element swaps the $i$-line and $j$-line, multiplies the $k$-line by $-1$, and fixes the remaining coordinate lines. In odd characteristic the swap has determinant $-1$ and the extra factor $-1$ restores determinant $1$. In characteristic $2$ the swap has determinant $1$. The element $w$ represents an element of the projective special unitary group with $w^2=1$ inducing $(ij)$.

In this paragraph, $E=\bar D\rtimes\gen{w}$ denotes the subgroup generated by $\bar D$ and $w$ inside $\bar D\rtimes S_m$. By hereditary permutability, $\bar A$ permutes with a conjugate of $\gen{w}$ in $E$. Write a generator of the chosen conjugate of $\gen{w}$ as $y=tw$ with $t\in\bar D$. Since $\bar A\gen{y}$ is a subgroup, for each $a\in\bar A$ the element $ya$ equals $a'y$ for some $a'\in\bar A$ (compare the nontrivial coset of $\bar D$ in $E/\bar D$). Therefore, $y\bar A y^{-1}=\bar A$. As $t\in\bar D$ centralizes the diagonal subgroup $\bar A$, we get $w\bar Aw^{-1}=\bar A$. Since coordinate transpositions generate $S_m$, this proves the lemma.
\end{proof}

\begin{lemma}\label{lem:unitary-rank-two-ratio-bound}
In the notation of {\rm Lemma \ref{lem:unitary-weyl-normalization}}, represent an element of $\bar A$ on the coordinate lines by
$$
        \diag(\lambda_1,\ldots,\lambda_m),~ \lambda_i^{Q+1}=1.
$$
If $Q\geq3$, then $(\lambda_i/\lambda_j)^4=1$ for every pair $i,j$.
\end{lemma}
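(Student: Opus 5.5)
Fix a pair $i\neq j$ and reduce to rank two. Since $m\geq3$, pick a third index $k$. Let $F$ be the subgroup of $E$ generated by $\bar D$ and a suitable nondiagonal element $w$ built from a $2\times2$ unitary block on the nondegenerate plane $L_i\perp L_j$, corrected in the $k$-coordinate so that the determinant is $1$. By Lemma \ref{lem:overgroup-restriction}, $\bar A$ is hereditarily $F$-permutable.

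\textbf{Step 1: a small cyclic test group.} Apply hereditary permutability to a cyclic group $\gen{y}$, with $y=tw$ and $t\in\bar D$. As in the proof of Lemma \ref{lem:unitary-weyl-normalization}, if $\bar A\gen{y}$ is a subgroup, compare cosets of $\bar D$ to get $y\bar Ay^{-1}\subseteq\bar A\gen{y}\cap\bar D$. That intersection is $\bar A\gen{y^s}$, where $s$ is the order of $y$ modulo $\bar D$. So $\bar A\gen{y^s}$ is normalized by $y$.

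\textbf{Step 2: choice of $w$.} Take $w$ to act on $L_i\perp L_j$ as an element of $\SU_2(Q)\cong\SL_2(Q)$ whose image in $\PSL_2(Q)$ has order $3$ (or some order coprime to the ratios involved), such as a rotation mixing the two coordinates. Then no diagonal element other than those scalar on $L_i\perp L_j$ commutes with it up to $\bar D$. Conjugating $a=\diag(\lambda_1,\ldots,\lambda_m)$ by $w$ yields a matrix whose $\{i,j\}$-block is nondiagonal unless $\lambda_i=\pm\lambda_j$. Requiring that this conjugate still lies in $\bar A\gen{y^s}$ forces it to be monomial on the block. For a nonmonomial $w$ this gives $\lambda_i/\lambda_j=\pm1$. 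Since $(ij)$-normalization holds by Lemma \ref{lem:unitary-weyl-normalization}, the residual freedom is $\lambda_i/\lambda_j\in\{\pm1,\pm\sqrt{-1}\}$, which gives $(\lambda_i/\lambda_j)^4=1$.

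\textbf{Step 2 (alternative route).} Pass through the Levi quotient of the plane $L_i\perp L_j$: project $F$ onto the projective action on that plane. This gives a group $E'$ with $\PSL_2(Q)\cong\PSU_2(Q)\leq E'\leq\PGL_2(Q)$. The image of $\bar A$ is hereditarily $E'$-permutable by Lemma \ref{lem:quotient-inheritance}, and it lies in a nonsplit torus of order dividing $Q+1$, with the element $\diag(\lambda_i,\lambda_j)$ having projective order equal to the order of $\lambda_i/\lambda_j$. Then imitate Steps 2--3 of Lemma \ref{lem:pgl2-borel} with the roles of split and nonsplit tori exchanged. Permuting with the normalizer of the split torus $D_-$ (of order dividing $2(Q-1)$) gives a proper product when $Q$ is large, so the image lies in $N_{E'}(D_-^x)$ and its order divides $2(Q-1)$. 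Together with its order dividing $Q+1$, this yields order dividing $(Q+1,2(Q-1))$, which divides $4$. For the small values of $Q$ where $D_-$ is not maximal, appeal to Theorem \ref{thm:unitary-low-bases}(1) when the image lies in the socle, or check $\PGL_2(Q)$ for $Q\in\{3,4,5,7,9,11\}$ directly as in Appendix \ref{app:pgl2-small-certificate}. Note that $Q=3$ gives $(4,4)=4$, consistent with the bound.

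\textbf{Main obstacle.} The hard step is the Levi-projection bookkeeping. The plane $L_i\perp L_j$ is nondegenerate, not a parabolic subquotient, so we cannot quotient by a unipotent radical. Instead I would take $F$ to be the stabilizer of the decomposition $(L_i\perp L_j)\perp(L_i\perp L_j)^\perp$ and map it onto its projective action on the plane. The image of $\bar A$ must be checked to lie in a torus of order dividing $Q+1$ with kernel exactly the elements having $\lambda_i=\lambda_j$. I would also need to confirm the maximality of $N(D_-)$ in $\PSL_2(Q)$ via Dickson's theorem, and the order estimate $|C|\,|D_-|<|E'|$, for the relevant range of $Q$.
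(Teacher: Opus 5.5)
Your main route (Steps 1 and 2) does not work. The coset comparison in Lemma \ref{lem:unitary-weyl-normalization} depends on $w$ being monomial. In that case $w$ normalizes $\bar D$, $\bar D$ has index $2$ in $\bar D\rtimes\gen{w}$, every conjugate of $w$ lies in $\bar Dw$, and $y\bar Ay^{-1}=\bar A$ can be read off from cosets. For a nonmonomial $w$ mixing $L_i$ and $L_j$, none of this holds. $\bar D$ is not normal in $\gen{\bar D,w}$, conjugates of $w$ need not have the form $tw$, and ``the order of $y$ modulo $\bar D$'' has no meaning.

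More fundamentally, the existence of the subgroup $\bar A\gen{y}$ forces no normalization at all. For example, an involution $a$ inverting an element $y$ of order $3$ gives $\gen{a}\gen{y}=\gen{y}\gen{a}\cong S_3$, yet $y$ does not normalize $\gen{a}$. So nothing makes $waw^{-1}$ monomial on the block. Even if it were, a nonmonomial $w$ conjugates $\diag(\lambda_i,\lambda_j)$ to a diagonal matrix (also modulo scalars) only when $\lambda_i=\lambda_j$. The passage from $\pm1$ to $\{\pm1,\pm\sqrt{-1}\}$ ``by $(ij)$-normalization'' is a non sequitur.

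Your alternative route is sound, and it differs from the paper's. The paper uses Lemma \ref{lem:unitary-weyl-normalization} to form $\bar c=a^{-1}a^{(ij)}\in\bar A$, whose image $\diag(\delta,\delta^{-1})$ on $U=L_i\perp L_j$ lies in the socle $N=\PSU(U)$. Lemma \ref{lem:normal-intersection-transfer} then makes $\bar A_U\cap N$ \emph{ordinarily} $N$-permutable, and Theorem \ref{thm:unitary-low-bases}(2) bounds its order by $2$ for $Q>4$, giving $\delta^4=1$. The case $Q=4$ is handled by the $A_5$ argument, and $Q=3$ is trivial.

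You instead bound the whole image $C$ of $\bar A$ in $E'$, in two cases.
\begin{itemize}
\item If $C\leq S=\PSL_2(Q)$ (automatic for even $Q$), Theorem \ref{thm:unitary-low-bases}(1) gives $C=1$ for $Q\geq4$. You need this case for large $Q$ too, because the Dickson step needs $M\not\geq S$, and that only follows from $C\not\leq S$.
\item If $C\not\leq S$ and $Q\geq13$ is odd, the $D_-$ argument gives $|C|\mid(Q+1,2(Q-1))=(Q+1,4)$.
\end{itemize}
This avoids both the Weyl normalization and the ordinary-permutability classification, using only the hereditary $\PSL_2$ theorem and Dickson, but it costs you small cases. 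Appendix \ref{app:pgl2-small-certificate} does not supply them, since it treats subgroups of a point stabilizer rather than of the nonsplit torus. They are easy, though. For $Q\in\{5,7,9,11\}$, the only subgroup of the torus $C_{Q+1}$ that is not in $S$ and has order not dividing $4$ is $C_{Q+1}$ itself. It fails against a subgroup of prime order $r=5,3,3,5$ respectively, because no maximal subgroup of $\PGL_2(Q)$ other than $\PSL_2(Q)$ has order divisible by $r(Q+1)$.

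Finally, like the paper, you use the hereditary condition in the stabilizer $J_U$ of $U\perp U^\perp$ inside the full unitary group. Neither $J_U$ nor your nonmonomial $w$ lies in $\bar D\rtimes S_m$. This stronger reading of the hypothesis is necessary: $\bar D$ is normal, hence hereditarily permutable, in $\bar D\rtimes S_m$, so the literal hypothesis alone cannot give the bound.
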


\begin{proof}
We fix $i\neq j$ and let $s=(ij)$, which normalizes $\bar A$ by Lemma \ref{lem:unitary-weyl-normalization}. For $a\in\bar A$ let $\bar c=a^{-1}a^s\in\bar A$. On the unitary subspace of dimension two $U=L_i\perp L_j$, the image of $\bar c$ in $\PGU(U)$ is $\diag(\delta,\delta^{-1})$ with $\delta=\lambda_j\lambda_i^{-1}$. This image of $\bar c$ lies in $N=\PSU(U)\cong\PSL_2(Q)$ (using $\PSU_2(Q)\cong\PSL_2(Q)$ \cite[p. 66]{Wilson2009}), and the projective order of this image equals the multiplicative order of $\delta^2$.

Let $\bar A_U$ be the image of $\bar A$ in $\PGU(U)$. Let $J_U$ denote the stabilizer of the orthogonal decomposition $U\perp U^\perp$ used for the hereditary condition of $\bar{A}$. We restrict this condition to $J_U$. Let $K_U$ be the complementary unitary factor of $J_U$ on $U^\perp$, acting on $U^\perp$ and trivially on $U$. To study the action on $U$, we take the quotient $J_U/K_U$ and use its induced projective action $\rho_U:J_U/K_U\to\PGU(U)$ on $U$. By Lemma \ref{lem:quotient-inheritance}, the image $\bar A_U$ satisfies the hereditary permutability condition in the group $\rho_U(J_U/K_U)$ acting on the space $U$ of dimension two. Applying Lemma \ref{lem:normal-intersection-transfer} to $\bar A_U$ for $N\unlhd\PGU(U)$ shows that the subgroup $K=\bar A_U\cap N$ is ordinary $N$-permutable. If $Q>4$, then Theorem \ref{thm:unitary-low-bases} shows that $|K|\leq2$ when $K\neq1$. Since $K$ lies in the diagonal torus of $\PGU(U)$ determined by the two coordinate lines, $K$ is abelian, hence proper in $N$. Therefore, the image of $\bar c$ in $\PGU(U)$ has order at most $2$ and $\delta^4=1$. If $Q=4$, then $N\cong A_5$, and a nontrivial subgroup of this diagonal torus of $\PGU(U)$ has order $5$. Testing the nontrivial subgroup in the diagonal torus of $\PGU(U)$ against a subgroup of order $3$ would require a subgroup of order $15$ in $A_5$. Since $3\nmid4$, this subgroup would be cyclic and contain an element of order $15$. However, such an element does not exist in $A_5$. So $\bar c=1$ and $\delta^4=1$. If $Q=3$, then the scalars of norm one have order $Q+1=4$, so $\delta^4=1$ directly.
\end{proof}

\begin{lemma}\label{lem:unitary-rank-three-toral-collapse}
In the notation of {\rm Lemma \ref{lem:unitary-weyl-normalization}}, the projective image of $\bar A$ on the span of any three coordinate lines is trivial.
\end{lemma}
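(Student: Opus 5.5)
We would prove Lemma \ref{lem:unitary-rank-three-toral-collapse} by restricting to a rank-three piece and testing there against a non-toral subgroup, in the same spirit as the proof of Lemma \ref{lem:unitary-rank-two-ratio-bound}.

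\emph{Reduction to dimension three.} Fix three coordinate lines $L_i,L_j,L_k$ and put $U=L_i\perp L_j\perp L_k$. Let $J_U$ be the stabilizer of the decomposition $U\perp U^\perp$ inside the simplex-stabilizer quotient $E$, and project through the action on $U$ exactly as in Lemma \ref{lem:unitary-rank-two-ratio-bound}. By Lemma \ref{lem:overgroup-restriction} and Lemma \ref{lem:quotient-inheritance}, the image $\bar A_U\leq \bar D_U$ is hereditarily permutable in the image $E_U=\bar D_U\rtimes S_3$ of $J_U$. Lemma \ref{lem:unitary-weyl-normalization} then applies with $m=3$, so $\bar A_U$ is $S_3$-invariant. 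If $Q\geq3$, Lemma \ref{lem:unitary-rank-two-ratio-bound} gives $(\lambda_a/\lambda_b)^4=1$. Hence $\bar A_U$ is a small $S_3$-stable subgroup of the projective torus, of exponent dividing $4$ and order dividing $16$. For $Q=2$ the torus $\bar D_U$ is itself a $3$-group of order $9$, which can be listed directly.

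\emph{Killing the remaining candidates.} Working inside $\bar D_U\rtimes S_3$ alone will not suffice: for example, the subgroup of elements $\diag(\pm1,\pm1,\pm1)$ is normal there and permutes with everything. So we would enlarge the ambient group. By Lemma \ref{lem:overgroup-restriction}, $\bar A_U$ is hereditarily permutable in every overgroup inside the projective image of $\Stab(U\perp U^\perp)$, and this image contains $\PSU(U)\cong\PSU_3(Q)$ together with the diagonal outer part.

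Take $N=\PSU(U)$, which is normal in this image. By Lemma \ref{lem:normal-intersection-transfer}, $K=\bar A_U\cap N$ is ordinarily $N$-permutable. For $Q\geq3$, Theorem \ref{thm:unitary-low-bases}(3) concerns only the hereditary property, so a cleaner route is the following. Note that $\bar A_U\leq N_{\PGU(U)}(\bar D_U)$, and consider $E'=\gen{\bar A_U,\PSU(U)}$, which is almost simple with socle $\PSU_3(Q)$. Then $\bar A_U$ is hereditarily $E'$-permutable. The quotient of $E'$ by $\PSU(U)$ is cyclic, so $\bar A_U\cap\PSU(U)$ inherits hereditary permutability inside $\PSU(U)$ via Lemma \ref{lem:overgroup-restriction}. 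Theorem \ref{thm:unitary-low-bases}(3) then forces $\bar A_U\cap\PSU(U)=1$.

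\emph{The outer part, which we expect to be the main obstacle.} It remains to show that $\bar A_U$ meets $\PGU(U)\setminus\PSU(U)$ trivially. Such a subgroup would be a nontrivial cyclic diagonal group of order dividing $(3,Q+1)$, and it would be $S_3$-invariant. The natural candidate is generated by the image of $\diag(1,\omega,\omega^2)$ with $\omega^3=1$.

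To exclude it, we would test it against a root subgroup $X$ of a Borel subgroup of $\PSU(U)$, or against a Singer-type cyclic subgroup of order $(Q^2-Q+1)/(3,Q+1)$. For the chosen test group we would show that no conjugate by an element of $\gen{\bar A_U,X}$ yields a subgroup $\bar A_UX^x$ of order $3|X|$. This would follow because such a subgroup would have to be normalized in a way incompatible with the maximal subgroups of $\PGU_3(Q)$ containing $X$.

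Finally, for the small fields $Q\in\{2,3\}$ we would either check directly or embed the computation into $\PSU_3(3)$ and $\PSU_3(2)\cong 3^2{:}Q_8$. There we again use Lemma \ref{lem:unitary-weyl-normalization} together with a test subgroup of order $2$ or $4$ that does not normalize the torus.
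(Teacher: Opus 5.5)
\textbf{There is a genuine gap in your second and third steps.}

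\emph{The invalid inference.} You claim that $\bar A_U\cap\PSU(U)$ becomes hereditarily $\PSU(U)$-permutable ``via Lemma~\ref{lem:overgroup-restriction}''. That does not follow. The lemma only passes hereditary permutability of $\bar A_U$ \emph{itself} down to groups lying between $\bar A_U$ and $E'$. The group $\PSU(U)$ is such a group only if $\bar A_U\le\PSU(U)$. For the intersection of $\bar A_U$ with a normal subgroup, the paper only gives ordinary permutability (Lemma~\ref{lem:normal-intersection-transfer}). As you note yourself, ordinary permutability does not match Theorem~\ref{thm:unitary-low-bases}(3), and cyclicity of $E'/\PSU(U)$ does not change this.

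\emph{The unproved step.} Your treatment of the ``outer part'' is not an argument. You never fix the test subgroup, and the claimed incompatibility with maximal subgroups of $\PGU_3(Q)$ is asserted, not shown.

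\emph{A smaller point.} Lemma~\ref{lem:overgroup-restriction} cannot enlarge the ambient group. Your observation that $\diag(\pm1,\pm1,\pm1)$ is normal in $\bar D_U\rtimes S_3$ is correct, and it shows what you must do instead. Start from hereditary permutability in a projective unitary group containing $\PSU_m(Q)$, restrict to the stabilizer of $U\perp U^\perp$ there, and project to $U$.

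\emph{The missing idea is already in your first step.} For $Q\ge3$, Lemma~\ref{lem:unitary-rank-two-ratio-bound} makes every coordinate ratio have order dividing $4$, so $\bar A_U$ is a $2$-group. Since $\PGU_3(Q)/\PSU_3(Q)\cong C_{(3,Q+1)}$ has odd order, it follows that $\bar A_U\le\PSU(U)$ automatically. Restriction then applies to $\bar A_U$ itself. Being abelian, $\bar A_U$ is proper, and Theorem~\ref{thm:unitary-low-bases}(3) gives $\bar A_U=1$. This is exactly the paper's proof.

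In particular, the ``outer part'' you flagged as the main obstacle never arises. Your candidate $\diag(1,\omega,\omega^2)$ already violates the ratio bound, since $\omega^4=\omega\neq1$. The case $Q=3$ needs no special treatment.

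\emph{The case $Q=2$.} This is the only special case, because the ratio bound and the $\PSU_3$ theorem both require $Q\ge3$. The paper settles it by a direct computation over the five nontrivial subgroups of the order-$9$ projective diagonal torus (Appendix~\ref{app:pgu3-2-toral-certificate}). That check must be carried out in $\PGU_3(2)$, not just in $\PSU_3(2)\cong3^2{:}Q_8$, because this torus meets $\PSU_3(2)$ only in a subgroup of order $3$.
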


\begin{proof}
If $Q=2$, then we consider each nontrivial subgroup of the projective diagonal torus of $\PGU_3(2)$. A direct computation in $\PGU_3(2)$ finds a cyclic subgroup $H$ for each chosen toral subgroup. For this $H$, no conjugate of $H$ permutes with that toral subgroup inside the group generated by $H$ and the toral subgroup (Appendix \ref{app:pgu3-2-toral-certificate}). Hence no such toral subgroup is hereditarily $\PGU_3(2)$-permutable, and the projective image of $\bar{A}$ on the span of the three coordinate lines is trivial. Therefore, we may assume that $Q\geq3$.

By Lemma \ref{lem:unitary-rank-two-ratio-bound}, the projective image of $\bar A$ on the span of any three coordinate lines is a $2$-group. By \cite[p. 66]{Wilson2009}, for a unitary space of dimension three over $\F_{Q^2}$,
$$
        \PGU_3(Q)/\PSU_3(Q)\cong C_{(3,Q+1)}
$$
(the quotient being trivial when $(3,Q+1)=1$), so the diagonal $2$-group acting on the three coordinates has trivial image in this quotient and lies in $\PSU_3(Q)$. By Lemma \ref{lem:overgroup-restriction} the projective image of $\bar A$ on the span of the three coordinate lines is hereditarily $\PSU_3(Q)$-permutable in $\PSU_3(Q)$. Since this image is a diagonal $2$-group, it is proper. Hence the result for $\PSU_3$ in Theorem \ref{thm:unitary-low-bases} shows that this image on three coordinates is trivial.
\end{proof}

\begin{proposition}\label{prop:unitary-toral}
Let $A$ be a hereditarily permutable subgroup contained in the projective diagonal torus of a self-polar simplex stabilizer $\Delta=D:S_m$ over $\F_{Q^2}$, with $m\geq3$. Then $A=1$.
\end{proposition}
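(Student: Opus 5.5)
The proof will be almost immediate from Lemma \ref{lem:unitary-rank-three-toral-collapse}, once we pass to the setting of Lemma \ref{lem:unitary-weyl-normalization}. Let $E=\bar D\rtimes S_m$ be the quotient of $\Delta$ by the kernel of its projective monomial action on the space spanned by the simplex lines. Let $\bar A$ be the image of $A$ in $E$. By Lemma \ref{lem:quotient-inheritance}, $\bar A$ is hereditarily $E$-permutable, and $\bar A\leq\bar D$. So Lemmas \ref{lem:unitary-weyl-normalization}--\ref{lem:unitary-rank-three-toral-collapse} apply to $\bar A$. Here $A$ sits inside the projective diagonal torus, and the kernel of the projective monomial action meets this torus only in scalars, which are trivial projectively. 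Hence $A\cong\bar A$, and it suffices to show $\bar A=1$.

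Take any $a\in\bar A$, represented by $\diag(\lambda_1,\ldots,\lambda_m)$. Fix a pair $i\neq j$. Since $m\geq3$, we can choose a third index $k$. By Lemma \ref{lem:unitary-rank-three-toral-collapse}, the projective image of $a$ on $L_i\perp L_j\perp L_k$ is trivial. That is, $\diag(\lambda_i,\lambda_j,\lambda_k)$ is scalar, so $\lambda_i=\lambda_j$. As the pair $i,j$ was arbitrary, all $\lambda_i$ coincide. Then $a$ is scalar on the whole space and hence trivial in the projective group. Therefore $\bar A=1$ and $A=1$.

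The one point needing care is bookkeeping rather than mathematics. The hereditary condition must be applied in the right ambient group. Lemma \ref{lem:unitary-rank-three-toral-collapse} is stated for the image of $\bar A$ on three coordinates, and that image is obtained inside $E$ through a further stabilizer and quotient, as in the proof of Lemma \ref{lem:unitary-rank-two-ratio-bound}. That lemma already handles this reduction. So I only need to check that the hypothesis ``hereditarily permutable in $\Delta$'' matches ``hereditarily $E$-permutable'' in Lemma \ref{lem:unitary-weyl-normalization}. The quotient map provides exactly this, via Lemma \ref{lem:quotient-inheritance}. If the intended hypothesis is instead hereditary permutability in the ambient $\PSU_n$, I would first restrict to $\Delta$ by Lemma \ref{lem:overgroup-restriction}.
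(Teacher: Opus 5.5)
Your core step is the paper's own: apply Lemma~\ref{lem:unitary-rank-three-toral-collapse} to every triple of coordinates, conclude that all $\lambda_i$ coincide, so $\bar A$ is projectively scalar and $A=1$. The problem is in your bookkeeping paragraph, and it is not cosmetic. You claim two things: that hereditary permutability in $\Delta$ (equivalently, after Lemma~\ref{lem:quotient-inheritance}, in $\bar D\rtimes S_m$) suffices, and that an ambient hypothesis should first be restricted to $\Delta$ via Lemma~\ref{lem:overgroup-restriction}. Under that weaker hypothesis the proposition is false. Indeed $D\unlhd\Delta$, so $D$ is normal in every intermediate group and hence hereditarily $\Delta$-permutable. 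Moreover $D\neq1$ for $m\geq3$; for instance $|D|=9$ in $\PGU_3(2)$ (Appendix~\ref{app:pgu3-2-toral-certificate}). Similarly, if you only assume hereditary $(\bar D\rtimes S_m)$-permutability, then $\bar A=\bar D$ violates the conclusion $(\lambda_i/\lambda_j)^4=1$ of Lemma~\ref{lem:unitary-rank-two-ratio-bound} once $Q+1\geq5$.

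The reason is that the proofs of Lemmas~\ref{lem:unitary-rank-two-ratio-bound} and~\ref{lem:unitary-rank-three-toral-collapse} restrict the hereditary condition to the stabilizer $J_U$ of $U\perp U^\perp$ in the \emph{ambient} unitary group. They then pass to $\PSU(U)\cong\PSL_2(Q)$, or to $\PSU_3(Q)$ on three coordinate lines. None of these groups is monomial, so none lies in $\Delta$. In particular, contrary to what you write, that stabilizer is not obtained ``inside $E$'' for your $E=\bar D\rtimes S_m$.

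You must therefore keep $A$ hereditarily permutable in the ambient group $E$ with $\PSU_m(Q)\leq E\leq\PGU_m(Q)$ (or in $G=\PSU_n(q)$). This is exactly how the proposition is used in Theorem~\ref{thm:unitary-simplex-image} and in the odd-dimensional propositions. Then invoke the three-coordinate lemma from that hypothesis. Lemma~\ref{lem:unitary-weyl-normalization} still follows, by restricting to $\bar D\rtimes\gen{w}$. The paper's phrase ``in the notation of Lemma~\ref{lem:unitary-weyl-normalization}'' is itself loose on this point, but its proofs use the ambient group. With that correction your argument coincides with the paper's.
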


\begin{proof}
We let $\bar A\leq\bar D$ be the image of $A$ under the projective monomial action on the space spanned by the $m$ simplex lines. By Lemma \ref{lem:unitary-weyl-normalization} the image $\bar A$ is normalized by $S_m$, and by Lemma \ref{lem:unitary-rank-three-toral-collapse} the projective image of $\bar A$ on the span of any three coordinate lines is trivial. Hence all coordinate ratios in $\bar A$ are trivial and $\bar A=1$. Projectively, $A=1$.
\end{proof}

\begin{remark}
The diagonal group $\bar D$ being abelian does not by itself show that the group $S_m$ acting by permutations of the coordinate lines normalizes $\bar A$. We prove this normalization by testing hereditary permutability of $\bar A$ against the cyclic subgroup generated by $w$ inside $\bar D\rtimes\gen{w}$. The involution $w$ constructed in the proof of Lemma \ref{lem:unitary-weyl-normalization} swaps the two chosen coordinate lines. Since $\bar A$ is diagonal, the conjugation calculation in this semidirect product shows that $w$ normalizes $\bar A$.
\end{remark}
\subsection{Unipotent subgroups fixing a nondegenerate line}

\begin{lemma}\label{lem:unitary-singer-line-orbit-stabilizer}
Let $F=\F_Q$, $E=\F_{Q^u}$, and let $\lambda\in E^\times$ have prime order $s$ with $\ord_s(Q)=u$. If an $F$-linear map $T\in\GL_F(E)$ stabilizes every line $F\lambda^i$ for $0\leq i<s$, then $T$ is scalar over $F$.
\end{lemma}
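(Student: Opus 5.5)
The plan is to translate the statement into one about linear relations among the powers $\lambda^i$, and then show that such relations would be forced to form a subspace invariant under too large an algebra. Write $T(\lambda^i)=c_i\lambda^i$ with $c_i\in F$ for $0\leq i<s$. Since $\lambda\neq1$ has prime order $s$, the powers $\lambda^0,\ldots,\lambda^{s-1}$ are all the elements of $\gen{\lambda}$. Since $\ord_s(Q)=u$, the field $F(\lambda)$ is all of $E=\F_{Q^u}$, because $\lambda\in\F_{Q^k}$ iff $s\mid Q^k-1$ iff $u\mid k$. Hence these powers span $E$ over $F$, and the minimal polynomial $m(x)$ of $\lambda$ over $F$ has degree $u$. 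Because $u\mid s-1$, we have $u\leq s-1$.

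The first step sets up an ideal of relations. Let $R=F[x]/(x^s-1)$, with basis $x^0,\ldots,x^{s-1}$, and consider the evaluation map $\varepsilon:R\to E$, $x\mapsto\lambda$. It is well defined because $\lambda^s=1$, and it is surjective. Its kernel $K$ is an ideal of $R$, so it is invariant under the cyclic shift $\sigma$ (multiplication by $x$). Moreover $\dim K=s-u$, which satisfies $0<s-u<s$, so $K$ is a nonzero proper subspace of $R$.

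Next I would use $T$ to obtain a splitting of $K$. For $a\in F$ put $I_a=\{i:c_i=a\}$ and $W_a=\varepsilon(R_{I_a})$, where $R_I$ is the coordinate subspace spanned by the $x^i$ with $i\in I$. Each $W_a$ lies in the $a$-eigenspace of $T$. Since eigenspaces for distinct eigenvalues are independent and the $W_a$ together span $E$, we get $E=\bigoplus_a W_a$. Now take $f\in K$ and write $f=\sum_a f_a$ with $f_a\in R_{I_a}$. Then $0=\varepsilon(f)=\sum_a\varepsilon(f_a)$ with $\varepsilon(f_a)\in W_a$, and directness forces $\varepsilon(f_a)=0$ for every $a$. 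Thus $K$ is invariant under every coordinate projection $\pi_{I_a}$.

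Suppose, for a contradiction, that $T$ is not scalar. Then some $I=I_a$ is a proper nonempty subset of $\Z/s$. Since $K$ is invariant under $\sigma$, it is also invariant under $\sigma^{-j}\pi_I\sigma^j=\pi_{I+j}$ for every $j$. Because $s$ is prime and $I$ is proper and nonempty, no nonzero translation fixes $I$, so the translates $I+j$ separate the points of $\Z/s$. Indeed, if $i\neq i'$ lay in exactly the same translates, then $I+(i-i')=I$. Taking products of the commuting projections $\pi_{I+j}$ and $1-\pi_{I+j}$ therefore yields every rank-one coordinate projection $\pi_{\{i\}}$. Together with the shift $\sigma$, these generate all matrix units and hence all of $\operatorname{End}_F(R)$. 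That algebra leaves invariant only $0$ and $R$, contradicting $0<\dim K<s$. Hence every $c_i$ is equal, and $T$ is scalar.

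The step I expect to need the most care is the last one, namely checking that the translates of $I$ separate points. This is exactly where the primality of $s$ is used, since for composite $s$ a proper subset can be stabilized by a nonzero translation. The step that carries the whole argument is the earlier passage from the direct sum $E=\bigoplus_a W_a$ to the invariance of $K$ under the coordinate projections $\pi_{I_a}$.
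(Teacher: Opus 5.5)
Your proof is correct, and it takes a genuinely different route from the paper's.

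\textbf{The paper's route.} The paper writes $T$ as a linearized polynomial $\sum_{k<u}a_kX^{Q^k}$. It turns the eigenvector conditions into an identity $\sum_k a_k\chi_{e_k}=c$ of functions on $\Z/s$, with $e_k=Q^k-1$. Applying the Frobenius gives $Q\Sigma=\Sigma$ for the support $\Sigma=\{e_k:a_k\neq0\}$. The argument then concludes because every nonzero $\gen{Q}$-orbit in $\Z/s$ has size $u$, while $\Sigma\setminus\{0\}$ has at most $u-1$ elements.

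\textbf{Your route.} You work with the ideal $K$ of linear relations among the $\lambda^i$ in $F[x]/(x^s-1)$. Independence of eigenspaces makes $K$ stable under $\pi_I$, and shift-invariance moves this to all translates. Note that $\sigma^{-j}\pi_I\sigma^j=\pi_{I-j}$ rather than $\pi_{I+j}$; this is harmless, since $j$ runs over $\Z/s$. Primality of $s$ makes the translates separate points, which is exactly where you said it would be used.

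\textbf{A shortcut at the end.} Once $K$ is stable under every $\pi_{\{i\}}$, any nonzero $f\in K$ yields $cx^i\in K$ with $c\neq0$. That would mean $\lambda^i=0$, which is absurd. So you only need $K\neq0$, i.e.\ $u<s$. The bound $\dim K<s$ and the generation of $\operatorname{End}_F(R)$ by matrix units are not needed.

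\textbf{What each approach buys.} Your argument never uses the Frobenius, linearized polynomials, or independence of characters. The hypothesis $\ord_s(Q)=u$ enters only to give $F(\lambda)=E$ and $u\leq s-1$. It therefore proves a more general fact. Let $s$ be prime, let $g$ be linear with $g^s=1$, and suppose the orbit vectors $g^iv$ span the space and are linearly dependent. Then any linear map stabilizing every line $Fg^iv$ is scalar. The paper's route is tied to the field structure: it uses $\ord_s(Q)=u$ essentially in the orbit count, and it identifies $T=a_0I$ directly from its coefficients.
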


\begin{proof}
Write $T$ as a linearized polynomial $T(X)=\sum_{k=0}^{u-1}a_kX^{Q^k}$ with $a_k\in E$. The condition that $T$ stabilizes the line $F\lambda^m$ means that $T(\lambda^m)=c_m\lambda^m$ with $c_m\in F$, and since $\lambda^s=1$ the scalar $c_m$ depends only on $m\pmod s$. Dividing by $\lambda^m$, we have, for every $m\in\Z/s$,
$$
        \sum_{k=0}^{u-1}a_k\,\lambda^{me_k}=c_m,~ e_k:=Q^k-1\!\!\pmod s .
$$
If $e_j=e_k$ with $j<k$, then $Q^{k-j}\equiv1\pmod s$. This congruence, together with $\ord_s(Q)=u$, forces $u\mid k-j$. Therefore, the residues $e_0=0,e_1,\dots,e_{u-1}$ are pairwise distinct. We view each side of $\sum_{k=0}^{u-1}a_k\,\lambda^{me_k}=c_m$ as a function $\Z/s\to E$ and expand both functions in the characters $\chi_e\colon m\mapsto\lambda^{me}$ $(e\in\Z/s)$. Write $p=\operatorname{char}F$ and let $\mu_s$ be the group of $s$th roots of unity. Since $s\neq p$ and $\mu_s\subseteq E^\times$, these characters are linearly independent over $E$. The identity reads $\sum_k a_k\chi_{e_k}=c$, where $c$ is the function $m\mapsto c_m$ on $\Z/s$. The set of exponents with nonzero coefficients is $\Sigma:=\{e_k:a_k\neq0\}$.

We now use the Frobenius map $x\mapsto x^Q$ on the character identity $\sum_k a_k\chi_{e_k}=c$. Since $\chi_e^Q=\chi_{Qe}$ and each $c_m\in F$ satisfies $c_m^Q=c_m$, the character identity becomes $\sum_k a_k^Q\chi_{Qe_k}=c$. We compare the nonzero coefficients in the two expressions for $c$. In this comparison, character independence and $a_k^Q\neq0\iff a_k\neq0$ give $Q\Sigma=\Sigma$. Hence $\Sigma$ is a union of $\gen{Q}$-orbits on $\Z/s$. The subgroup $\gen{Q}\leq(\Z/s)^\times$ has order $u$ and acts freely on the nonzero residues, so each orbit on the residues different from $0$ has size $u$. But $\Sigma\setminus\{0\}\subseteq\{e_1,\dots,e_{u-1}\}$ has at most $u-1$ elements. This is too few to contain an entire orbit on the nonzero residues. Therefore, $\Sigma\subseteq\{0\}$, i.e.\ $a_k=0$ for all $k\geq1$. Finally $T(1)=a_0=c_0\in F$, so $T=a_0I$ is scalar over $F$.
\end{proof}

\begin{proposition}\label{prop:unitary-unipotent}
Let $G=\PSU_u(Q)$ with $Q=p^f$ and $u\geq3$ odd, $(u,Q)\neq(3,2)$. Suppose that $A<G$ is proper, nontrivial, and hereditarily $G$-permutable, is a $p$-subgroup, and fixes a nondegenerate line $L_0$ of the natural module pointwise. Then no such $A$ exists.
\end{proposition}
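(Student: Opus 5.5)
The plan is to test $A$ against a cyclic subgroup of a unitary Singer torus of prime order. I will then show that the resulting product forces $A$ to fix pointwise a spanning set of nondegenerate lines, which is impossible for a nontrivial group.

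\textbf{Step 1: choosing $T$.} Identify the natural module $V$ with $\F_{Q^{2u}}$, viewed as a $u$-dimensional space over $\F_{Q^2}$, carrying the usual trace-type Hermitian form. The norm-one elements of $\F_{Q^{2u}}^\times$ act as isometries. Since $u$ is odd, this cyclic group has order $Q^u+1$, and its image in $G$ has order divisible by $(Q^u+1)/(Q+1)$. Let $s$ be a primitive prime divisor of $Q^{2u}-1$, which exists by Zsigmondy since $(u,Q)\neq(3,2)$ and $u\geq3$. Let $T=\gen{\lambda}$ be the subgroup of order $s$. Then $s\nmid Q+1$, so $T$ embeds in $G$. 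Also $\ord_s(Q^2)=u$, so $T$ acts irreducibly on $V$, and every $T$-orbit of lines spans $V$. We may take $L_0=\F_{Q^2}\cdot 1$; the element $1$ is nonsingular for the trace form, and $V$ is a single $\F_{Q^2}$-line over $\F_{Q^{2u}}$. This is exactly the setting of Lemma \ref{lem:unitary-singer-line-orbit-stabilizer} with $F=\F_{Q^2}$, and the lines $F\lambda^i$ form the $T$-orbit of $L_0$.

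\textbf{Step 2: the product $X$ and Sylow counting.} By Lemma \ref{lem:generated-hereditary-formulation} there is $x\in\gen{A,T}$ with $X=AT^x$ a subgroup of order $|A|\,s$. Here $A$ is a $p$-group and $s\neq p$, and $T^x$ is a Sylow $s$-subgroup of $X$. Replacing $L_0$ by a suitable conjugate line (harmless, since we only use that some nondegenerate line is fixed pointwise), I would like to conclude that $T^x$ normalizes $A$, or at least some nontrivial subgroup $A_0\leq A$ that still fixes $L_0$ pointwise. The route is as follows. The number of Sylow $s$-subgroups of $X$ is a power $p^k$ with $p^k\equiv1\pmod s$. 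Since $s$ is a primitive prime divisor, this forces $2uf\mid k$, so either $k=0$ or $|A|\geq Q^{2u}$.
\begin{itemize}[leftmargin=*]
\item If $k=0$, then $T^x\unlhd X$. Then $A$ normalizes $T^x$, so $A\leq N_G(T^x)$, whose order is $|C|\cdot u$ up to scalars for a cyclic torus $C$ of order prime to $p$. Hence the $p$-part of $|N_G(T^x)|$ divides $u$. Such an $A$ is generated by field-automorphism-type elements, and these fix pointwise only the $\F_{Q^{2u/u'}}$-subfield lines. I would check directly that such an element cannot fix a nondegenerate $\F_{Q^2}$-line pointwise together with the determinant condition unless it is trivial. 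Alternatively, I would apply Lemma \ref{lem:unitary-singer-line-orbit-stabilizer} to the linear part.
\item If $k>0$, then $X$ is a $\{p,s\}$-group with $|X|>Q^{2u}s$.
\end{itemize}

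\textbf{Step 3: the large case, which I expect to be the main obstacle.} Here I would instead argue inside $X$ via the Frattini argument or Burnside's $p^aq^b$ structure to obtain an $s$-element $t\in X$ normalizing $O_p(X)$. Then I need to show that $A\cap O_p(X)$ is nontrivial and $t$-invariant. An alternative is to replace $T$ by the full Singer normalizer $M$, which is maximal by Bereczky's theorem (excluding $\PSU_5(2)$, to be checked separately). Lemma \ref{lem:maximal-product} then gives $A\leq M^y$ or $G=AM^y$. The factorization is excluded because
$$
|A|\,|M|\leq Q^{u(u-1)/2}\,u(Q^u+1)/(Q+1)<|G|.
$$
The containment $A\leq M^y$ reduces to the normalizing case of Step 2. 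This second route needs $u$ prime for Bereczky's theorem; for composite $u$ I would fall back on the Sylow argument.

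\textbf{Step 4: conclusion.} Once an $s$-element $t$ normalizes a nontrivial subgroup $A_0\leq A$ fixing $L_0$ pointwise, $A_0=A_0^{t^i}$ fixes every line $t^iL_0=F\lambda^i L_0$ pointwise. By Lemma \ref{lem:unitary-singer-line-orbit-stabilizer}, each element of $A_0$ is an $\F_{Q^2}$-scalar, equal to $1$ on $L_0$, hence the identity. So $A_0=1$, a contradiction.
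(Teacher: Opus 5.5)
Your architecture matches the paper's. You test $A$ against an order-$s$ subgroup $T$ of a Singer torus, form the $\{p,s\}$-group $X=AT^x$ in which $A$ is a Sylow $p$-subgroup, split according to whether $T^x\unlhd X$, and in the non-normal case apply Lemma~\ref{lem:unitary-singer-line-orbit-stabilizer} to a nontrivial $T^x$-invariant subgroup of $A$. But in each branch you stop exactly at the step that carries the content.

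\textbf{Non-normal branch.} You leave open that $A\cap O_p(X)\neq1$ and fall back on Bereczky's theorem, which does not cover composite $u$. The missing point is immediate:
\begin{itemize}
\item $O_p(X)\le A$, because $A$ is a Sylow $p$-subgroup of $X$;
\item $O_p(X)\unlhd X$, so it is $T^x$-invariant;
\item if $O_p(X)=1$, then the Fitting subgroup of the solvable group $X$ is $O_s(X)\neq1$, which forces $O_s(X)=T^x\unlhd X$.
\end{itemize}
The paper gets the same conclusion via $\Core_H(A)$, which Lemma~\ref{lem:unitary-singer-line-orbit-stabilizer} kills, together with Burnside's theorem on solvable groups of prime degree. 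Your Sylow counting then becomes superfluous. As written it would also need $s$ to be a primitive prime divisor of $p^{2uf}-1$, not merely of $Q^{2u}-1$, to give $2uf\mid k$.

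\textbf{Normal branch.} Here nothing is proved, and the mechanisms you suggest fail. The determinant is irrelevant. Lemma~\ref{lem:unitary-singer-line-orbit-stabilizer} does not apply, since $A$ permutes the lines $t^iL_0$ rather than fixing them.

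Worse, your Step~1 claim that $1$ is nonsingular for the trace form is false exactly where this branch matters. We have $h(1,1)=\operatorname{Tr}(1)=u$, which is $0$ when $p\mid u$. And $p\mid u$ is the only case in which a nontrivial $p$-subgroup can normalize $T^x$: such a subgroup embeds in $N_G(T^x)/C_G(T^x)$, which is cyclic of order $u$, because $C_G(T^x)$ is a $p'$-group. If $1$ were nonsingular, the Galois map $x\mapsto x^{\sigma}$ of order $p$ would defeat your step. That map lies in $\SU_u(Q)$, normalizes $T$, and fixes $\F_{Q^2}\cdot1$ pointwise.

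The idea you need is the paper's Jordan-block argument:
\begin{itemize}
\item An element $y\in A$ of order $p$ normalizes but does not centralize $T^x$.
\item So $y$ permutes the $u$ eigenlines of a generator of $T^x$ freely in $p$-cycles.
\item Hence $y$ is a sum of Jordan blocks of size $p$ and has none of size $1$.
\item But a pointwise-fixed nondegenerate line gives the $y$-invariant decomposition $L_0\perp L_0^\perp$, and hence a block of size $1$.
\end{itemize}
Equivalently, $\ker(y-1)=\operatorname{Im}(y-1)^\perp\le\operatorname{Im}(y-1)$ is totally singular.
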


\begin{proof}
Since $L_0$ is nondegenerate, $V=L_0\perp L_0^\perp$ is an $A$-invariant orthogonal decomposition of $V$, and $A$ fixes $L_0$ pointwise. Hence each nonidentity element of $A$, being unipotent and acting as the identity on the direct summand $L_0$, has a Jordan block of size $1$ on the natural unitary module $V$.

Choose a primitive prime divisor $s$ of $Q^{2u}-1$. Since $u\geq3$ and $(u,Q)\neq(3,2)$, such a divisor exists by Zsigmondy's theorem \cite{Zsigmondy1892}. This prime satisfies $\ord_s(Q)=2u$ and $s\equiv1\pmod{2u}$. Let $R=\gen{r}$ be the subgroup of order $s$ in an irreducible unitary Singer torus of $G$ acting on the natural module $V$ \cite[Table 1, Main Theorem]{Bereczky2000}. Since $A$ is a $p$-group and $s\neq p$, we have $A\cap R=1$.

By hereditary $G$-permutability of $A$, we choose a conjugate of $R$, again written $R$, with $H=AR$ a subgroup. Since $A\cap R=1$, the action of $H$ on the cosets of $A$ has prime degree $s$ and $R$ is regular on these cosets. We let $C=\Core_H(A)$. Because $A$ is a $p$-group and $|R|=s$ is a prime distinct from $p$, $|H|=|A|\,s=p^a s$, so $H$ is solvable by Burnside's $p^aq^b$ theorem \cite[Theorem 3.10]{DixonMortimer1996}. In the coset action of $H/C$, a solvable transitive group of prime degree $s$ has a normal Sylow $s$-subgroup \cite[Theorem 3.5B]{DixonMortimer1996}, so $RC/C\unlhd H/C$. In particular the nonsolvable possibility does not occur, and only two cases remain.

\emph{$R\unlhd H$.} Let $y\in A$ have order $p$. If $y$ centralized $R$, then $y$ would lie in the centralizer of the element $r$. The element $r$ acts irreducibly on the natural module, and its centralizer consists of the transformations acting by multiplication in $\F_{Q^{2u}}$, identified with the natural module $V$ over $\F_{Q^2}$. The unitary transformations in this centralizer form a torus of elements of norm one, a $p'$-group \cite[Table 1]{Bereczky2000}. Hence $y$ acts nontrivially on $R$,
$$
        yry^{-1}=r^a,~ a\in(\Z/s\Z)^\times,~ |a|=p.
$$
The eigenvalues of $r$ form one orbit under the map $\lambda\mapsto\lambda^{-Q}$,
$$
        \lambda,\lambda^{-Q},\ldots,\lambda^{(-Q)^{u-1}},
$$
and the eigenvalue multiset of $r^a$ must equal the eigenvalue multiset of $r$. So exponentiation by $a$ sends $\lambda$ into the $\gen{-Q}$-orbit of $\lambda$, and unitary semisimple conjugacy forces
$$
        a\in\gen{-Q},~ |\gen{-Q}|=u.
$$
Equivalently, $N_G(R)/C_G(R)$ is cyclic of order $u$, generated by the map on eigenvalues $\lambda\mapsto\lambda^{-Q}$ \cite[Main Theorem]{Bereczky2000}. Since the central scalars lie in $C_G(R)$, taking the quotient of $\SU_u(Q)$ by $Z(\SU_u(Q))$ to obtain $\PSU_u(Q)$ leaves $N_G(R)/C_G(R)$ cyclic of order $u$ in the projective group $G$, with generator induced by the map on eigenvalues $\lambda\mapsto\lambda^{-Q}$. Therefore, the automorphism induced by $y$ on $R$ lies in $\gen{-Q}$. As $|a|=p$ and $\gen{-Q}$ is cyclic of order $u$, we have $p\mid u$. If $p\nmid u$, then there is no element $a$ of order $p$ in $\gen{-Q}$, so $R\not\unlhd H$. The element $y$ permutes the eigenlines of $r$ in cycles of length $p$. On the span of each cycle the matrix of $y$ is a cyclic permutation matrix. In characteristic $p$ this matrix is a single Jordan block of size $p$, not of size $1$. So $y$ has no Jordan block of size $1$, contradicting the fact that $A$ fixes $L_0$ pointwise. Hence $R\not\unlhd H$.

\emph{$RC/C\unlhd H/C$ but $R\not\unlhd H$.} Since $H=AR$, the core of $A$ in $H$ is
$$
        C=\bigcap_{i=0}^{s-1}A^{r^i}.
$$
As $A$ fixes the line $L_0$, the subgroup $C$ stabilizes every line in the $R$-orbit $L_0,L_0^r,\ldots,L_0^{r^{s-1}}$. By Lemma \ref{lem:unitary-singer-line-orbit-stabilizer} (with $Q$ replaced by $Q^2$, so its fields are $F=\F_{Q^2}$ and $E=\F_{Q^{2u}}$), $C$ is scalar on the natural module $V$. But $C\leq A$ is a $p$-group, while the scalars are elements of norm one of $p'$-order. Therefore, $C=1$. Then $RC/C\unlhd H/C$ implies that $R\unlhd H$. But $R\unlhd H$ is incompatible with the Jordan blocks forced by the fixed line $L_0$. Both cases are impossible, so no such $A$ exists.
\end{proof}

\begin{lemma}\label{lem:unitary-unipotent-root-sl2}
Let $G=\PSU_n(q)$ with $n\geq3$ and $q\geq3$, and let $A<G$ be proper, nontrivial, and hereditarily $G$-permutable. If $A$ is contained in a single subgroup $T_{\langle v\rangle}=\{\,x\mapsto x+\lambda\langle x,v\rangle v:\lambda^q=-\lambda\,\}$ corresponding to a long root ($v$ isotropic), then $A=1$.
\end{lemma}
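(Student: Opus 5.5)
Write $T=T_{\langle v\rangle}$. It is elementary abelian of order $q$: it is isomorphic to the additive group $\{\lambda\in\F_{q^2}:\lambda^q=-\lambda\}$, which is a $1$-dimensional $\F_q$-space. The plan is to reduce to a rank-one subgroup in which $T$ becomes a full root subgroup of $\PSL_2(q)$, and then apply Theorem~\ref{thm:unitary-low-bases}(1).

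Since $v$ is isotropic, choose an isotropic vector $w$ with $\langle v,w\rangle=1$. Then $U=\gen{v,w}$ is a nondegenerate hyperbolic plane and $V=U\perp U^\perp$. Every element of $T$ acts trivially on $U^\perp$, because $\langle x,v\rangle=0$ there, and $T$ preserves $U$. Let $J=\Stab_G(U\perp U^\perp)$, so that $A\leq T\leq J$. By Lemma~\ref{lem:overgroup-restriction}, $A$ is hereditarily $J$-permutable. Let $K_U$ be the normal subgroup of $J$ acting by scalars on $U$; this is the same device used in Lemma~\ref{lem:unitary-rank-two-ratio-bound}. By Lemma~\ref{lem:quotient-inheritance}, the image $\bar A$ of $A$ in $J/K_U\leq\PGU(U)$ is hereditarily permutable there. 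This image is isomorphic to $A$, because a nontrivial transvection on $U$ is never scalar on $U$. The image $\bar T$ of $T$ is a full long-root (transvection) subgroup of order $q$ in $\SU(U)\cong\SL_2(q)$, and it injects into $\PSU(U)\cong\PSL_2(q)$. So $\bar A\leq N:=\PSU(U)$, where $N$ is normal in the quotient.

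We must make sure $N$ actually lies in the quotient. I would check this by noting that $\SU(U)\times\SU(U^\perp)$ embeds in $\SU_n(q)$, so $\SU(U)$ acting trivially on $U^\perp$ lies in $J$. Then apply Lemma~\ref{lem:overgroup-restriction} once more to pass to $N$, using $\bar A\leq N$. This gives that $\bar A$ is a hereditarily $N$-permutable subgroup of $\PSL_2(q)$ lying in a root subgroup. For $q\geq4$, Theorem~\ref{thm:unitary-low-bases}(1) gives $\bar A=1$ whenever $\bar A$ is proper, and $\bar A\leq\bar T$ is indeed proper. Hence $A=1$.

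The main obstacle is $q=3$, where $\PSL_2(3)\cong A_4$ is not simple and part (1) does not apply. Here $|T|=3$, so $\bar A$ is a subgroup of order $3$ in $A_4$. Hereditary permutability with a subgroup of order $2$ would produce a subgroup of order $6$ in $A_4$, which does not exist. This is exactly the argument of Step~1 of Lemma~\ref{lem:pgl2-borel} and of Lemma~\ref{lem:sl-borel}. So $\bar A=1$ in this case too. A secondary point to verify carefully is that passing from $J$ to the quotient by $K_U$ does not identify $\bar T$ with anything smaller; the injectivity noted above, coming from the transvection being nonscalar on $U$, handles this.
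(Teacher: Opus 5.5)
Your proof is correct and is essentially the paper's argument. Both reduce, via the hyperbolic plane $U=\gen{v,w}$, to a copy of $\PSL_2(q)$ in which $\bar A\cong A$ lies in a root subgroup, and then invoke the $\PSL_2(q)$ theorem for $q\geq4$ plus the $A_4$ check for $q=3$. The only difference is the order of the steps: the paper restricts directly to $R=\gen{T_{\langle v\rangle},T_{\langle w\rangle}}\cong\SL_2(q)$, quotients by $Z(R)$, and cites Lemma~\ref{lem:pgl2-borel} (whose Step~1 contains exactly your two cases), whereas you quotient the stabilizer of $U$ first and then restrict to $\PSU(U)$.
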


\begin{proof}
We choose $w$ so that the space $\langle v,w\rangle$ spanned by $v$ and $w$ is a hyperbolic plane and let $R=\langle T_{\langle v\rangle},T_{\langle w\rangle}\rangle\cong\SU_2(q)\cong\SL_2(q)$ \cite[p.~66]{Wilson2009}, so $Z(R)=\{\pm I\}$. A nonidentity element of $T_{\langle v\rangle}$ is unipotent, hence not $-I$, so $A\cap Z(R)=1$. By Lemma \ref{lem:overgroup-restriction} ($A\leq R\leq G$) and Lemma \ref{lem:quotient-inheritance} (modulo $Z(R)$), the image $\bar A$ of $A$ in $R/Z(R)\cong\PSL_2(q)$ is hereditarily $\PSL_2(q)$-permutable and isomorphic to $A$. The subgroup $\bar A$ lies in the image of $T_{\langle v\rangle}$ in $R/Z(R)$, a point stabilizer (Borel subgroup) of $\PSL_2(q)$. Since $q\geq3$, we have $\PSL_2(q)\not\cong\PGL_2(2)$, so Lemma \ref{lem:pgl2-borel} shows that $\bar A=1$. Hence $A=1$.
\end{proof}

\subsection{Orders of quotients associated with extension fields}

\begin{lemma}\label{lem:unitary-extension-quotient}
Let $n=ub$ with $b$ an odd prime, and let
$$
        E_b=N_G(S_b),~ S_b\cong\PSU_u(q^b),~ G=\PSU_n(q).
$$
Then every prime divisor of $|E_b/S_b|$ divides $b(q^b+1)/(q+1)$.
\end{lemma}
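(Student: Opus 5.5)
The plan is to identify $E_b$ with the image in $G$ of the field-extension subgroup of class $\mathcal C_3$, and then read off $|E_b/S_b|$ from its order. View the natural module $V$ (of dimension $n=ub$ over $\F_{q^2}$) as a $u$-dimensional space over $\F_{q^{2b}}$. Since $b$ is odd, the Hermitian form over $\F_{q^2}$ is the trace of a Hermitian form over $\F_{q^{2b}}$, so $\GU_u(q^b)$ embeds in $\GU_n(q)$. Following Kleidman and Liebeck \cite[\S4.3, Proposition 4.3.6]{KleidmanLiebeck1990}, the normalizer of this embedded $\GU_u(q^b)$ in $\GU_n(q)$ is $\GU_u(q^b):b$. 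Here the cyclic group of order $b$ is generated by the $\F_{q^2}$-semilinear field automorphism $x\mapsto x^{q^2}$ of $\F_{q^{2b}}$.

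The first step is to argue that $N_G(S_b)$ is the image in $G$ of $N_{\GU_n(q)}(\SU_u(q^b))\cap\SU_n(q)$, up to scalars. An element normalizing $S_b$ lifts to an element of $\GU_n(q)$ normalizing the perfect preimage $\SU_u(q^b)$ (perfect for $(u,q^b)$ outside the tiny exceptions, which do not occur since $q^b\ge 8$). Such an element preserves the $\F_{q^{2b}}$-structure, because $\F_{q^{2b}}$ is recovered as the centralizer algebra of the absolutely irreducible $\SU_u(q^b)$-module $V$. Hence it lies in $\GU_u(q^b):b$, and $E_b$ is a section of $(\GU_u(q^b):b)\cap\SU_n(q)$ modulo scalars.

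The second step is the order computation. The quotient $E_b/S_b$ is a section of
$$
(\GU_u(q^b):b)/\SU_u(q^b)\cong C_{q^b+1}:C_b ,
$$
because $\GU_u(q^b)/\SU_u(q^b)\cong C_{q^b+1}$ via the determinant over $\F_{q^{2b}}$. Intersecting with $\SU_n(q)$ imposes a determinant condition over $\F_{q^2}$. This determinant is the norm $N_{\F_{q^{2b}}/\F_{q^2}}$ of the $\F_{q^{2b}}$-determinant, and it maps the norm-one group of order $q^b+1$ onto the norm-one group of order $q+1$. The kernel therefore has order $(q^b+1)/(q+1)$. So the toral part of $E_b/S_b$ has order dividing $(q^b+1)/(q+1)$. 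Passing to the projective quotient can only further reduce it, since the scalars of $\GU_n(q)$ lie in $\GU_u(q^b)$ and collapse part of this cyclic group. The field-automorphism part contributes at most $b$. Hence $|E_b/S_b|$ divides $b(q^b+1)/(q+1)$, and every prime divisor divides this number.

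The main obstacle is the first step: the precise identification of $N_G(S_b)$ with the projective image of $(\GU_u(q^b):b)\cap\SU_n(q)$. In particular one must exclude extra normalizing elements and handle the scalar and determinant bookkeeping when $\gcd(n,q+1)>1$. I would settle it by citing \cite[Proposition 4.3.6]{KleidmanLiebeck1990} for the structure of the $\mathcal C_3$ member of type $\GU_u(q^b)$ in $\PSU_n(q)$. Where that citation does not apply directly (small $u$, for instance $u=1$, where $S_b$ is trivial or abelian and the statement should be read with $S_b$ the torus), I would note that only a divisibility of prime factors is claimed. That weaker claim survives any quotient by scalars, so no exact index computation is needed.
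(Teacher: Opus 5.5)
Your proposal is correct and follows essentially the same route as the paper. You identify $E_b$ with the projective image of $(\GU_u(q^b){:}b)\cap\SU_n(q)$ through the $\mathcal C_3$ description of Kleidman and Liebeck. You use that the $\F_{q^2}$-determinant is the norm of the $\F_{q^{2b}}$-determinant, so the kernel on $C_{q^b+1}$ has order $(q^b+1)/(q+1)$. You then add the factor $b$ from the field automorphism and note that the central quotient cannot add primes.

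The one step you assert rather than verify is that this norm maps the group of order $q^b+1$ onto the group of order $q+1$. The paper checks this by computing $\gcd\bigl(q^b+1,(q^{2b}-1)/(q^2-1)\bigr)=(q^b+1)/(q+1)$. That computation uses that $b$ is odd, so that $(q^b-1)/(q-1)\equiv 1\pmod{q+1}$.
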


\begin{proof}
The normalizer $E_b=N_G(S_b)$ comes from the subgroup in class $\mathcal C_3$ of Kleidman and Liebeck \cite[\S4.3, Table 4.3.A]{KleidmanLiebeck1990} that fixes an extension field structure as a whole. This structure comes from $\GU_u(q^b)$ inside $\GU_n(q)$, with the natural space viewed over $\F_{q^{2b}}$. If $\delta\in C_{q^b+1}$ is the determinant over $\F_{q^{2b}}$, then the determinant over $\F_{q^2}$ is
$$
        N_{\F_{q^{2b}}/\F_{q^2}}(\delta)=\delta^{1+q^2+\cdots+q^{2(b-1)}}.
$$
Set $M=q^b+1$ and $N=1+q^2+\cdots+q^{2(b-1)}$. Since $b$ is odd,
$$
        N=\frac{q^{2b}-1}{q^2-1}=\frac{q^b-1}{q-1}\cdot\frac{q^b+1}{q+1}.
$$
Since $1+q+\cdots+q^{b-1}\equiv1\pmod{q+1}$, we have $\gcd(q+1,(q^b-1)/(q-1))=1$. Hence $\gcd(M,N)=(q^b+1)/(q+1)$, so the norm kernel on $C_{q^b+1}$ has order $(q^b+1)/(q+1)$. The special unitary determinant condition restricts $\delta$ to this norm kernel and removes the $(q+1)$-part. The automorphism of the extension $\F_{q^{2b}}/\F_{q^2}$ contributes only the factor $b$. Passing from $\SU_n(q)$ to $G=\PSU_n(q)$ means taking the quotient by $Z(\SU_n(q))$, which cannot introduce new prime divisors. Therefore, every prime divisor of $|E_b/S_b|$ divides $b(q^b+1)/(q+1)$.
\end{proof}

\begin{lemma}\label{lem:unitary-visibility-gcd}
If $b\nmid q+1$, then
$$
        \gcd\!\left(b\,\frac{q^b+1}{q+1},\,q+1\right)=1.
$$
\end{lemma}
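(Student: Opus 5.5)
The approach is to reduce everything to a congruence modulo $q+1$. Throughout, $b$ is an odd prime, as in Lemma \ref{lem:unitary-extension-quotient}, where this lemma will be used.

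First, since $b$ is odd, I would write the quotient as an alternating geometric sum:
$$
        \frac{q^b+1}{q+1}=\sum_{i=0}^{b-1}(-q)^i=1-q+q^2-\cdots+q^{b-1}.
$$
Reducing modulo $q+1$, we have $-q\equiv1$, so each term $(-q)^i$ is congruent to $1$. This gives
$$
        \frac{q^b+1}{q+1}\equiv b\pmod{q+1}.
$$
By the Euclidean algorithm it follows that
$$
        \gcd\!\left(\frac{q^b+1}{q+1},\,q+1\right)=\gcd(b,q+1).
$$

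Second, because $b$ is prime and $b\nmid q+1$, we get $\gcd(b,q+1)=1$. Hence both $b$ and $(q^b+1)/(q+1)$ are coprime to $q+1$.

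Third, I would conclude with the elementary fact that $\gcd(xy,z)=1$ whenever $\gcd(x,z)=\gcd(y,z)=1$. Applying it with $x=b$, $y=(q^b+1)/(q+1)$ and $z=q+1$ gives the statement.

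There is no real obstacle here. The one point that needs care is that the alternating-sum identity uses the oddness of $b$. If $b=2$, then $q+1$ need not divide $q^2+1$ at all, so this hypothesis must be carried over explicitly from the setting of Lemma \ref{lem:unitary-extension-quotient}.
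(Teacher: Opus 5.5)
Your proposal is correct and follows the paper's proof: both rest on the congruence $(q^b+1)/(q+1)\equiv b\pmod{q+1}$ from the alternating sum, combined with $\gcd(b,q+1)=1$ for the prime $b$. You are also right to state explicitly that $b$ must be an odd prime, inherited from the setting of Lemma~\ref{lem:unitary-extension-quotient}; the paper's proof uses this hypothesis without saying so.
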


\begin{proof}
Since $b\nmid q+1$, we have $(b,q+1)=1$, and
$$
        \frac{q^b+1}{q+1}=q^{b-1}-q^{b-2}+\cdots-q+1\equiv b\pmod{q+1}.
$$
A common prime divisor of the integers $b(q^b+1)/(q+1)$ and $q+1$ would divide $b$, but no prime divisor of $b$ divides $q+1$.
\end{proof}

\subsection{The permutation action on the lines of a self-polar simplex}

Let $M$ be a self-polar simplex stabilizer in ${\rm PSU}_m (Q)$. Its projective diagonal unitary subgroup $D$ is the kernel of the permutation action on the simplex lines. With this notation, $M=D:S_m$ and $M/D\cong S_m$. Let $A\leq M$, and write $B=AD/D\leq S_m$ for the image of $A$ in this action on the simplex lines. By Lemma \ref{lem:quotient-inheritance}, $B$ is hereditarily $S_m$-permutable in every symmetric quotient $S_m$ in which we consider the image of $A$.

\begin{lemma}\label{lem:unitary-alt-transfer}
Let $m\geq7$ and let $B\leq S_m$ be hereditarily $S_m$-permutable. Then $K=B\cap A_m$ is ordinary $A_m$-permutable.
\end{lemma}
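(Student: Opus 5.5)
The statement is an instance of Lemma~\ref{lem:normal-intersection-transfer}. Apply that lemma with $G=S_m$, $A=B$ and $N=A_m\unlhd S_m$. This gives directly that $K=B\cap A_m$ is ordinarily $A_m$-permutable: for every $H\leq A_m$ there is $n\in A_m$ with $KH^n=H^nK$.

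Concretely, the argument runs as follows. Fix $H\leq A_m$ and put $R=\gen{B,H}$. By hereditary $S_m$-permutability of $B$ (in the form of Lemma~\ref{lem:generated-hereditary-formulation}) choose $x\in R$ with $BH^x=H^xB$. Since $R\leq BA_m$, write $x=nl$ with $n\in A_m$ and $l\in B$. Intersecting the subgroup $BH^x$ with $A_m$ and using $H^x\leq A_m$ (Dedekind's rule) gives
$$
        (B\cap A_m)H^x=H^x(B\cap A_m).
$$
Because $K=B\cap A_m$ is normal in $B$, conjugating by $l^{-1}$ turns this into $KH^n=H^nK$.

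The only point that needs care is the factorization $x=nl$ with $l\in B$ rather than $l\in S_m$. This is exactly where hereditary permutability is used: the conjugating element lies in $\gen{B,H}\leq BA_m$, and not merely in $S_m$. A conjugating element $x=nt$ with $t\notin B$ would not let us return from $H^x$ to an $A_m$-conjugate of $H$. This is also why the conclusion is only \emph{ordinary} $A_m$-permutability of $K$.

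The hypothesis $m\geq7$ plays no role in this argument; it is only needed later, when the Yang--Galt theorem on $A_p$-permutable subgroups is applied. The only subtlety to check is that $B$ is hereditarily $S_m$-permutable in the full symmetric quotient, and that is provided by Lemma~\ref{lem:quotient-inheritance}.
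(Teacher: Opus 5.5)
Your proof is correct and is essentially the paper's argument. The paper proves this lemma by the same Dedekind intersection with $A_m$, writing the conjugating element as $x=ab^{-1}$ with $a\in A_m$, $b\in B$ (or using $x$ itself when $x\in A_m$); this is the index-two case of Lemma~\ref{lem:normal-intersection-transfer}, which you invoke directly, and, as you note, $m\geq7$ plays no role.
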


\begin{proof}
Let $C\leq A_m$. Hereditary $S_m$-permutability of $B$ ensures that $BC^x=C^xB$ for some $x\in\gen{B,C}$. If $x\in A_m$, then we use $x$ itself as the conjugating element. If $x\notin A_m$, then $B\not\leq A_m$ (else $\gen{B,C}\leq A_m$). We pick $b\in B\setminus A_m$ and set $a=xb\in A_m$. Conjugating $BC^x$ by $b$ produces $BC^a$, and since $C^a\leq A_m$,
$$
        (BC^a)\cap A_m=(B\cap A_m)C^a=KC^a
$$
is a subgroup, so $KC^a=C^aK$. Since $C\leq A_m$ was arbitrary, $K$ is ordinary $A_m$-permutable.
\end{proof}

\begin{lemma}\label{lem:unitary-subset}
Let $B\leq S_m$ be hereditarily $S_m$-permutable, and let $H=S_k\times S_{m-k}$ be the setwise stabilizer in $S_m$ of a subset of size $k$ with $1\leq k<m/2$. Then either $B\leq H^x$ for some $x$, or $S_m=BH^x$ for some $x$. In the factorization case $S_m=BH^x$, the group $B$ is transitive on $k$-subsets and $\binom{m}{k}\mid|B|$.
\end{lemma}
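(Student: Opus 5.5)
The lemma should follow directly from Lemma \ref{lem:maximal-product} applied to $G=S_m$, $A=B$ and $M=H$; the only input needed is that $H$ is maximal in $S_m$. The plan has three steps.

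\emph{Maximality of $H$.} For $1\leq k<m/2$ the setwise stabilizer $S_k\times S_{m-k}$ of a $k$-subset is maximal in $S_m$. This is the intransitive case of the O'Nan--Scott description of maximal subgroups of $S_m$; we cite \cite[\S2.1]{DixonMortimer1996} for subset actions. The condition $k\neq m/2$ is exactly what excludes the imprimitive overgroup $S_{m/2}\wr S_2$, which is why the lemma assumes $k<m/2$. If one wants a direct argument: a subgroup properly containing $H$ contains an element moving some point of the $k$-subset $\Delta$ into its complement. Together with $S_k\times S_{m-k}$, such an element generates transpositions joining $\Delta$ to its complement. Since $S_\Delta$ and $S_{\Omega\setminus\Delta}$ are already present, all transpositions are then generated, so the overgroup is $S_m$.

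\emph{The dichotomy.} Lemma \ref{lem:maximal-product} gives $x\in\gen{B,H}$ with $BH^x$ a subgroup of $S_m$ containing $H^x$. By maximality, either $BH^x=H^x$, which means $B\leq H^x$, or $BH^x=S_m$.

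\emph{The factorization case.} Suppose $S_m=BH^x$, and hence $S_m=H^xB$ as well. Lemma \ref{lem:factorization-coset-transitivity} shows that $B$ is transitive on the right cosets of $H^x$. These cosets correspond $S_m$-equivariantly to the $k$-subsets of $\{1,\ldots,m\}$, because $H^x$ is the stabilizer of the $k$-subset $\Delta^x$ and $S_m$ is transitive on $k$-subsets. So $B$ is transitive on $k$-subsets. By the orbit--stabilizer theorem the orbit length $\binom{m}{k}$ divides $|B|$. Equivalently, $|B:B\cap H^x|=|S_m:H^x|=\binom{m}{k}$.

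None of these steps is a real obstacle. The only point that needs care is justifying maximality of the intransitive subgroup, and in particular noting that $k<m/2$ rules out the case $k=m/2$.
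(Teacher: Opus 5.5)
Your proposal is correct and follows the paper's own argument: apply Lemma~\ref{lem:maximal-product} in $S_m$ to the maximal subgroup $H$. In the factorization case, transitivity of $B$ on the cosets of $H^x$, equivalently on $k$-subsets (Lemma~\ref{lem:factorization-coset-transitivity}), gives $\binom{m}{k}\mid|B|$. Your justification that $S_k\times S_{m-k}$ is maximal for $k<m/2$, which the paper only asserts, is a useful addition, though your direct argument needs the hypothesis $k<m-k$ at the point where it produces a transposition joining the $k$-subset to its complement.
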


\begin{proof}
We use Lemma \ref{lem:maximal-product} in $S_m$ for the maximal subgroup $H$. The factorization $S_m=BH^x$ makes $B$ transitive on the cosets of $H$, i.e., on $k$-subsets.
\end{proof}

We consider the possible permutation images $B=AD/D \leq S_m$ of $A$ on the simplex lines as follows. Solvable images are handled directly, in Lemma \ref{lem:unitary-solvable-low-socle-simplex} for composite degree and Lemma~\ref{lem:unitary-solvable-high-prime-simplex} for prime degree. The prime degree $5$ is handled in Lemma \ref{lem:s5-no-solvable-hereditary}. For nonsolvable $B$ we restrict the action of $B$ to an orbit with nonsolvable image and reduce to a transitive image on that orbit. Lemma \ref{lem:unitary-transitive-alternating} forces this transitive image to contain the alternating group. We then prove that no unitary subgroup $A$ with such a permutation image on the entire set of coordinates can satisfy hereditary permutability (Lemmas \ref{lem:unitary-transitive-alternating}, \ref{lem:unitary-full-support-simplex}, and \ref{lem:unitary-small-simplex}). For $k$-homogeneous permutation groups, we use the theorems of Livingstone and Wagner \cite[Theorem 2]{LivingstoneWagner1965} and Kantor \cite[Theorem 1]{Kantor1972}. We also use the classification of multiply transitive groups \cite[\S7.4]{DixonMortimer1996} and the transitive groups of small degree from Butler and McKay \cite{ButlerMcKay1983}. The remaining result is the theorem of Yang and Galt \cite[Theorem 2]{YG2023}. The geometric inputs include the maximality of the unitary parabolics $P_1$ and $P_2$ stabilizing totally singular lines and planes \cite[Table 3.5.B]{KleidmanLiebeck1990}, \cite[Proposition 26.1(b)]{MalleTesterman2011}. We also use Zsigmondy's theorem \cite{Zsigmondy1892} and the elementary module calculation in Lemma \ref{lem:alternating-permutation-no-two}.

\begin{lemma}\label{lem:no-solvable-four-homogeneous}
For $N\geq8$ no solvable subgroup of $S_N$ is $4$-homogeneous.
\end{lemma}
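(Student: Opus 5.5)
We reduce to $4$-transitivity and then use the classification of multiply transitive groups. By the theorem of Livingstone and Wagner \cite[Theorem 2]{LivingstoneWagner1965}, a $k$-homogeneous group of degree $N$ with $2\leq k\leq N/2$ is $(k-1)$-transitive. Moreover, for $k\geq5$ it is even $k$-transitive, and Kantor \cite[Theorem 1]{Kantor1972} lists the exceptions for $k\in\{2,3,4\}$. For $N\geq8$ and $k=4$ we have $k\leq N/2$. So a $4$-homogeneous subgroup $B\leq S_N$ is $3$-transitive, and Kantor's theorem shows that it is $4$-transitive unless it lies in a short exceptional list: $\PGL_2(8)$, $\PGammaL_2(8)$ and $\PGammaL_2(32)$, of degrees $9$, $9$ and $33$.

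We then exclude solvable groups in each alternative.

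\emph{The exceptional case.} Each of $\PGL_2(8)$, $\PGammaL_2(8)$ and $\PGammaL_2(32)$ contains the nonabelian simple group $\PSL_2(8)$ or $\PSL_2(32)$, so none is solvable.

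\emph{The $4$-transitive case.} It is cleanest to argue elementarily rather than through the full list. A solvable $2$-transitive group has an elementary abelian regular normal subgroup (its minimal normal subgroup is abelian and regular), so $N=p^d$ and $B\leq \AGL_d(p)$. In a $3$-transitive affine group, the stabilizer of $0$ is $2$-transitive on the nonzero vectors. For $p$ odd this fails, because $v$ and $-v$ are linearly dependent and linear maps preserve that relation. Hence $p=2$.

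If $B$ is $4$-transitive and $d\geq3$, the stabilizer of $0$ and a nonzero $v$ must be transitive on the remaining $N-2$ points. But linear maps fixing $v$ preserve the set $\{v+w\}$-type relations: they cannot send a point $w$ with $w\notin\gen{v}$ to a point of a different linear relation class. More precisely, the stabilizer of $0,v,w$ fixes $v+w$, so $4$-transitivity forces $N-3\leq1$, that is $N\leq4$, contrary to $N\geq8$.

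Alternatively, one can quote the list of $4$-transitive groups in \cite[\S7.4]{DixonMortimer1996}, namely $A_N$, $S_N$, $M_{11}$, $M_{12}$, $M_{23}$ and $M_{24}$. For $N\geq8$ each of these is nonsolvable, which settles this case directly.

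The main obstacle is applying Livingstone--Wagner and Kantor correctly, in particular confirming the exceptional list for $k=4$ and checking the hypothesis $N\geq2k$, which holds since $N\geq8$. Once $B$ is known to be $4$-transitive or one of the $\PSL_2(8)$/$\PSL_2(32)$ overgroups, nonsolvability is immediate.
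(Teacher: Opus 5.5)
Your proof is correct, and its skeleton is the paper's. Livingstone--Wagner gives $3$-transitivity since $4\leq N/2$. Kantor's list ($\PGL_2(8)$, $\PGammaL_2(8)$, $\PGammaL_2(32)$) disposes of the $4$-homogeneous but not $4$-transitive case. What remains is the $4$-transitive case.

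The difference lies in that last case. The paper simply quotes the list $A_N$, $S_N$, $M_{11}$, $M_{12}$, $M_{23}$, $M_{24}$ of $4$-transitive groups, and that list rests on the classification of finite simple groups. Your main argument is elementary instead:
\begin{itemize}
\item a solvable primitive group is affine, so $B\leq\AGL_d(p)$ with linear point stabilizer;
\item $B_{0,v}$ fixes $-v$, which forces $p=2$;
\item $B_{0,v,w}$ fixes the fourth point $v+w$, so $4$-transitivity would need $N\leq4$.
\end{itemize}
This is valid. You should delete the garbled sentence about ``$\{v+w\}$-type relations'' and keep only the precise statement after ``More precisely''. Since Livingstone--Wagner and Kantor's 1972 theorem predate and do not use the classification, your version of the lemma is classification-free, at the cost of a few more lines than a citation.

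Your affine argument can in fact replace Kantor as well. Once $B\leq\AGL_d(2)$ with $d\geq3$, the condition $a+b+c+e=0$ on a $4$-subset $\{a,b,c,e\}$ is preserved by every $x\mapsto Mx+t$, because $4t=0$. It holds for $\{0,e_1,e_2,e_1+e_2\}$ but not for $\{0,e_1,e_2,e_3\}$. So $B$ is not even $4$-homogeneous, and only the $3$-transitivity supplied by Livingstone--Wagner is needed.
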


\begin{proof}
Let $B\leq S_N$ be $4$-homogeneous with $N\geq8$, so that $4\leq N/2$. By the Livingstone and Wagner theorem \cite[Theorem 2]{LivingstoneWagner1965}, we have that $B$ is $3$-transitive, in particular $2$-transitive and primitive. If $B$ is $4$-transitive, then by the classification of the multiply transitive groups \cite[\S7.4]{DixonMortimer1996}, $B$ is alternating, symmetric, or one of the Mathieu groups $M_{11},M_{12},M_{23},M_{24}$. If $B$ is $4$-homogeneous but not $4$-transitive, then by Kantor's classification of such groups \cite[Theorem 1]{Kantor1972}, $B$ is $\PGL_2(8)$ or $\PGammaL_2(8)$ of degree $9$, or $\PGammaL_2(32)$ of degree $33$. Every entry of both lists contains a nonabelian simple group (an alternating group $A_N$ with $N\geq 8$, a Mathieu group, or $\PSL_2(8)$ or $\PSL_2(32)$), so $B$ is nonsolvable. Hence no solvable subgroup of $S_N$ is $4$-homogeneous.
\end{proof}

\begin{lemma}\label{lem:solvable-simplex-prime-bound}
Let $N\geq5$ be odd and let $B\leq S_N$ be solvable and hereditarily $S_N$-permutable. Then $B$ stabilizes a partition of the $N$ points into parts of size $4$ together with one part of size $5$ or $7$. $B$ acts trivially on the part of size $5$ or $7$. Every prime divisor of $|B|$ is $2$ or $3$.
\end{lemma}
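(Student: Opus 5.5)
We use Lemma \ref{lem:unitary-subset} for the intransitive maximal subgroups $S_k\times S_{N-k}$, $1\leq k<N/2$, and test each $k$ in turn. Since $N$ is odd, $k=(N-1)/2$ is admissible, and every $k\leq 3$ is admissible as well because $N\geq 5$ gives $N/2>2$, with $k=3$ needing $N\geq 7$.

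\emph{Step 1: $B$ fixes sets of every small size.} For each admissible $k$, Lemma \ref{lem:unitary-subset} gives a dichotomy: either $B$ stabilizes some $k$-subset, or $B$ is $k$-homogeneous with $\binom{N}{k}\mid|B|$. For $N\geq 8$ and $4\leq k<N/2$, $k$-homogeneity implies $4$-homogeneity by Livingstone--Wagner, and Lemma \ref{lem:no-solvable-four-homogeneous} excludes this. For $k\leq 3$, a $k$-homogeneous group of degree $N\geq 5$ is primitive by Livingstone--Wagner. A solvable primitive group of odd degree $N$ is affine with $N=p^d$. We then compare $\binom{N}{k}$ with $|\mathrm{A\Gamma L}_1|$ or the solvable affine order bounds; a $2$-homogeneous solvable group lies in $\mathrm{A\Gamma L}_1(p^d)$ apart from a few small exceptions. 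For $k=2$ this leaves only $N=p^d$ with $B\leq\mathrm{A\Gamma L}_1(p^d)$. In that case I test $B$ against a subgroup $S_3\times S_{N-3}$ or an $(N-1)/2$-subset stabilizer: $3$-homogeneity, respectively $\binom{N}{(N-1)/2}\mid|B|$, fails by order once $N\geq 9$. The small degrees $N=5,7$ are settled by hand. The main obstacle is this small-degree bookkeeping, in particular $N=5$ with $B\leq \AGL_1(5)$ and $N=7$ with $B\leq\AGL_1(7)$. For those I would exhibit explicit subgroups $C$, for instance a product of transpositions, such that no conjugate of $C$ in $\gen{B,C}$ permutes with $B$, as in Lemma \ref{lem:pgl2-borel}.

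\emph{Step 2: building the partition.} Once $B$ fixes $k$-subsets for all admissible $k$, I argue by a peeling induction. Let $\Omega_1$ be a $B$-invariant subset of size $\le 3$ or $4$. Restrict to its complement: $B$ acts on $N-|\Omega_1|$ points, and its image there is hereditarily permutable by Lemmas \ref{lem:overgroup-restriction} and \ref{lem:quotient-inheritance}, applied to the stabilizer $S_{\Omega_1}\times S_{\Omega\setminus\Omega_1}$ modulo $S_{\Omega_1}$. Iterating strips off $B$-invariant blocks. I then want to show that each block can be chosen of size $4$ until odd residue $5$ or $7$ remains. A $4$-block carries a solvable image inside $S_4$, whose order divides $2^3\cdot 3$, which accounts for the primes $2,3$.

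\emph{Step 3: the residue.} On the final part of size $5$ or $7$, the image is solvable and hereditarily permutable in $S_5$ or $S_7$. By the same small-case check as in Step 1, together with the fact that it must fix points or $2$-subsets there, it must be trivial. The point is that a nontrivial solvable subgroup of $S_5$ or $S_7$ fixing a point of a $5$- or $7$-set reduces to an $S_4$ or $S_6$ piece. The prime restriction then follows because $|B|$ divides the product of the orders of its images on the $4$-blocks.

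The step I expect to be delicate is justifying that the blocks can always be taken of size exactly $4$, rather than of sizes $1,2,3$. To handle it I would merge small invariant sets: fixed points and $2$-, $3$-sets combine into $4$-sets or get absorbed into the odd residue. The statement only asks for an invariant partition, so unions of invariant sets are allowed.
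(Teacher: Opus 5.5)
Your induction step is sound once you strip it down, and it is then exactly the paper's argument. For odd $N\geq 9$ you already note that Lemma~\ref{lem:no-solvable-four-homogeneous} excludes the product alternative of Lemma~\ref{lem:unitary-subset} for $S_4\times S_{N-4}$, so $B$ fixes a $4$-set. Restricting to its complement and inducting on the odd number $N-4$ reaches a residue of size $5$ or $7$ automatically, according to $N\bmod 4$. The kernel of the restriction lies in a copy of $S_4$. So the ``delicate'' merging of invariant $1$-, $2$- and $3$-sets is a non-issue. The whole $k\leq 3$ analysis is also unnecessary for $N\geq 9$. As written it is moreover inaccurate: transitivity does not give primitivity in composite degree, and you do not address Huppert's solvable $2$-transitive exceptions, which have odd degrees $3^2,5^2,7^2,11^2,23^2,3^4$.

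The genuine gap is the base case, which carries the whole conclusion that $B$ acts trivially on the residual part. You must show that a solvable hereditarily $S_5$- or $S_7$-permutable subgroup is trivial, and you only promise to do this ``by hand''. The reduction you sketch in Step~3 cannot work. It passes from a subgroup fixing a point to an $S_4$ or $S_6$ piece, but hereditary permutability in the smaller symmetric group is much weaker. For example, $V_4=\{1,(1\,2)(3\,4),(1\,3)(2\,4),(1\,4)(2\,3)\}$ is normal, hence hereditarily permutable, in $S_4$. As a subgroup of $S_5$ fixing $5$, however, it fails against $H=\gen{(1\,2\,3\,4\,5)}$: here $\gen{V_4,H}=A_5$, and $V_4H^x$ would have order $20$, which no subgroup of $A_5$ has. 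Any valid test must therefore use subgroups that move the fixed points.

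The paper settles $N=5$ by the exhaustive check of Lemma~\ref{lem:s5-no-solvable-hereditary}. It settles $N=7$ by Lemma~\ref{lem:unitary-solvable-high-prime-simplex}: $B\cap A_7$ is ordinarily $A_7$-permutable by Lemma~\ref{lem:unitary-alt-transfer}, hence trivial by the theorem of Yang and Galt, so $|B|\leq 2$. A subgroup generated by an odd involution is then excluded by Lemma~\ref{lem:symmetric-odd-involution-not-hereditary}. Without an argument of this kind, or a complete explicit check of the solvable subgroups of $S_5$ and $S_7$, your proof does not establish the lemma.
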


\begin{proof}
We induct on the odd integer $N$. If $N\in\{5,7\}$, then Lemma \ref{lem:s5-no-solvable-hereditary} for $N=5$ and Lemma \ref{lem:unitary-solvable-high-prime-simplex} for $N=7$ (with $u=7$) show that $B=1$. We take the whole set of $N$ points as the single part of the partition. This partition satisfies the conclusion.

We let $N\geq9$ be odd. Since $B$ is solvable and $N\geq8$, Lemma \ref{lem:no-solvable-four-homogeneous} shows that $B$ is not $4$-homogeneous. As $4<N/2$, the setwise stabilizer $H=S_4\times S_{N-4}$ of a subset of size $4$ is maximal in $S_N$. The product alternative $S_N = BH^x$ of Lemma \ref{lem:unitary-subset} would make $B$ transitive on $4$-sets and hence $4$-homogeneous. Therefore, this product alternative cannot occur. Hence $B\leq H^x$ for some $x$, i.e., $B$ stabilizes a subset $S_0$ of size $4$. Then $B\leq S_{S_0}\times S_{\Omega}$ with $\Omega$ the complementary $(N-4)$-set. We restrict hereditary permutability of $B$ from $S_N$ to $S_{S_0}\times S_{\Omega}$ (Lemma \ref{lem:overgroup-restriction}). To obtain the action on $\Omega$, we take the quotient of the symmetric group $S_{S_0} \times S_{\Omega}$ by its direct factor $S_{S_0}$, which acts on $S_0$ and fixes $\Omega$ pointwise (Lemma \ref{lem:quotient-inheritance}). Therefore, the image $B'$ of $B$ in $S_\Omega$ is solvable and hereditarily $S_{N-4}$-permutable. As $N-4\geq5$ is odd, induction shows that $B'$ stabilizes a partition of $\Omega$ into parts of size $4$ and one part of size $5$ or $7$. Moreover, $B'$ acts trivially on the part of size $5$ or $7$. Induction also shows that $|B'|$ is divisible by no prime other than $2$ or $3$. Adjoining $S_0$ produces the partition of all $N$ points, with the same part of size $5$ or $7$ as in the partition of $\Omega$. Because the action of $B$ on $\Omega$ factors through $B'$, $B$ acts trivially on the part of size $5$ or $7$. Finally the kernel of $B\to B'$ is supported on $S_0$, hence lies in $S_{S_0}\cong S_4$, a $\{2,3\}$-group, and $|B'|$ is a $\{2,3\}$-number, so $|B|$ is a $\{2,3\}$-number.
\end{proof}

\begin{lemma}\label{lem:unitary-solvable-low-socle-simplex}
Let $G=\PSU_n(q)$ with $n=ub$ odd, where $b$ is an odd prime with $b\nmid q+1$ and $u\geq3$. Let $E=N_G(S)$ with $S\cong\PSU_u(q^b)$ be the normalizer in $G$ of the subgroup defined by the extension field $\F_{q^{2b}}$. Suppose that a hereditarily $G$-permutable subgroup $A$ lies in $E$ and in a self-polar simplex stabilizer $\Delta=D:S_n$. If the image of $A$ in $E/S$ is nontrivial and the permutation image $B=AD/D$ of $A$ on the simplex lines is solvable, then no such subgroup $A$ exists.
\end{lemma}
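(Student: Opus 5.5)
The plan is to compare the prime divisors of $|A|$ coming from two sources: the quotient $E/S$ and the simplex stabilizer $\Delta$.

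First, by Lemma \ref{lem:unitary-extension-quotient}, every prime dividing $|E/S|$ divides $b(q^b+1)/(q+1)$. Since the image $AS/S$ is nontrivial, there is a prime $r$ dividing $|AS/S|$, and hence dividing $b(q^b+1)/(q+1)$. By Lemma \ref{lem:unitary-visibility-gcd} and the hypothesis $b\nmid q+1$, the prime $r$ does not divide $q+1$.

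Next, compute inside $\Delta=D:S_n$. The projective diagonal torus $D$ has order dividing $(q+1)^{n-1}$, so $r\nmid|D|$. Therefore $r$ divides $|B|$, where $B=AD/D\leq S_n$ is the permutation image.

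Now apply Lemma \ref{lem:solvable-simplex-prime-bound}. By Lemma \ref{lem:quotient-inheritance}, $B$ is solvable and hereditarily $S_n$-permutable, and $n$ is odd with $n\geq9$. The lemma then says every prime divisor of $|B|$ is $2$ or $3$. So $r\in\{2,3\}$, and it remains to exclude both values.

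\begin{itemize}
\item[$r=2$:] The number $b(q^b+1)/(q+1)$ is odd. Indeed, $b$ is odd, and $(q^b+1)/(q+1)$ is an alternating sum of $b$ powers of $q$. If $q$ is even this sum is odd. If $q$ is odd, the sum is $\equiv b\equiv1\pmod 2$. Hence $r\neq2$.
\item[$r=3$:] First suppose $b=3$. Then $3\nmid q+1$, so $q\not\equiv 2\pmod 3$. Hence $(q^3+1)/(q+1)=q^2-q+1$ is $\not\equiv0\pmod3$. So the only way $3$ enters $|E/S|$ is through the field-automorphism factor $b=3$. Now suppose $b\neq3$. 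Then $3$ divides $(q^b+1)/(q+1)$ only if $3\mid q^b+1$. That forces $q\equiv2\pmod 3$, hence $3\mid q+1$, and then $(q^b+1)/(q+1)\equiv b\not\equiv0\pmod 3$ modulo $3$. So $3$ does not divide it.
\end{itemize}

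The main obstacle is the residual case $b=r=3$. Here $A$ induces a field automorphism of order $3$ of $\PSU_u(q^3)$, and its image in $S_n$ is a $3$-element. I would handle it by using the structure given by Lemma \ref{lem:solvable-simplex-prime-bound}. That lemma says $B$ fixes pointwise a part of size $5$ or $7$. So an element $a\in A$ of order $3$ mapping nontrivially to $E/S$ is, modulo $D$, a product of $3$-cycles supported on the parts of size $4$.

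I then expect a contradiction from comparing how many simplex lines are fixed by $a$ with the structure forced by a field automorphism. A field automorphism of $\F_{q^6}/\F_{q^2}$ permutes the $3$ coordinates within each block of an $\F_{q^{2b}}$-basis. It should therefore move the $\F_{q^2}$-lines in $n/3$ orbits of size $3$, with the fixed lines accounted for by the trace-form structure. This is incompatible with fixing $5$ or $7$ simplex lines pointwise while the remaining parts have size $4$, which is not divisible by $3$ in a way compatible with full $3$-cycles.

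If that counting is too delicate, I would fall back on a different route. Pass to a Sylow $3$-subgroup of $A$ and test hereditary permutability against a subgroup of order a primitive prime divisor of $q^{2n}-1$. This would mimic Proposition \ref{prop:unitary-unipotent}, using the Jordan/eigenvalue structure of $a$ on the fixed nondegenerate lines.
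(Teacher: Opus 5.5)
\textbf{There is a genuine gap in the residual case $b=r=3$.}

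Your bookkeeping is correct up to the final step. Every prime divisor of $|D|$ divides $q+1$, so $r\mid|B|$, and your parity and mod-$3$ computations correctly rule out $r\in\{2,3\}$ except when $b=3$. That exception is not a corner case. For $\PSU_9(q)$ with $3\nmid q+1$ and $S\cong\PSU_3(q^3)$, the field automorphism $\phi$ contributes a factor $3$ to $|E/S|$, and Lemma \ref{lem:solvable-simplex-prime-bound} still allows $3\mid|B|$.

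What you offer for this case is a heuristic, and its key sentence is false as stated. First, $\phi$ fixes every $\F_{q^2}$-line spanned by a vector whose coordinates, relative to an $\F_{q^6}$-basis, lie in $\F_{q^2}$. Second, the simplex $\Delta$ is not assumed to have any relation to the $\F_{q^6}$-structure. Third, an element $a\in A$ with nontrivial image in $E/S$ lies only in a coset of $\phi^{\pm1}$ over the inner part; it need not equal $\phi$.

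Your instinct that the pointwise-fixed part of size $5$ or $7$ is the right invariant could be made rigorous, but only with inputs you do not supply:
\begin{enumerate}
\item Show that $a$ has projective order $3$, using $3\nmid|D|$.
\item Use a Lang-type result that every element of order $3$ in that coset is conjugate to $\phi$ modulo scalars.
\item Compare invariants. The eigenvalue $\omega$ (or, when $3\mid q$, the number of Jordan blocks of size $3$) has multiplicity $u$ for $\phi$. For a monomial element of order $3$ with norm-one diagonal entries whose permutation part has $j$ three-cycles, the multiplicity is $j$. Since $5$ or $7$ simplex lines are fixed, $j\leq (n-5)/3<u$.
\end{enumerate}
Your fallback does not apply either. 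A Sylow $3$-subgroup of $A$ is not known to be hereditarily permutable, and Proposition \ref{prop:unitary-unipotent} concerns a $p$-subgroup fixing a nondegenerate line pointwise.

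\textbf{The missing ingredient is Lemma \ref{lem:symmetric-solvable-hereditary-trivial}.} It states that for odd $N\geq5$, $S_N$ has no nontrivial solvable hereditarily $S_N$-permutable subgroup. It strengthens Lemma \ref{lem:solvable-simplex-prime-bound} by running the same induction down to a single $4$-set. The nontrivial subgroups of $S_4$ are then eliminated by the computation in Appendix \ref{app:degree-nine-base}.

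Since $n=ub\geq9$ is odd, this lemma gives $B=1$ at once. Your own observation $r\mid|B|$ then already yields a contradiction. The paper instead concludes $A\leq D$, so $A=1$ by Proposition \ref{prop:unitary-toral}, contradicting $AS/S\neq1$. No prime analysis of $|E/S|$ is needed at all.
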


\begin{proof}
We write $Q=AS/S\neq1$ for the nontrivial image of $A$ in the quotient $E/S$ of the extension field normalizer $E$ by its socle $S$. The permutation image $B=AD/D$ of $A$ on the simplex lines is hereditarily $S_n$-permutable (Lemmas \ref{lem:overgroup-restriction} and \ref{lem:quotient-inheritance}). By hypothesis $B$ is solvable, and $n=ub$ is odd with $n\geq9$, so Lemma \ref{lem:symmetric-solvable-hereditary-trivial} shows that $B=1$. Hence $A$ lies in the diagonal torus $D$ in $G$, and Proposition \ref{prop:unitary-toral} shows that $A=1$. But then $Q=AS/S=1$, contradicting $Q\neq1$. So no such subgroup $A$ exists.
\end{proof}

\begin{lemma}\label{lem:unitary-transitive-alternating}
Let $d\geq7$ and let $C\leq S_d$ be nonsolvable, transitive, and hereditarily $S_d$-permutable. Then $C\geq A_d$.
\end{lemma}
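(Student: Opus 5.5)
We test hereditary $S_d$-permutability of $C$ against setwise stabilizers of small subsets and use the homogeneity classification to force $C\geq A_d$. Since $d\geq7$, we have $1\leq k<d/2$ for $k\in\{2,3\}$. Hence the subset stabilizers $H_k=S_k\times S_{d-k}$ are maximal in $S_d$, and Lemma \ref{lem:unitary-subset} applies to them. For each such $k$, either $C$ stabilizes some $k$-subset, or $C$ is $k$-homogeneous.

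\textbf{Step 1: the $2$-subsets.} Suppose first that $C$ stabilizes a $2$-subset $\{\alpha,\beta\}$. Then $C$ preserves the graph on the $d$ points with the single edge $\{\alpha,\beta\}$. By transitivity, the $C$-images of this edge cover every point, so the orbit of $\{\alpha,\beta\}$ under $C$ is a $C$-invariant set of edges through all points. In the case where $C$ fixes the edge itself, this orbit is just that one edge, which covers only the two points $\alpha,\beta$. Transitivity then forces $d=2$, a contradiction. Hence $C$ is $2$-homogeneous. Since $d$ is at least $7$ (in particular $d$ is not tiny), $C$ has even order whenever it is nonsolvable. It follows that $C$ is $2$-transitive, and in particular primitive.

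\textbf{Step 2: the $3$-subsets.} The same argument applies with $k=3$. If $C$ fixes a $3$-subset, then the orbit of that subset is the subset itself, and transitivity again forces $d\leq3$, which is impossible. Therefore $C$ is $3$-homogeneous. For $d\geq 8$ we also have $4<d/2$, so we may apply the argument with $k=4$ and obtain that $C$ is $4$-homogeneous as well. In the case $d=7$ we stop at $3$-homogeneity and handle it separately in the last step.

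\textbf{Step 3: the classification.} By Livingstone and Wagner \cite[Theorem 2]{LivingstoneWagner1965}, $4$-homogeneity with $d\geq8$ gives $3$-transitivity. Combining this with Kantor \cite[Theorem 1]{Kantor1972} and the list of multiply transitive groups \cite[\S7.4]{DixonMortimer1996}, as in Lemma \ref{lem:no-solvable-four-homogeneous}, $C$ is one of the following: $A_d$, $S_d$, a Mathieu group $M_{11},M_{12},M_{23},M_{24}$ in its natural degree, or one of $\PGL_2(8)$, $\PGammaL_2(8)$ (degree $9$), $\PGammaL_2(32)$ (degree $33$). The group $M_{24}$ is $5$-transitive, so this list cannot be cut down further by homogeneity alone. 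We exclude each non-alternating group $C$ in the list by testing it against a suitable maximal subgroup $M$ of $S_d$, and the choice of $M$ depends on $C$.

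\begin{enumerate}[label=\textup{(\roman*)}]
\item \textbf{Subset stabilizers.} Take $M=S_k\times S_{d-k}$ with $k<d/2$ for which $C$ is not $k$-homogeneous. Every $C$-orbit on $k$-subsets then gives a contradiction, because $C$ stabilizes no $k$-subset by the transitivity argument of Steps 1 and 2. For $M_{11}$ (degree $11$) and $M_{12}$ (degree $12$) we take $k=5$: the group order is too small to be transitive on $5$-subsets, since $\binom{11}{5}=462\nmid|M_{11}|$ and $\binom{12}{5}=792\nmid|M_{12}|$. For degree $9$ we use $k=4$, noting that $\binom94=126\nmid|\PGammaL_2(8)|=1512$. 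For degree $33$ we use $k=5$. For $M_{23}$ and $M_{24}$ we use $k=6$ or $k=7$.
\item \textbf{The imprimitive subgroup $S_2\wr S_{d/2}$.} For even $d$, we use this maximal subgroup as an alternative test. Here $C$ must be transitive on perfect matchings, which fails by order, or $C$ must preserve a matching, which contradicts primitivity.
\end{enumerate}

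\textbf{Step 4: degree $7$.} For $d=7$, the nonsolvable transitive groups are $\PSL_3(2)$, $A_7$ and $S_7$ \cite[Table 7A]{ButlerMcKay1983}. To exclude $\PSL_3(2)$ we test it against $S_3\times S_4$. Since $35\nmid168$, the group $\PSL_3(2)$ is not transitive on $3$-subsets, while by the orbit argument above it fixes no $3$-subset. This is a contradiction, so $C\geq A_7$.

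\textbf{Main obstacle.} The main obstacle is the sporadic list in Step 3. For each exceptional group I need an explicit $k$ for which divisibility of $\binom dk$ into $|C|$ fails. For the large Mathieu groups I must check that a suitable $k$ exists, and I expect $k=6$ or $7$ to work by order.
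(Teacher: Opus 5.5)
There is a genuine gap at $d=8$ and $d=9$, and two of your divisibility claims in Step 3(i) are false.

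\textbf{Degree $8$.} For $d=8$ you have $4=d/2$, not $4<d/2$. The subgroup $S_4\times S_4$ is not maximal in $S_8$, since it lies in $S_4\wr S_2$. So Lemma \ref{lem:unitary-subset} says nothing for $k=4$, and you never obtain $4$-homogeneity. Steps 1 and 2 only give a nonsolvable, $2$-transitive, $3$-homogeneous group of degree $8$. Besides $A_8$ and $S_8$, this allows $\PSL_2(7)$, $\PGL_2(7)$ and $\AGL_3(2)$, of orders $168$, $336$ and $1344$. None of these is on your Step 3 list, so they are never excluded. They can be excluded by an imprimitive maximal subgroup. Since $C$ is primitive, it lies in no conjugate of $S_4\wr S_2$ (index $35$) or of $S_2\wr S_4$ (index $105$). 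Hence the index divides $|C|$, so $5\mid|C|$, which fails for all three groups.

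\textbf{The divisibility claims.} In fact $\binom{12}{5}=792$ does divide $|M_{12}|=95040$, with quotient $120$; $M_{12}$ is sharply $5$-transitive. Likewise $\binom94=126$ divides both $|\PGammaL_2(8)|=1512$ and $|\PSL_2(8)|=504$. The degree-$9$ groups are on your list precisely because they are $4$-homogeneous, and $M_{12}$ is $k$-homogeneous for every $k\leq5$. So no subset stabilizer $S_k\times S_{d-k}$ with $k<d/2$ can exclude any of them.

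For $M_{12}$ your alternative test (ii) does work: $[S_{12}:S_2\wr S_6]=10395$ is divisible by $7$, while $7\nmid|M_{12}|$. (The paper uses $S_6\wr S_2$, of index $462$, with the same prime.) But $d=9$ is odd, so (ii) is unavailable there, and your proposal contains no valid argument against $\PSL_2(8)$ and $\PGammaL_2(8)$. The missing ingredient is again an imprimitive maximal subgroup. For instance, $S_3\wr S_3<S_9$ has index $280=2^3\cdot5\cdot7$, and $5\nmid1512$.

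\textbf{What is fine, and how the paper proceeds.} The remaining cases are correct: $M_{11}$ with $k=5$, $M_{23}$ and $M_{24}$ with $k=6$, $\PGammaL_2(32)$ with $k=5$, and $d=7$. For comparison, the paper avoids the sporadic list altogether. Since $C$ is $k$-homogeneous for all $k<d/2$, Livingstone--Wagner gives $5$-transitivity for $d\geq11$ and $6$-transitivity for $d\geq13$. This leaves only $M_{12}$ at $d=12$, which $S_6\wr S_2$ rules out. For $8\leq d\leq10$, an imprimitive maximal subgroup forces a prime $p$ with $d/2<p\leq d-3$ to divide $|C|$. Such an element is a single $p$-cycle, and Jordan's theorem finishes.
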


\begin{proof}
For $1\leq k<d/2$ let $H_k=S_k\times S_{d-k}$ be the setwise stabilizer in $S_d$ of a subset of size $k$. As $C$ is transitive, the group $C$ lies in no conjugate of $H_k$. Therefore, the product alternative $S_d = CH_k ^x$ of Lemma \ref{lem:unitary-subset} holds and $C$ is $k$-homogeneous for each such $k$. In particular, $C$ is $h$-homogeneous with $h=\lfloor(d-1)/2\rfloor\geq3$, hence $2$-transitive and primitive \cite[Theorem 2]{LivingstoneWagner1965}.

A $5$-homogeneous group is $5$-transitive \cite[Theorem 2]{LivingstoneWagner1965}, and by the classification of multiply transitive groups \cite[\S7.4]{DixonMortimer1996}, the $5$-transitive groups of degree $\neq12,24$ are alternating or symmetric. If $d\geq13$, then $h\geq6$, so $C$ is $6$-transitive and $C\geq A_d$. If $d=11$, then $h\geq5$, so $C$ is $5$-transitive of degree $11$ and $C\geq A_{11}$.

If $d=12$, then $h=5$, so $C$ is $5$-transitive of degree $12$, and the $5$-transitive groups of degree $12$ are $A_{12}$, $S_{12}$, and the Mathieu group $M_{12}$ \cite[\S7.4]{DixonMortimer1996}. Since $ C$ is $2$-transitive, the group $C$ is primitive and lies in no conjugate of the imprimitive maximal subgroup $H=S_6\wr S_2$. By Lemmas \ref{lem:maximal-product} and \ref{lem:factorization-coset-transitivity} the index $[S_{12}:H]=\tfrac12\binom{12}{6}=462=2\cdot3\cdot7\cdot11$ divides $|C|$. Since $|M_{12}|=95040=2^6\cdot3^3\cdot5\cdot11$ is not divisible by $7$, the divisibility condition shows that $C$ cannot be $M_{12}$, so $C\geq A_{12}$. The normalizer in $S_{12}$ of this copy of $M_{12}$ acting on twelve points is $M_{12}$ itself. The outer automorphism of $M_{12}$ is not induced by a permutation of the twelve points.

For $8\leq d\leq10$ we use Jordan's theorem. A primitive subgroup of $S_d$ containing a $p$-cycle for a prime $p\leq d-3$ contains $A_d$ \cite[Theorem 3.3E]{DixonMortimer1996}. Being primitive, $C$ lies in no conjugate of an imprimitive maximal subgroup $M$, so by Lemmas \ref{lem:maximal-product} and \ref{lem:factorization-coset-transitivity} the index $[S_d:M]$ divides $|C|$. Taking $M=S_4\wr S_2<S_8$, $M=S_3\wr S_3<S_9$, and $M=S_5\wr S_2<S_{10}$ produces indices $35$, $280$, and $126$ respectively, providing the prime $5$ (for $d=8,9$) or $7$ (for $d=10$) as a divisor of $|C|$. In each case $d/2<p\leq d-3$, so an element of the indicated prime order in $C$ is a single $p$-cycle and Jordan's theorem shows that $C\geq A_d$.

For $d=7$ the nonsolvable transitive groups are $\PSL_3(2),A_7,S_7$ \cite[Table 7A]{ButlerMcKay1983}. Lemma \ref{lem:unitary-subset} for $3$-subsets forces $\binom{7}{3}=35\mid|C|$, incompatible with $|\PSL_3(2)|=168$, so $C\geq A_7$. In every case $C\geq A_d$.
\end{proof}

\begin{lemma}\label{lem:alternating-permutation-no-two}
Let $F$ be a field and let $d\geq5$. The natural permutation module $V=F^d$ for $A_d$ has no $2$-dimensional $A_d$-submodule.
\end{lemma}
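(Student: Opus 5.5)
Suppose $W\leq V=F^d$ is an $A_d$-submodule with $\dim W=2$. We will show that $A_d$ must act trivially on $W$, and then that the fixed space of $A_d$ on $V$ is too small to contain $W$.

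\emph{Step 1: the action on $W$ is trivial.} Let $\phi\colon A_d\to\GL(W)\cong\GL_2(F)$ be the restricted action. For $d\geq5$ the group $A_d$ is simple, so $\phi$ is either trivial or injective. Suppose $\phi$ is injective. Then $A_d$ embeds in $\GL_2(F)$, and since $A_d$ is perfect the image lies in $\SL_2(F)$.

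\begin{itemize}
\item If $\operatorname{char}F=p>0$, the image is a finite subgroup of $\SL_2(\bar F)$. By Dickson's classification, a nonabelian simple section of such a group is some $\PSL_2(p^k)$. The only possibility for $A_d$ with $d\geq5$ is then $A_5\cong\PSL_2(4)\cong\PSL_2(5)$ (and $A_6\cong\PSL_2(9)$). However, $\SL_2$ in odd characteristic has a unique involution $-I$, so a simple subgroup of even order cannot embed faithfully. In characteristic $2$, $\SL_2(2^k)$ does contain $\PSL_2(4)$.
\item A cleaner argument covers all characteristics at once. The Klein four-group $V_4\leq A_4\leq A_d$ would embed in $\SL_2(F)$. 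If $\operatorname{char}F\neq2$, every involution of $\SL_2(F)$ is $-I$ (it is diagonalizable with eigenvalues $\pm1$ and determinant $1$), so $V_4$ cannot embed. If $\operatorname{char}F=2$, every element of order $3$ in $\SL_2(\bar F)$ is semisimple with eigenvalues $\omega,\omega^{-1}$. Consider $A_5\leq A_d$. Its Sylow $2$-subgroup $V_4$ consists of unipotents in $\SL_2(\bar F)$, and any two commuting unipotents there lie in the same root subgroup. Hence $V_4$ fixes a unique line of $\bar F^2$. The normalizer $A_4$ of $V_4$ must preserve that line, so $A_4$ lies in a Borel subgroup, which is solvable. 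This is fine for $A_4$ itself, so we apply the constraint across all of $A_5$ instead. Every Sylow $2$-subgroup fixes a line, so $A_5$ acts on the finite set of these lines. The stabilizers are the normalizers $A_4$, which have index $5$. This gives the natural transitive action on $5$ lines. But $\PSL_2(4)$ has no doubly transitive action on $5$ points of $\mathbb P^1(\bar F)$ compatible with an embedding in $\SL_2(\bar F)$: the line stabilizer $A_4$ would embed in a Borel $\bar F\rtimes\bar F^\times$, whose elements of order $3$ act on the unipotent radical with the order-$3$ element's square-eigenvalue ratio. That is consistent, so this route does not yet give a contradiction.
\end{itemize}

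Since the characteristic-$2$ case is genuinely delicate ($A_5\cong\SL_2(4)$ does act faithfully on $\F_4^2$), we avoid representation theory there and argue directly with the permutation module. Take a $3$-cycle $t=(1\,2\,3)$. On $V$ it acts as a permutation matrix with $d-3$ fixed coordinates, so $t-1$ has rank $2$ on $V$. Now take two $3$-cycles $t=(1\,2\,3)$ and $t'=(3\,4\,5)$. Each has $W\cap\ker(t-1)$ of dimension at least $0$. Under an injective $\phi$, the image $\phi(t)$ is a nontrivial element of order $3$ in $\SL_2(F)$, so $\ker(\phi(t)-1)=0$ on $W$. That is, no nonzero vector of $W$ is fixed by $t$.

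Take any nonzero $w=(w_1,\dots,w_d)\in W$. The $3$-cycles moving only coordinates $i,j,k$ satisfy $w+tw+t^2w\in W$, and this vector is fixed by $t$, so it is $0$. Hence $w_i+w_j+w_k=0$ for all distinct $i,j,k$. (This holds in the span generated, since $\phi(t)$ has no eigenvalue $1$ and $1+\phi(t)+\phi(t)^2=0$.) Because $d\geq5$, we can compare $\{i,j,k\}$ with $\{i,j,l\}$, which gives $w_k=w_l$ for all $k,l$. So $w=c(1,\dots,1)$. Then $3c=0$ and $w$ is fixed by $A_d$, contradicting the injectivity of $\phi$. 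Therefore $\phi$ is trivial.

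\emph{Step 2: the fixed space is too small.} Since $\phi$ is trivial, $W\leq V^{A_d}$. Because $A_d$ is transitive on coordinates, $V^{A_d}=F(1,\dots,1)$, which has dimension $1<2$. This is a contradiction, so no $2$-dimensional $A_d$-submodule exists.

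The main obstacle is Step 1. The plan is the $3$-cycle averaging argument, which uses only $d\geq5$ and $1+\phi(t)+\phi(t)^2=0$ for an order-$3$ element of $\SL_2$ without eigenvalue $1$. When $\operatorname{char}F=3$, an order-$3$ element of $\SL_2(F)$ is unipotent and does fix a line, so that case needs separate care. In characteristic $3$, use instead a double transposition $s$ with $s^2=1$. The element $\phi(s)$ has order $2$ in $\SL_2(F)$, so $\phi(s)=-I$, which is central. Conjugating $s$ within $A_d$ gives noncommuting double transpositions that all map to $-I$, and therefore commute in the image. This contradicts injectivity.
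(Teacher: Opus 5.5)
Your proof is correct, but it takes a different route from the paper's. The paper never leaves the permutation module. A minimal-support argument (the differences $y-y^{(ijk)}$, then $z-z^{(rs)(ab)}$) shows that every $A_d$-submodule of the augmentation submodule $V_0=\{\sum a_ie_i:\sum a_i=0\}$ is either contained in $F\mathbf{1}$ or equal to $V_0$. Hence a $2$-dimensional submodule $X$ meets $V_0$ exactly in $F\mathbf{1}$. Then $X$ is an extension of a trivial module by a trivial module, perfectness of $A_d$ makes $X$ pointwise fixed, and this contradicts $V^{A_d}=F\mathbf{1}$.

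You instead use simplicity to reduce to excluding a faithful homomorphism $A_d\to\SL_2(F)$. For $\operatorname{char}F\neq2$ you exclude it abstractly: $-I$ is the only involution of $\SL_2(F)$, so the Klein four-group cannot embed. Only in characteristic $2$, where $A_5\cong\SL_2(4)$ really does have faithful $2$-dimensional representations, do you return to the permutation module, via $(1+t+t^2)w=0$ for a $3$-cycle $t$. The closing fixed-space count is the same in both proofs. Your route proves more away from characteristic $2$: there $A_d$ has no faithful $2$-dimensional representation at all, whatever the module. The price is a case split by characteristic. The paper's route is uniform in $F$ and gives the submodule structure of $V_0$ as a by-product.

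In a final write-up, remove the dead ends:
\begin{itemize}
\item the Dickson bullet;
\item the characteristic-$2$ Borel discussion, whose claim that $\PSL_2(4)$ has no suitable doubly transitive action on $5$ points is actually false, since $\SL_2(4)$ acts $3$-transitively on $\mathbb{P}^1(\F_4)$;
\item the vacuous remark that $W\cap\ker(t-1)$ has dimension at least $0$;
\item the separate characteristic-$3$ paragraph, which the Klein four-group argument already covers.
\end{itemize}
State explicitly that $\det\phi(t)=1$ is what rules out the eigenvalue $1$ for $\phi(t)$ when $\operatorname{char}F\neq3$. In characteristic $2$ you can also finish faster: the coordinates of $(1+t+t^2)w$ off the support of $t$ equal $3w_l=w_l$, so $w=0$ immediately.
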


\begin{proof}
Let
$$
        \mathbf 1=e_1+\cdots+e_d,
        ~ T=F\mathbf 1,
        ~ W=\left\{\sum a_i e_i:\sum a_i=0\right\}.
$$
We first consider vectors of minimal support. Let $0\neq Y\leq W$ be an $A_d$-submodule, and suppose that $Y$ is not contained in $T$. We choose $y=(a_1,\ldots,a_d)\in Y\setminus T$. Since $y$ is not constant, two coordinates differ, say $a_i\neq a_j$. Choose $k$ distinct from $i,j$ and set $z=y-y^{(ijk)}$. Then $0\neq z\in Y$ and $z$ is supported on at most three coordinates. If $z$ has support two, then $z$ is a scalar multiple of some $e_r-e_s$. Suppose that $z$ has support three, say on $\{r,s,t\}$. If two nonzero coefficients (say those at $r$ and $s$) differ, then choose two points $a,b$ outside the support of $z$ and act on $z$ by $(rs)(ab)$. The difference $z-z^{(rs)(ab)}$ is a nonzero vector of $Y$ supported on $\{r,s\}$. If the three nonzero coefficients are all equal, then the characteristic is $3$. Choosing a point $a$ outside the support of $z$ and acting on $z$ by $(rs)(ta)$ produces a difference equal to a scalar multiple of $e_t-e_a$. Thus, in all cases $Y$ contains $e_r-e_s$ for some $r\neq s$. Since $A_d$ is $2$-transitive for $d\geq5$, the $A_d$-orbit of $e_r-e_s$ spans $W$. Hence $Y=W$.

Now suppose that $X\leq V$ is an $A_d$-submodule of dimension $2$. If $X\leq W$, then the argument using vectors of minimal support for submodules of $W$ shows that either $X\leq T$ or $X=W$, both impossible. Hence $X\nleq W$, and $X\cap W$ is an invariant line. By the argument using vectors of minimal support for submodules of $W$, the line $X\cap W$ must be $T$. In particular the characteristic divides $d$, so that $T\leq W$. The quotient $X/T$ embeds in the trivial module $V/W$, and $T$ is also trivial. Since $A_d$ is perfect for $d\geq5$, the action of $A_d$ on the extension $X$ of $X/T$ by $T$ is trivial. The space $X$ is fixed pointwise by $A_d$, but since $A_d$ is transitive, the fixed space of $A_d$ on $V$ is exactly $T$, of dimension $1$. This contradiction proves the lemma.
\end{proof}

\begin{lemma}\label{lem:unitary-full-support-simplex}
Let $\PSU_d(Q)\leq E\leq\PGU_d(Q)$ with $Q=p^f$ and $d\geq5$, let $\Delta=D:S_d$ be a self-polar simplex stabilizer in $E$, and suppose that $A\leq\Delta$ is hereditarily $E$-permutable with $B=AD/D\geq A_d$. Then no such $A$ exists.
\end{lemma}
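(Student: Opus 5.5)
The plan is to use hereditary permutability of $A$ against the maximal parabolic $P_1$, the stabilizer in $E$ of a totally singular line. This should force $A$ to fix a singular projective point. We then exploit the monomial structure of $\Delta$ together with the high transitivity of $B\geq A_d$.

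\emph{Step 1: $A$ fixes a singular line.} By Lemma \ref{lem:maximal-product}, either $A\leq P_1^x$ or $E=AP_1^x$. In the second case Lemma \ref{lem:factorization-coset-transitivity} makes $A$ transitive on the singular points, so $|E:P_1|$ divides $|A|$. The number of singular points is
$$
(Q^d-(-1)^d)(Q^{d-1}-(-1)^{d-1})/(Q^2-1).
$$
Let $e\in\{d,d-1\}$ be the odd one. Choose a primitive prime divisor $s$ of $Q^{2e}-1$ by Zsigmondy \cite{Zsigmondy1892}; the small exceptions are checked directly. Then $s\geq 2e+1>d$ and $s\nmid Q+1$, and $s$ divides $|E:P_1|$. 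On the other hand, $|A|$ divides $|\Delta|$, which divides $(Q+1)^{d-1}\cdot(3,Q+1)\cdot d!$. Hence $s\nmid|A|$, a contradiction. So $A$ stabilizes a singular line $\gen{v}$.

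\emph{Step 2: the diagonal part of $A$ is trivial.} Write $v=\sum a_ie_i$ in the simplex basis. Since $A$ acts monomially and fixes $\gen{v}$, the support of $v$ is a $B$-invariant subset of the coordinates. It is nonempty, and $B$ is transitive, so every $a_i\neq0$. An element of $A\cap D$ is $\diag(\lambda_1,\ldots,\lambda_d)$ fixing $\gen{v}$ with all $a_i\neq0$. This forces all $\lambda_i$ to be equal, so $A\cap D=1$ projectively and $A\cong B$. We may conjugate by a diagonal element of $\GU_d(Q)$, which normalizes $\Delta$ and preserves $E$ up to the projective diagonal automorphisms. After this we may take $v=\mathbf 1=\sum e_i$, possibly at the cost of replacing the form by a diagonal one with entries $\beta_i$ of norm-one type. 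Then $A$ acts by signed/scaled permutations fixing $\gen{\mathbf 1}$. So the scalar cocycle is a homomorphism to scalars times a coboundary, and the perfect subgroup $A\cap B^{-1}(A_d)$ acts by genuine permutation matrices.

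\emph{Step 3: a second subspace test.} Next we test $A$ against $P_2$, the stabilizer of a totally singular plane, which is maximal for $d\geq5$. We use the same primitive prime divisor argument, now for the number of totally singular planes. This forces $A$ to stabilize a totally singular $2$-space $X$. The perfect part $A_d\leq A$ then stabilizes the $2$-dimensional $\F_{Q^2}$-subspace $X$ of the natural permutation module $\F_{Q^2}^d$, since it acts by permutation matrices. This contradicts Lemma \ref{lem:alternating-permutation-no-two}.

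\emph{Main obstacle.} I expect the main difficulty in Step 2. We must ensure that after fixing $\gen{\mathbf 1}$, the derived subgroup of $A$ really acts by honest permutation matrices rather than twisted monomial ones. Here I would use that $A_d$ is perfect, so any monomial cocycle fixing $\gen{\mathbf 1}$ becomes trivial on $A_d$. The remaining difficulty is the finitely many small $(d,Q)$ with no Zsigmondy prime. For those I would instead use the order bound $|A|\leq |\Delta|$ against $|E:P_1|$ directly.
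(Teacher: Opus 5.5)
Your proposal is correct and follows essentially the paper's proof: a prime $r>d$ with $r\nmid Q+1$ divides both $[E:P_1]$ and $[E:P_2]$, which forces $A$ into conjugates of $P_1$ and $P_2$. Transitivity of $A_d$ then gives $v$ full support, and conjugating by $T=\diag(v_1,\ldots,v_d)$ turns the invariant totally singular plane into a $2$-dimensional submodule of the $A_d$-permutation module, contradicting Lemma \ref{lem:alternating-permutation-no-two}. Two small remarks: your uniform primitive prime divisor of $Q^{2e}-1$ with $e\in\{d,d-1\}$ odd has no Zsigmondy exceptions, so no small cases remain (the paper handles $(Q,d)=(2,6)$ separately with $r=11$), and the ``main obstacle'' you anticipate does not arise, because $gv=\mu_gv$ with all $v_i\neq0$ already gives $T^{-1}gT=\mu_gP_\sigma$ directly, with no cocycle or perfectness argument needed.
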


\begin{proof}
Let $P_1$ be the stabilizer in $E$ of a totally singular $1$-space in the natural unitary module of $E$. Let $P_2$ be the stabilizer in $E$ of a totally singular $2$-space in the natural unitary module of $E$. Since $d\geq5$, the Witt index of this unitary module is at least $2$, so both subspaces exist and $P_1,P_2$ are maximal parabolics in ${\rm PGU}_d (Q)$. The maximality of these parabolic subgroups is the usual maximality of maximal parabolics in the finite $BN$-pair. The unitary stabilizers of these totally singular subspaces are listed in Kleidman and Liebeck \cite[Table 3.5.B]{KleidmanLiebeck1990}, and the Levi quotients of $P_1$ and $P_2$ are those of \cite[Proposition 26.1(b)]{MalleTesterman2011}. Since $\PSU_d(Q)$ is transitive on totally singular $i$-subspaces and each stabilizer $P_i$ maps onto $E/\PSU_d(Q)$, the maximality of $P_i$ holds in every intermediate group $E$.

Every prime divisor of $|D|$ divides $Q+1$, and every prime divisor of $|S_d|$ is at most $d$. Since $A\leq\Delta=D:S_d$, it is enough, for each $i=1,2$, to find a prime $r\mid[E:P_i]$ with $r>d$ and $r\nmid Q+1$. Then $[E:P_i]\nmid |A|$, so the product alternative $E=AP_i^x$ in Lemma \ref{lem:maximal-product} cannot occur and $A$ lies in a conjugate of $P_i$. 

The indices of $P_1$ and $P_2$ in $E$ are the numbers of totally singular $1$-spaces and $2$-spaces in this natural unitary module,
$$
        [E:P_1]=\frac{(Q^d-(-1)^d)(Q^{d-1}-(-1)^{d-1})}{Q^2-1},
$$
$$
        [E:P_2]=\frac{(Q^d-(-1)^d)(Q^{d-1}-(-1)^{d-1})(Q^{d-2}-(-1)^{d-2})(Q^{d-3}-(-1)^{d-3})}{(Q^4-1)(Q^2-1)}.
$$
Zsigmondy's theorem \cite{Zsigmondy1892} provides the primitive prime divisors in the following two cases. If $d$ is odd, then take a primitive prime divisor $r$ of $Q^{2d}-1$. Then $\ord_r(Q)=2d$, so $r\mid Q^d+1=Q^d-(-1)^d$. As $2d>4$, $r$ divides  neither $Q^2-1$ nor $Q^4-1$, and $r\equiv1\pmod{2d}$ implies that $r>d$ and $r\nmid Q+1$. If $d$ is even and $(Q,d)\neq(2,6)$, then take a primitive prime divisor $r$ of $Q^d-1$. Then $\ord_r(Q)=d$, so $r\mid Q^d-1=Q^d-(-1)^d$. As $d>4$, $r$ divides neither $Q^2-1$ nor $Q^4-1$, and $r\equiv1\pmod d$ implies that $r>d$ and $r\nmid Q+1$. For the exceptional pair $(Q,d)=(2,6)$ the factor $Q^{d-1}-(-1)^{d-1}=2^5+1=33$ contributes $r=11$, which divides $[E:P_1]=693$ and $[E:P_2]=6237$ but neither $Q+1=3$ nor $6!$. In each case, a prime $r>d$ with $r\nmid Q+1$ divides both $[E:P_1]$ and $[E:P_2]$. Hence $A\leq P_1^x$ and $A\leq P_2^y$, so $A$ stabilizes a totally singular $1$-space $\ell=\langle v\rangle$ and a totally singular $2$-space $U$.

We let $A^+\leq A$ be the preimage of $A_d\leq B$ under the permutation homomorphism from $A$ to $B=AD/D$ on the simplex lines. Since $A_d$ is transitive on the $d$ simplex coordinates, the vector $v$ has full support, $v=(v_1,\dots,v_d)$ with all $v_i\neq0$. To describe the projective action, conjugate the monomial group by $T=\operatorname{diag}(v_1,\dots,v_d)$, using the map $g\mapsto T^{-1}gT$. An element $g=\operatorname{diag}(\lambda_1,\dots,\lambda_d)P_\sigma$ of $A^+$ stabilizes $\ell=\langle v\rangle$, so $gv=\mu_gv$ for a scalar $\mu_g$. After conjugation by $T$ the element $g$ becomes $\mu_gP_\sigma$, so the projective action of $A^+$ is the ordinary permutation action of $A_d$ on the $d$-point permutation module $(\F_{Q^2})^d$, with $A_d$ permuting its coordinate vectors. The invariant $2$-space $U$ then produces a $2$-dimensional $A_d$-submodule $T^{-1}U$ of the $d$-point permutation module for $A_d$, contradicting Lemma \ref{lem:alternating-permutation-no-two}. This contradiction shows that no such $A$ exists.
\end{proof}

\begin{lemma}\label{lem:unitary-small-simplex}
Let $\PSU_d(Q)\leq E\leq\PGU_d(Q)$ with $d\in\{5,6\}$ and $Q=p^f$, let $\Delta=D:S_d$ be a self-polar simplex stabilizer in $E$, and suppose that $A\leq\Delta$ is hereditarily $E$-permutable with $B=AD/D$ nonsolvable and transitive on the $d$ lines of the simplex. Then no such $A$ exists.
\end{lemma}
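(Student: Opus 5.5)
Since the lemma is stated only for $d\in\{5,6\}$, I will use it to reduce to Lemma~\ref{lem:unitary-full-support-simplex}, which already excludes $B\geq A_d$ for $d\geq5$. So the plan is to show that a nonsolvable transitive, hereditarily $S_d$-permutable subgroup $B\leq S_d$ contains $A_d$ when $d\in\{5,6\}$. This replaces Lemma~\ref{lem:unitary-transitive-alternating}, which needs $d\geq7$. As in the discussion before Lemma~\ref{lem:unitary-alt-transfer}, $B=AD/D$ is hereditarily $S_d$-permutable by Lemmas~\ref{lem:overgroup-restriction} and~\ref{lem:quotient-inheritance}.

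\emph{Degree $5$.} The nonsolvable transitive subgroups of $S_5$ are $A_5$ and $S_5$ (the groups $C_5$, $D_{10}$ and $\AGL_1(5)$ are solvable). So $B\geq A_5$ at once, and Lemma~\ref{lem:unitary-full-support-simplex} applies.

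\emph{Degree $6$.} The nonsolvable transitive subgroups of $S_6$ are $A_5\cong\PSL_2(5)$ and $S_5\cong\PGL_2(5)$ acting on the six points of the projective line, together with $A_6$ and $S_6$. I must exclude the first two.

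\begin{itemize}
\item First try the imprimitive maximal subgroup $H=S_3\wr S_2$. By Lemmas~\ref{lem:maximal-product} and~\ref{lem:factorization-coset-transitivity}, either $B$ lies in a conjugate of $H$, or $10=[S_6:H]$ divides $|B|$.
\item $\PGL_2(5)$ is primitive on six points and so lies in no conjugate of $H$. Its order $120$ is divisible by $10$, and $\PSL_2(5)$ is $2$-transitive, so this test does not separate them. Indeed $S_6=\PGL_2(5)\cdot(S_3\wr S_2)$ is a genuine factorization.
\item The $3$-subset stabilizer $S_3\times S_3$ is not maximal, so I will instead test against a subgroup $K$ chosen so that $\gen{B,K}$ is small. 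The aim is for every product $BK^x$ with $x\in\gen{B,K}$ to have an order that is not a subgroup order.
\item Concretely, for $B\cong A_5$ transitive, take $K$ generated by an odd permutation, for instance a transposition. Then $\gen{B,K}=S_6$, because the only proper overgroup of transitive $A_5$ in $S_6$ is $S_5$, which contains no transposition. Since $B$ contains no transposition, $|BK^x|=120$, so $BK^x$ would be a transitive $S_5$ containing a transposition, which is impossible.
\item For $B\cong\PGL_2(5)$, test against the subgroup generated by a $3$-cycle. A $3$-cycle lies in the intransitive $S_5$'s and not in $B$, so $\gen{B,K}=S_6$ and $|BK^x|=360$. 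Then $BK^x=A_6$, which contradicts the fact that $B$ contains odd permutations, since $\PGL_2(5)\not\leq A_6$.
\end{itemize}
This gives $B\geq A_6$, and Lemma~\ref{lem:unitary-full-support-simplex} finishes the proof.

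The main obstacle is degree $6$: the wreath-product index test fails there, so the argument depends on these ad hoc subgroup choices. I would verify them, or confirm them by a small computation as the paper does in its appendices.
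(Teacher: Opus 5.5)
Your proof is correct and follows the paper's route. Degree $5$ and the case $B\geq A_6$ are handed to Lemma~\ref{lem:unitary-full-support-simplex}, and the transitive $\PSL_2(5)$ and $\PGL_2(5)$ of degree six are ruled out by permuting $B$ with a small cyclic subgroup. The paper uses a transposition in both cases (giving order $120$, resp.\ index $3$), whereas you use a $3$-cycle for $\PGL_2(5)$ (forcing $BK^x=A_6$). One small slip: transitive $A_5$ also lies in $A_6$, so your list of its overgroups is incomplete. This is harmless, because the claim $\gen{B,K}=S_6$ is not needed at all: every conjugate of a transposition is again a transposition.
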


\begin{proof}
By Lemmas \ref{lem:overgroup-restriction} and \ref{lem:quotient-inheritance}, $B$ is hereditarily $S_d$-permutable. By Butler and McKay \cite{ButlerMcKay1983}, the nonsolvable transitive groups of degree $5$ are $A_5,S_5$. In degree $6$, they are $A_6,S_6$ together with the actions of degree six of $A_5\cong\PSL_2(5)$ and $S_5\cong\PGL_2(5)$.

If $B\geq A_d$, then Lemma \ref{lem:unitary-full-support-simplex}, which is valid for all $d\geq5$, contradicts the assumed existence of $A$. This settles degree $5$, where $A_5$ and $S_5$ both contain $A_5$, and the cases $A_6,S_6$ in degree $6$.

We now consider the remaining case $d=6$ with $B\in\{A_5,S_5\}$. In their transitive actions of degree six $A_5\cong\PSL_2(5)$ and $S_5\cong\PGL_2(5)$ act on the projective line over $\F_5$, where no nonidentity element fixes more than two of the six points. In particular neither of these transitive groups of degree six contains a transposition. Let $t\in S_6$ be a transposition. By Lemma \ref{lem:maximal-product} the hereditarily $S_d$-permutable group $B$ permutes with a conjugate $\langle t^x\rangle$, so $P=B\langle t^x\rangle$ is a subgroup containing the transposition $t^x$. If $B=A_5$, then $t^x\notin B$ implies that $|P|=120$. In this case, $P$ contains the transitive group $A_5$, so the subgroup $P$ is transitive. The transitive subgroups of $S_6$ of order $120$ are the copies of $\PGL_2(5)$, none containing a transposition, a contradiction. If $B=S_5$, then $t^x\notin B$ implies that $|P|=240$. In this case, the subgroup would have index $3$ in $S_6$, but no subgroup of that index exists. Both remaining possibilities for $B$ are impossible.
\end{proof}

\begin{lemma}\label{lem:symmetric-odd-involution-not-hereditary}
Let $u>5$ be an odd prime. No subgroup of $S_u$ generated by an odd involution is hereditarily $S_u$-permutable.
\end{lemma}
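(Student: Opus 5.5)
The plan is to exhibit, for an odd involution $t\in S_u$, a single involution $s\in S_u$ such that no conjugate of $\gen{s}$ inside $F=\gen{t,s}$ permutes with $C=\gen{t}$. By Lemma \ref{lem:generated-hereditary-formulation} this is enough to show that $C$ is not hereditarily $S_u$-permutable. The criterion I will use is the one already used in Step 3 of Lemma \ref{lem:pgl2-borel}: two subgroups of order $2$ permute if and only if their generators commute. So it suffices to arrange $F\cong D_8$ with $ts$ of order $4$. In $D_8$ generated by two reflections whose product has order $4$, the two generating reflections lie in the two different conjugacy classes of non-central involutions. Moreover, the only involutions commuting with $t$ are $t$, the central involution $z=(ts)^2$, and $tz$, and $tz$ is conjugate to $t$. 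Hence the $F$-conjugates $\gen{s},\gen{s^{t}}$ of $\gen{s}$ never commute with $t$, and $C$ fails the test against $\gen{s}$ inside $F$.

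Write $t$ as a product of $r$ disjoint transpositions with $r$ odd; then $t$ has $u-2r\geq1$ fixed points because $u$ is odd. I split into two cases according to the number of fixed points.

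\emph{Case $u-2r\geq3$.} Write $t=(1\,2)\tau$, where $\tau$ is the product of the other transpositions of $t$, and let $3,4$ be fixed points of $t$. Put $s=(1\,3)(2\,4)\tau$. Since $\tau$ commutes with both $(1\,2)$ and $(1\,3)(2\,4)$ and $\tau^2=1$, we get $ts=(1\,2)(1\,3)(2\,4)$, which is a $4$-cycle on $\{1,2,3,4\}$. Hence $\gen{t,s}\cong D_8$, and the criterion above applies. Because $u>5$ is odd, $u-2r$ is odd, so this case covers every $t$ with $u-2r\ne1$.

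\emph{Case $u-2r=1$.} Here $r=(u-1)/2$ is odd, so $r\geq3$ since $u\geq7$, and $t$ has exactly one fixed point $f$. Now $(1\,2)(3\,4)$ alone is useless, since it is the central rotation of the square $1,3,2,4$. So I would work on the support $\{1,2,3,4,5,6,f\}$ of three transpositions $(1\,2)(3\,4)(5\,6)$ of $t$ together with $f$, and keep the remaining transpositions $\tau$ of $t$ as a common commuting factor of $t$ and $s$. I would then choose an involution $s_0$ on these seven points such that $(1\,2)(3\,4)(5\,6)\,s_0$ has order $4$. The first candidate to check is $s_0=(1\,3)(2\,4)(5\,f)$, possibly adjusted on the points $5,6,f$. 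Setting $s=s_0\tau$ then again gives $F=\gen{t,s}\cong D_8$ with $ts$ of order $4$, and the criterion applies.

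The main obstacle is this second case. I have not verified that $s_0=(1\,3)(2\,4)(5\,f)$ works: the orbit of $5,6,f$ may make $ts$ have order $12$ rather than $4$. If no convenient involution $s_0$ on these seven points gives order exactly $4$, I would fall back on a test subgroup $H$ of order $3$ (or a $3$-cycle) with $\gen{t,H}\cong S_4$ or $S_3\times C_2$. In $S_4$ the argument of the $|C|=4$ step in Lemma \ref{lem:pgl2-borel} would carry over: a permuting product would be a subgroup of order $6$ containing $t$, and one checks directly which order-$6$ subgroups occur. The oddness of $t$ would be used to ensure that $t$ does not lie in the relevant even subgroup.
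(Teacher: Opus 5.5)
Your reduction to a dihedral test is sound, and your first case is correct: for $s=(1\,3)(2\,4)\tau$ the product $ts$ is a $4$-cycle on $\{1,2,3,4\}$, so $\gen{t,s}\cong D_8$ with $t$ and $s$ in different reflection classes. The paper handles $f\geq3$ fixed points differently, using a $5$-cycle through a transposition of $t$ and three fixed points, together with the fact that the involutions of $\AGL_1(5)$ are double transpositions. Your version is more uniform.

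The genuine gap is the case $u-2r=1$, which you leave open, and your proposed witness fails.
\begin{itemize}
\item \emph{The candidate passes the test.} Take $t=(1\,2)(3\,4)(5\,6)\tau$ and $s=(1\,3)(2\,4)(5\,f)\tau$. The two parts on $\{1,2,3,4\}$ lie in one Klein four group, so $ts$ is $(1\,4)(2\,3)$ times a $3$-cycle on $\{5,6,f\}$. Thus $ts$ has order $6$, not $4$ or $12$, and $F\cong D_{12}$. Conjugating $s$ by $t$ and then by $s$ gives $(1\,3)(2\,4)(5\,6)\tau$, which commutes with $t$. So $C$ does permute with a conjugate of $\gen{s}$, and the test is passed.
\item \emph{Adjusting on $\{5,6,f\}$ cannot help.} As long as $s$ agrees with $(1\,3)(2\,4)$ on $\{1,2,3,4\}$ and preserves $\{5,6,f\}$, the product $ts$ has order $2$ or $6$, never $4$.
\item \emph{The fallback is doomed, not just unverified.} In $S_3\times C_2$ the subgroup of order $3$ is normal. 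In $S_4\cong\gen{t,H}$, the involution $t$ lies outside the $A_4$ containing $H$, so it is a transposition and normalizes some $\gen{t,H}$-conjugate of $H$. Either way $\gen{t}$ permutes with a conjugate of $H$.
\end{itemize}

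The missing idea is to change $s$ on the first four points rather than on $\{5,6,f\}$. Take the single transposition $s=(2\,3)$, which links the two transpositions $(1\,2)$ and $(3\,4)$ of $t$. Then
$$
ts=\bigl((1\,2)(3\,4)(2\,3)\bigr)\cdot(5\,6)\tau ,
$$
where the first factor is a $4$-cycle and the second is an involution on disjoint points. So $ts$ has order $4$ and $F\cong D_8$. The $F$-conjugates of $s$ are $(2\,3)$ and $(1\,4)$. Neither commutes with $t$, since $t$ maps neither $\{2,3\}$ nor $\{1,4\}$ to itself. This is exactly the paper's argument for $f=1$, with $h=(b\,c)$. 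It works whenever $t$ has at least two transpositions, so combined with your first case (needed only when $t$ is a single transposition) it gives a uniform dihedral proof.
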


\begin{proof}
Let $B=\gen{\tau}$ with $\tau$ an odd involution, written as a product of $t$ transpositions, and let $f=u-2t$ be its number of fixed points. Since $\tau$ is odd, $t$ is odd. The integer $u$ is also odd, so $f$ is odd. Consequently, $f=1$ or $f\geq3$.

Suppose that $f\geq3$. Choose a transposition $(ab)$ of $\tau$ and three fixed points $c,d,e$, and let $h=(abcde)$ and $H=\gen{h}$. On $\{a,b,c,d,e\}$ the transposition $(ab)$ and the $5$-cycle $h$ generate $S_5$. The remaining transpositions of $\tau$ are disjoint from $\{a,b,c,d,e\}$. Both $\tau$ and $h$ map $\{a,b,c,d,e\}$ to itself, and the restriction of $\tau$ to this set is $(ab)$. If $B$ were hereditarily $S_u$-permutable, then for some $x\in\gen{B,H}$ the product $BH^x$ would be a subgroup. As $|B|=2$ and $|H^x|=5$ are coprime, $|BH^x|=10$ and $H^x\unlhd BH^x$, so $\tau$ normalizes $H^x$. Since $\gen{B,H}$ maps $\{a,b,c,d,e\}$ to itself, restriction to this set of five points shows that $(ab)$ normalizes a group generated by a $5$-cycle. In $N_{S_5}(\gen{(abcde)})\cong\AGL_1(5)$ every involution has cycle type $(2,2,1)$, whereas $(ab)$ has cycle type $(2,1,1,1)$, so this normalization is impossible.

The remaining case is $f=1$, where $t=(u-1)/2\geq3$, so $\tau$ has two transpositions $(ab)$ and $(cd)$. Let $h=(bc)$ and $H=\gen{h}$. On $\{a,b,c,d\}$,
$$
        \gen{(ab)(cd),(bc)}\cong D_8,
$$
and in this dihedral group the conjugates of $(bc)$ are $(bc)$ and $(ad)$, neither of which commutes with $(ab)(cd)$. Since $\gen{B,H}$ maps $\{a,b,c,d\}$ to itself and $h$ is supported on that set of four points, every $\gen{B,H}$-conjugate of $h$ is supported in $\{a,b,c,d\}$, hence equals $(bc)$ or $(ad)$. As the remaining transpositions of $\tau$ are disjoint from $\{a,b,c,d\}$, such a conjugate commutes with $\tau$ if and only if the conjugate commutes with $(ab)(cd)$. Neither of the two conjugates commutes with this double transposition. As two subgroups of order $2$ permute if and only if their generators commute, no conjugate of $H$ permutes with $B$, contradicting hereditary $S_u$-permutability of $B$.
\end{proof}

\begin{lemma}\label{lem:symmetric-three-cycle-not-hereditary}
Let $N\geq5$. No subgroup of $S_N$ generated by a single $3$-cycle is hereditarily $S_N$-permutable.
\end{lemma}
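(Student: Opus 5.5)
We take $B=\gen{(abc)}$ and test it against a single well-chosen subgroup $H$, using Lemma \ref{lem:generated-hereditary-formulation}. That lemma says the conjugating element may be taken inside $\gen{B,H}$. So it suffices to find $H\leq S_N$ such that no $\gen{B,H}$-conjugate of $H$ permutes with $B$.

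\textbf{The test subgroup.} Pick a fourth point $d\notin\{a,b,c\}$; this is possible since $N\geq5$. Set
$$
        H=\gen{(ab)(cd)}.
$$
Every element of $\gen{B,H}$ is supported on $\{a,b,c,d\}$ and is even there. Hence $\gen{B,H}\leq A_{\{a,b,c,d\}}\cong A_4$. In fact equality holds, since a $3$-cycle and a double transposition generate $A_4$. We do not need equality; the inclusion is enough.

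\textbf{The contradiction.} Suppose $x\in\gen{B,H}$ satisfies $BH^x=H^xB$.
\begin{itemize}
\item Then $BH^x$ is a subgroup of $\gen{B,H}$, because $B$, $H^x$ and $x$ all lie in $\gen{B,H}$.
\item Its order is $|B|\,|H^x|=3\cdot2=6$, since $B$ and $H^x$ have coprime orders.
\item All elements involved fix every point outside $\{a,b,c,d\}$. So restriction to $\{a,b,c,d\}$ is injective on $\gen{B,H}$, and $BH^x$ would be a subgroup of order $6$ in $A_4$.
\end{itemize}
But $A_4$ has no subgroup of order $6$. The same fact was already used in the $q=3$ step of Lemma \ref{lem:pgl2-borel}. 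Hence no such $x$ exists, and $B$ is not hereditarily $S_N$-permutable.

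\textbf{Where the care is needed.} The only real point is the choice of $H$. A transposition $H=\gen{(cd)}$ would fail as a test, because $\gen{(abc),(cd)}$ is all of $S_{\{a,b,c,d\}}$. That group contains the conjugate $(ab)$ of $(cd)$, and $\gen{(abc),(ab)}\cong S_3$ is a genuine permutable product. Choosing an even involution keeps $\gen{B,H}$ inside $A_4$, and this rules out any product of order $6$.
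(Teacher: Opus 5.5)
Your proof is correct and uses the same strategy as the paper: test $B=\gen{(a\,b\,c)}$ against a single cyclic subgroup $H$ of coprime order, chosen so that $\gen{B,H}$ is a small alternating group with no subgroup of order $|B|\,|H|$. The paper takes the $5$-cycle $h=(a\,b\,c\,d\,e)$, so $\gen{B,H}\cong A_5$ has no subgroup of order $15$; your double transposition $(a\,b)(c\,d)$ gives $\gen{B,H}\leq A_4$, which has no subgroup of order $6$, and since this needs only four points your argument even works for $N\geq4$.
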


\begin{proof}
Let $B=\gen{(a\,b\,c)}$, and choose two further points $d,e$. We let $h=(a\,b\,c\,d\,e)$ and $H=\gen{h}$. On $\{a,b,c,d,e\}$ the $3$-cycle and the $5$-cycle generate $A_5$ (any subgroup of $A_5$ of order divisible by $15$ is $A_5$), and both fix the remaining $N-5$ points. So $\gen{B,H}$ acts as $A_5$ on $\{a,b,c,d,e\}$ and trivially elsewhere. If $B$ were hereditarily $S_N$-permutable, then since $|B|=3$ and $|H|=5$ are coprime, we could choose some $x\in\gen{B,H}$ such that $BH^x$ is a subgroup of order $|B|\,|H|/|B\cap H^x|=15$. But a subgroup of order $15$ would be cyclic, with an element of order $15$, which $A_5$ lacks. The group $A_5$ has no such subgroup. Hence no conjugate of $H$ permutes with $B$, contradicting hereditary $S_N$-permutability of $B$.
\end{proof}

\begin{lemma}\label{lem:unitary-solvable-high-prime-simplex}
Let $u>5$ be prime and let $B\leq S_u$ be solvable and hereditarily $S_u$-permutable. Then $B=1$.
\end{lemma}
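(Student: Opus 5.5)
The plan is to pass from $S_u$ to the alternating group $A_u$, where the ordinary theorem of Yang and Galt applies, and then to handle the leftover subgroup of order at most $2$ with Lemma \ref{lem:symmetric-odd-involution-not-hereditary}. Since $u>5$ is prime, we have $u\geq7$. So Lemma \ref{lem:unitary-alt-transfer} applies to $B$, and the intersection
$$
        K=B\cap A_u
$$
is ordinary $A_u$-permutable. Because $B$ is solvable, $K$ is solvable. Since $A_u$ is nonabelian simple for $u\geq7$, $K$ is a proper subgroup of $A_u$.

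Next I would invoke the theorem of Yang and Galt \cite[Theorem 2]{YG2023} on $G$-permutable subgroups of $A_p$ for $p$ prime. I expect it to give that $A_p$ has no proper nontrivial ordinary $A_p$-permutable subgroup, at least in the range $p\geq7$ relevant here. This forces $K=1$. The main point to verify is that the hypotheses of that theorem (the range of $p$ and the meaning of "permutable") match the ordinary notion delivered by Lemma \ref{lem:unitary-alt-transfer}. This is the step I expect to be the main obstacle. If the cited theorem has a small exceptional prime, say $p=7$, I would treat it separately. The fallback for that case is to combine the subset test of Lemma \ref{lem:unitary-subset} with $k=3$, which forces $35\mid|C|$ for the relevant transitive pieces, with Lemma \ref{lem:symmetric-three-cycle-not-hereditary} to exclude small solvable $K$.

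Once $K=1$, the group $B$ embeds in $S_u/A_u\cong C_2$, so $|B|\leq2$. If $B=1$ we are done. Otherwise $B=\gen{\tau}$ with $\tau\notin A_u$, so $\tau$ is an odd involution. Lemma \ref{lem:symmetric-odd-involution-not-hereditary}, valid because $u>5$ is an odd prime, shows that such a $B$ is not hereditarily $S_u$-permutable. This is a contradiction, and hence $B=1$.
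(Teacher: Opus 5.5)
Your proposal is correct and follows the paper's proof step for step: Lemma \ref{lem:unitary-alt-transfer} makes $K=B\cap A_u$ ordinary $A_u$-permutable, solvability makes $K$ proper, Yang and Galt \cite[Theorem 2]{YG2023} gives $K=1$, and Lemma \ref{lem:symmetric-odd-involution-not-hereditary} rules out $|B|=2$. The paper applies the Yang--Galt theorem directly for every prime $u>5$, so your contingency argument for $u=7$ is not needed.
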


\begin{proof}
Let $K=B\cap A_u$. By Lemma \ref{lem:unitary-alt-transfer}, $K$ is ordinary $A_u$-permutable. Since $B$ is solvable, $K<A_u$. Therefore, the theorem of Yang and Galt \cite[Theorem 2]{YG2023} shows that $K=1$. Hence $B\hookrightarrow S_u/A_u\cong C_2$. If $B\cong C_2$, then $B$ is generated by an odd involution of $S_u$, contradicting Lemma \ref{lem:symmetric-odd-involution-not-hereditary}. Therefore, $B=1$.
\end{proof}

\begin{lemma}\label{lem:s5-no-solvable-hereditary}
No nontrivial solvable subgroup of $S_5$ is hereditarily $S_5$-permutable.
\end{lemma}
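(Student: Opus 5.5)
The plan is to reduce, via the alternating subgroup, to a short list of small candidates, and then kill each candidate with an explicit test subgroup.

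\textbf{Step 1: reduce to $|B|\in\{2,4\}$.} Let $B\leq S_5$ be nontrivial, solvable and hereditarily $S_5$-permutable, and put $K=B\cap A_5$.

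First I will check that the proof of Lemma \ref{lem:unitary-alt-transfer} never uses $m\geq7$. It only uses that $A_m$ has index $2$ and that $C^a\leq A_m$. So the same argument shows that $K$ is ordinary $A_5$-permutable.

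Since $B$ is solvable, $K<A_5$. Now $A_5\cong\PSL_2(4)$, so Theorem \ref{thm:unitary-low-bases}(2) (the case $Q=4$, via \cite[Example 1]{GT2022}) gives $|K|\leq2$. As $|B:K|\leq2$, we get $|B|\in\{2,4\}$. Hence $B$ is conjugate to one of the following:
\begin{itemize}
\item $\gen{(ab)}$, a transposition;
\item $\gen{(ab)(cd)}$, a double transposition;
\item $\gen{(abcd)}$, generated by a $4$-cycle;
\item $\gen{(ab),(cd)}$, a Klein four-group.
\end{itemize}
Only $K$ of order $\leq 2$ inside $A_5$ is allowed, and that leaves exactly these four cases.

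\textbf{Step 2: eliminate each candidate.} Throughout I use Lemma \ref{lem:generated-hereditary-formulation}. It suffices to exhibit a subgroup $H$ such that no $x$ (or even no $x\in S_5$) makes $BH^x$ a subgroup.
\begin{itemize}
\item \emph{Transposition $(ab)$.} Take $H=\gen{(abcde)}$, as in the case $f\geq3$ of Lemma \ref{lem:symmetric-odd-involution-not-hereditary}. A product of order $10$ would force $(ab)$ to normalize a $5$-cycle subgroup. But the involutions of $\AGL_1(5)$ are double transpositions, so this is impossible.
\item \emph{Double transposition $(ab)(cd)$.} Take $H=\gen{(bc)}$, exactly as in the case $f=1$ of that lemma. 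The $\gen{B,H}$-conjugates of $(bc)$ are $(bc)$ and $(ad)$, and neither commutes with $(ab)(cd)$.
\item \emph{$4$-cycle $(abcd)$.} Take $H=\gen{(abe)}$. A permuting product would be a subgroup of order $12$ containing an element of order $4$. The subgroups of $S_5$ of order $12$ are $A_4$ and $S_3\times S_2$, and neither has elements of order $4$.
\item \emph{Klein group $\gen{(ab),(cd)}$.} Take $H$ generated by a $5$-cycle. Since the orders are coprime, a product $BH^x$ would have order $20$, hence be a conjugate of $\AGL_1(5)$. That group contains no transposition, a contradiction.
\end{itemize}
In all four cases no conjugate of $H$ permutes with $B$, so $B=1$.

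\textbf{Main obstacle.} The main point to verify is Step 1. I need the transfer argument of Lemma \ref{lem:unitary-alt-transfer} to be valid for $m=5$, and the low-rank input on ordinary $A_5$-permutable subgroups (Theorem \ref{thm:unitary-low-bases}(2)) to apply at $Q=4$. If that input turned out unavailable, I would instead enumerate all solvable subgroups of $S_5$ up to conjugacy. These are the cyclic groups of orders $2,3,4,5,6$, the two Klein groups, the two kinds of $S_3$, $D_8$, $D_{10}$, $S_3\times S_2$, $\AGL_1(5)$, $A_4$ and $S_4$. I would dispose of each directly:
\begin{itemize}
\item the $3$-cycle by Lemma \ref{lem:symmetric-three-cycle-not-hereditary};
\item the larger ones by Lemma \ref{lem:maximal-product} against the maximal subgroups $S_4$, $\AGL_1(5)$ and $S_3\times S_2$, using the index-divisibility conditions $5$, $6$ and $10$ together with non-transitivity on the relevant cosets;
\item any remaining cases by small explicit tests like those in Step 2.
\end{itemize}
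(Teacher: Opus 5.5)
Your proof is correct, and it takes a genuinely different route from the paper's. The paper gives no conceptual argument for this lemma. It cites a machine enumeration of all $156$ subgroups of $S_5$ (Appendix~\ref{app:s5-solvable-certificate}), which finds that only $A_5$ and $S_5$ pass the generated test.

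You instead pass to the normal subgroup $A_5$. The transfer step is valid for $m=5$. It is in fact just Lemma~\ref{lem:normal-intersection-transfer} with $N=A_5$, so you need not re-examine Lemma~\ref{lem:unitary-alt-transfer}. The description of ordinary $A_5$-permutable subgroups in Theorem~\ref{thm:unitary-low-bases}\textup{(2)} then caps $|B|$ at $4$. Your list of four candidates is complete: the involutions of $S_5$ are transpositions and double transpositions, and the Klein group inside $A_5$ is excluded because it would give $|K|=4$. Each of your four tests is sound. The double transposition is the one case that genuinely needs $x\in\gen{B,H}$, since $(ab)$ and $(cd)$ are $S_5$-conjugates of $(bc)$ that commute with $(ab)(cd)$. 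Your other three tests fail for every $x\in S_5$.

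What your route buys is a short hand proof with no reliance on supplementary code. Since the only nonsolvable subgroups of $S_5$ are $A_5$ and $S_5$, it recovers the full appendix conclusion. The price is dependence on the external $A_5$ input, which you can make self-contained in a few lines:
\begin{itemize}
\item $C_3$ and $C_5$ fail against each other, since $A_5$ has no subgroup of order $15$.
\item $V_4$, $S_3$ and $D_{10}$ fail against $C_5$ or $C_3$, since $A_5$ has no subgroup of order $20$ or $30$.
\item $A_4$ fails against $\gen{(abc),(ab)(de)}\cong S_3$, because $|A_4S_3^x|\in\{24,36\}$ for every $x$.
\end{itemize}
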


\begin{proof}
A check of the subgroups of $S_5$ (Appendix \ref{app:s5-solvable-certificate}) shows that the only nontrivial hereditarily $S_5$-permutable subgroups are $A_5$ and $S_5$, both nonsolvable.
\end{proof}

\begin{lemma}\label{lem:symmetric-solvable-hereditary-trivial}
For every odd $N\geq5$, the symmetric group $S_N$ has no nontrivial solvable hereditarily $S_N$-permutable subgroup.
\end{lemma}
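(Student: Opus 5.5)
We first apply Lemma \ref{lem:solvable-simplex-prime-bound} and then remove the remaining $\{2,3\}$-structure by induction on the odd integer $N$. Let $B\leq S_N$ be solvable and hereditarily $S_N$-permutable, with $N\geq5$ odd. For $N=5$ the claim is Lemma \ref{lem:s5-no-solvable-hereditary}. For $N=7$ it is Lemma \ref{lem:unitary-solvable-high-prime-simplex}. So we assume that $N\geq9$ and that the statement holds for all smaller odd degrees at least $5$.

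As in the proof of Lemma \ref{lem:solvable-simplex-prime-bound}, Lemma \ref{lem:no-solvable-four-homogeneous} together with Lemma \ref{lem:unitary-subset} for $k=4$ shows that $B$ fixes a $4$-subset $S_0$. Let $\Omega$ be the complementary $(N-4)$-set. By Lemmas \ref{lem:overgroup-restriction} and \ref{lem:quotient-inheritance}, the image of $B$ in $S_\Omega$ is solvable and hereditarily $S_{N-4}$-permutable. Since $N-4\geq5$ is odd, induction shows that this image is trivial. Hence $B$ acts trivially on $\Omega$ and $B\leq S_{S_0}\cong S_4$.

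It remains to kill a nontrivial $B\leq S_{S_0}$ that fixes the $N-4\geq5$ points of $\Omega$. For this we choose one test subgroup $H$ for each type of $B$.
\begin{itemize}
\item If $B$ contains a $3$-cycle $(a\,b\,c)$, we take two points $d,e\in\Omega$ and $H=\gen{(a\,b\,c\,d\,e)}$.
\item If $B$ contains a transposition $(a\,b)$, we take $c,d,e\in\Omega$ and $H=\gen{(a\,b\,c\,d\,e)}$.
\item If $B$ is a $2$-group containing only double transpositions and $4$-cycles, we use a $5$-cycle through two points of $S_0$ and three points of $\Omega$, or a transposition mixing $S_0$ with $\Omega$.
\end{itemize}

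In each case $\gen{B,H}$ acts on a set of at most seven points containing the support of $B$, and it fixes everything else. We then show that no $\gen{B,H}$-conjugate $H^x$ gives a subgroup $BH^x$. The arguments are orders and normalizers of $5$-cycles, exactly as in Lemmas \ref{lem:symmetric-three-cycle-not-hereditary} and \ref{lem:symmetric-odd-involution-not-hereditary}. For example, a $5$-cycle subgroup $H^x$ would be normal in $BH^x$ whenever $|B|$ is coprime to $5$. Then $B$ would embed in $\AGL_1(5)$ acting on the support of $H^x$. That is impossible when $B$ contains a $3$-cycle, which does not divide $|\AGL_1(5)|$, or a transposition, which has the wrong cycle type. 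A cleaner route would be to use the primes $2,3$ bound of Lemma \ref{lem:solvable-simplex-prime-bound} directly and reduce to the cases where $B$ contains an element of order $3$ or is a nontrivial $2$-group.

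The main obstacle is the last case: $B$ a $2$-subgroup of $S_4$ with no transposition, namely $\gen{(ab)(cd)}$, $V_4$, $C_4$ or $D_8$. Here I would restrict to $S_0\cup\{e\}$ together with a few points of $\Omega$ and test against a suitable cyclic $H$ of order $3$ or $5$. When $|BH^x|$ is coprime to $5$ or gives a forced normal subgroup, the same normalizer argument applies. If this becomes messy, a finite check in $S_7$ or $S_9$ settles the base configurations, in the style of Appendix \ref{app:s5-solvable-certificate}, since $\gen{B,H}$ always lives on a bounded number of points.
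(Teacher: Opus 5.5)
Your reduction is the same as the paper's. You use the same bases $N=5,7$, the same use of Lemmas \ref{lem:no-solvable-four-homogeneous} and \ref{lem:unitary-subset} to obtain a $B$-stable $4$-set $S_0$, and the same restriction--quotient induction giving $B\leq\Sym(S_0)$ with $\Omega$ fixed pointwise. The paper then disposes of the nontrivial subgroups of $\Sym(S_0)\cong S_4$ by the computation of Appendix \ref{app:degree-nine-base}.

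The gap is in your hand replacement for that computation. You assert that a $5$-cycle subgroup $H^x$ is normal in $BH^x$ whenever $5\nmid|B|$. This is false for $|B|=12,24$, since a group of order $60$ or $120$ can have six Sylow $5$-subgroups; for example $A_4C_5=A_5$. As a result your tests do not eliminate $A_4$, $S_4$ or $C_4$. Write $S_0=\{a,b,c,f\}$.
\begin{itemize}
\item For $B=A_4$ and $H=\gen{(a\,b\,c\,d\,e)}$ with $d,e\in\Omega$, the group $\gen{B,H}$ is $A_6$ on $S_0\cup\{d,e\}$. It contains $x=(d\,f\,e)$, which gives $H^x=\gen{(a\,b\,c\,f\,d)}$, and then $BH^x=A_5$ is a subgroup.
\item For $B=S_4$ the same happens with $BH^x=S_5$. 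This holds for your $3$-cycle test and also for your transposition test, where $\gen{B,H}=S_7$.
\item For $B=\gen{(a\,c\,b\,f)}\cong C_4$, your $5$-cycle test through two points of $S_0$ gives $\gen{B,H}=S_7$. But $B$ normalizes the conjugate $\gen{(d\,a\,c\,f\,b)}$, acting as $z\mapsto 2z$ on $\Z/5$, so $BH^x\cong\AGL_1(5)$ is a subgroup.
\item For the same $C_4$, your transposition test $H=\gen{(a\,d)}$ gives $\gen{B,H}\cong S_5$. This contains $(a\,b)$, and $B\gen{(a\,b)}\cong D_8$ is a subgroup.
\end{itemize}
So three of the eight isomorphism types pass every test you propose.

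What is missing is the test the paper's computation uses for $A_4$ and $S_4$: a $7$-cycle $h$ on $S_0\cup\{d,e,g\}$ with $d,e,g\in\Omega$. This is available since $|\Omega|=N-4\geq5$, and it handles every $B$ uniformly.
\begin{itemize}
\item Every $x\in\gen{B,H}$ stabilizes this $7$-set and fixes everything else. So a subgroup $BH^x$ would be a transitive group of degree $7$ and order $7|B|$, with $|B|\in\{2,3,4,6,8,12,24\}$.
\item Transitive groups of degree $7$ have orders $7,14,21,42,168,2520,5040$ \cite[Table 7A]{ButlerMcKay1983}, so orders $28,56,84$ cannot occur.
\item Orders $14,21,42$ give subgroups of $\AGL_1(7)$, whose nonidentity elements fix at most one point. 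But every element of $B$ fixes $d,e,g$.
\item Order $168$ gives $\PSL_3(2)\leq A_7$. But then $|B|=24$ forces $B=\Sym(S_0)$, which contains transpositions.
\end{itemize}
Since $\gen{B,H}$ lives on seven points, this argument is independent of $N$. It replaces the finite check that your fallback, like the paper, would otherwise need.
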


\begin{proof}
We induct on the odd integer $N$. For $N=5$, we use Lemma \ref{lem:s5-no-solvable-hereditary} and for $N=7$ Lemma \ref{lem:unitary-solvable-high-prime-simplex}. Let $N\geq9$ and let $B\leq S_N$ be solvable and hereditarily $S_N$-permutable. Since $B$ is solvable and $N\geq8$, Lemma \ref{lem:no-solvable-four-homogeneous} shows that $B$ is not $4$-homogeneous, so by Lemma \ref{lem:unitary-subset} (with $k=4<N/2$) $B$ stabilizes a subset $S_0$ of size $4$. Let $\Omega$ be the complementary $(N-4)$-set. We restrict hereditary permutability of $B$ in $S_N$ to $\Stab_{S_N}(S_0)=\Sym(S_0)\times\Sym(\Omega)$. For the induced action on $\Omega$, we take the quotient of this direct product by $\Sym(S_0)$, the kernel of its action on $\Omega$ (Lemmas \ref{lem:overgroup-restriction} and \ref{lem:quotient-inheritance}). Therefore, the image of $B$ in $\Sym(\Omega)\cong S_{N-4}$ is solvable and hereditarily $S_{N-4}$-permutable. As $N-4\geq5$ is odd, the image of $B$ on $\Omega$ is trivial by induction. Hence $B$ acts trivially on $\Omega$, i.e., $B\leq\Sym(S_0)\cong S_4$ and $B$ fixes $\Omega$ pointwise. For each nontrivial subgroup of this copy of $S_4$, the computation of Appendix \ref{app:degree-nine-base} finds a $5$-cycle or $7$-cycle $h$ on $S_0$ together with two or three points of $\Omega$. Let $H=\gen{h}$. For this chosen cyclic subgroup, no $x\in\gen{B,H}$ makes $BH^x$ a subgroup. Since $\gen{B,H}$ is then supported on at most seven points, the same failure of permutability occurs inside the copy of $S_7$ on those points, adjoining fixed points when fewer than seven points are used, and is independent of $N$. Therefore, no nontrivial subgroup of this copy of $S_4$ is hereditarily permutable in the symmetric group $S_N$, and $B=1$.
\end{proof}

\begin{theorem}[Permutation images on the lines of a self-polar simplex]\label{thm:unitary-simplex-image}
Let $A\leq D:S_m$ be a hereditarily $E$-permutable  subgroup  with $\PSU_m(Q)\leq E\leq\PGU_m(Q)$, and let $B=AD/D\leq S_m$. Then $B=1$, and the remaining case of a subgroup of the diagonal torus $D$ in $E$ is settled by {\rm Proposition \ref{prop:unitary-toral}}.
\end{theorem}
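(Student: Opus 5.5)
We will show $B=1$ by splitting according to whether $B$ is solvable, using the lemmas of this subsection. We assume $m\geq5$ throughout; small $m$ is covered by the low-rank bases and the toral proposition. By Lemmas~\ref{lem:overgroup-restriction} and~\ref{lem:quotient-inheritance}, applied to $\Delta=D:S_m$ and then to the quotient by $D$, the image $B=AD/D$ is hereditarily $S_m$-permutable.

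\emph{Solvable $B$.} For odd $m\geq5$, Lemma~\ref{lem:symmetric-solvable-hereditary-trivial} gives $B=1$ at once. For even $m$ we run the same induction as in that lemma.
\begin{itemize}
\item For $m\geq8$, Lemma~\ref{lem:no-solvable-four-homogeneous} shows that $B$ is not $4$-homogeneous, so Lemma~\ref{lem:unitary-subset} makes $B$ fix a $4$-subset.
\item Passing to $\Sym(S_0)\times\Sym(\Omega)$ and then to $\Sym(\Omega)$ by Lemma~\ref{lem:quotient-inheritance}, induction on $m-4$ kills the image on $\Omega$.
\item This leaves $B\leq\Sym(S_0)\cong S_4$. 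We kill it with a test cycle meeting $\Omega$, as in Appendix~\ref{app:degree-nine-base}, which works once $|\Omega|\geq3$.
\item The even bases $m=6$ (and the $m=4$ residue) need a direct check against transpositions, $3$-cycles and $5$-cycles, using Lemmas~\ref{lem:symmetric-three-cycle-not-hereditary} and~\ref{lem:symmetric-odd-involution-not-hereditary}-type arguments.
\end{itemize}

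\emph{Nonsolvable $B$.} Choose a $B$-orbit $\Omega_1$ on which the image of $B$ is nonsolvable, so $|\Omega_1|\geq5$. Suppose first that $\Omega_1$ is proper. Then $B$ lies in $\Sym(\Omega_1)\times\Sym(\Omega_1^c)$, and the image $C$ on $\Omega_1$ is transitive, nonsolvable and hereditarily $\Sym(\Omega_1)$-permutable.
\begin{itemize}
\item If $d=|\Omega_1|\geq7$, Lemma~\ref{lem:unitary-transitive-alternating} gives $C\geq A_d$.
\item If $d\in\{5,6\}$, the lists in Lemma~\ref{lem:unitary-small-simplex} apply.
\end{itemize}
Now return to the unitary group. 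The simplex lines indexed by $\Omega_1$ span a nondegenerate subspace $U$. Its stabilizer $J_U$ induces on $U$ a group between $\PSU(U)$ and $\PGU(U)$. By Lemma~\ref{lem:quotient-inheritance}, the image of $A$ there is hereditarily permutable, lies in a simplex stabilizer of $U$, and has permutation image $C$. Lemmas~\ref{lem:unitary-full-support-simplex} and~\ref{lem:unitary-small-simplex} rule this out. If $\Omega_1$ is the whole set, the same two lemmas apply directly in $E$. Hence a nonsolvable $B$ cannot occur, and $B=1$.

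With $B=1$ we have $A\leq D$, and Proposition~\ref{prop:unitary-toral} finishes the argument.

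\emph{Main obstacle.} The hardest part is the solvable case for even $m$, because Lemma~\ref{lem:symmetric-solvable-hereditary-trivial} is stated only for odd $N$. We would first try to reduce to odd degree: remove a fixed $4$-set or a fixed point, obtained from maximal intransitive subgroups with $k=1$ (Lemma~\ref{lem:unitary-subset}), provided $B$ is not transitive. A transitive solvable $B$ of even degree would then be handled by the homogeneity bound. A secondary check is that the restriction to $U$ really yields an intermediate group $\PSU(U)\leq E_U\leq\PGU(U)$ when $\dim U\geq5$.
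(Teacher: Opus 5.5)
For odd $m\geq5$ your argument is the paper's proof. The solvable case is Lemma~\ref{lem:symmetric-solvable-hereditary-trivial}. The nonsolvable case restricts to a nonsolvable orbit $\Omega$, applies Lemma~\ref{lem:unitary-transitive-alternating} or Lemma~\ref{lem:unitary-small-simplex}, passes to the induced group on the coordinate space $V_\Omega$, and concludes with Lemma~\ref{lem:unitary-full-support-simplex}. Proposition~\ref{prop:unitary-toral} then handles $A\leq D$.

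The paper's proof is explicitly confined to the odd-dimensional descent, where $m$ is odd and $m\geq5$. That is the only setting in which the theorem is applied. You should state that restriction and drop your extra cases, which are not correct as written:
\begin{itemize}
\item Small $m$ is not ``covered by the low-rank bases''. For $m=2$ and $E=\PGU_2(3)\cong S_4$, the normal Klein four-group is hereditarily $E$-permutable. It lies in $D:S_2\cong D_8$ but not in $D\cong C_4$, so $B\neq1$.
\item Your even-$m$ induction fails already at the symmetric-group level. For $m=8$ the stabilizer $S_4\times S_4$ of a $4$-set is not maximal, since Lemma~\ref{lem:unitary-subset} needs $k<m/2$. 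So no invariant $4$-set is produced.
\item The induction also cannot terminate at degree $4$, because $V_4\unlhd S_4$ is a nontrivial solvable hereditarily $S_4$-permutable subgroup. The degree-$6$ base is left unchecked as well.
\end{itemize}
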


\begin{proof}
Let $m$ be the number of simplex lines and $Q$ the value used in the unitary group notation. The simplex stabilizer lies in a group $E$ with $\PSU_m(Q)\leq E\leq\PGU_m(Q)$. The subgroup $A$ is hereditarily $E$-permutable. Suppose that $B$ is solvable. By Lemmas \ref{lem:overgroup-restriction} and \ref{lem:quotient-inheritance} the image $B$ is hereditarily $S_m$-permutable. In the descent for unitary groups of odd dimension, the number $m$ of simplex lines is the dimension of the unitary space, so $m$ is odd and $m\geq5$. Hence $B=1$ by Lemma \ref{lem:symmetric-solvable-hereditary-trivial}.

Suppose that $B$ is nonsolvable. As $B$ embeds in the product of the permutation images of $B$ on its orbits, not all of these permutation images are solvable. Therefore, we choose a $B$-orbit $\Omega$ with $C=B^\Omega$ nonsolvable, and let $d=|\Omega|$. Since $B$ maps $\Omega$ to itself, the group $B$ lies in $S_\Omega\times S_{\Omega^c}$. We restrict the hereditary condition of $B$ in $S_m$ to $S_\Omega\times S_{\Omega^c}$ (Lemma \ref{lem:overgroup-restriction}). We take the quotient of $S_\Omega\times S_{\Omega^c}$ by $S_{\Omega^c}$, the kernel of the action on $\Omega$ (Lemma \ref{lem:quotient-inheritance}). In this quotient, $C$ is hereditarily $S_d$-permutable. Moreover, the image $C$ of the action of $B$ on $\Omega$ is transitive by construction, and a transitive nonsolvable group has degree $d\geq5$.

The set $\{\langle e_i\rangle:i\in\Omega\}$ is permuted by $A$, so $A$ stabilizes the nondegenerate unitary coordinate subspace $V_\Omega=\langle e_i:i\in\Omega\rangle$. Let $J_\Omega$ be the stabilizer in $E$ of the orthogonal decomposition $V_\Omega\perp V_\Omega^\perp$. We restrict the hereditary condition of $A$ to $J_\Omega$. Let $K_\Omega$ be the complementary unitary factor of $J_\Omega$ on $V_\Omega^\perp$, acting on $V_\Omega^\perp$ and trivially on $V_\Omega$. To study the action on $V_\Omega$, we take the quotient $J_\Omega/K_\Omega$ and use its projective action $\rho_\Omega:J_\Omega/K_\Omega\to\PGU(V_\Omega)$ on $V_\Omega$. Write $E_\Omega=\rho_\Omega(J_\Omega/K_\Omega)$ and let $A_\Omega$ be the image of $A$ under $J_\Omega\to J_\Omega/K_\Omega\to E_\Omega$. Lemmas \ref{lem:overgroup-restriction} and \ref{lem:quotient-inheritance} show that $A_\Omega$ is hereditarily $E_\Omega$-permutable, with $\PSU_d(Q)\leq E_\Omega\leq\PGU_d(Q)$. The permutation action of $A_\Omega$ on the simplex lines indexed by $\Omega$ has image $C$.

If $d\geq7$, then Lemma \ref{lem:unitary-transitive-alternating} shows that $C\geq A_d$ and Lemma \ref{lem:unitary-full-support-simplex} shows that the image of $A$ in $E_\Omega$ cannot have this permutation image $C$. If $d\in\{5,6\}$, then Lemma \ref{lem:unitary-small-simplex} shows that the image $A_\Omega$ of $A$ in $E_\Omega$ cannot have the nonsolvable permutation image $C$. Hence $B$ cannot be nonsolvable, so $B=1$. In every case $A\leq D$, and Proposition \ref{prop:unitary-toral} settles the case of a subgroup of the diagonal torus $D$ in $E$.
\end{proof}

\begin{remark}\label{rmk:simplex-nonsolvable-reduction}
Theorem \ref{thm:unitary-simplex-image} concerns the permutation image $B=AD/D$ of the unitary subgroup $A$ on the lines of a self-polar simplex. If $B$ has an orbit $\Omega$ of size $d=|\Omega|$ with nonsolvable image $C=B^\Omega$, then we restrict to the setwise stabilizer $S_\Omega\times S_{\Omega^c}$ of $\Omega$ and take its quotient by $S_{\Omega^c}$, the symmetric group on the complement of $\Omega$. This restriction and quotient make the image $C$ hereditarily permutable in the symmetric group on $\Omega$. Lemma \ref{lem:unitary-transitive-alternating} forces the transitive image $C$ to contain the alternating group, with the two actions of degree six treated in Lemma \ref{lem:unitary-small-simplex}. We then consider the image $A_\Omega$ of $A$ in $E_\Omega$ acting on the coordinate space $V_\Omega=\langle e_i:i\in\Omega\rangle$. These are the image group and coordinate space defined in the proof of Theorem \ref{thm:unitary-simplex-image}. The indices of the parabolic stabilizers $P_1$ and $P_2$ of a totally singular line and a totally singular plane in $E_\Omega$ are divisible by a prime that does not divide the order of its simplex stabilizer $D_\Omega:S_d$, where $D_\Omega$ is the diagonal subgroup for the $d$ lines indexed by $\Omega$. If the unitary image of $A$ were contained in both parabolics, then we would have a $2$-dimensional submodule of the natural $A_d$-permutation module, contrary to Lemma \ref{lem:alternating-permutation-no-two}. Therefore, the permutation image of $A$ cannot contain the alternating group on the entire orbit.
\end{remark}

\subsection{Outline of the remaining unitary cases}

The rest of the proof is a descent that always reduces to the results for low rank, toral subgroups, and unipotent subgroups specified in this outline. The base cases of low rank (Theorem \ref{thm:unitary-low-bases}) settle the quotients of low rank. The cases involving a torus or a unipotent radical (Propositions \ref{prop:unitary-toral} and \ref{prop:unitary-unipotent}) have been established. The lemmas on extension fields concern the quotient of the extension field normalizer by its socle. The symmetric group acts on the lines of the corresponding self-polar simplex. Every prime divisor of the order of this quotient divides the order of that symmetric group. We use Theorem \ref{thm:unitary-simplex-image} to contradict a nontrivial image of $A$ in this quotient of the extension field normalizer. Using these results of low rank, toral and unipotent results, and lemmas on extension fields, we handle odd dimension by removing the largest prime divisor of the dimension. For even dimension, we use stabilizers in the unitary group of a nondegenerate line, a singular line, and a totally singular plane. These stabilizers reduce the argument to linear cases, cases of low rank, or cases involving toral or unipotent subgroups. For odd prime dimension coprime to $q+1$, we use Theorem \ref{thm:unitary-odd-prime-noncentral}, with the conclusion for $\PSU_5(2)$ following from Theorem \ref{thm:unitary-psu52}. For odd prime dimension dividing $q+1$, we use Lemma \ref{lem:unitary-prime-pure-central}. For odd composite dimension coprime to $q+1$, the conclusion follows from Proposition \ref{prop:unitary-odd-noncentral-composite}. If $n$ is odd composite and $(n,q+1)>1$, then we use Proposition \ref{prop:unitary-odd-central}. Even dimension is treated in Proposition \ref{prop:unitary-even}.
\subsection{Odd prime dimension coprime to \texorpdfstring{$q+1$}{q+1}}

\begin{theorem}\label{thm:unitary-odd-prime-noncentral}
Let
$$
        G=\PSU_r(q),~ r\geq5\text{ odd prime},~ (r,q+1)=1,~ (r,q)\neq(5,2).
$$
Then every proper ordinary $G$-permutable subgroup $F<G$ has order $1$ or $r$, and $G$ has no proper nontrivial hereditarily $G$-permutable subgroup.
\end{theorem}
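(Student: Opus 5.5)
The plan is to treat the ordinary statement first and then deduce the hereditary one. Let $F<G$ be proper, nontrivial and ordinarily $G$-permutable, and let $T$ be the unitary Singer torus of order $(q^r+1)/(q+1)$, with $N_G(T)=T:r$ maximal in $G$ by Bereczky's theorem. Lemma~\ref{lem:ordinary-maximal-product} applied to $N_G(T)$ gives either $F\leq N_G(T)^x$ or $G=FN_G(T)^x$. The factorization alternative will be excluded with the classification of factorizations of classical groups of Li, Wang and Xia; for $r\geq5$ prime there should be no factorization of $\PSU_r(q)$ with a Singer normalizer as a factor, since every factor would need order divisible by a primitive prime divisor of $q^{2(r-1)}-1$ together with the relevant parabolic index. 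Hence $F\leq N_G(T)^x$ for some $x$.

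Next I would test $F$ against a self-polar simplex stabilizer $\Delta=D:S_r$, which is maximal by Dye's theorem. The index $[G:\Delta]$ contains a large power of $p$ and also primitive prime divisors of $q^{2i}-1$ for $i<r$, so it cannot divide $|N_G(T)|=r(q^r+1)/(q+1)$. Thus $F\leq\Delta^y$ as well. Then $F$ lies in $N_G(T)^x\cap\Delta^y$. The part of $F$ inside $T$ has order prime to $|\Delta|$, because the primes of $(q^r+1)/(q+1)$ are ppd's of $q^{2r}-1$ and are larger than $r$ and coprime to $q+1$ (Lemma~\ref{lem:unitary-visibility-gcd} with $b=r$). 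Therefore $F\cap T=1$, and $F$ embeds in $N_G(T)/T\cong C_r$, giving $|F|\in\{1,r\}$.

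For the hereditary claim, let $A$ be proper, nontrivial and hereditarily $G$-permutable. Then $A$ is ordinarily $G$-permutable, so $|A|=r$ and $A\leq\Delta^y$ for some $y$. Theorem~\ref{thm:unitary-simplex-image} applies with $m=r$ and $E=G$: its permutation-image argument, using Lemma~\ref{lem:unitary-solvable-high-prime-simplex} for solvable images, forces $B=AD/D=1$, so $A\leq D$. Proposition~\ref{prop:unitary-toral} then gives $A=1$, which is a contradiction. The same conclusion also follows directly, since $r\nmid|D|$: every prime divisor of $|D|$ divides $q+1$, and $(r,q+1)=1$.

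The hardest step will be the factorization exclusion $G\neq FN_G(T)^x$. If reading off the Li--Wang--Xia tables fails, I would argue by orders instead. The complementary factor $F$ would need $|F|$ divisible by $[G:N_G(T)]$, which includes the full $p$-part $q^{r(r-1)/2}$. I would then show that a core-free subgroup with that $p$-part must lie in a parabolic, and that no parabolic has order divisible by the required ppd of $q^{2(r-1)}-1$ together with that $p$-part. The excluded case $(5,2)$ is precisely the one where Bereczky's maximality statement fails.
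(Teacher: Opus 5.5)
Your proof is correct. For the ordinary statement it uses the same ingredients as the paper: Dye's simplex stabilizer $\Delta=D:S_r$, the Singer torus $T$ with Bereczky's theorem, the absence of factorizations, and order arithmetic. You just use them in the opposite order.
\begin{itemize}
\item The paper first gets $F\leq\Delta^x$, because $\PSU_r(q)$ has no factorization at all (Liebeck--Praeger--Saxl, Corollary 2). It then tests $F$ against $T$ itself, so that the proper subgroup $FT^x$ lies in $N_G(T^x)$ by Bereczky. Finally it removes the primes of $q+1$ and the odd primes below $r$ one at a time.
\item You land in $N_G(T)^x$ first, via its maximality. You then land in $\Delta^y$, via the $p$-part of $[G:\Delta]$. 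You kill $F\cap T^x$ in one step from $\gcd(|T|,|\Delta|)=1$. This is equivalent and slightly tidier.
\end{itemize}
The step you call hardest is immediate from that corollary, so you can drop both your heuristic and the order-theoretic fallback. The heuristic is in fact wrong: a prime $s$ with $\ord_s(q)=2(r-1)$ divides none of the factors $q^i-(-1)^i$ with $2\leq i\leq r$, so it does not divide $|G|$ at all.

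The real difference is in the hereditary step. You appeal to Theorem~\ref{thm:unitary-simplex-image}. For a cyclic image of order $r$, that theorem rests on Lemma~\ref{lem:symmetric-solvable-hereditary-trivial}. That lemma in turn uses the Yang--Galt theorem for $r\geq7$ and the computation of Appendix~\ref{app:s5-solvable-certificate} for $r=5$. Lemma~\ref{lem:unitary-solvable-high-prime-simplex}, which you cite, requires $r>5$ and does not cover $r=5$ on its own.

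The paper's finish is more direct. The image of $A$ in $S_r$ is generated by an $r$-cycle. Testing it against a transposition gives a subgroup of order $2r$, in which the transposition normalizes the $r$-cycle. This contradicts $N_{S_r}(\gen{(1\,2\,\cdots\,r)})=\AGL_1(r)$, whose nonidentity elements fix at most one point.

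Both finishes are valid and non-circular. The paper's is elementary and self-contained; yours reuses heavier machinery that the paper has already established.
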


\begin{proof}
We use the self-polar simplex stabilizer $M=D:S_r$ in $G$, maximal by Dye \cite[Theorem 5]{Dye1985}. By Liebeck, Praeger and Saxl \cite[Corollary 2]{LPS1990}, a simple unitary group of odd dimension with a maximal factorization is one of $U_3(3)$, $U_3(5)$, $U_9(2)$. None has prime dimension $r\geq5$, so the product alternative $G=FM^x$ cannot occur. So Lemma \ref{lem:ordinary-maximal-product} shows that $F$ is contained in a $G$-conjugate of $M$, and every prime divisor of $|F|$ either divides $q+1$ or is at most $r$.

We let $T$ be a unitary Singer torus corresponding to a partition with one part. This torus lies in $G$ and has order $|T|=(q^r+1)/(q+1)$. By $G$-permutability of $F$, we have a subgroup $FT^x$ of $G$, and since $G$ has no factorization, $FT^x<G$. A maximal overgroup of $FT^x$ in $G$ contains $T^x$, so by Bereczky's theorem \cite[Main Theorem]{Bereczky2000}, that maximal overgroup lies in $N_G(T^x)$, whence
$$
        |F|\mid r\,\frac{q^r+1}{q+1}.
$$
If a prime $a\mid|F|$ divided $q+1$, then $a\mid r(q^r+1)/(q+1)$. Since $(q^r+1)/(q+1)\equiv r\pmod{q+1}$, the congruence forces $a=r$, contradicting $(r,q+1)=1$. So no prime divisor of $|F|$ divides $q+1$. If $a$ is an odd prime divisor of $|F|$, then $a\leq r$. If $a<r$, then $a\mid(q^r+1)/(q+1)$ with $a\nmid q+1$, so $-q$ has order $r$ modulo $a$ and $r\mid a-1$, impossible. Hence every odd prime divisor of $|F|$ is $r$, and $|F|\mid r$ as $r \nmid q^r +1$.

Now let $A$ be nontrivial and hereditarily $G$-permutable, so $|A|=r$. We choose a self-polar simplex stabilizer $D:S_r$ containing $A$. Since $r\nmid q+1$, the image of $A$ in $S_r$ has order $r$ and is generated by an $r$-cycle. Testing hereditary permutability of this image against the subgroup generated by a transposition in $S_r$, we have a subgroup of order $2r$. Therefore, the transposition normalizes the cyclic subgroup generated by the $r$-cycle. But
$$
        N_{S_r}(\gen{(1\,2\,\cdots\,r)})=\AGL_1(r),
$$
whose nonidentity elements have at most one fixed point, while a transposition fixes $r-2\geq3$ points, a contradiction.
\end{proof}

\subsection{The case \texorpdfstring{$\PSU_5(2)$}{PSU5(2)}}

\begin{theorem}\label{thm:unitary-psu52}
The group $G=\PSU_5(2)$ has no proper nontrivial hereditarily $G$-permutable subgroup.
\end{theorem}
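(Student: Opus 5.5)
We treat $G=\PSU_5(2)$ directly, since it is the boundary case excluded from Theorem~\ref{thm:unitary-odd-prime-noncentral}: here $(5,q+1)=(5,3)=1$, but Bereczky's theorem has an exception, so the Singer-normalizer step there fails. Let $A<G$ be proper, nontrivial and hereditarily $G$-permutable. We have $|G|=2^{10}\cdot3^5\cdot5\cdot11$. As in that proof, we first use maximal subgroups. By \cite[Corollary 2]{LPS1990}, $\PSU_5(2)$ has no maximal factorization. Hence Lemma~\ref{lem:maximal-product} puts $A$ inside a conjugate of every maximal subgroup class of $G$, in particular inside conjugates of $P_1$, of $P_2$, and of the simplex stabilizer $M=D:S_5$, where $|D|=3^4$.

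Lying in a conjugate of $M$ gives $|A|\mid 2^3\cdot3^5\cdot5$. Next we test $A$ against the Singer torus of order $11$. The subgroup $AT^x$ is proper. The maximal subgroups of $\PSU_5(2)$ containing an element of order $11$ are $11:5$ and $\PSL_2(11)$ (the exceptional overgroup in Bereczky's theorem). So $A$ lies in one of these, and $|A|$ divides $55$ or $660$. Combining this with the previous divisibility gives $|A|\mid 60$, and more precisely $A$ is a $\{2,3,5\}$-subgroup of $\PSL_2(11)$ or of order dividing $5$.

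We then run the same argument with the other maximal subgroup classes. Since $A$ lies in conjugates of $P_1$ and $P_2$, $A$ stabilizes a totally singular line and a totally singular plane. Intersecting with the possible subgroups of $\PSL_2(11)$ (namely $A_5$, $A_4$, $D_{12}$, $D_{10}$, $C_5$, $C_3$, $C_2$, $C_2^2$, $S_3$), together with parabolic containment, cuts the list to small cyclic or elementary abelian groups of orders $2$, $3$, $4$ and $5$.

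The remaining cases are settled as follows.
\begin{enumerate}[label=\textup{(\roman*)}]
\item If $|A|=5$, then the image of $A$ in the simplex group $S_5$ is generated by a $5$-cycle, since $5\nmid |D|$. Testing against a transposition, exactly as at the end of Theorem~\ref{thm:unitary-odd-prime-noncentral} (a transposition fixes $3$ points), gives a contradiction. Here we must check that the conjugating element can be absorbed, via Lemma~\ref{lem:quotient-inheritance} applied to $M\to S_5$.
\item If $A$ is a $3$-group, then $A$ lies in a conjugate of $M$. Its image $B=AD/D$ in $S_5$ is solvable and hereditarily $S_5$-permutable, so $B=1$ by Lemma~\ref{lem:s5-no-solvable-hereditary}. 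Then $A\le D$, and Proposition~\ref{prop:unitary-toral} gives $A=1$.
\item If $A$ is a $2$-group, then again $B=1$ by Lemma~\ref{lem:s5-no-solvable-hereditary}. This forces $A\le D$, a $3$-group, so $A=1$.
\end{enumerate}
In fact, applying Lemma~\ref{lem:s5-no-solvable-hereditary} to $B$ handles every solvable $A$ at once: $B=1$, then $A\le D$, then Proposition~\ref{prop:unitary-toral}. So the only real case left is nonsolvable $A$, that is, $A\cong A_5$ inside $\PSL_2(11)$.

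For $A\cong A_5$, the group $B$ is nonsolvable and transitive of degree $5$, and Lemma~\ref{lem:unitary-small-simplex} excludes it. If $B$ were intransitive it would have no nonsolvable orbit image of degree at least $5$, which is impossible. So every case yields $A=1$.

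The main obstacle is the first step: placing $A$ inside a conjugate of $M$. This needs the absence of maximal factorizations of $\PSU_5(2)$ (from \cite{LPS1990}) together with Lemma~\ref{lem:maximal-product}. If that citation is fine, the argument actually reduces directly to Theorem~\ref{thm:unitary-simplex-image} with $m=5$, without needing the Singer analysis at all. I would structure the proof that way, and keep a direct computational check of the $A_5$ case (in the style of the appendices) as a fallback.
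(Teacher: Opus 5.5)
Your argument is correct, but it takes a different route from the paper.

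Both proofs rest on the same key input. By \cite[Corollary 2]{LPS1990}, $\PSU_5(2)$ has no factorization, so Lemma~\ref{lem:maximal-product} places $A$ in a conjugate of every maximal subgroup.

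The paper then works inside the maximal subgroup $T\cong\PSL_2(11)$. A proper subgroup of $T$ is excluded by Lemma~\ref{lem:overgroup-restriction} and the Galt--Tyutyanov $\PSL_2(q)$ theorem. The case $A=T$ is excluded because $11\nmid|3^4{:}S_5|$, so $T$ can neither lie in a conjugate of $3^4{:}S_5$ nor factorize $G$ with it.

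You instead work inside the simplex stabilizer $M=3^4{:}S_5$, and in effect apply Theorem~\ref{thm:unitary-simplex-image} and Proposition~\ref{prop:unitary-toral} with $m=5$, $Q=2$. This is legitimate, since none of those results depends on Theorem~\ref{thm:unitary-psu52}. It is, however, much heavier. It relies on the computer checks for $S_5$ (Lemma~\ref{lem:s5-no-solvable-hereditary}) and for the $\PGU_3(2)$ torus (inside Proposition~\ref{prop:unitary-toral}). It also needs the $A_5$ permutation-module argument of Lemmas~\ref{lem:unitary-small-simplex} and~\ref{lem:unitary-full-support-simplex}. The paper needs only one published theorem and an order comparison.

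Two small corrections:
\begin{itemize}
\item $11{:}5$ is not a maximal subgroup of $\PSU_5(2)$. It lies inside $\PSL_2(11)$, which is exactly the exception in Bereczky's theorem. This does not affect your conclusion.
\item The Singer-torus test and the vague ``intersecting with subgroups of $\PSL_2(11)$'' step are unnecessary. You had already placed $A$ in conjugates of all maximal subgroups.
\end{itemize}

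In fact, $|P_1|=2^{10}\cdot 3^4$ together with $A\le\PSL_2(11)^x$ gives $|A|\mid 12$ at once. So your cases $|A|=5$ and $A\cong A_5$ are vacuous. Then $A$ is a proper subgroup of $\PSL_2(11)^x$, and the $\PSL_2(11)$ theorem kills it directly; this is essentially the paper's argument.
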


\begin{proof}
Let $A<G$ be proper, nontrivial, and hereditarily $G$-permutable. The maximal subgroups of $G$ are listed in the \emph{Atlas of Finite Groups} \cite{Atlas1985}. By the result of Liebeck, Praeger, and Saxl \cite[Corollary 2]{LPS1990}, $G$ has no factorization. Indeed, their list of factorizable unitary groups of odd dimension omits $U_5(2)$. The group $G$ has a maximal subgroup
$$
        T\cong\PSL_2(11).
$$
Since $G$ has no factorization $G=AT^x$, Lemma \ref{lem:maximal-product} shows that a $G$-conjugate of $A$ is contained in $T$. If $A<T$, then Lemma \ref{lem:overgroup-restriction} and the result for $\PSL_2$ in Theorem \ref{thm:unitary-low-bases} contradict hereditary permutability of $A$, so $A=T$. Testing hereditary permutability of $T$ against the maximal subgroup $M\cong3^4:S_5$ of $G$, we have the subgroup $TM^x$ for a conjugate $M^x$ that permutes with $T$. Since $M^x$ is maximal, either $T\leq M^x$ or $G=TM^x$. Because $11\mid|T|$ but $11\nmid|M|$, $T$ cannot be contained in $M^x$, and the factorization $G=TM^x$ is impossible by \cite[Corollary 2]{LPS1990}. This contradiction proves the theorem.
\end{proof}

\subsection{Odd composite dimension}

\begin{lemma}\label{lem:unitary-largest-prime}
Let $n$ be odd composite with $(n,q+1)=1$, and let $b=\max\pi(n)$, $u=n/b$. If $r\mid u$, then $r\nmid q^b+1$. Consequently, $(u,q^b+1)=1$.
\end{lemma}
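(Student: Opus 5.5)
The plan is the order argument used in the proof of Theorem~\ref{thm:unitary-odd-prime-noncentral}, this time for the element $-q$ modulo $r$. Fix a prime $r\mid u$ and suppose, for a contradiction, that $r\mid q^b+1$.

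First I record the restrictions on $r$ that come from the hypotheses. Since $r\mid u$ and $u\mid n$, we have $r\mid n$. Because $n$ is odd, $r$ is odd. Because $b=\max\pi(n)$, we have $r\leq b$. Because $(n,q+1)=1$, we have $r\nmid q+1$. Also $r\nmid q$, since $r\mid q$ would give $q^b+1\equiv1\pmod r$. So $q$, and hence $-q$, is a unit modulo $r$.

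Next I compute the multiplicative order of $-q$ modulo $r$. Since $b$ is an odd prime (it is the largest prime divisor of the odd number $n$), we have $(-q)^b=-q^b\equiv1\pmod r$. Hence $\ord_r(-q)$ divides $b$, so it equals $1$ or $b$.
\begin{enumerate}[label=\textup{(\arabic*)}]
\item If $\ord_r(-q)=1$, then $-q\equiv1\pmod r$, that is, $r\mid q+1$. This contradicts $(n,q+1)=1$.
\item If $\ord_r(-q)=b$, then $b\mid r-1$ by Fermat's little theorem, so $b<r$. This contradicts $r\leq b$.
\end{enumerate}
Either way we reach a contradiction, so $r\nmid q^b+1$.

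The consequence is immediate. Any prime divisor of $(u,q^b+1)$ would be a prime $r\mid u$ with $r\mid q^b+1$, which we have just excluded. Hence $(u,q^b+1)=1$.

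The only step needing care is the case split on $\ord_r(-q)$. There the oddness of $b$, which makes $(-q)^b=-q^b$, and the maximality of $b$ among the prime divisors of $n$ are exactly the inputs used. The rest is routine.
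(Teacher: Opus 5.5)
Your proof is correct and is essentially the paper's multiplicative-order argument. The one difference is that you work with $-q$, whose order modulo $r$ divides the prime $b$, instead of $q$, whose order would be $2b$; this handles $r=b$ in the same argument, whereas the paper treats $r<b$ by the order argument and $r=b$ separately via $q^b\equiv q\pmod b$.
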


\begin{proof}
Let $r\mid u$, so $r\leq b$ and $r\nmid q+1$. If $r<b$ and $r\mid q^b+1$, then $q^b\equiv-1$ and $q\not\equiv-1\pmod r$. As $b$ is prime, $\ord_r(q)$ divides $2b$ but not $b$, and is not $2$, so $\ord_r(q)=2b$ and $2b\mid r-1$, impossible for $r<b$. If $r=b$, then $q^b\equiv q\pmod b$, so $q^b+1\equiv q+1\not\equiv0\pmod b$. Hence $r\nmid q^b+1$ in all cases.
\end{proof}

\begin{proposition}\label{prop:unitary-odd-noncentral-composite}
Let $G=\PSU_n(q)$ with $n>3$ odd composite and $(n,q+1)=1$. Then $G$ has no proper nontrivial hereditarily $G$-permutable subgroup.
\end{proposition}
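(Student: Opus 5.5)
Let $A<G=\PSU_n(q)$ be proper, nontrivial and hereditarily $G$-permutable, and write $b=\max\pi(n)$ and $u=n/b$. Since $n$ is odd, $b$ is odd, and since $n$ is composite, $u\geq3$. Because $(n,q+1)=1$, we have $b\nmid q+1$.

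\textbf{Step 1: $A$ lies in the extension field subgroup.} Take $E_b=N_G(S_b)$, where $S_b\cong\PSU_u(q^b)$ is the $\mathcal C_3$ subgroup attached to $\F_{q^{2b}}$. Applying Lemma \ref{lem:maximal-product} to $E_b$ gives either $A\leq E_b^x$ or $G=AE_b^x$. I plan to rule out the factorization by the classification of factorizations of classical groups (Li, Wang and Xia \cite{LiWangXia2024}; in odd dimension, Liebeck, Praeger and Saxl \cite[Corollary 2]{LPS1990}). Their list of odd-dimensional unitary groups with a factorization contains only $U_3(3)$, $U_3(5)$ and $U_9(2)$. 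The case $U_9(2)$ is excluded here because $3\mid q+1$, which contradicts $(n,q+1)=1$. I must still check that every factorization, not only a maximal one, would enlarge to a maximal factorization. It does, because $AE_b^x$ would lie in a maximal subgroup unless it equals $G$, and $E_b$ is maximal. So we may assume $A\leq E_b$.

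\textbf{Step 2: the image of $A$ in $E_b/S_b$ is trivial.} By Lemma \ref{lem:unitary-extension-quotient}, every prime divisor of $|E_b/S_b|$ divides $b(q^b+1)/(q+1)$. By Lemma \ref{lem:unitary-visibility-gcd}, these primes are coprime to $q+1$. I would place $A$ inside a self-polar simplex stabilizer $D:S_n$; this step is not justified yet and must itself come from permutability, using maximality by Dye and the absence of factorizations as above. Theorem \ref{thm:unitary-simplex-image} then gives $B=AD/D=1$, so $A\leq D$, and Proposition \ref{prop:unitary-toral} gives $A=1$. This argument would already finish the proof and bypass the extension field.

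The delicate point is precisely the containment in the simplex stabilizer. Its maximality holds by \cite[Theorem 5]{Dye1985} for odd $n$ in most cases. The product alternative $G=A(D:S_n)^x$ is again excluded by \cite[Corollary 2]{LPS1990}, since $n>3$ is odd and $U_9(2)$ is excluded. Therefore Lemma \ref{lem:maximal-product} puts $A$ into a conjugate of $D:S_n$. From there, Theorem \ref{thm:unitary-simplex-image}, stated for general $E$ with $m=n$ odd and $m\geq5$, gives $B=1$. Proposition \ref{prop:unitary-toral} with $m=n\geq9$ then gives $A=1$, the desired contradiction.

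\textbf{Main obstacle.} The hardest step is verifying the maximality of the simplex stabilizer in $\PSU_n(q)$ for small $q$. Dye's theorem has exceptions when $q=2$, because there the diagonal torus is small and the stabilizer can lie in other subgroups. For $q=2$ the hypothesis $(n,3)=1$ still allows values such as $n=25$. If the stabilizer fails to be maximal, I would take a maximal overgroup $M\geq D:S_n$ and argue that $A\leq M$, again because there is no factorization, and then descend through $M$. The fallback route is the extension field one: use Step 1 and Lemma \ref{lem:unitary-largest-prime} to apply induction on $S_b\cong\PSU_u(q^b)$, whose dimension $u$ satisfies $(u,q^b+1)=1$. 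Hereditary permutability passes to $A\cap S_b$ only in the ordinary sense, via Lemma \ref{lem:normal-intersection-transfer}, so I would need the ordinary result \cite[Corollary 4.17]{ZhangOrdinaryPreprint} at that step.
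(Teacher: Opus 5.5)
Your argument is correct, and it takes a genuinely shorter route than the paper.

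\textbf{The paper's route.} The paper first places $A$ in the $\mathcal C_3$ normalizer $E_b=N_G(S_b)$ and then splits into three cases.
\begin{enumerate}
\item $A<S_b$ is handled by induction on the dimension. This is where Lemma \ref{lem:unitary-largest-prime} and the base cases (Theorems \ref{thm:unitary-odd-prime-noncentral} and \ref{thm:unitary-psu52}) enter.
\item $A=S_b$ is excluded by putting $S_b$ into a self-polar simplex stabilizer and applying Theorem \ref{thm:unitary-simplex-image}.
\item $A\not\leq S_b$ is excluded by putting $A$ into a simplex stabilizer. Lemmas \ref{lem:unitary-extension-quotient} and \ref{lem:unitary-visibility-gcd} then give $B=AD/D\neq1$, which contradicts Theorem \ref{thm:unitary-simplex-image}.
\end{enumerate}

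\textbf{Your route.} You observe that the containment $A\leq\Delta^x$ never uses $A\leq E_b$. That containment comes from Dye's maximality together with the absence of factorizations in \cite[Corollary 2]{LPS1990}, where $U_9(2)$ is excluded by $(n,q+1)=1$. So Theorem \ref{thm:unitary-simplex-image}, applicable since $m=n$ is odd and $n\geq9$, gives $A\leq D$. Proposition \ref{prop:unitary-toral} then gives $A=1$, for every $A$ at once.

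\textbf{Comparison.} Your approach needs no induction, no extension-field lemmas, and no base cases. The hypothesis $(n,q+1)=1$ is used only to keep $U_9(2)$ off the factorization list. The same direct argument would apply wherever a self-polar simplex stabilizer cannot be a factor, which the paper asserts in Step~1 of Proposition \ref{prop:unitary-odd-central}. The paper's detour through $E_b$ buys nothing logically extra. In two of its three cases it invokes exactly your two ingredients, the maximality of $D:S_n$ and Theorem \ref{thm:unitary-simplex-image}. Your Step~1 and the prime bookkeeping in Step~2 are therefore superfluous.

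\textbf{Your ``main obstacle''.} The paper relies on \cite[Theorem 5]{Dye1985} for maximality of $D:S_n$ at precisely the same point, and this includes $q=2$, where $(n,3)=1$ forces $n\geq25$. So your proof introduces no dependency beyond the paper's own. The vague fallbacks are not needed: descending through an unspecified maximal overgroup of $D:S_n$, or working with $A\cap S_b$ only in the ordinary sense via Lemma \ref{lem:normal-intersection-transfer}. You can drop Step~1 and the fallback and present the three-step argument directly: $A\leq\Delta^x$, then $B=1$, then $A=1$.
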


\begin{proof}
Suppose that $A<G$ is proper, nontrivial, and hereditarily $G$-permutable. Let $b=\max\pi(n)$ and $u=n/b$. By \cite[\S4.3, Table 4.3.A]{KleidmanLiebeck1990}, the $\mathcal C_3$ normalizer of the subgroup defined over the extension field in $G$ is
$$
        E_b=N_G(S_b),~ S_b\cong\PSU_u(q^b).
$$ If hereditary $G$-permutability of $A$ and Lemma \ref{lem:maximal-product} for $E_b$ led to the product alternative $G = A E_b ^x$, then a maximal subgroup $M\geq A$ of $G$ would produce a maximal factorization $G=ME_b^x$. But neither the list of factorizations with maximal factors of $\PSU_n(q)$ nor its classical version has such a row under these hypotheses \cite[Main Theorem]{LPS1990}, \cite[Theorem 1.4, Tables 4.1--4.2]{LiWangXia2024}. So after conjugating $A$ in $G$, $A\leq E_b$.

If $A<S_b$, then $A$ is hereditarily $S_b$-permutable by Lemma \ref{lem:overgroup-restriction}, and by Lemma \ref{lem:unitary-largest-prime} $(u,q^b+1)=1$, so $S_b$ is again a unitary group of smaller odd dimension satisfying this coprimality condition. Induction on the dimension, with Theorems \ref{thm:unitary-low-bases}, \ref{thm:unitary-odd-prime-noncentral} and \ref{thm:unitary-psu52} as bases, settles the case of a proper subgroup of $S_b$.

We next show that $A=S_b$ is impossible. Under the assumption $A=S_b$, the subgroup $S_b$ itself is hereditarily $G$-permutable. Using Lemma \ref{lem:maximal-product} for a self-polar simplex stabilizer $\Delta=D:S_n$ in $G$, we then have $S_b\leq\Delta^x$ or $G=S_b\Delta^x$. By \cite[Corollary 2]{LPS1990}, the factorizable simple unitary groups of odd dimension are $U_3(3)$, $U_3(5)$ and $U_9(2)$. The groups $U_3(3)$ and $U_3(5)$ have dimension $3$, and $U_9(2)$ has $(9,q+1)>1$. None has $n$ odd composite with $(n,q+1)=1$. Therefore, the factorization $G=S_b\Delta^x$ is impossible under the present hypotheses. So $S_b\leq\Delta^x$. We relabel $\Delta^x$ as $\Delta$ and write $D$ for its diagonal subgroup. Using Theorem \ref{thm:unitary-simplex-image} for the hereditarily $G$-permutable subgroup $S_b$ of the stabilizer of a self-polar simplex, we have $S_bD/D=1$, i.e., $S_b\leq D$. Since $D$ is abelian while $S_b\cong\PSU_u(q^b)$ ($u\geq3$, $q^b\geq8$) is nonabelian simple, we cannot have $S_b\leq D$.

If $A\not\leq S_b$, then let $Q=AS_b/S_b\neq1$. We prove that $A$ is contained in a stabilizer of a self-polar simplex in $G$, independently of Theorem \ref{thm:unitary-simplex-image}. The stabilizer $\Delta=D:S_n$ of a self-polar simplex is maximal in $G$ by Dye \cite[Theorem 5]{Dye1985}. We use hereditary $G$-permutability of $A$ and Lemma \ref{lem:maximal-product} for $\Delta$ with $A$ contained in $E_b$. Either $A\leq\Delta^x$ for some $x$, or $G=A\Delta^x$. In the latter case, a maximal subgroup $M\geq A$ produces a maximal factorization $G=M\Delta^x$ with the self-polar simplex stabilizer $\Delta^x$ as a factor. The only factorizable simple unitary groups of odd dimension are $U_3(3)$, $U_3(5)$, $U_9(2)$ \cite[Corollary 2]{LPS1990}. Since $n$ is odd composite, $n\neq3$. If $n=9$, then the condition $(9,q+1)=1$ is incompatible with $q\equiv2\pmod3$, so $q\neq2$. Hence $G=\PSU_n(q)$ is none of the three listed factorizable groups. Therefore, the factorization $G=A\Delta^x$ is impossible. Hence $A\leq\Delta^x$. Relabelling $\Delta^x$ as $\Delta$ ($\Delta^x$ is again a self-polar simplex stabilizer, and $A$ was not moved), we have $A\leq E_b\cap\Delta$.

By Lemma \ref{lem:unitary-extension-quotient},
$$
        \pi(Q)\subseteq\pi\!\left(b\,\frac{q^b+1}{q+1}\right),
$$
and since $b\nmid q+1$, Lemma \ref{lem:unitary-visibility-gcd} shows that the prime divisors of $|Q|$ are coprime to $q+1$. The permutation action $A\to B=AD/D$ on the lines of the self-polar simplex has kernel $A\cap D$. The order of this kernel has prime divisors only among those of $q+1$. Since $|Q|$ divides $|A|$ and every prime of $\pi(Q)$ is coprime to $q+1$, every prime of $\pi(Q)$ divides $|B|$. In particular $B\neq1$. But $A\leq E_b\cap\Delta$ is a hereditarily $G$-permutable subgroup of the stabilizer of a self-polar simplex in the unitary descent, so by Theorem \ref{thm:unitary-simplex-image} (see Remark \ref{rmk:simplex-nonsolvable-reduction}), we have $B=1$, a contradiction. Therefore, neither the case $A=S_b$ nor the case $A\not\leq S_b$ can occur.
\end{proof}

\begin{lemma}\label{lem:unitary-prime-pure-central}
Let $r\geq5$ be an odd prime with $r\mid q+1$. Then $G=\PSU_r(q)$ has no proper nontrivial hereditarily $G$-permutable subgroup.
\end{lemma}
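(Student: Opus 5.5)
We reduce first to the stabilizer of a self-polar simplex and then use Theorem \ref{thm:unitary-simplex-image} together with Proposition \ref{prop:unitary-toral}. Let $A<G=\PSU_r(q)$ be proper, nontrivial and hereditarily $G$-permutable, and let $\Delta=D:S_r$ be a self-polar simplex stabilizer. By Dye \cite[Theorem 5]{Dye1985}, $\Delta$ is maximal in $G$. Lemma \ref{lem:maximal-product} then gives $A\leq\Delta^x$ or $G=A\Delta^x$. In the second case, a maximal subgroup $M\geq A$ would give a maximal factorization $G=M\Delta^x$. By \cite[Corollary 2]{LPS1990}, the only factorizable simple unitary groups of odd dimension are $U_3(3)$, $U_3(5)$ and $U_9(2)$, and none of these has prime dimension $r\geq5$. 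Hence, after conjugating, $A\leq\Delta$.

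The second step is to apply Theorem \ref{thm:unitary-simplex-image} with $m=r$ and $E=G$. The hypothesis $r\mid q+1$ plays no role here: the number of simplex lines is $r$, which is odd and at least $5$, and that is all the theorem uses. We conclude that $B=AD/D=1$, so $A\leq D$. Proposition \ref{prop:unitary-toral} (with $Q=q$ and $m=r\geq3$) then gives $A=1$, a contradiction.

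The one genuine obstacle lies inside the application of Theorem \ref{thm:unitary-simplex-image}, namely its solvable branch for prime degree. There one uses Lemma \ref{lem:symmetric-solvable-hereditary-trivial}; for $r=5$ this reduces to Lemma \ref{lem:s5-no-solvable-hereditary}, and for $r\geq7$ prime to Lemma \ref{lem:unitary-solvable-high-prime-simplex}. For $r\geq11$, the induction in Lemma \ref{lem:symmetric-solvable-hereditary-trivial} strips $4$-subsets until it reaches degree $5$ or $7$. I would check that none of these lemmas implicitly assumed $(r,q+1)=1$. They do not, since they concern only the symmetric group.

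The difference from Theorem \ref{thm:unitary-odd-prime-noncentral} is that the order argument through the Singer torus is no longer available. When $r\mid q+1$, primes dividing $q+1$ are not excluded, and the centre of $\SU_r(q)$ has order $r$. That is why we pass through the simplex and toral results rather than through Bereczky's theorem. In the toral step one also checks that the projective diagonal torus of $G$ is the group $D$ treated in Proposition \ref{prop:unitary-toral}. This holds because that proposition works with projective images on three coordinate lines inside $\PGU_3(q)$, which is insensitive to the central quotient.
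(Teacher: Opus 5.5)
Your proposal is correct and follows the paper's proof essentially step for step. Dye's maximality of the simplex stabilizer and the Liebeck--Praeger--Saxl list of factorizable odd-dimensional unitary groups place $A$ in $\Delta$; Theorem~\ref{thm:unitary-simplex-image} then gives $AD/D=1$, and Proposition~\ref{prop:unitary-toral} forces $A=1$. Your added checks, that the symmetric-group lemmas never use $(r,q+1)=1$ and that the hypothesis $r\mid q+1$ only separates this case from Theorem~\ref{thm:unitary-odd-prime-noncentral}, agree with how the paper uses these results.
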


\begin{proof}
Let $A<G$ be proper, nontrivial, and hereditarily $G$-permutable, and let $M=D:S_r$ be a self-polar simplex stabilizer, maximal in $G$ by Dye \cite[Theorem 5]{Dye1985}. If the product alternative $G = A M^x$ held with $M$, then $G$ would have a maximal factorization with a unitary socle of odd dimension. But the factorizable unitary groups of odd dimension are $U_3(3)$, $U_3(5)$, $U_9(2)$ \cite[Corollary 2]{LPS1990}. None has prime dimension $r\geq5$. So $A\leq M$ after conjugating $A$ in $G$. The image of $A$ in $M/D\cong S_r$ is handled by Theorem \ref{thm:unitary-simplex-image} (see Remark \ref{rmk:simplex-nonsolvable-reduction}), and $AD/D = 1$ implies that $A\leq D$, which is impossible by Proposition \ref{prop:unitary-toral}.
\end{proof}

\begin{proposition}\label{prop:unitary-odd-central}
Let $G=\PSU_n(q)$ with $n$ odd and $(n,q+1)>1$. Then $G$ has no proper nontrivial hereditarily $G$-permutable subgroup.
\end{proposition}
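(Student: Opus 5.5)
We argue by induction on the odd dimension $n$, following the simplex-stabilizer strategy of Lemma \ref{lem:unitary-prime-pure-central}. Suppose that $A<G=\PSU_n(q)$ is proper, nontrivial and hereditarily $G$-permutable. The case $n=3$ is Theorem \ref{thm:unitary-low-bases}(3), with $q\geq3$ because $\PSU_3(2)$ is not simple. The case of odd prime $n=r\geq5$ with $r\mid q+1$ is Lemma \ref{lem:unitary-prime-pure-central}. So we may assume that $n\geq9$ is odd composite.

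Let $\Delta=D:S_n$ be a self-polar simplex stabilizer, which is maximal in $G$ by Dye \cite[Theorem 5]{Dye1985}. By Lemma \ref{lem:maximal-product}, either $A\leq\Delta^x$ or $G=A\Delta^x$. In the second case, any maximal $M\geq A$ gives a maximal factorization $G=M\Delta^x$. By \cite[Corollary 2]{LPS1990}, the only factorizable odd-dimensional unitary simple groups are $U_3(3)$, $U_3(5)$ and $U_9(2)$. The first two have dimension $3$, so only $G=U_9(2)$ remains. Here $(9,3)>1$, so this group genuinely lies in the present case.

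For $U_9(2)$ we use the explicit factorization list \cite[Main Theorem]{LPS1990}. I expect this to be a factorization of the form $U_9(2)=J_3\cdot N_1$ or similar, with a nondegenerate-point stabilizer, and I would check that the simplex stabilizer $D:S_9$ never occurs as a factor. If that fails, I would replace $\Delta$ by a different maximal subgroup. A natural choice is an extension-field subgroup $\PSU_3(8).3$, or the $\mathcal C_2$ subgroup $\PSU_3(2)\wr S_3$, and the argument then proceeds as in Theorem \ref{thm:unitary-psu52}.

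Once $A\leq\Delta$, Theorem \ref{thm:unitary-simplex-image} gives $AD/D=1$, so $A\leq D$. Proposition \ref{prop:unitary-toral} then forces $A=1$, a contradiction. Theorem \ref{thm:unitary-simplex-image} is stated for an arbitrary $m$, and its solvable branch used only that $m$ is odd and at least $5$. Both conditions hold here, so no coprimality to $q+1$ is needed at this step, unlike the extension-field argument of Proposition \ref{prop:unitary-odd-noncentral-composite}. This is why the simplex route, and not the $\mathcal C_3$ route, is the right one when $(n,q+1)>1$.

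The main obstacle is therefore the single exceptional group $U_9(2)$. If its factorizations do involve a subgroup that could contain $A$ while being compatible with a simplex stabilizer, I would fall back on a different descent. In that case I would first show that $A$ lies in the parabolic $P_1$, by comparing the prime $19$ (a primitive divisor of $2^{18}-1$) with orders of the factors. I would then pass to the Levi quotient $\PSU_7(2)$. That Levi quotient is settled by the $(7,3)=1$ case, Theorem \ref{thm:unitary-odd-prime-noncentral}, together with the toral and unipotent results (Propositions \ref{prop:unitary-toral} and \ref{prop:unitary-unipotent}) for the kernel.
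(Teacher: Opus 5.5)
Your proof is correct. Its core is the paper's own argument: Step~1 of the paper's proof obtains $A\le\Delta$ by exactly your route. That route is Lemma~\ref{lem:maximal-product} together with \cite[Corollary 2]{LPS1990}, plus the Li--Wang--Xia list to exclude a simplex-stabilizer factor for $U_9(2)$. The paper's proof also closes with Theorem~\ref{thm:unitary-simplex-image} and Proposition~\ref{prop:unitary-toral}.

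The difference is that you apply these two results immediately. The paper instead first runs a $\mathcal C_3$ descent through $E_b=N_G(S_b)$ with $S_b\cong\PSU_{n/b}(q^b)$. In that descent it:
\begin{itemize}
\item reduces to $\pi(n)\subseteq\pi(q+1)$;
\item treats $A<S_b$ by induction on the dimension;
\item rules out $A=S_b$;
\item handles $A\not\le S_b$ using Lemma~\ref{lem:unitary-extension-quotient}.
\end{itemize}
The containment $A\le\Delta$ in Step~1 does not depend on that case division, and Theorem~\ref{thm:unitary-simplex-image} needs only that $n$ is odd and $n\ge5$. So the descent is redundant, exactly as the paper's own Lemma~\ref{lem:unitary-prime-pure-central} already shows for prime $n$. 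Your version is shorter and avoids both the extension-field order computations and the induction.

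One correction concerns $U_9(2)$. The factorization in question is $J_3\cdot P_1$, where $P_1=2^{1+14}{:}\SU_7(2)$ is the stabilizer of a singular point, not a nondegenerate one \cite{LiWangXia2024}. No self-polar simplex stabilizer occurs as a factor, so your check succeeds and the fallback paragraphs can be deleted. As sketched they would not work anyway: Theorem~\ref{thm:unitary-odd-prime-noncentral} says nothing about an image of $A$ containing the whole Levi factor $\PSU_7(2)$.
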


\begin{proof}
For ease of reading, we divide the proof into four steps.

\begin{itemize}[leftmargin=0pt,labelwidth=4em,labelsep=0.5em,itemindent=4.5em,align=left,topsep=3pt,partopsep=0pt,parsep=0pt,itemsep=0pt]
\item[{\bf Step 1.}] $A\leq\Delta$ after conjugation.
\end{itemize}
We begin with the prime dimensions that form the bases for induction. If $n=3$, then we use Theorem \ref{thm:unitary-low-bases}. If $n=r\geq5$ is prime, then $r\mid q+1$ and we use Lemma \ref{lem:unitary-prime-pure-central}. The rest of the proof concerns odd composite $n$. For a prime divisor $b$ of $n$, the descent below uses a subgroup $S_b\cong\PSU_{n/b}(q^b)$. If $A<S_b$, then we apply induction in $S_b$. This reduces the dimension from $n$ to $n/b$ and changes the field parameter from $q$ to $q^b$.

We need the permutation image of $A$ on the simplex lines in two cases. In one case, $A$ has nontrivial image in the quotient of the extension field normalizer by its socle. In the other, $A$ equals the socle of that normalizer. In both cases, we prove that the hereditarily $G$-permutable subgroup $A$ is contained in a stabilizer $\Delta=D:S_n$ of a self-polar simplex in $G$, independently of Theorem \ref{thm:unitary-simplex-image}. The stabilizer $\Delta$ is maximal in $G$ by Dye \cite[Theorem 5]{Dye1985}. Using Lemma \ref{lem:maximal-product} for $\Delta$, we have $A\leq\Delta^x$ unless $G=A\Delta^x$. In the latter case, a maximal subgroup $M\geq A$ of $G$ produces a maximal factorization $G=M\Delta^x$ with the self-polar simplex stabilizer $\Delta^x$ as a factor. By \cite[Corollary 2]{LPS1990}, the only factorizable simple unitary groups of odd dimension are $U_3(3),U_3(5),U_9(2)$. In the classical version, their factorizations have parabolic and almost simple factors, never a factor in class $\mathcal C_2$ that is a self-polar simplex stabilizer \cite[Theorem 1.4, Tables 4.1--4.2]{LiWangXia2024}. Therefore, no simple unitary group of odd dimension has such a factorization. Hence $A\leq\Delta^x$, and we relabel $\Delta^x$ as $\Delta$.

We consider the prime divisors of $n$ that do not divide $q+1$. Suppose that $n$ has a prime divisor not dividing $q+1$, and choose $b=\max(\pi(n)\setminus\pi(q+1))$. By \cite[\S4.3, Table 4.3.A]{KleidmanLiebeck1990}, the $\mathcal C_3$ normalizer of the subgroup defined over the extension field in $G$ is
$$
        E_b=N_G(S_b),~ S_b\cong\PSU_{n/b}(q^b).
$$

\begin{itemize}[leftmargin=0pt,labelwidth=4em,labelsep=0.5em,itemindent=4.5em,align=left,topsep=3pt,partopsep=0pt,parsep=0pt,itemsep=0pt]
\item[{\bf Step 2.}] After descent, $\pi(n)\subseteq\pi(q+1)$.
\end{itemize}
As in Proposition \ref{prop:unitary-odd-noncentral-composite}, we distinguish the case where $A$ is contained in the socle $S_b$ from the case where the image of $A$ in the quotient of $E_b$ by $S_b$ is nontrivial. When the image of $A$ in this quotient is nontrivial, $b\nmid q+1$. In this case, we choose $\Delta$ to contain $A$ by the argument in Step~1 establishing that $A$ lies in a self-polar simplex stabilizer. The prime divisors of the image of $A$ in $E_b$ modulo $S_b$ then appear in the permutation image $B=AD/D$ of $A$ on the simplex lines (Lemma \ref{lem:unitary-visibility-gcd}). By Theorem \ref{thm:unitary-simplex-image}, we have $B=1$, a contradiction (since the hypothesis $b\nmid q+1$ holds, we use Lemma \ref{lem:unitary-solvable-low-socle-simplex} when $B$ is solvable, see Remark \ref{rmk:simplex-nonsolvable-reduction}). We now suppose that $A\leq S_b$. If $A<S_b$, then the descent passes from dimension $n$ to dimension $n/b$, removing one factor $b$ from the dimension. Under the assumption $A<S_b$, take $r\mid n/b$ with $r\nmid q+1$. If $r=b$, then $q^b+1\equiv q+1\not\equiv0\pmod b$. If $r<b$ and $r\mid q^b+1$, then the order argument of Lemma \ref{lem:unitary-largest-prime} shows that $2b\mid r-1$, impossible. Every prime divisor of $n/b$ that does not divide $q^b+1$ is also a prime divisor of $n$ that does not divide $q+1$. Therefore, we use induction in $S_b$. Repeating this descent reduces to the case $\pi(n)\subseteq\pi(q+1)$. If $A=S_b$, then $S_b$ is hereditarily $G$-permutable. The factorization argument in Step~1 makes the product $G=S_b\Delta^x$ impossible. Therefore, we choose a stabilizer $\Delta$ of a self-polar simplex containing $S_b$ by Lemma \ref{lem:maximal-product}. We use Theorem \ref{thm:unitary-simplex-image} to conclude that $S_bD/D=1$, so $S_b\leq D$. Since $D$ is abelian while $S_b\cong\PSU_{n/b}(q^b)$ ($n/b\geq3$, $q^b\geq8$) is nonabelian simple, we cannot have $S_b\leq D$.

Now assume that $\pi(n)\subseteq\pi(q+1)$. Since the cases of prime dimension have been settled, suppose that $n$ is composite. We choose the largest prime $b=\max\pi(n)$ and let $u=n/b\geq3$. By \cite[\S4.3, Table 4.3.A]{KleidmanLiebeck1990}, the $\mathcal C_3$ normalizer of the subgroup defined over the extension field is
$$
        E_b=N_G(S_b),~ S_b\cong\PSU_u(q^b).
$$

\begin{itemize}[leftmargin=0pt,labelwidth=4em,labelsep=0.5em,itemindent=4.5em,align=left,topsep=3pt,partopsep=0pt,parsep=0pt,itemsep=0pt]
\item[{\bf Step 3.}] $A\leq E_b$ and $A\not\leq S_b$.
\end{itemize}
By Lemma \ref{lem:maximal-product}, after conjugating $A$ in $G$, either $A\leq E_b$ or $G=AE_b^x$. In the product case a maximal subgroup $M\geq A$ of $G$ produces a maximal factorization $G=ME_b^x$ with the subgroup $E_b^x$ as a factor in class $\mathcal C_3$ that fixes an extension field structure as a whole. By \cite[Corollary 2]{LPS1990}, the only factorizable simple unitary groups of odd dimension are $U_3(3)$, $U_3(5)$ and $U_9(2)$, and only $U_9(2)$ meets the present hypotheses ($n=9$, $q=2$, $b=3$, $S_b\cong\PSU_3(8)$). The only factorization of $U_9(2)$ with a nonsolvable factor is $U_9(2)=J_3\cdot P_1$, where $P_1=2^{1+14}{:}\SU_7(2)$ is the parabolic subgroup stabilizing a singular line in the natural module of $U_9(2)$ \cite[Lemma 5.7]{LiWangXia2024}. Since $E_b^x$ is nonsolvable (the group $E_b^x$ contains $\PSU_3(8)$) but is a $\mathcal C_3$ subgroup defined over an extension field, hence neither the almost simple factor $J_3$ nor the parabolic $P_1$, no such factorization exists. Hence, after conjugating $A$ in $G$, $A\leq E_b$.

If $A<S_b$, then $A$ is hereditarily $S_b$-permutable (Lemma \ref{lem:overgroup-restriction}). Since $b$ is odd and $b\mid q+1$, every prime $r\mid q+1$ has $q^b\equiv-1\pmod r$. Therefore, $\pi(u)\subseteq\pi(n)\subseteq\pi(q+1)\subseteq\pi(q^b+1)$. The group $S_b\cong\PSU_u(q^b)$ is again a unitary group of odd dimension with every prime divisor of $u$ dividing $q^b+1$, in the smaller dimension $u<n$. Induction on the dimension, with the bases of Theorem \ref{thm:unitary-low-bases} and Lemma \ref{lem:unitary-prime-pure-central}, settles the case of a proper subgroup of $S_b$.

We next show that $A=S_b$ is impossible. Under the assumption $A=S_b$, the subgroup $S_b$ is hereditarily $G$-permutable. By Lemma \ref{lem:maximal-product} for a self-polar simplex stabilizer $\Delta=D:S_n$ in $G$, we then have $S_b\leq\Delta^x$ or $G=S_b\Delta^x$. By the factorization list for the extension field normalizer $E_b=N_G(S_b)$ used in Step~3 (for $U_9(2)$ the only factorization is $J_3\cdot P_1$, with no self-polar simplex stabilizer as a factor), the factorization $G=S_b\Delta^x$ is impossible. So $S_b\leq\Delta^x$. We relabel $\Delta^x$ as $\Delta$ and write $D$ for its diagonal subgroup. Using Theorem \ref{thm:unitary-simplex-image} for the hereditarily $G$-permutable subgroup $S_b$ of the stabilizer of a self-polar simplex, we have $S_bD/D=1$, i.e., $S_b\leq D$. Since $D$ is abelian while $S_b\cong\PSU_u(q^b)$ ($u\geq3$, $q^b\geq8$) is nonabelian simple, we cannot have $S_b\leq D$.

\begin{itemize}[leftmargin=0pt,labelwidth=4em,labelsep=0.5em,itemindent=4.5em,align=left,topsep=3pt,partopsep=0pt,parsep=0pt,itemsep=0pt]
\item[{\bf Step 4.}] $A=1$.
\end{itemize}
If $A\not\leq S_b$, then let $Q=AS_b/S_b\neq1$. By Lemma \ref{lem:unitary-extension-quotient},
$$
        \pi(Q)\subseteq\pi\!\left(b\,\frac{q^b+1}{q+1}\right),
$$
and since
$$
        \frac{q^b+1}{q+1}=q^{b-1}-q^{b-2}+\cdots-q+1\equiv b\pmod{q+1},
$$
a prime of $\pi(Q)$ dividing the diagonal $(q+1)$-part of the simplex stabilizer can only be $b$. We next prove that $A$ is contained in a conjugate of a stabilizer $\Delta=D:S_n$ of a self-polar simplex in $G$. As $A\leq E_b$, a product $G=A\Delta^x$ would produce a maximal factorization $G=M'\Delta^x$ (with $M'\geq A$ maximal in $G$) having a self-polar simplex stabilizer as a factor. The factorization $G=M'\Delta^x$ is forbidden by the list for the extension field normalizer $E_b$ used in Step~3. Therefore, $A\leq\Delta^x$. Relabelling $\Delta^x$ as $\Delta$ and writing $D$ for its diagonal subgroup, we have $A\leq E_b\cap\Delta$. Let $B=AD/D$ be the image of $A$ in its permutation action on the simplex lines. The kernel of the permutation action from $A$ to $B$ on the simplex lines is $A\cap D$. The order of this kernel has prime divisors only among those of $q+1$. Therefore, every prime of $\pi(Q)$ coprime to $q+1$ divides $|B|$. A prime $r\mid(q^b+1)/(q+1)$ with $r\nmid q+1$ has $\ord_r(q)=2b$, hence $r\geq2b+1\geq7$, and by Lemma \ref{lem:solvable-simplex-prime-bound} divides the order of no solvable hereditarily $S_n$-permutable permutation image of $A$ on the simplex lines. If the prime $r$ divided the order of $B$, then $B$ would be nonsolvable, contradicting Theorem \ref{thm:unitary-simplex-image}. So no such prime occurs. By Theorem \ref{thm:unitary-simplex-image}, we have $B=1$ in any case. Then $A\leq D$ is diagonal and Proposition \ref{prop:unitary-toral} forces $A=1$, against assumption. Therefore, $A\leq S_b$, and repeating the descent through proper subgroups of $S_b$ reduces to the bases of prime dimension $r$ with $r\mid q+1$, the base in dimension three in Theorem \ref{thm:unitary-low-bases} and the higher prime bases in Lemma \ref{lem:unitary-prime-pure-central}.
\end{proof}

\begin{lemma}\label{lem:unitary-flag-toral}
Let $G=\PSU_{2m}(q)$ and let $A$ be a hereditarily $G$-permutable subgroup lying in a parabolic subgroup $P$ of $G$. Suppose that the image $C$ of $A$ in the Levi quotient $P/R_u(P)$ lies in the central torus of this Levi quotient. In this situation, the image of $A$ in each semisimple factor of this Levi quotient is trivial. Then $C=1$, so $A\leq R_u(P)$.
\end{lemma}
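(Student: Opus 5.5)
The plan is to show that the central torus image $C$ is trivial by testing hereditary permutability against root subgroups of the unipotent radical, in the spirit of Lemma \ref{lem:sl-root-line-module-obstruction}. First fix notation. $P$ stabilizes a totally singular flag $0<F_1<\cdots<F_s$ in the natural module $V$ of dimension $2m$. Its Levi factor $L$ acts on the quotients $F_i/F_{i-1}$ (linear pieces over $\F_{q^2}$) and on the unitary piece $F_s^\perp/F_s$. The central torus $Z(L)$ acts on each block by a scalar: $\mu_i$ on $F_i/F_{i-1}$, $\mu_i^{-q}$ on the dual block, and $\mu_0$ with $\mu_0^{q+1}=1$ on $F_s^\perp/F_s$. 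These scalars are taken modulo the global scalars of $\PSU_{2m}(q)$. By Lemma \ref{lem:quotient-inheritance}, $C$ is hereditarily permutable in $P/R_u(P)$. By the hypothesis on the semisimple factors, $C$ lies in $Z(L)$ projectively. It therefore suffices to show that every ratio $\mu_i/\mu_j$ is trivial, together with the ratios involving the unitary block and the dual blocks.

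Fix two blocks, say with indices $i<j$ (or a block and its dual). Consider the root-type subgroup $U_{ij}=\Hom(F_j/F_{j-1},F_i/F_{i-1})$ inside $R_u(P)$, or the opposite one inside $R_u(P^-)$. Work in the subgroup $F=U_{ij}\rtimes (A R_u(P)/\ldots)$, obtained as in the proof of Lemma \ref{lem:sl-incident-full-cover-d2}: pass to a quotient of $P$ (or of the semidirect product of $L$ with one abelian root module) in which the image of $A$ acts on an elementary abelian module $U$ with trivial intersection. In that quotient the image of $A$ acts on $U$ through $C$, by the scalar $\mu_i\mu_j^{-1}$ (or $\mu_i\mu_j^{q}$ for the dual pairing). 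Apply Lemma \ref{lem:sl-root-line-module-obstruction} with $K=\F_p$ and $U$ an $\F_p$-line. Hereditary permutability makes $C$ stabilize some $\F_p$-line of $U$, so the scalar $\mu_i\mu_j^{-1}$ lies in $\F_p^\times$.

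To push this to triviality, apply the same lemma to a suitable $\F_p$-subspace instead of a line. Permutation with a conjugate of a subgroup $X\le U$ forces $C$ to normalize some $\F_p$-subspace $X^x$ of the same dimension. Choose $X$ to be a $1$-dimensional $\F_p$-subspace lying in no proper $\F_p(\mu)$-subspace. More simply, use the symmetry argument of Lemma \ref{lem:unitary-weyl-normalization}: a Weyl element swapping the blocks normalizes $C$. Combined with the rank-two computation of Lemma \ref{lem:unitary-rank-two-ratio-bound} and the $\PSU_3$ and $\PSL_2$ bases of Theorem \ref{thm:unitary-low-bases}, this collapses the ratios as in Lemma \ref{lem:unitary-rank-three-toral-collapse}. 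In this route, embed the relevant pair of blocks into a Levi-type subgroup isomorphic to $\GL_2$ or $\GU_2$ (or $\GU_3$ when the unitary block is involved). The image of $C$ then lies in the split torus of a Borel subgroup of an almost simple $\PSL_2$ or $\PSU_3$ quotient, and Lemma \ref{lem:pgl2-borel} and Theorem \ref{thm:unitary-low-bases}(3) kill it.

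The main obstacle is making the passage to these rank-one or rank-two subquotients legitimate. The image of $A$ must be shown to land in a Borel subgroup of an honest almost simple group $E$ with $\PSL_2(Q)\le E\le\PGL_2(Q)$, and hereditary permutability must be inherited through restriction and quotient via Lemmas \ref{lem:overgroup-restriction} and \ref{lem:quotient-inheritance}. The small fields $q\in\{2,3\}$ will need separate handling, either by direct order arguments or by falling back on the $\F_p$-line argument above. Once every ratio is trivial, $C$ is projectively trivial, so $A\le R_u(P)$.
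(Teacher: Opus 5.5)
The route you finally settle on, passing to rank-one Levi quotients $\PSL_2(q')\le E\le\PGL_2(q')$ and killing the image with Lemma~\ref{lem:pgl2-borel}, is exactly the paper's proof. However, the proposal as written does not carry it out, and the two routes you try first would not work.

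\emph{The $\F_p$-line route.} The argument via Lemma~\ref{lem:sl-root-line-module-obstruction} can never give more than $\mu_i\mu_j^{-1}\in\F_p^\times$. The conjugates of an $\F_p$-subspace $X\le U$ in $U\rtimes C$ are scalar multiples of $X$, and $\F_p^\times$ stabilizes every $\F_p$-subspace, so no choice of $X$ helps. In particular it is no fallback for $q=3$, where it only gives $\pm1$. Its hypothesis of trivial intersection with $U$ is also not available when $A$ has a nontrivial component in that root group.

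\emph{The Weyl-symmetry route.} Lemmas~\ref{lem:unitary-weyl-normalization}--\ref{lem:unitary-rank-three-toral-collapse} do not transfer. They need the subgroup to lie in a diagonal torus and the swapping element to lie in the group where permutability is tested. Here $A$ has unipotent parts, and a Weyl element swapping two blocks of the flag is not in $P$.

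\emph{Where the information has to come from.} $C$ is central in $P/R_u(P)$, so hereditary permutability of $C$ there is automatic and carries no information. Everything must be extracted from $A$ in overgroups not contained in $P$.

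\emph{No $\GU_3$ factor.} $\GU_3$ and $\PSU_3$ never occur here. A complete totally singular flag in dimension $2m$ ends in a maximal totally singular $m$-space. The rank-one factors are $\SL_2(q^2)$ for the linear simple roots and $\SU_2(q)\cong\SL_2(q)$ for the last one.

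What you call the main obstacle is resolved as in the paper:
\begin{enumerate}
\item $C$ acts by a scalar on each block, and $R_u(P)$ acts trivially on the blocks. So $A$ stabilizes every totally singular refinement of the flag of $P$, and hence $A\le B$ for a Borel subgroup $B\le P$.
\item For each simple root $\alpha$, take the minimal parabolic $P_\alpha\ge B$; for roots joining two blocks, $P_\alpha\not\le P$. Restrict by Lemma~\ref{lem:overgroup-restriction}.
\item Factor out $R_u(P_\alpha)$, then the central torus together with the kernel of the rank-one action (Lemma~\ref{lem:quotient-inheritance}). This gives $\PSL_2(q')\le E_\alpha\le\PGL_2(q')$.
\item The image of $A$ in $E_\alpha$ lies in a Borel subgroup of $E_\alpha$, with diagonal part $\alpha(C)$. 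This is the image of $A$, not of $C$, and it may have a nontrivial $U_\alpha$-component since $U_\alpha\le R_u(P)$.
\item Lemma~\ref{lem:pgl2-borel} makes this image trivial whenever $q'\ge3$, so $q=3$ needs no special care. The only remaining case is $q'=2$, the unitary root with $q=2$, and there $\alpha(C)=1$ because the diagonal torus of $\PGL_2(2)$ is trivial.
\end{enumerate}
Then $C$ lies in the common kernel of the simple roots, which consists of scalars, so $C=1$ and $A\le R_u(P)$.
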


\begin{proof}
The image of $A$ in each semisimple factor of the Levi quotient of $P$ is trivial. Therefore, the subgroup $A$ lies in a Borel subgroup $B$ of $G=\PSU_{2m}(q)$ with maximal torus $T$ in $B$. Identify $B/R_u(B)$ with the maximal torus $T$. We use $C$ for the image of $A$ under $B\to B/R_u(B)\cong T$. The characters of $T$ are spanned by the simple roots $\alpha_1,\dots,\alpha_m$ relative to $B$, so it suffices to prove $\alpha_i(C)=1$ for every $i$. Then $C$ lies in the intersection of the kernels of the simple root characters, namely the scalar centre before taking the projective quotient, and is projectively scalar, i.e., $C=1$.

We fix $i$ and let $P_i\geq B$ be the minimal parabolic of $G$ corresponding to the simple root indexed by $i$, with Levi decomposition $P_i=L_iR_u(P_i)$. By \cite[Proposition 26.1(b)]{MalleTesterman2011} and \cite{Carter1985}, the semisimple factor of rank one of $L_i$ is $\SL_2(q')$ with $q'=q^2$ on a linear simple root and $q'=q$ on the unitary simple root. Since $A\leq B\leq P_i$, Lemma \ref{lem:overgroup-restriction} makes $A$ hereditarily $P_i$-permutable. Since $R_u(P_i)\unlhd P_i$, Lemma \ref{lem:quotient-inheritance} makes the image of $A$ in $L_i\cong P_i/R_u(P_i)$ hereditarily $L_i$-permutable. The relevant action is that of $L_i$ on the factor of rank one. We take the quotient of $L_i$ by the product of its central torus and the kernel of this action. This product is a normal subgroup of $L_i$, so Lemma \ref{lem:quotient-inheritance} applies again. It follows that the image $\bar A_i$ of $A$ is hereditarily $E_i$-permutable in the quotient group $E_i$, with $\PSL_2(q')\leq E_i\leq\PGL_2(q')$. Since $A\leq B$, the image $\bar A_i$ lies in the image of $B$ in $E_i$, a Borel subgroup (point stabilizer) of $E_i$, and the diagonal part of $\bar A_i$ is exactly $\alpha_i(C)$. When $q'\geq3$, by Lemma \ref{lem:pgl2-borel}, we have $\bar A_i=1$, so $\alpha_i(C)=1$. The only remaining value is $q'=2$ (the unitary simple root with $q=2$), where the diagonal torus of $\PGL_2(2)$ is trivial, so $\alpha_i(C)=1$ also in this case. Hence $\alpha_i(C)=1$ for every $i$, so $C=1$. The image of $A$ in the Levi quotient $P/R_u(P)$ is trivial, so $A\leq R_u(P)$.
\end{proof}

\subsection{Even dimension}

\begin{proposition}\label{prop:unitary-even}
For the simple group $G=\PSU_{2m}(q)$ there is no proper nontrivial hereditarily $G$-permutable subgroup.
\end{proposition}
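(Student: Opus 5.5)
We argue by contradiction: let $A<G=\PSU_{2m}(q)$ be proper, nontrivial and hereditarily $G$-permutable, and induct on $m$. The case $m=1$ is $\PSU_2(q)\cong\PSL_2(q)$ and is covered by Theorem \ref{thm:unitary-low-bases}. The small groups $\PSU_4(2)\cong\PSp_4(3)$ and $\PSU_4(3)$, and the exceptions to the primitive prime divisor arguments below, will be settled by direct inspection of maximal subgroups from the Atlas, in the manner of Theorem \ref{thm:unitary-psu52}. Throughout, the odd-dimensional results already proved (Theorem \ref{thm:unitary-odd-prime-noncentral}, Propositions \ref{prop:unitary-odd-noncentral-composite} and \ref{prop:unitary-odd-central}, Lemma \ref{lem:unitary-prime-pure-central}) are available as inputs for the Levi descent.

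\textbf{Step 1: $A$ fixes a totally singular line or a nondegenerate line.} Test $A$ against the maximal subgroups $P_1$ (stabilizer of a singular line) and $N_1$ (stabilizer of a nondegenerate line) via Lemma \ref{lem:maximal-product}. If neither containment holds, then $G=AP_1^x=AN_1^y$. Taking a maximal $M\geq A$, we obtain two maximal factorizations of $G$ sharing the factor $M$. Comparing with \cite[Main Theorem]{LPS1990} and \cite[Tables 4.1--4.2]{LiWangXia2024}, the even-dimensional unitary rows have $M$ of type $N_1$ against $P_m$, or $\PSU_{m}(q^2)$/$\mathrm{Sp}$-type against $P_1$, together with finitely many sporadic rows. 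In each row the same $M$ cannot also factor with the other subgroup. This is checked by index divisibility using a primitive prime divisor of $q^{2(2m-1)}-1$ or of $q^{2m}-1$, as in Lemma \ref{lem:unitary-full-support-simplex}. I expect this table-checking to be the main obstacle: every factorization row must be excluded, and rows in which $A$ itself could be almost simple (for instance $J_3$-type or $\PSU_4(3)$-type rows) need separate attention. First I would try to obtain transitivity of $A$ on both singular and nonsingular points, and then use the known classification of subgroups transitive on singular points.

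\textbf{Step 2: the nondegenerate line case.} Suppose $A\leq N_1$, stabilizing $L\perp L^\perp$ with $\dim L^\perp=2m-1$ odd. Restrict to $N_1$ (Lemma \ref{lem:overgroup-restriction}) and pass to the projective action on $L^\perp$ (Lemma \ref{lem:quotient-inheritance}), as in the proof of Theorem \ref{thm:unitary-simplex-image}. The image is hereditarily permutable in some $E$ with $\PSU_{2m-1}(q)\leq E\leq\PGU_{2m-1}(q)$. Applying the odd-dimensional theorems to the intersection with the socle, via Lemmas \ref{lem:normal-intersection-transfer} and \ref{lem:overgroup-restriction}, the image either contains $\PSU_{2m-1}(q)$ or is trivial modulo the diagonal part.

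If the image contains $\PSU_{2m-1}(q)$, then $A$ is transitive on the singular points of $L^\perp$. Testing against $P_2$ (stabilizer of a totally singular plane) yields either a contradiction from an invariant singular point, or a factorization excluded by the tables. If the image is trivial, then $A$ is a $(q+1)$-group of scalars on $L^\perp$, and hence lies in a diagonal torus of a self-polar simplex stabilizer. The case $m\geq2$ is then killed by Proposition \ref{prop:unitary-toral}, applied after projecting to three coordinate lines. Unipotent remnants that fix $L$ pointwise are handled by Proposition \ref{prop:unitary-unipotent} applied inside the odd-dimensional factor.

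\textbf{Step 3: the totally singular line case.} Suppose $A\leq P_1$. The Levi quotient of $P_1$ has a $\PSU_{2m-2}(q)$ factor acting on $\ell^\perp/\ell$, together with a $\GL_1(q^2)$ torus. By induction on $m$, together with Lemma \ref{lem:quotient-inheritance}, the image of $A$ in the projective factor is either the whole group or trivial modulo the diagonal part. The full-image case is excluded as in Lemma \ref{lem:sl-incident-full-cover-large}: test against $P_2$, and show that $A$ cannot be transitive on totally singular planes because it fixes $\ell$. An invariant plane then gives an invariant singular point of $\ell^\perp/\ell$, a contradiction.

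Otherwise we extend the totally singular flag and continue the descent. Linear Levi factors $\GL_k(q^2)$ are handled by Theorem \ref{thm:split-linear} and Lemma \ref{lem:sl-borel}, and $\PSU_2$ factors by Lemma \ref{lem:pgl2-borel}. Eventually $A$ lies in a Borel subgroup with trivial semisimple images. Lemma \ref{lem:unitary-flag-toral} then gives $A\leq R_u(B)$. Finally, a descending argument on the root filtration, using Lemma \ref{lem:sl-root-line-module-obstruction} on successive abelian layers, pushes $A$ into a long root subgroup $T_{\langle v\rangle}$, where Lemma \ref{lem:unitary-unipotent-root-sl2} gives $A=1$. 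The $q=2$ case of this last step needs a separate small computation.
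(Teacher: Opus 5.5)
There is a genuine gap in Steps 2 and 3. After you restrict to $N_1$ (resp.\ $P_1$) and pass to the projective action on $L^\perp$ (resp.\ $\ell^\perp/\ell$), Lemma~\ref{lem:quotient-inheritance} only makes the image $\bar A$ hereditarily permutable in some group $E$ with $\PSU_k(q)\leq E\leq\PGU_k(q)$. The odd-dimensional theorems and your induction hypothesis, however, are statements about the simple group $\PSU_k(q)$. They apply to $\bar A$ only when $\bar A\leq\PSU_k(q)$. Otherwise Lemma~\ref{lem:normal-intersection-transfer} gives only \emph{ordinary} $\PSU_k(q)$-permutability of $\bar A\cap\PSU_k(q)$, and none of the hereditary results you cite applies to that. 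So the dichotomy ``full, or trivial modulo the diagonal part'' is not established. Images outside the socle that meet it in a proper nontrivial subgroup are never treated. Your ``trivial'' branch also handles only $\bar A=1$, not a nontrivial $\bar A$ with $\bar A\cap\PSU_k(q)=1$. (There are no ``unipotent remnants'' there either: $N_1$ has no unipotent radical, so a trivial projective image on $L^\perp$ makes $A$ diagonal.) Separately, for $m=2$ the full-image case of Step 2 is not excluded by testing with $P_2$. Indeed $\PSU_4(q)=N_1P_2$ is a genuine maximal factorization (the row $N_1\cdot P_m$), so $A$ can be transitive on totally singular planes. Testing with $P_1$ works instead, since $N_1$ has two orbits on singular points.

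The terminal unipotent case is also not handled. Once $A\leq R_u(B)$, $A$ is a $p$-group in characteristic $p$, so it fixes a line in every module. Hence Lemma~\ref{lem:sl-root-line-module-obstruction} gives no restriction, and its hypotheses $F=VA$, $A\cap V=1$ need not hold for layers of the root filtration anyway. Nothing in your argument forces $A$ into a single long root subgroup.

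The paper organises the proof to avoid both problems:
\begin{itemize}
\item Primitive prime divisor index comparisons place $A$ \emph{simultaneously} in $N_1$ and $P_1$, and then in $P_m$ (Steps 1--5).
\item The descent runs only through the linear Levi factor of $P_m$, where Lemma~\ref{lem:sl-actual-quotient-trichotomy} and Theorem~\ref{thm:split-linear} are available for the almost simple quotient. The toral part is removed by Lemma~\ref{lem:unitary-flag-toral}.
\item The unipotent case then exploits the fixed nondegenerate line $U$. A $p$-subgroup of $R_u(P_m)$ fixes both the maximal totally singular space $X$ and $U$ pointwise.
\item For $m=2$, this puts $A$ in one long root subgroup, and Lemma~\ref{lem:unitary-unipotent-root-sl2} applies (with a computation for $q=2$).
\item For $m\geq3$, $A$ fixes a nondegenerate plane pointwise, hence a nondegenerate line of $U^\perp$. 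Since $A$ is a $p$-group, its image lies in $\PSU_{2m-1}(q)$, and Proposition~\ref{prop:unitary-unipotent} finishes the argument.
\end{itemize}
Your Step 1 yields only ``$P_1$ or $N_1$'', which discards exactly the simultaneous nondegenerate line that this ending needs.
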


\begin{proof}
For ease of reading, we divide the proof into seven steps. For $m=1$, the isomorphism $\PSU_2(q)\cong\PSL_2(q)$ reduces the required nonexistence of proper nontrivial hereditarily permutable subgroups to Theorem \ref{thm:unitary-low-bases}. Therefore, we may assume that $m\geq2$, and let $A<G$ be proper, nontrivial, and hereditarily $G$-permutable.

\begin{itemize}[leftmargin=0pt,labelwidth=4em,labelsep=0.5em,itemindent=4.5em,align=left,topsep=3pt,partopsep=0pt,parsep=0pt,itemsep=0pt]
\item[{\bf Step 1.}] If $G=AN_1$, then $G=AP_1^y$.
\end{itemize}
Let $N_1$ be the stabilizer in $G$ of a nondegenerate line of the natural unitary space $V$, a maximal $\mathcal C_1$-subgroup of type $\GU_1(q)\perp\GU_{2m-1}(q)$ \cite[Table 3.5.B]{KleidmanLiebeck1990}. By hereditary $G$-permutability of $A$ and Lemma \ref{lem:maximal-product}, after conjugating $A$ in $G$ either $A\leq N_1$ or $G=AN_1$. Suppose that $G=AN_1$, and take a maximal subgroup $M\geq A$ of $G$, so $G=MN_1$ is a maximal factorization. We use the reduction to factorizations with maximal factors of Liebeck, Praeger, and Saxl \cite[Main Theorem, Corollary 1]{LPS1990}. The possible unitary factorizations with $N_1$ as a factor are in Li, Wang, and Xia \cite[Theorem 4.1, Tables 4.1--4.2, Proposition 4.2]{LiWangXia2024}. The solvable case is covered by Li and Xia \cite{LiXia2022}.

We show directly that the factorization $G=AN_1$ is impossible. Let $P_1$ be a stabilizer of a singular line in $G$. The subgroup $N_1$ has at least two orbits on singular lines of $V$ (those in the orthogonal complement of the nondegenerate line fixed by $N_1$, and those outside that orthogonal complement), so $P_1^gN_1\neq G$ for all $g\in G$. Hence, if $A\leq P_1^g$, then the assumed product case $G=AN_1$ would force $G\leq P_1^gN_1<G$, a contradiction, so $A$ lies in no $G$-conjugate of $P_1$. By hereditary permutability of $A$ with $P_1$ and maximality of $P_1$ in $G$, we then have the factorization
\begin{equation}\label{eq:even-p1-index}
        G=AP_1^y,~\text{so}~ |G:P_1|\mid|A|.
\end{equation}
Using the unitary order formulas \cite[Propositions 2.3.1--2.3.3]{KleidmanLiebeck1990}, we obtain
$$
        |G:P_1|=\frac{(q^{2m}-1)(q^{2m-1}+1)}{q^2-1}.
$$
\begin{itemize}[leftmargin=0pt,labelwidth=4em,labelsep=0.5em,itemindent=4.5em,align=left,topsep=3pt,partopsep=0pt,parsep=0pt,itemsep=0pt]
\item[{\bf Step 2.}] $A\leq N_1$ after conjugation.
\end{itemize}
For $e=4m-2$, Zsigmondy \cite{Zsigmondy1892} gives a primitive prime divisor $r$ of $q^e-1$, except when $(q,e)=(2,6)$. The prime $r$ divides $q^{2m-1}+1$, divides $|G:P_1|$, and divides neither $q^{2m}-1$ nor the extension-degree parts of the Singer and parabolic rows. So the solvable $P_m$/Singer row and the generic nonsolvable $N_1$-rows cannot satisfy \eqref{eq:even-p1-index}. Writing $G=MN_1$ for a maximal subgroup $M\geq A$ of $G$, the relation \eqref{eq:even-p1-index} forces $r\mid|A|$, hence $r\mid|M|$. But no maximal subgroup of $G$ serving as the factor paired with $N_1$ in \cite[Theorem 4.1, Tables 4.1--4.2]{LiWangXia2024} has order divisible by this primitive prime divisor $r$ of $q^{4m-2}-1$.

The primitive prime divisor argument leaves the small cases $\PSU_4(2)$ and the $\PSU_6(2)$ row with factor $3\cdot M_{22}$ \cite[Table 4.1, Proposition 4.2]{LiWangXia2024}. For $\PSU_4(2)$, the ATLAS \cite{Atlas1985} gives
$$
        |G|=25920,~ |G:N_1|=40,~ |G:P_1|=45,~ |G:P_2|=27,
$$
with $|N_1|,|P_1|,|P_2|=648,576,960$. The factorization $G = AN_1$ bounds the order of $A$ by the order of the factor $S_6$, so $|A|\mid720$, while the factorization $G = AN_1$ also shows that $40\mid|A|$. Testing the hereditary permutability of $A$ with $P_1$, we have $45\mid|A|$, hence $360\mid|A|$. Then no $G$-conjugate of $P_2$ (order $960=2^6\cdot3\cdot5$) contains $A$, and testing the  hereditary permutability of $A$ with $P_2$ forces $27\mid|A|$, contradicting $|A|\mid720=2^4\cdot3^2\cdot5$. For the $\PSU_6(2)$ row,
$$
        |G:N_1|=672,~ |G:P_2|=6237=3^4\cdot7\cdot11,~ |P_2|=2^{15}\cdot3^2\cdot5,
$$
and $|3\cdot M_{22}|=2^7\cdot3^3\cdot5\cdot7\cdot11$, so the factorization with $N_1$ shows that $7\mid|A|$, $A$ is in no $G$-conjugate of $P_2$, and the factorization with $P_2$ forces $3^4\mid|A|$, again a contradiction. Hence $G=AN_1$ is impossible, and after conjugating $A$ in $G$, $A\leq N_1$.

\begin{itemize}[leftmargin=0pt,labelwidth=4em,labelsep=0.5em,itemindent=4.5em,align=left,topsep=3pt,partopsep=0pt,parsep=0pt,itemsep=0pt]
\item[{\bf Step 3.}] $A$ fixes a singular line.
\end{itemize}
We test $A\leq N_1$ against a singular line stabilizer $P_1$ in $G$. If $A$ lay in no conjugate of $P_1$, then hereditary permutability of $A$ and maximality of $P_1$ would force $G=AP_1^y$ and $|G:P_1|\mid|A|\mid|N_1|$. The subgroup $N_1$ has type $\GU_1(q)\times\GU_{2m-1}(q)$, whose primes other than the characteristic lie among $q+1$ and $q^i-(-1)^i$ for $1\leq i\leq2m-1$ \cite[Propositions 2.3.1--2.3.3]{KleidmanLiebeck1990}. Zsigmondy's theorem \cite{Zsigmondy1892} guarantees the primitive prime divisors in the following two cases. For $m$ even, a primitive prime divisor of $q^{2m}-1$ divides the $q^{2m}-1$ factor of $|G:P_1|$ and no factor of $|N_1|$. For $m$ odd, since the only factor of $|N_1|$ with exponent divisible by $m$ is $q^m+1$, a primitive prime divisor of $q^m-1$ divides $|G:P_1|$ but not $|N_1|$. The only exception is $\PSU_4(2)$. By the ATLAS \cite{Atlas1985}, $|G:P_1|=45\nmid648=|N_1|$. Thus, $|G:P_1|\nmid|N_1|$. So $A\leq P_1^y$. (For $\PSU_4(2)$, one more test with $P_2$ shows that $|G:P_2|=27\nmid576=|P_1|$, so $A\leq P_2^z$.) If the singular line $\ell$ and totally singular plane $X$ stabilized by $A$ in $V$ satisfy $\ell\not\leq X$, then consider $X\cap\ell^\perp$. Since $\ell^\perp$ is a hyperplane, the intersection $X\cap\ell^\perp$ is nonzero. If $X\cap\ell^\perp$ were all of $X$, then $X\leq\ell^\perp$, hence $\ell\leq X^\perp=X$ (as $X^\perp=X$ for the maximal totally singular plane $X$). This would contradict $\ell\not\leq X$. So $X\cap\ell^\perp$ has dimension one and is an $A$-stable singular line $\ell'\leq X$. Replacing $\ell$ by $\ell'$, we have the flag $\ell'<X$, and $A$ lies in the parabolic subgroup $P_{\ell'<X}$ of $G$ stabilizing the singular flag $\ell'<X$. This parabolic has a toral Levi quotient and unipotent radical $R_u(P_{\ell'<X})$. The image of $A$ in the toral Levi quotient $P_{\ell'<X}/R_u(P_{\ell'<X})$ is trivial by Lemma \ref{lem:unitary-flag-toral}. For $\PSU_4(2)$, the proof of Lemma \ref{lem:unitary-flag-toral} uses the subgroup $\SL_2(4)$ corresponding to the linear simple root as its factor of rank one, and $\PSL_2(4)\cong A_5$ is covered by Lemma \ref{lem:pgl2-borel}. The triviality of this toral image means that $A$ is contained in the unipotent radical $R_u(P_{\ell'<X})$ of $P_{\ell'<X}$. We now use the argument in Step 6 involving fixed spaces for $A$ in a unipotent radical. This argument shows that $A$ is the subgroup $\gen{t}$ generated by a single transvection. Such a subgroup cannot be hereditarily permutable by Appendix \ref{app:psu4-2}.

\begin{itemize}[leftmargin=0pt,labelwidth=4em,labelsep=0.5em,itemindent=4.5em,align=left,topsep=3pt,partopsep=0pt,parsep=0pt,itemsep=0pt]
\item[{\bf Step 4.}] If $\ell\not\leq U^\perp$, then $A\leq P_m^x$.
\end{itemize}
Now $A$ stabilizes a nondegenerate line $U$ and a singular line $\ell$ in the natural unitary space $V$. The group $\PSU_4(2)$ was handled using its parabolic subgroup stabilizing the singular flag in Step 3, so assume that $(q,m)\neq(2,2)$. The stabilizers of the nondegenerate line $U$ and the singular line $\ell$ are geometric maximal subgroups of $G$ \cite[Table 3.5.B]{KleidmanLiebeck1990}.

Suppose that $\ell\not\leq U^\perp$. Then $H=U+\ell$ is a nondegenerate plane and $A$ lies in the stabilizer $N_2$ of $H$ in $G$. We have $V=H\perp W$ with nondegenerate orthogonal complement $W$ and $\dim W=2m-2$. We next prove that $A$ is contained in a parabolic subgroup of $G$ stabilizing a maximal totally singular subspace of $V$. With $P_m$ the stabilizer in $G$ of a maximal totally singular $m$-space of $V$, if $A$ lay in no $G$-conjugate of $P_m$, then by Lemma \ref{lem:maximal-product}, we would have
$$
        G=AP_m^x,~ |G:P_m|\mid|A|\mid|N_2|.
$$
By the unitary order formula \cite[Propositions 2.3.1--2.3.3]{KleidmanLiebeck1990},
$$
        |G:P_m|=\prod_{i=1}^m(q^{2i-1}+1),
$$
while the factors in the part of $|N_2|$ prime to $p$ (with $N_2$ of type $\GU_2(q)\times\GU_{2m-2}(q)$) have exponents at most $2m-2$ apart from the factor on the nondegenerate plane $H$. Zsigmondy's theorem \cite{Zsigmondy1892} guarantees a primitive prime divisor of $q^{4m-2}-1$, except for the group $\PSU_4(2)$ with $(q,4m-2)=(2,6)$. We settled this case in Step 3. The primitive prime divisor divides the last factor $q^{2m-1}+1$ of $|G:P_m|$. The multiplicative order of $q$ modulo this prime is $4m-2$, so the prime does not divide $|N_2|$. So $|G:P_m|\nmid|N_2|$, and $A\leq P_m^x$.

\begin{itemize}[leftmargin=0pt,labelwidth=4em,labelsep=0.5em,itemindent=4.5em,align=left,topsep=3pt,partopsep=0pt,parsep=0pt,itemsep=0pt]
\item[{\bf Step 5.}] The semisimple Levi factors are linear or of rank one.
\end{itemize}
We let $Y=X\cap W$ for the totally singular $m$-space $X$ stabilized by $P_m^x$ and by $A$. In $Y^\perp/Y$, the orthogonal complement $Y^\perp$ is taken inside $W$. Since $X$ is totally singular of dimension $m$ and $W$ has Witt index $m-1$, the projection of $X$ to $H$ is nonzero of dimension at most $2$, so
$$
        \dim Y=m-1\text{ or }m-2,~ \dim(Y^\perp/Y)=0\text{ or }2.
$$
The parabolic subgroup stabilizing $Y$ in the unitary group on $W$ has Levi factors $\GL(Y)$ and $\GU(Y^\perp/Y)$, and toral factors, together with the unipotent radical of this parabolic subgroup \cite[Proposition 26.1(b)]{MalleTesterman2011}. When $\ell\not\leq U^\perp$ and $H$ is nondegenerate, we need only consider a linear factor over $\F_{q^2}$, toral factors, a case involving a unipotent radical, and possibly a quotient of rank one $\PSU_2(q)\leq E\leq\PGU_2(q)\cong\PGL_2(q)$ \cite[p. 66]{Wilson2009}.

Instead suppose that $\ell\leq U^\perp$, and let $W=U^\perp$, of odd dimension $2m-1$ and Witt index $m-1$, with $A$ stabilizing $U$ and the singular line $\ell\leq W$. Testing hereditary permutability of $A$ in $G$ against $P_m$ as in the case $\ell\not\leq U^\perp$, where $H=U+\ell$ is nondegenerate, we find that $A$ now lies in a subgroup $N_3$ of $G$ that stabilizes both $U$ and $\ell$, with Levi factors of types $\GU_1(q)$, $\GL_1(q^2)$, $\GU_{2m-3}(q)$. The parts of their orders prime to $p$ have no prime divisor for which $q$ has multiplicative order $4m-2$, whereas $|G:P_m|$ has such a prime divisor \cite{Zsigmondy1892}, so $|G: P_m| \nmid |N_3|$ and $A\leq P_m^x$. The totally singular $m$-space $X$ stabilized by $P_m^x$ and by $A$ has intersection $Y=X\cap W$ with $W$. Since $W$ has codimension $1$, the intersection $Y$ has dimension at least $m-1$. On the other hand, the Witt index of $W$ is $m-1$, so the dimension of $Y$ is not $m$. Together with the lower bound, this shows that the intersection $Y=X\cap W$ has dimension $m-1$. Therefore, the Levi quotient of the stabilizer of $Y$ in the unitary group on $W$ is linear of type $\GL_{m-1}(q^2)$. The other parts of this parabolic subgroup are toral factors and its unipotent radical.

\begin{itemize}[leftmargin=0pt,labelwidth=4em,labelsep=0.5em,itemindent=4.5em,align=left,topsep=3pt,partopsep=0pt,parsep=0pt,itemsep=0pt]
\item[{\bf Step 6.}] If $m\geq3$ and $A\leq R_u(P_m)$, then $A$ fixes $H$ pointwise.
\end{itemize}
An image of $A$ on $W$ lying in no subspace stabilizer in $\PSU(W)$ would, by hereditary permutability with a Borel subgroup of $\PSU (W)$, produce a factorization of  $\PSU (W)$ with a solvable factor that is a Borel subgroup. Since the $p$-part of this Borel subgroup is too large, this factorization is impossible by Li and Xia \cite{LiXia2022}. Hence the image of $A$ on $W$ lies in a parabolic subgroup of $\PSU (W)$. The argument forcing the image of $A$ to stabilize a line and a hyperplane reduces the linear rank. For the remaining cases of small rank in which the image contains the socle, we reduce to $\PSL_2(q^2)$, toral, or unipotent cases by Section \ref{sec:split-linear}. The linear images are handled by Theorem \ref{thm:split-linear}. The quotient of rank one $E$ from the projective action on $Y^\perp/Y$, when $\dim(Y^\perp/Y)=2$, is handled as follows. If $q>2$, then $E\not\cong\PGL_2(2)$ and Lemma \ref{lem:pgl2-borel} settles the image of $A$ in $E$. If $q=2$, then $E\cong\PGL_2(2)\cong S_3$. An image of $A$ in a point stabilizer of $E$ fixes a line in the natural $\F_2$-module for $E$. To extend the singular flag stabilized by $A$, we lift this invariant line in the natural $\F_2$-module for $E$, leaving only toral or unipotent images. By contrast, an image of $A$ in $E$ containing a subgroup of order $3$ has no invariant line on the natural $\F_2$-module for $E$. The lemma on invariant lines in modules, Lemma \ref{lem:sl-root-line-module-obstruction}, contradicts hereditary permutability of this image of $A$.

We now consider the case in which $A$ lies in a unipotent radical. $A\leq R_u(P_m)$ fixes the totally singular $m$-space $X$ pointwise. Moreover $A$ acts on the nondegenerate line $U$ through $\GU_1(q)$, cyclic of order $q+1$ and so a $p'$-group, so the $p$-group $A$ fixes $U$ pointwise as well. Hence every nonidentity $a\in A$ has $\Fix(a)\supseteq X\oplus U$, of dimension $m+1$. If $m=2$, then the fixed space of $a$ is a hyperplane of $V$, so $a$ is an element corresponding to a long root and $A$ lies in a single subgroup corresponding to a long root $T_v$ of $G$. By Lemma \ref{lem:unitary-unipotent-root-sl2}, we have $A=1$ for $q\geq3$, while $\PSU_4(2)$ is settled by Appendix \ref{app:psu4-2}. If $m\geq3$, then recall that $A\leq R_u(P_m)$ fixes the maximal totally singular subspace $X$ of $V$ pointwise. Since $U$ is nondegenerate, $X\not\leq U^\perp$. Otherwise $U\leq X^\perp=X$, impossible for a nondegenerate line lying in a totally singular space. Therefore, we may choose a singular line $\ell'\leq X$ with $\ell'\not\leq U^\perp$. We pick nonzero vectors $u\in U$ and $w'\in\ell'$. We use the field norm $N(z)=z^{q+1}$ from $\F_{q^2}$ to $\F_q$. As $\langle w',w'\rangle=0$ and $\langle u,w'\rangle\neq0$, the Hermitian form on $H=\gen{u,w'}$ has Gram determinant $\langle u,u\rangle\langle w',w'\rangle-\langle u,w'\rangle\langle w',u\rangle=-N(\langle u,w'\rangle)\neq0$. Thus, $H=U+\ell'$ is a nondegenerate plane. By the argument at the start of this paragraph, $A$ fixes $U$ pointwise, and $A$ fixes $\ell'\leq X$ pointwise, so $A$ fixes $H=U+\ell'$ pointwise. Therefore, $A$ fixes the nondegenerate line $H\cap U^\perp=U^{\perp_H}$ pointwise.

\begin{itemize}[leftmargin=0pt,labelwidth=4em,labelsep=0.5em,itemindent=4.5em,align=left,topsep=3pt,partopsep=0pt,parsep=0pt,itemsep=0pt]
\item[{\bf Step 7.}] $A=1$.
\end{itemize}
Since $A$ fixes $U$ pointwise, the group $A$ lies in its stabilizer. In the matrix unitary group on $V$, the stabilizer of $U$ has the description $\Stab(U)=\GU_1(U)\times\GU(U^\perp)$. We restrict hereditary permutability of $A$ to this stabilizer. To obtain the action of this stabilizer on $U^\perp$, we then take the quotient of this stabilizer by the $\GU_1(U)$-factor of ${\rm Stab} (U)$, which is the kernel of that action (Lemmas \ref{lem:overgroup-restriction} and \ref{lem:quotient-inheritance}). Therefore, the image of $A$ in $\GU(U^\perp)\cong\GU_{2m-1}(q)$ is a hereditarily $\GU_{2m-1}(q)$-permutable $p$-subgroup fixing the nondegenerate line $H\cap U^\perp=U^{\perp_H}$ in $U^\perp$. We take the quotient of $\GU_{2m-1}(q)$ by its centre to form the projective group $\PGU_{2m-1}(q) $. Since $\PGU_{2m-1}(q)/\PSU_{2m-1}(q)$ has order dividing $q+1$, a $p'$-number, the image of the $p$-subgroup coming from $A$ under the central quotient of the unitary group on $U^\perp$ lies in the simple socle $\PSU_{2m-1}(q)$. By Lemma \ref{lem:overgroup-restriction} the projective image of $A$ is hereditarily $\PSU_{2m-1}(q)$-permutable. Proposition \ref{prop:unitary-unipotent} (odd dimension $2m-1\geq5$) makes the image of $A$ in $\PSU_{2m-1} (q)$ trivial. As $A$ fixes $U$, $A=1$.

We are left with linear images of $A$, images of rank one, images in a torus, or images in a unipotent radical. For the linear images we use Theorem \ref{thm:split-linear}, and for the images of rank one we use Lemma \ref{lem:pgl2-borel}, together with the argument for $q=2$ in the quotient of rank one. Let $\mathcal F$ be the singular flag stabilized by $A$ at a step of the descent, and let $P_{\mathcal F}=\Stab_G(\mathcal F)$ be its parabolic stabilizer, containing $A$. The toral image $C=AR_u(P_{\mathcal F})/R_u(P_{\mathcal F})$ lies in the central torus of the Levi quotient $P_{\mathcal F}/R_u(P_{\mathcal F})$. Hence Lemma \ref{lem:unitary-flag-toral} shows that this Levi image $C$ is trivial. Therefore, $A$ is contained in the unipotent radical $R_u(P_{\mathcal F})$ of $P_{\mathcal F}$. We then use the argument in Steps 6 and 7 with the fixed spaces of the elements of $A$ and the nondegenerate line $U$ to conclude that $A$ is trivial. This completes the proof in even dimension.
\end{proof}

\subsection{Proof of \texorpdfstring{Theorem \ref{thm:unitary}}{the unitary theorem}}

\begin{proof}[Proof of Theorem \ref{thm:unitary}]
For $G=\PSU_n(q)$, the cases $n=2,3$ are the known cases for $\PSL_2$ and $\PSU_3$. If $n$ is an odd prime with $(n,q+1)=1$, then use Theorem \ref{thm:unitary-odd-prime-noncentral}, with the case $\PSU_5(2)$ from Theorem \ref{thm:unitary-psu52}. If $n$ is an odd prime with $n\mid q+1$, then use Lemma \ref{lem:unitary-prime-pure-central}. If $n$ is odd composite with $(n,q+1)=1$, then use Proposition \ref{prop:unitary-odd-noncentral-composite}. If $n$ is odd composite with $(n,q+1)>1$, then use Proposition \ref{prop:unitary-odd-central}. If $n$ is even, then use Proposition \ref{prop:unitary-even}. These cases cover all simple unitary groups.
\end{proof}

\begin{remark}
The descent in Proposition \ref{prop:unitary-odd-noncentral-composite} uses the largest prime divisor $b=\max\pi(n)$ of the dimension (Lemma \ref{lem:unitary-largest-prime}). The lemma ensures that the dimension $u$ of the smaller unitary group is coprime to $q^b+1$, where $q^b$ is the unitary field parameter in $S_b\cong\PSU_u(q^b)$. By contrast, a smaller prime divisor of the dimension need not have this coprimality property.
\end{remark}
\section{Symplectic groups}\label{sec:symplectic}

The natural symplectic space $V$ has dimension $2m$ over $\F_q$. Its nondegenerate alternating bilinear form $b$ satisfies $b(v,v)=0$ for every $v\in V$. In the notation of Kleidman and Liebeck \cite[\S2.1]{KleidmanLiebeck1990}, the orthogonal complement of $X\leq V$ is $X^\perp=\{v\in V:b(v,x)=0\text{ for every }x\in X\}$. A subspace $X$ is \emph{totally isotropic} if $b(x,y)=0$ for all $x,y\in X$. We also use \emph{totally singular} for this condition in this section. Since $b(v,v)=0$ for every $v\in V$, every line is isotropic.

A maximal totally isotropic subspace has dimension $m$ and is called \emph{Lagrangian}. A~complete~totally~isotropic~flag has the form
$$
        0=L_0<L_1<\cdots<L_m.
$$
Each $L_i$ has dimension $i$ for $0\leq i\leq m$ and is totally isotropic. In particular, $L_m$ is Lagrangian. An element $g$ stabilizes this flag if $g(L_i)=L_i$ for $0\leq i\leq m$. For an isotropic line $\ell$, the form $b$ induces an alternating form on $\ell^\perp/\ell$. This quotient space has dimension $2m-2$.

We choose a hyperbolic basis $e_1,\ldots,e_m,f_m,\ldots,f_1$ with $L_i=\gen{e_1,\ldots,e_i}$ for $1\leq i\leq m$. Its pairings satisfy $b(e_i,e_j)=b(f_i,f_j)=0$ for all $i,j$. The mixed pairings satisfy $b(e_i,f_i)=1$ and $b(e_i,f_j)=0$ for $i\neq j$. We write $E_0=L_m$. In the flag stabilizer, vectors may change within each subspace. To fix a subspace pointwise means to fix every vector in it. Every vector of $\ell$ is orthogonal to $\ell^\perp$. Therefore, for $u,v\in\ell^\perp$, the value $b(u,v)$ is unchanged when either vector is altered by a vector of $\ell$. This gives the induced form $b_\ell(u+\ell,v+\ell)=b(u,v)$.

The group $\Sp(V)$ consists of the linear transformations $g$ satisfying $b(gu,gv)=b(u,v)$ for all $u,v\in V$. Allowing $b(gu,gv)=c_gb(u,v)$ with $c_g\in\F_q^\times$ gives the conformal symplectic group. Taking the quotient of each group by its scalar transformations gives $\PSp(V)$ and $\PCSp(V)$, respectively. The projective action on $\ell^\perp/\ell$ is the action on its subspaces of dimension one $\gen{v+\ell}$, where $v\in\ell^\perp\setminus\ell$.

For symplectic groups, we follow the linear descent method of Section \ref{sec:split-linear}. We use the two alternatives for a singular line stabilizer, the contradiction from a factorization with a solvable factor, descent through Levi quotients of parabolic subgroups, and subgroups contained in a Borel subgroup.

\begin{theorem}\label{thm:symplectic}
For every simple group $G=\PSp_{2m}(q)$ there is no proper nontrivial hereditarily $G$-permutable subgroup.
\end{theorem}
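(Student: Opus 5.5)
Following the linear template of Section \ref{sec:split-linear}, I would take $A<G=\PSp_{2m}(q)$ proper, nontrivial and hereditarily $G$-permutable, and first dispose of small cases by known isomorphisms. For $m=1$, $\PSp_2(q)\cong\PSL_2(q)$ and Theorem \ref{thm:unitary-low-bases} applies. The groups $\PSp_4(2)'\cong A_6$ and $\Sp_4(2)\cong S_6$ (not simple) do not arise, and $\PSp_4(3)\cong\PSU_4(2)$ is covered by Theorem \ref{thm:unitary}. So assume $m\geq2$ and $(m,q)\neq(2,2),(2,3)$.

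\textbf{Step 1: parabolic containment.} Let $B$ be a Borel subgroup and take $x$ with $AB^x$ a subgroup (Lemma \ref{lem:maximal-product}). If $A$ lies in no parabolic, then $G=AB^x$, and with $M\geq A$ maximal this gives a factorization $G=MB^x$ with solvable factor. I would then compare $p$-parts against the symplectic rows of Li and Xia \cite[Tables 1.1--1.2]{LiXia2022}, using $|B|_p=q^{m^2}$. Those rows have solvable factors such as $\widehat{\GL}_1(q^{2m})$ normalizers, $q^{m(m+1)/2}$-type parabolic pieces with a Singer-type complement, and a few small exceptions such as $\Sp_4(q)$ with $q$ even and $\PSp_6(2)$. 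Each has a strictly smaller $p$-part, which gives a contradiction.

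\textbf{Step 2: descent.} Testing against the maximal parabolics $P_1$ and $P_m$ with Lemma \ref{lem:maximal-product} gives two alternatives. Either $A$ stabilizes a totally isotropic line and a Lagrangian, or $A$ is transitive on those subspaces; transitivity contradicts the fact that $A$ stabilizes the isotropic subspace produced in Step 1, exactly as in Lemma \ref{lem:sl-line-hyperplane}. Note that $P_1$ is transitive on points, but $A$ preserves the point set of an invariant totally isotropic space, so the argument goes through. I would then pass to the Levi quotients, using Lemma \ref{lem:quotient-inheritance} throughout:
\begin{itemize}
\item from a Lagrangian $L$, to a group $E$ with $\PSL_m(q)\leq E\leq\PGL_m(q)$ acting on $L$;
\item from an isotropic line $\ell$, to the projective action on $\ell^\perp/\ell$, giving $\PSp_{2m-2}(q)\leq E\leq\PCSp_{2m-2}(q)$.
\end{itemize}
On the linear factor I would apply Lemma \ref{lem:sl-actual-quotient-trichotomy}, Lemma \ref{lem:sl-borel} and Theorem \ref{thm:split-linear}. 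On the symplectic factor I would induct on $m$, running the same trichotomy with $N_E$ of a Borel subgroup and a solvable-factor comparison. In rank-one factors, Lemma \ref{lem:pgl2-borel} handles Borel images, and Lemma \ref{lem:sl-root-line-module-obstruction} handles images with no invariant line. As in the linear case, one must rule out images that contain the full socle of a Levi factor. For this I would imitate Lemmas \ref{lem:sl-nonincident-full-cover} and \ref{lem:sl-incident-full-cover-large}: test against $P_2$ or $P_k$ and count the $A$-invariant subspace dimensions, which is possible because a full $\PSL_m$ or $\PSp_{2m-2}$ image acts irreducibly on its factor. When the socle-containing factor has rank one, I would instead use Lemma \ref{lem:sl-root-line-module-obstruction} on the unipotent radical piece, which is a natural module.

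\textbf{Step 3: Borel and unipotent endgame.} Once every semisimple Levi image is trivial, the toral image vanishes by the symplectic analogue of Lemma \ref{lem:unitary-flag-toral}: apply Lemma \ref{lem:pgl2-borel} in each minimal parabolic, whose rank-one factor is $\SL_2(q)$ for both long and short roots. This leaves $A\leq R_u(B)$. Arguing as in Lemma \ref{lem:sl-borel}, projecting through $P_1$ and the $\PSp_{2m-2}$ Levi quotient forces $A$ into the highest (long) root subgroup. That subgroup lies in an $\SL_2(q)$ meeting the centre trivially, so Lemma \ref{lem:pgl2-borel} or the $\PSL_2$ theorem finishes when $q\geq4$. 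For $q\in\{2,3\}$ I would embed the root subgroup in a $\PSp_4$ or $\PSL_3$ subsystem subgroup whose simple quotient is already covered, for example via $\PSL_2(7)$ as in Lemma \ref{lem:sl-borel}.

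\textbf{Main obstacle.} I expect the hardest part to be the full-socle incompatibility in the symplectic Levi factor. There the invariant-subspace counting must also account for the $(2m-2)$-dimensional symplectic module and the nonisotropic subspaces, and the small characteristic cases $q=2,3$ with $\Sp_4(2)'$ pieces may need ad hoc treatment or a computation like the appendices.
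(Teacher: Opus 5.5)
Your Steps 1 and 2 follow the paper's route: a Borel factorization checked against the symplectic rows of Li and Xia, then descent through the Levi quotient of $P_1$ on $\ell^\perp/\ell$. The full-socle case you single out as the main obstacle is in fact immediate from your own Step 2. Since $A$ fixes both $\ell$ and a Lagrangian $L$, its image on $\ell^\perp/\ell$ fixes the Lagrangian $L/\ell$ if $\ell\leq L$, or $((L\cap\ell^\perp)+\ell)/\ell$ if $\ell\not\leq L$. So the image cannot contain $\PSp_{2m-2}(q)$. This is simpler than the paper's test against $P_2$ using a primitive prime divisor of $q^{2m}-1$.

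The genuine gap is in Step 3. You claim that, once the toral part is gone, projecting through $P_1$ and its $\PSp_{2m-2}$ Levi quotient ``as in Lemma \ref{lem:sl-borel}'' forces $A$ into the highest root subgroup $U_{2e_1}$. This is false in type $C_m$.
\begin{itemize}
\item The Levi of $P_1=\Stab_G(\gen{e_1})$ carries only roots supported on the indices $2,\dots,m$. So it leaves $A$ inside $R_u(P_1)$, whose roots are $e_1\pm e_j$ and $2e_1$.
\item Adding the Lagrangian Levi removes the roots $e_1-e_j$. But even the Levi quotients of all standard maximal parabolics (of types $\GL_k\times\Sp_{2m-2k}$) leave the abelian group $\prod_{j=2}^mU_{e_1+e_j}\cdot U_{2e_1}$ of order $q^m$.
\end{itemize}
Unlike type $A$, no family of standard parabolics isolates the highest root, so your closing $\SL_2(q)$ argument never applies.

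The missing idea is the one in Theorem \ref{thm:symplectic-borel}.
\begin{itemize}
\item First apply Lemma \ref{lem:sl-borel} to the linear Levi on the Lagrangian $E_0=\gen{e_1,\ldots,e_m}$. This shows that $A$ acts on $E_0$ by scalars.
\item Hence $A$ fixes every coordinate line $\gen{e_i}$, not just $\gen{e_1}$. So $A$ lies in the nonstandard parabolics $\Stab_G(\gen{e_i})$.
\item The symplectic Levi quotient on $W_i=\gen{e_i}^\perp/\gen{e_i}$ contains the roots $e_a+e_b$ for $i\notin\{a,b\}$ and $2e_a$ for $i\neq a$. So induction kills every remaining component once $m\geq3$, and the highest root needs no separate treatment.
\end{itemize}
For $m=2$ you need the parabolics obtained by conjugating $P_\beta$ by $s_\alpha$ and $P_\alpha$ by $s_\beta$, as in Lemma \ref{lem:symplectic-c2-borel}. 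Both contain $U_{\alpha+\beta}U_{2\alpha+\beta}$, and their rank-one Levi factors carry $2\alpha+\beta$ and $\alpha+\beta$ respectively.

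Your fallback for $q=2$ also fails as stated. The long roots $\pm2e_i$ of $C_m$ are pairwise orthogonal, so no $A_2$ subsystem subgroup ($\PSL_3(2)$) contains a long root subgroup. The $C_2$ subsystem subgroup $\Sp_4(2)\cong S_6$ is not simple. The paper settles the $\Sp_4(2)$ Borel base by the computation in Appendix \ref{app:sp4-2-borel-certificate}.
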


The group $\PSp_4(2)\cong S_6$ is not simple. Since $\PSU_4(2)$ is covered by Theorem \ref{thm:unitary}, we use the isomorphism $\PSp_4(3)\cong\PSU_4(2)$ \cite[\S3.6.5]{Wilson2009} where needed.

\subsection{The alternatives for a singular line stabilizer}

Let $P_1$ be the stabilizer in $G$ of a totally isotropic line of the natural symplectic module of $G$. For a proper nontrivial hereditarily $G$-permutable subgroup $A<G$, Lemma \ref{lem:maximal-product} shows that either $A\leq P_1^x$ or $G=AP_1^x$. The equality $G=AP_1^x$ says that $A$ is transitive on isotropic lines.

\begin{proposition}\label{prop:symplectic-no-p1-factorization}
For a simple group $G=\PSp_{2m}(q)$ with $(m,q)\neq(2,3)$, the case $G=AP_1^x$ cannot occur for a proper nontrivial hereditarily $G$-permutable subgroup $A$. The group $\PSp_4(3)$ is handled through $\PSp_4(3)\cong\PSU_4(2)$ and {\rm Theorem \ref{thm:unitary}}.
\end{proposition}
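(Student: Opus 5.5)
Suppose $G=AP_1^x$ with $A$ proper, nontrivial and hereditarily $G$-permutable. Choose a maximal subgroup $M$ with $A\leq M<G$. Then $G=MP_1^x$ is a maximal factorization of $\PSp_{2m}(q)$ in which one factor is the isotropic-line stabilizer, and $A$ is transitive on the $(q^{2m}-1)/(q-1)$ isotropic lines. I would go through the list of such factorizations in Liebeck, Praeger and Saxl \cite[Main Theorem]{LPS1990}, refined by Li, Wang and Xia \cite[Tables 4.1--4.2]{LiWangXia2024}, together with Li and Xia \cite{LiXia2022} when $A$ is solvable. The factors $M$ paired with $P_1$ fall into a few families:
\begin{itemize}
\item $\Sp_{2a}(q^b)$ with $ab=m$ (including Singer-type solvable subgroups when $a=1$);
\item $\mathrm{O}^-_{2m}(q)$ for $q$ even;
\item $G_2(q)$ when $m=3$ and $q$ is even;
\item small sporadic cases, such as $\PSp_4(q)$ with $\PSL_2(q^2)$, and a few groups with tiny $q$.
\end{itemize}
In every case, $A$ is a transitive subgroup on isotropic lines inside $M$.

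The main tool is to test $A$ against further maximal subgroups, using Lemma \ref{lem:maximal-product} and Lemma \ref{lem:factorization-coset-transitivity}. For a maximal subgroup $K$, either $A$ lies in a conjugate of $K$ or $|G:K|$ divides $|A|$, which divides $|M|$. I would use two test subgroups.

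\begin{itemize}
\item \textbf{$P_m$, the Lagrangian stabilizer.} Its index is $\prod_{i=1}^m(q^i+1)$. A primitive prime divisor of $q^{2m-2}-1$, or of $q^{m-1}+1$, divides this index. For $m\geq3$, such a prime does not divide $|\Sp_{2a}(q^b)|$ when $b\geq2$, nor $|G_2(q)|$. For $\mathrm{O}^-_{2m}(q)$, I would instead compare $q$-parts or use the prime attached to $q^{m-1}+1$. So $A$ lies in a conjugate of $P_m$. But $P_m$ fixes a Lagrangian subspace and so has at least two orbits on isotropic lines: those inside the Lagrangian and those outside it. This contradicts transitivity of $A$.
\item \textbf{$P_2$, the totally isotropic plane stabilizer.} This is the fallback test when the $P_m$ test fails. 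Again $P_2$ is not transitive on isotropic lines, since it separates lines inside the plane $X$, lines in $X^\perp$ but not in $X$, and the rest. So containment in $P_2$ is impossible, and the divisibility $|G:P_2|$ dividing $|M|$ must fail.
\end{itemize}

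Solvable $A$ is handled directly. The solvable factors of $\PSp_{2m}(q)$ in the Li--Xia rows are essentially normalizers of Singer-type tori, contained in $\Gamma\Sp_2(q^m)$. Testing against $P_m$ gives divisibility by $\prod(q^i+1)$, whose order is far larger than $|\Gamma\Sp_2(q^m)|$ once $m\geq2$. This leaves only finitely many small $(m,q)$.

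The main obstacle is the residue of small groups where Zsigmondy primes fail or the factorization tables contain exceptional rows. Examples are $\PSp_4(q)$ with $M$ of type $\Sp_2(q^2)$ or $\mathrm{O}^-_4(q)$ (note $\PSp_4(q)=\Omega_5(q)$), $\PSp_6(2)$, $\PSp_4(4)$, and $\PSp_8(2)$. For $m=2$, my first attempt would be the $q$-part comparison. If $A$ is transitive on points and also, after testing against $P_2$, forced to have order divisible by $|G:P_2|=(q^4-1)/(q-1)$, then $(q^2+1)(q+1)^2$ must divide $|\Sp_2(q^2).2|$, which fails. The remaining finite list I would settle by explicit order checks against the ATLAS \cite{Atlas1985}, testing $A$ against a second maximal subgroup of coprime-ish index, exactly as was done for $\PSU_4(2)$ and $\PSU_6(2)$ in Proposition \ref{prop:unitary-even}. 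The excluded case $\PSp_4(3)$ is covered by the isomorphism $\PSp_4(3)\cong\PSU_4(2)$ and Theorem \ref{thm:unitary}.
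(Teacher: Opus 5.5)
\textbf{Gap at $m=2$.} Your argument does not close the case $m=2$. This is an infinite family, $\PSp_4(q)$ with $q\geq4$, and it is exactly the case used later in Lemma \ref{lem:symplectic-c2-borel-reduction}.

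In $\PSp_4(q)$ the two maximal parabolics have the same index, $|G:P_1|=|G:P_2|=(q+1)(q^2+1)$. This number divides $|\PSp_2(q^2).2|$, since $(q+1)(q^2+1)\mid q^4-1$. So transitivity of $A$ on points together with transitivity on Lagrangian planes only gives $(q+1)(q^2+1)\mid|A|$. Your claim that $(q+1)^2(q^2+1)$ must divide $|A|$ does not follow from the $P_1$ and $P_2$ tests.

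The number $(q+1)^2(q^2+1)$ is $|G:B|$ for a Borel subgroup $B$, and divisibility by it requires transitivity on flags. The missing step is to test $A$ against $B$ itself. Since $A$ is transitive on points, it lies in no proper parabolic. The subgroup $AB^y$ given by Lemma \ref{lem:maximal-product} therefore cannot be proper, because every maximal overgroup of $B^y$ is parabolic. Hence $G=AB^y$ and $|G:B|\mid|A|$. Then $(q+1)^2(q^2+1)\nmid|\PSp_2(q^2).2|$, because $q+1\nmid 2(q-1)/\gcd(2,q-1)$.

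This Borel test is precisely the key step of the paper's proof. The paper applies it for every $m$, obtains a factorization $G=MB^y$ with a solvable factor, and excludes it uniformly. It does so by comparing $|B|_p=q^{m^2}$ with the $p$-parts of the solvable factors in the symplectic rows of Li and Xia.

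\textbf{The case $m\geq3$.} Here your route is a genuinely different argument, and it works. The maximal factorizations of Liebeck, Praeger and Saxl with $P_1$ as a factor give $M=\PSp_{2a}(q^b).b$ or $M=G_2(q)$, up to small cases. Take a primitive prime divisor $r$ of $q^{2m-2}-1$. Then $r$ divides $q^{m-1}+1$, hence divides $|G:P_m|$. It does not divide $|M|$:
\begin{itemize}
\item the only multiple of $m-1$ in $[1,m]$ is $m-1$, and $b\nmid m-1$;
\item $r\geq 2m-1>b$;
\item for $G_2(q)$, $\gcd(q^6-1,q^4-1)=q^2-1$.
\end{itemize}
The Zsigmondy exception $\Sp_8(2)$ is settled because $3^3\mid|G:P_4|=2295$ while $3^3\nmid|\Sp_4(4).2|$.

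Your route needs only the maximal-factorization list and index divisibility. The paper's route needs the solvable-factor classification, but it avoids Zsigmondy exceptions and treats all ranks, including $m=2$, in one comparison.

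\textbf{Smaller points.} $\mathrm{O}^-_{2m}(q)$ is paired with $P_m$, not $P_1$: it has two orbits on points, so it never arises here. In any case, a ``$q$-part comparison'' against $|G:P_m|$ would be meaningless, since that index is prime to $p$. Your description of the Li--Xia symplectic solvable factors as Singer-type normalizers is also inaccurate. They are the parabolic-type groups $q^{m(m+1)/2}{:}(q^m-1).m<P_m$ and $q^3{:}(q^2-1).2<P_1$. That aside is unnecessary anyway, because your divisibility argument never uses solvability of $A$.
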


\begin{proof}
Suppose that $G=AP_1^x$, so $A$ is transitive on isotropic lines. For $A$, we consider a parabolic of $G$ stabilizing a nonzero totally isotropic subspace $W$. This parabolic maps the nonempty proper set of isotropic lines in $W$ to itself, so this parabolic subgroup cannot contain $A$. We test hereditary $G$-permutability of $A$ with a Borel subgroup $B$ of $G$. Therefore, for some $G$-conjugate $B^y$, the product $AB^y$ is a subgroup. If $AB^y<G$, then a maximal subgroup of $G$ containing $AB^y$ contains the Borel subgroup $B^y$ and is parabolic, contradicting the fact that $A$ lies in no proper parabolic subgroup of $G$. Consequently, $G=AB^y$. In particular, for a maximal subgroup $M\geq A$ of $G$, we get the factorization $G=MB^y$ with solvable factor $B^y$. This factorization is covered by the classification of Li and Xia \cite{LiXia2022}.

A Borel subgroup of $G$, of type $C_m$, has $|B|_p=q^{m^2}$. For a symplectic socle $G$, the Li and Xia cases used for the factorization $G=MB^y$ are Table 1.1, rows 3--5, and Table 1.2, rows 12--17. The $\PSp_4(3)$ rows are omitted and handled through $\PSp_4(3)\cong\PSU_4(2)$ and Theorem \ref{thm:unitary}. The comparison is as follows.
$$
\begin{array}{c|c|c|c}
\text{row} & \text{socle }G & \text{solvable }p\text{-part} & \text{Borel }p\text{-part}\\\hline
\text{1.1, row 3} & \PSp_{2m}(q),\ q\text{ even} & q^{m(m+1)/2} & q^{m^2}\\
\text{1.1, rows 4--5} & \PSp_4(q) & q^3 & q^4\\
\text{1.2, rows 12--15} & \PSp_4(q),\ q=5,7,11,23 & q^3 & q^4\\
\text{1.2, row 16} & \Sp_6(2) & \leq 2^4 & 2^9\\
\text{1.2, row 17} & \PSp_6(3) & 3^5 & 3^9
\end{array}
$$
In every row of this table, the $p$-part of the listed solvable factor is smaller than the $p$-part of a Borel subgroup of its symplectic socle $G$. Hence $B^y$ is not a listed factor, contradicting $G=MB^y$.
\end{proof}

\subsection{Subgroups of \texorpdfstring{$P_1$}{P1} and descent through Levi quotients}

By Proposition \ref{prop:symplectic-no-p1-factorization}, after conjugating $A$ in $G$, we may assume that $A\leq P_1$ (the case $\PSp_4(3)$ being handled separately through $\PSp_4(3)\cong\PSU_4(2)$). Let $R=R_u(P_1)$. We take the quotient of $P_1$ by its unipotent radical $R$ and then take the image of that Levi quotient in its projective action on $\ell^\perp/\ell$, where $\ell$ is the line stabilized by $P_1$. In the second step we quotient the Levi quotient by the elements acting as scalars on $\ell^\perp/\ell$. By \cite[Proposition 26.1(b)]{MalleTesterman2011}, the finite Levi decomposition of $P_1$ identifies the nonabelian part of the projective Levi quotient of $P_1$, up to central and diagonal factors, with a group $E$ between
$$
        \PSp_{2m-2}(q)~\text{and}~\PCSp_{2m-2}(q).
$$ By Lemma \ref{lem:quotient-inheritance}, the image of $A$ in $E$ is hereditarily $E$-permutable.

\begin{lemma}\label{lem:symplectic-actual-levi-alternatives}
At a step of the symplectic descent, consider the singular line stabilizer used at that step and its projective Levi quotient. Let $S=\PSp_{2d}(q)\leq E\leq\PCSp_{2d}(q)$ be the symplectic factor of this quotient, with $d\geq2$ and $(d,q)\neq(2,2)$. Let $C\leq E$ be hereditarily $E$-permutable. Suppose that the image of $C$ in the semisimple factor of this quotient is nontrivial and does not contain the socle $S$. Then $C$ lies in the normalizer in $E$ of a proper parabolic of $S$.
\end{lemma}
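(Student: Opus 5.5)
We follow the proof of Lemma \ref{lem:sl-actual-quotient-trichotomy} almost verbatim, with the Borel subgroup of $S=\PSp_{2d}(q)$ in place of the linear Borel subgroup. Suppose, for a contradiction, that $C$ lies in the normalizer in $E$ of no proper parabolic of $S$. Let $B_S$ be a Borel subgroup of $S$ (the stabilizer of a complete totally isotropic flag) and set $B_E=N_E(B_S)$. The first step is to check that $B_E$ maps onto $E/S$. A conformal diagonal element $\diag(\lambda,\dots,\lambda,1,\dots,1)$ in the hyperbolic basis $e_1,\ldots,e_d,f_d,\ldots,f_1$ multiplies the form by $\lambda$ and preserves the standard flag. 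Hence such elements, together with $S$, generate $\PCSp_{2d}(q)$ and normalize $B_S$. So $E=SB_E$.

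By hereditary $E$-permutability of $C$ applied to $B_E$, there is $x\in E$ with $CB_E^x$ a subgroup. If $CB_E^x<E$, take a maximal subgroup $M\geq CB_E^x$. Since $B_E^x$ covers $E/S$, $M\not\geq S$. Thus $M\cap S$ is a proper subgroup of $S$ containing $B_S^x$, and is therefore a proper parabolic of $S$. Because $S\unlhd E$, we get $M\leq N_E(M\cap S)$, so $C$ lies in the normalizer of a proper parabolic, contrary to assumption. Hence $E=CB_E^x$.

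In this factorization neither factor contains $S$: $C$ does not by hypothesis, and $B_E^x$ is solvable. By Lemma \ref{lem:almost-simple-core-free} both factors are core-free, so this is a core-free factorization of the almost simple group $E$ with a solvable factor. It therefore appears in Li and Xia \cite{LiXia2022} (Table 1.1, rows 3--5, and Table 1.2, rows 12--17, together with the $\PSp_4(3)$ rows). Moreover $|B_E^x|_p\geq|B_S|_p=q^{d^2}$.

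We now reuse the $p$-part table from the proof of Proposition \ref{prop:symplectic-no-p1-factorization}: every listed solvable factor has $p$-part strictly smaller than $q^{d^2}$, which gives the contradiction. The $p$-part of $E/S$ is trivial when $q$ is odd ($|E/S|\mid 2$). For $q$ even we have $E=S$, so no extra $p$-part can appear.

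The expected obstacle is the exceptional socle $\PSp_4(3)$ ($d=2$, $q=3$). Proposition \ref{prop:symplectic-no-p1-factorization} omitted its rows, and here $S$ is a Levi factor, so we cannot simply invoke Theorem \ref{thm:unitary}. The first attempt will be to inspect the $\PSp_4(3)$ rows of Li--Xia directly and compare their $3$-parts with $3^4$. If some row has $3$-part $3^4$, we would instead use $\PSp_4(3)\cong\PSU_4(2)$ and transport the Borel of $S$ to a parabolic of $\PSU_4(2)$. Alternatively, when $C\leq S$, we would argue via Theorem \ref{thm:unitary} applied to $C\cap S$ through Lemma \ref{lem:normal-intersection-transfer}. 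The excluded case $(d,q)=(2,2)$ is exactly the one where $S$ is not simple, so no further small cases arise.
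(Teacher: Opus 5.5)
For $(d,q)\neq(2,3)$ your argument is the paper's. $B_E=N_E(B_S)$ covers $E/S$, so a proper product $CB_E^x$ would put $C$ in the normalizer of the parabolic $M\cap S$. The resulting factorization $E=CB_E^x$ is then excluded by comparing $p$-parts with the symplectic rows of Li and Xia.

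\textbf{The gap: the case $(d,q)=(2,3)$.} The lemma includes this case, and you leave it as a list of tentative fallbacks, none of which works as stated.
\begin{itemize}
\item The $3$-part comparison must fail. A Sylow $3$-subgroup $U\leq B_S$ of $\PSp_4(3)$ has order $3^4$. It meets every conjugate of the maximal subgroup $2^4{:}A_5$ (order $2^6\cdot3\cdot5$) in a subgroup of order at most $3$. So every $U$-orbit on the $27$ cosets of $2^4{:}A_5$ has length $27$, and $\PSp_4(3)=(2^4{:}A_5)B_S$ by Lemma \ref{lem:factorization-coset-transitivity}. Since $2^4{:}A_5$ lies in no parabolic normalizer of $S$, no factorization argument can rule out $C=2^4{:}A_5$.
\item ``Transporting the Borel of $S$ to a parabolic of $\PSU_4(2)$'' has no content. $B_S$ has order $2\cdot3^4=162$, while the parabolics of $\PSU_4(2)$ (characteristic $2$) have orders $192$, $576$ and $960$.
\item Lemma \ref{lem:normal-intersection-transfer} yields only \emph{ordinary} $S$-permutability of $C\cap S$. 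Theorem \ref{thm:unitary} says nothing about ordinary permutability. When $C\leq S$ the correct tool is Lemma \ref{lem:overgroup-restriction}.
\item Your route does not address $C\not\leq S$ when $E=\PCSp_4(3)$.
\end{itemize}

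\textbf{The missing observation.} The paper closes this case by noting that in the symplectic descent one always has $E=S$. An isometry of $V$ fixing $\ell$ induces an isometry of the induced form on $\ell^\perp/\ell$, with similitude factor $1$. So the symplectic factor of the projective Levi quotient is exactly $\PSp(\ell^\perp/\ell)$, and the same holds at later flag steps. Hence $C\leq S\cong\PSU_4(2)$. Lemma \ref{lem:overgroup-restriction} makes $C$ hereditarily $S$-permutable, and Theorem \ref{thm:unitary} gives $C\in\{1,S\}$, contradicting the hypothesis.

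Your worry that $S$ being a Levi factor blocks Theorem \ref{thm:unitary} is unfounded. Lemma \ref{lem:quotient-inheritance} has already placed $C$ in the abstract group $E$, so the theorem applies directly once $E=S$.
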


\begin{proof}
Let $B_S$ be a Borel subgroup of $S$ and $B_E=N_E(B_S)$, so the image of $B_E$ under the quotient map from $E$ to $E/S$ is the whole quotient. Suppose that $C$ lies in no normalizer in $E$ of a proper parabolic of $S$. By hereditary $E$-permutability of $C$, we have $B_E^x$ for some $x \in E$ with $CB_E^x$ a subgroup of $E$. Suppose that $CB_E^x<E$. Since $B_E^x$ maps onto $E/S$, a maximal overgroup $M$ of $CB_E^x$ in $E$ cannot contain $S$. Hence $M\cap S$ is proper in $S$ and contains the Borel subgroup $B_S^x$, so it is parabolic, a contradiction. So $E=CB_E^x$, a factorization of the almost simple group $E$ with solvable factor $B_E^x$. Since the image of $C$ in the semisimple factor is nontrivial and does not contain the socle, $C\not\geq S$ and $C$ is core-free in $E$ (Lemma \ref{lem:almost-simple-core-free}). Also, $B_E^x$ contains the Borel subgroup $B_S^x$ of $S$. We use the symplectic Li and Xia rows of Proposition \ref{prop:symplectic-no-p1-factorization}. For every $d\geq2$, the comparison at the characteristic prime between the solvable factor and a Borel subgroup of $S$ is $q^{d(d+1)/2}<q^{d^2}$. The finitely many special rows ($\Sp_6(2)$, $\PSp_6(3)$, and $\PSp_4(q)$ for $q\in\{5,7,11,23\}$) satisfy the same comparison. Hence this comparison contradicts the factorization at every descent dimension. The one factorization case omitted from Proposition \ref{prop:symplectic-no-p1-factorization}, namely $(d,q)=(2,3)$ where $\PSp_4(3)\cong\PSU_4(2)$, is handled directly. In this descent, for $u,v\in\ell^\perp$ the value $\langle u,v\rangle$ is unchanged by an element of the stabilizer of $\ell$ in the symplectic group. Therefore, this element acts on the symplectic quotient $\ell^\perp/\ell$ as an isometry with similitude factor $1$. No diagonal or conformal extension of the factor $S=\PSp_4(3)$ occurs in this descent. Although the lemma is stated for $E$ up to $\PCSp_4(3)$, only $E=S$ occurs in this descent. The symplectic factor of the Levi quotient of this line stabilizer is the simple socle itself, $E=S=\PSp_4(3)$. We have $C\leq S$, and Lemma \ref{lem:overgroup-restriction} makes $C$ hereditarily $S$-permutable, so Theorem \ref{thm:unitary} shows that $C=1$ or $C=S$, contrary to the hypothesis on $C$.
\end{proof}

\begin{lemma}\label{lem:symplectic-psp4-2-levi}
Let $C\leq S_6\cong\PSp_4(2)$ be a proper hereditarily $S_6$-permutable image occurring in the symplectic descent. Then either $C=A_6$, which is transitive on the fifteen totally singular planes of the natural $\F_2$ symplectic module of $\PSp_4(2)$, or $C$ fixes a totally singular line.
\end{lemma}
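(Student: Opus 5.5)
We work inside $S_6\cong\PSp_4(2)$ acting on the natural $\F_2$ symplectic module $V$ of dimension $4$. There are fifteen isotropic lines, i.e.\ projective points, and fifteen totally singular planes. The outer automorphism of $S_6$ interchanges these two $15$-point actions. In $S_6$ language, the stabilizer $P_1$ of a point is the stabilizer of a synthematic duad $\cong S_2\times S_4$, and the stabilizer $P_2$ of a totally singular plane is $S_4\times S_2$ fixing a $2$-subset (or a synthematic total's image under the outer automorphism); both have index $15$ and are maximal in $S_6$. The plan is to apply Lemma \ref{lem:maximal-product} to both $P_1$ and $P_2$.

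\emph{Step 1.} We test $C$ against $P_1$. Either $C\leq P_1^x$, and then $C$ fixes an isotropic line, which is the second alternative, or $S_6=CP_1^x$. In the latter case $15\mid|C|$, and $C$ is transitive on the fifteen points.

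\emph{Step 2.} Suppose $15\mid|C|$ and $C<S_6$. The proper subgroups of $S_6$ of order divisible by $15$ are $A_6$ and the two classes of $A_5$ and $S_5$: the point stabilizers in the natural action, and the transitive $\PGL_2(5)$-type copies of degree six. A point stabilizer $S_5$ in one action of $S_6$ on six letters is transitive of degree six in the other. We identify which copies are transitive on the fifteen isotropic lines and which fix a line. Note that $S_5$ has no subgroup of index $15$ that would allow orbit structure $15$ without $15\mid |S_5|$; indeed $|S_5|=120$ is divisible by $15$, so we must decide using the subgroup lattice. For each candidate of type $A_5$ or $S_5$, we exhibit a cyclic subgroup $H$ such that no conjugate of $H$ in $\gen{C,H}$ permutes with $C$, in the spirit of Lemma \ref{lem:unitary-small-simplex}. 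Equivalently we test against a transposition $t$, or its image under the outer automorphism, a triple transposition. In the intransitive $S_5$ on six letters, a transposition not fixing the fixed letter gives a product of order $240$, which would be a subgroup of index $3$ in $S_6$, and none exists. For $A_5$ the product has order $120$, is transitive, and is a $\PGL_2(5)$ containing a transposition, which is impossible, as in Lemma \ref{lem:unitary-small-simplex}. Applying the outer automorphism, with $t$ replaced by a triple transposition, handles the other class. The hereditary formulation of Lemma \ref{lem:generated-hereditary-formulation} is respected because $\gen{C,t}=S_6$ in each case. This eliminates $A_5$ and $S_5$ and leaves $C=A_6$.

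\emph{Step 3.} It remains to check that $A_6$ is transitive on the fifteen totally singular planes. This holds because $A_6$ is normal of index $2$ in $S_6$, which is transitive on these planes, and the plane stabilizer $S_4\times S_2$ contains odd elements, so $A_6P_2=S_6$.

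The main obstacle is Step 2: correctly matching the four classes of $A_5$ and $S_5$ in $S_6$ with their actions on points and planes of the symplectic module, and finding an explicit obstructing cyclic subgroup for each class. If any ambiguity remains, we fall back on a direct computer check in the style of the Appendix certificates.
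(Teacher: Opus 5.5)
Step 2 fails as written. Take the intransitive $S_5$, say $C=\Sym\{1,\dots,5\}$, and test it against a transposition $t=(1\,6)$. This gives no contradiction. Hereditary permutability only requires \emph{some} $x\in\gen{C,t}$ with $C\gen{t^x}=\gen{t^x}C$. Since $\gen{C,t}=S_6$, you can choose $x$ with $t^x=(1\,2)\in C$, and then $C\gen{t^x}=C$ is a subgroup. The fact $\gen{C,t}=S_6$ that you cite is exactly what lets the conjugate land inside $C$; the product of order $240$ only arises for conjugates lying outside $C$.

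The same problem occurs elsewhere. The intransitive $A_5$ also passes the transposition test, via $A_5\gen{(1\,2)}=S_5$. Applying the outer automorphism, the transitive copies $\PGL_2(5)$ and $\PSL_2(5)$ pass the triple-transposition test, because $\PGL_2(5)\setminus\PSL_2(5)$ contains fixed-point-free involutions. So your assignment of tests to classes is backwards for $S_5$, and at best ambiguous for $A_5$. The argument you borrow from Lemma \ref{lem:unitary-small-simplex} applies only to the transitive copies of degree six, which is the setting of that lemma.

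Swapping the roles repairs this.
\begin{itemize}
\item The transitive copies contain no transposition, since a nonidentity element of $\PGL_2(5)$ fixes at most two points of the projective line. Against $\gen{t}$, the product would have order $240$, which is impossible, or order $120$. In the latter case it normalizes $\PSL_2(5)$, so it equals $N_{S_6}(\PSL_2(5))=\PGL_2(5)$, which contains no transposition.
\item The intransitive copies fix a letter, so they contain no fixed-point-free involution $\tau$. Against $\gen{\tau}$, the product would have order $240$, or order $120$. In the latter case it equals the point stabilizer $N_{S_6}(A_5)$, which contains no such $\tau$.
\end{itemize}
These arguments exclude every $x\in S_6$, so nothing about $\gen{C,H}$ is needed.

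With this correction your proof is complete, and it takes a genuinely different, more elementary route than the paper's. The paper splits according to whether $C\leq A_6$. In the first case it uses the Vasil'ev--Tyutyanov theorem for $A_6$. In the second it uses Lemma \ref{lem:normal-intersection-transfer} together with the Galt--Tyutyanov bound on ordinary $\PSL_2(9)$-permutable subgroups to get $|C|\leq4$, so that $C$ is a $2$-group inside a Borel subgroup. You replace both external theorems by one application of Lemma \ref{lem:maximal-product} to $P_1$ and your list of proper subgroups of $S_6$ of order divisible by $15$, which is correct.
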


\begin{proof}
If $C\leq A_6$, then $C$ is hereditarily $A_6$-permutable (Lemma \ref{lem:overgroup-restriction}), so $C\in\{1,A_6\}$ by the alternating theorem \cite{VT2012}. Since $S_6$ is transitive on the fifteen totally singular planes and the stabilizer $P_2$ in $S_6$ of a totally singular plane has order $48$ and contains odd permutations, $A_6P_2=S_6$. Consequently, the group $A_6$ is transitive on these planes. If instead $C\not\leq A_6$, then choose $b\in C\setminus A_6$. Let $K=C\cap A_6$. For $D\leq A_6$, hereditary $S_6$-permutability of $C$ provides $x\in\gen{C,D}$ with $CD^x=D^xC$. Replacing $x$ by $xb$ when $x\notin A_6$, we may take $x\in A_6$. Then $(CD^x)\cap A_6=KD^x$ is a subgroup. The subgroup $K$ is ordinary $A_6$-permutable (Lemma \ref{lem:normal-intersection-transfer}). Since $A_6\cong\PSL_2(9)$, an ordinary $\PSL_2(9)$-permutable subgroup has order at most $2$ \cite[Theorem 1]{GT2025}, so $|K|\leq2$. As $C/K$ embeds in $S_6/A_6\cong C_2$, $|C|\leq4$. Then $C$ is a $2$-group, contained in a Sylow $2$-subgroup of $S_6\cong\PSp_4(2)$, the unipotent radical of a Borel subgroup of $\PSp_4(2)$, and so fixes a totally singular line.
\end{proof}

When the semisimple image of $A$ in the Levi quotient of the singular line stabilizer used at that step of the descent is proper and nontrivial, we extend the totally isotropic flag stabilized by $A$ and lower the symplectic rank.

Suppose that the image of $A$ in the semisimple factor of the Levi quotient of $P_1$ contains $\PSp_{2m-2}(q)$. With $P_2$ the stabilizer in $G$ of a totally singular plane in the natural symplectic module of $G$, we test hereditary permutability of $A$ against $P_2$. If $A$ were contained in a conjugate of $P_2$, then the image of $A$ on $\ell^\perp/\ell$ would fix a singular line, contradicting the assumption that this image contains $\PSp_{2m-2}(q)$. In the product case $G=AP_2^z$ we have $|G:P_2|\mid|A|\mid|P_1|$. A primitive prime divisor of $q^{2m}-1$ lies in $|G:P_2|$ and not in $|P_1|$ \cite{Zsigmondy1892}, except when $(q,2m)=(2,6)$. In this exceptional case, $G=\PSp_6(2)$. By \cite[(3.22)]{Wilson2009}, the indices
$$
        |G:P_2|=315,~ |P_1|=23040
$$
directly show that $|G:P_2|\nmid|P_1|$.

If the image of $A$ in the semisimple factor of the Levi quotient of $P_1$ is trivial, then
$$
        A\leq R_u(P_1)\rtimes\GL_1(q).
$$
The subgroup $R_u(P_1)\rtimes\GL_1(q)$ acts trivially on $\ell^\perp/\ell$ and so stabilizes every flag of totally isotropic subspaces starting at $\ell$ and lying in $\ell^\perp$, hence $A$ lies in a Borel subgroup of $G$. The subgroup $A$ must then be trivial by Theorem~\ref{thm:symplectic-borel}.

\subsection{The symplectic Borel case}

\begin{lemma}\label{lem:symplectic-c2-borel-reduction}
For $q\geq4$, every proper hereditarily $\PSp_4(q)$-permutable subgroup lies in a Borel subgroup of this symplectic group.
\end{lemma}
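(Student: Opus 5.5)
Let $G=\PSp_4(q)$ with $q\geq4$, and let $A<G$ be proper and hereditarily $G$-permutable. The plan is to run the descent of the previous subsection in rank two and show that $A$ stabilizes a complete totally isotropic flag $\ell<L$ with $\dim\ell=1$ and $\dim L=2$.

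\textbf{Step 1: $A$ lies in $P_1$ or in $P_2$.} By Proposition \ref{prop:symplectic-no-p1-factorization}, which applies since $(2,q)\neq(2,3)$, the product alternative $G=AP_1^x$ is impossible. So after conjugation $A\leq P_1$, where $P_1$ is the stabilizer of an isotropic line $\ell$. For even $q$ I would also keep the graph automorphism in mind, since it swaps $P_1$ and $P_2$. The argument is symmetric under it, so it does not matter which parabolic we start from.

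\textbf{Step 2: the image on $\ell^\perp/\ell$.} Let $C$ be the image of $A$ in the projective Levi quotient acting on $\ell^\perp/\ell$. This is a group $E$ with $\PSL_2(q)\cong\PSp_2(q)\leq E\leq\PGL_2(q)$. By Lemma \ref{lem:quotient-inheritance}, $C$ is hereditarily $E$-permutable. There are three cases.
\begin{enumerate}[label=\textup{(\alph*)}]
\item $C$ fixes a point of the projective line $\ell^\perp/\ell$. Its preimage is a plane $L$ with $\ell<L\leq\ell^\perp$, and $L$ is totally isotropic. So $A$ stabilizes the flag $\ell<L$ and lies in a Borel subgroup, as required.
\item $C$ contains $\PSL_2(q)$. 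Then $A$ fixes no totally isotropic plane containing $\ell$. Test $A$ against $P_2$ by Lemma \ref{lem:maximal-product}. If $A\leq P_2^z$ with stabilized plane $X\not\ni\ell$, then $X\cap\ell^\perp$ is an $A$-invariant line. If that line is not $\ell$, its image in $\ell^\perp/\ell$ is a point fixed by $C$, a contradiction. Otherwise $G=AP_2^z$ and $|G:P_2|=(q^4-1)/(q-1)$ divides $|A|$, which divides $|P_1|=q^3(q-1)|\SL_2(q)|$ up to the $\gcd$ factor. A primitive prime divisor of $q^4-1$ (Zsigmondy, valid here) divides $q^2+1$ but not $|P_1|$, a contradiction.
\item $C$ is proper and fixes no point. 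Since $q\geq4$, the base $\PSL_2(q)$ theorem (Theorem \ref{thm:unitary-low-bases}) and its $\PGL_2$ analogue applied to $C\cap\PSL_2(q)$ via Lemma \ref{lem:overgroup-restriction} force $C\cap S=1$, so $|C|\leq2$. A group $C$ of order $2$ in $\PGL_2(q)\setminus\PSL_2(q)$ with no fixed point has to be ruled out. I would do this with the module obstruction, Lemma \ref{lem:sl-root-line-module-obstruction}, or with the dihedral test of Step 3 of Lemma \ref{lem:pgl2-borel}.
\end{enumerate}

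\textbf{Main obstacle.} The delicate case is (c), together with the possibility that $C$ is a small nonsplit-toral subgroup of $E$ that is hereditarily permutable in $E$ but fixes no point. For $q>11$, Step 2 of Lemma \ref{lem:pgl2-borel} already gives $|C|\mid 4$ through the dihedral maximal subgroups, and not only for subgroups of $B_E$. I would then use the Levi of $P_1$ acting on $R_u(P_1)$, whose quotient modulo the centre is the natural $2$-dimensional module. Lemma \ref{lem:sl-root-line-module-obstruction}, applied to $R_u(P_1)/Z(R_u(P_1))\rtimes$Levi (noting $A\cap R_u(P_1)$ is controlled), forces $C$ to fix a line, as in Lemma \ref{lem:sl-nonincident-full-cover}. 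The fields $q\in\{4,5,7,8,9,11\}$ would be settled by direct check, or by the same module argument, which does not depend on $q$.
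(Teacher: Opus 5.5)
Your cases (a) and (b) are sound, and (b) is in substance the paper's argument. You only omit the trivial subcase $\ell\leq X$, where $X/\ell$ is a $C$-fixed point.

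The gap is in case (c) and in your ``main obstacle'' paragraph, which as written do not give a proof:
\begin{itemize}
\item Lemma \ref{lem:overgroup-restriction} cannot be applied to $C\cap S$ when $C\not\leq S$. The only available tool is Lemma \ref{lem:normal-intersection-transfer}, which yields \emph{ordinary} $S$-permutability. Ordinary permutable subgroups of order $2$ are not excluded by Theorem \ref{thm:unitary-low-bases}(2), so you get at most $|C\cap S|\leq 2$, not $C\cap S=1$.
\item There is no ``$\PGL_2$ analogue'' for a $C$ that fixes no point. Lemma \ref{lem:pgl2-borel} assumes $C\leq B_E$, and its Step~2 uses $|C|\leq|B_E|$ to obtain $CD_-^x<E$, so it does not extend beyond $B_E$ as you claim.
\item Lemma \ref{lem:sl-root-line-module-obstruction} requires $A\cap V=1$, which you only assert is ``controlled''.
\item The ``direct check'' for $q\leq 11$ is not specified.
\end{itemize}

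The missing observation is that none of this is needed. When you excluded $G=AP_2^z$ in (b), you used only $A\leq P_1$: a primitive prime divisor of $q^4-1$ divides $|G:P_2|=(q+1)(q^2+1)$ but not $|P_1|$. So the product alternative fails for every $A\leq P_1$, whatever $C$ is, and hence $A\leq P_2^z$ always. The stabilized plane $X$ then gives a complete totally isotropic flag directly:
\begin{itemize}
\item if $\ell\leq X$, the flag is $\ell<X$;
\item otherwise the flag is $X\cap\ell^\perp<X$. Here $X\cap\ell^\perp$ is a line, because $X\leq\ell^\perp$ would force $\ell\leq X^\perp=X$.
\end{itemize}
This is exactly the paper's proof: show $A\leq P_1\cap P_2^g$ and read off the flag, with no case split on $C$.

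Independently, your worry about involutions in $\PGL_2(q)\setminus\PSL_2(q)$ is moot. Elements of $P_1$ preserve the induced alternating form on $\ell^\perp/\ell$, so they act there through $\Sp_2(q)=\SL_2(q)$, and the projective image is $E=\PSL_2(q)$. Then $C\leq E=S$ is hereditarily $S$-permutable, and Theorem \ref{thm:unitary-low-bases}(1) (valid for $q\geq4$) gives $C\in\{1,S\}$. So case (c) is empty.
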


\begin{proof}
Let $A<\PSp_4(q)$ be proper, nontrivial, and hereditarily $\PSp_4(q)$-permutable. By Lemma \ref{lem:maximal-product} and Proposition \ref{prop:symplectic-no-p1-factorization}, we may assume that $A\leq P_1$. We obtain this conclusion by testing hereditary permutability of $A$ on the stabilizer $P_1$ in $\PSp_4(q)$ of a totally isotropic line in its natural symplectic module. The Levi quotient of $P_1$ has rank one and acts projectively on $\ell^\perp/\ell$, where $\ell$ is the totally isotropic line fixed by $P_1$. A nontrivial proper image of $A$ in this quotient is settled by Lemma \ref{lem:pgl2-borel}. To show that the image of $A$ cannot be full, we test hereditary permutability of $A$ on the stabilizer $P_2$ of a totally singular plane. The argument in the case in which the semisimple image is full gives a contradiction. The trivial semisimple image of $A$ lies in $R_u(P_1)\rtimes\GL_1(q)$. We use Lemma \ref{lem:maximal-product} for $P_2$. The divisibility comparison between the index of $P_2$ and the order of $P_1$ in the case in which the semisimple image is full contradicts the product case. Hence $A\leq P_2^g$ for some $g$. Suppose that the singular line is not contained in the Lagrangian plane. The symplectic pairing between the singular line fixed by $P_1$ and the Lagrangian plane fixed by $P_2^g$ then induces a nonzero functional on the plane. The kernel of this functional is an $A$-stable singular line in the plane. Thus, whether or not the singular line fixed by $P_1$ lies in the Lagrangian plane fixed by $P_2^g$, these two subspaces determine a flag fixed by $A$. Hence $A$ lies in a Borel subgroup of $\PSp_4(q)$.
\end{proof}

\begin{lemma}\label{lem:symplectic-c2-borel}
Let $S=\PSp_4(q)\leq E\leq\PCSp_4(q)$, and let $B_E$ be the normalizer in $E$ of a Borel subgroup of $S$. If $C\leq B_E$ is hereditarily $E$-permutable, then $C=1$.
\end{lemma}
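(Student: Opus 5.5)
The plan is to mimic the proof of Lemma \ref{lem:sl-borel}: project $C$ to the rank-one Levi quotients of the two minimal parabolic subgroups containing $B_E$, kill the toral part and all root components except those that survive both projections, and then kill the surviving root part inside an $\SL_2$ subgroup. Write $\alpha$ (short) and $\beta$ (long) for the simple roots of type $C_2$. Let $P_\alpha,P_\beta\geq B_E$ be the corresponding minimal parabolics of $E$; these are the stabilizers of a totally isotropic line and of a Lagrangian plane. Each Levi quotient, after factoring out the kernel of its projective action on the rank-one piece, is a group $E_\gamma$ with $\PSL_2(q)\leq E_\gamma\leq\PGL_2(q)$. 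For $P_\alpha$ this is the action on $\ell^\perp/\ell$. For $P_\beta$ it is the linear action on the Lagrangian $L_2$, a $\GL_2(q)$-Levi.

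By Lemmas \ref{lem:overgroup-restriction} and \ref{lem:quotient-inheritance}, the images of $C$ in $E_\alpha$ and $E_\beta$ are hereditarily permutable and lie in Borel subgroups. For $q\geq3$, Lemma \ref{lem:pgl2-borel} makes both images trivial. This forces $\alpha(C)=\beta(C)=1$ on the torus part. In the same way as the toral argument of Lemma \ref{lem:unitary-flag-toral}, this makes the image of $C$ in $B_E/R_u(B_E)$ projectively scalar, hence trivial; the conformal part of $E/S$ is detected by the similitude character, which is also seen in one of these quotients. It also kills the components of $C$ in the simple root subgroups. Hence $C\leq R_u(B_S)$ lies in the subgroup $U'$ generated by the non-simple positive root subgroups $U_{\alpha+\beta},U_{2\alpha+\beta}$. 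For $q$ odd this subgroup is normal in $R_u(B_S)$ and abelian of order $q^2$; for $q$ even the commutator relations differ, but $U'$ is still the kernel of both projections.

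The main obstacle is this last step: eliminating a nontrivial $C\leq U'$. I would first use the Weyl group to conjugate and restrict. $U_{2\alpha+\beta}$ is a long root subgroup, and with the opposite root subgroup it generates $R\cong\SL_2(q)$. As in Lemma \ref{lem:sl-borel}, if $C\leq U_{2\alpha+\beta}$ then $C$ embeds in $R/Z(R)$ and is killed by \cite[Proposition 1]{GT2023} for $q\geq4$.

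For general $C\leq U'$, I would test $C$ against the Levi/torus subgroups to force $C$ to be a union of root subgroups. Concretely, I would test against a Levi $\GL_2(q)$ of $P_\beta$. This group acts on $U'$ (modulo the highest root) as a natural module. By Lemma \ref{lem:sl-root-line-module-obstruction}, applied in $U'\rtimes L$ with $L$ a suitable Levi, $C$ would be forced into an invariant line, i.e.\ a single root subgroup up to conjugacy. I would then finish with the $\SL_2$ argument for the long root or short root ($U_{\alpha+\beta}$ generates an $\SL_2(q)$ with its opposite, since it is short, Weyl-conjugate to $U_\alpha$). Alternatively, $C$ may be conjugated by the Weyl element into $R_u$ of the other minimal parabolic, where a component in a simple root subgroup was already excluded.

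The small fields $q=2,3$ need separate treatment. For $q=2$, $S_6$ is not simple and $\PGL_2(2)$ is excluded from Lemma \ref{lem:pgl2-borel}; I would handle this by Lemma \ref{lem:symplectic-psp4-2-levi} or a direct check. For $q=3$, I would use $\PSp_4(3)\cong\PSU_4(2)$ and Theorem \ref{thm:unitary}, after restricting to $S$ when $C\leq S$, and otherwise projecting to the quotient $E/S$ of order $2$ via the $\PCSp$-similitude argument above.
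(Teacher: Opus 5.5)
Your reduction to $C\le U'=U_{\alpha+\beta}U_{2\alpha+\beta}$ is the paper's. You project to the two rank-one Levi quotients, apply Lemmas~\ref{lem:quotient-inheritance} and~\ref{lem:pgl2-borel} (the latter already covers $q=3$), and conclude that the toral part and the simple-root components of $C$ vanish. Your geometric labels are swapped, though. With $\alpha$ short, the parabolic whose Levi contains $U_{\pm\alpha}$ is the Lagrangian stabilizer with Levi $\GL_2(q)$. The isotropic-line stabilizer has the long root $\beta$ in its Levi.

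\textbf{The gap is your main plan for the last step.}
\begin{itemize}
\item Lemma~\ref{lem:sl-root-line-module-obstruction} concerns a subgroup $A$ with $A\cap V=1$ acting on an abelian normal $V$, and it constrains the image of that action. Your $C$ lies \emph{inside} $U'$. It therefore meets any candidate module containing $U'$ nontrivially and acts trivially on it when that module is abelian, so the lemma says nothing about where $C$ sits.
\item $U'$ is not a module for the $\SL_2$ of either Levi. The element $s_\alpha$ swaps $\beta$ and $2\alpha+\beta$, and $s_\beta$ swaps $\alpha$ and $\alpha+\beta$.
\item Modulo the highest root, $U'$ is just the single root line $U_{\alpha+\beta}$. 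It sits inside the natural module $U_\alpha U_{\alpha+\beta}U_{2\alpha+\beta}/U_{2\alpha+\beta}$ of the line stabilizer.
\end{itemize}
So nothing forces a general $C\le U'$ into a single root subgroup. Your $\SL_2(q)$ argument, which works for $C\le U_{2\alpha+\beta}$ when $q\ge4$, is never reached.

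\textbf{Your one-line alternative is what closes the proof, and it is the paper's idea.} Spelled out:
\begin{itemize}
\item Take a representative $\dot s_\alpha\in S$. Then $C^{\dot s_\alpha}\le U_{s_\alpha(\alpha+\beta)}U_{s_\alpha(2\alpha+\beta)}=U_{\alpha+\beta}U_\beta\le B_S$.
\item $C^{\dot s_\alpha}$ is still hereditarily $E$-permutable. Your first step applied to it kills the $U_\beta$-component, so $C\le U_{\alpha+\beta}$.
\item Next, $\dot s_\beta$ carries $U_{\alpha+\beta}$ to the simple root subgroup $U_\alpha$, and the first step gives $C=1$.
\end{itemize}
The paper phrases this by conjugating the parabolics rather than $C$. $P_\beta^{s_\alpha}$ has Levi root $2\alpha+\beta$ and radical containing $U_{\alpha+\beta}$. $P_\alpha^{s_\beta}$ has Levi root $\alpha+\beta$ and radical containing $U_{2\alpha+\beta}$. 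Lemma~\ref{lem:pgl2-borel} is then applied once more in each.

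This route needs neither \cite[Proposition 1]{GT2023} nor the $\PSp_4(3)\cong\PSU_4(2)$ detour for $q=3$. In any case, after the first step $C\le U'\le S$, so your "otherwise" case for $q=3$ never arises.

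Only $q=2$ is genuinely separate. There Lemma~\ref{lem:symplectic-psp4-2-levi} is not enough: it only gives $C\in\{1,A_6\}$ or $|C|\le4$, not $C=1$. You need the direct check of Appendix~\ref{app:sp4-2-borel-certificate}.
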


\begin{proof}
For $q=2$, where $\PSp_4(2)\cong S_6$ is not simple, we enumerate in $\Sp_4(2)$ the subgroups of the Borel subgroup stabilizing the flag $\gen{e_1}<\gen{e_1,e_2}$. For each nontrivial subgroup $C$ in this enumeration, Appendix \ref{app:sp4-2-borel-certificate} gives a cyclic subgroup $H$ such that $CH^x\neq H^xC$ for every $x\in\gen{C,H}$. Consequently, every such $C$ fails the hereditary $E$-permutability condition, and the required conclusion $C=1$ holds. Assume that $q\geq3$. Take a root system of type $C_2$ \cite{Carter1985} with simple roots $\alpha$ (short) and $\beta$ (long). The positive roots are
$$
        \alpha,~\beta,~\alpha+\beta,~2\alpha+\beta.
$$
Let $B$ be the Borel subgroup of $S$ normalized by $B_E$. Let $T$ be a split maximal torus of $B$ and $U$ the subgroup generated by the four positive root subgroups. Then $B=UT$. Let $P_\alpha,P_\beta$ be the maximal parabolics of $E$ with Levi factors of rank one corresponding to $\alpha$ and $\beta$, respectively. Since $C\leq B_E\leq P_\alpha,P_\beta$, restriction from $E$ to these two parabolic subgroups (Lemma \ref{lem:overgroup-restriction}) makes $C$ hereditarily $P_\alpha$-permutable and hereditarily $P_\beta$-permutable. Next, apply inheritance of hereditary permutability by quotients (Lemma \ref{lem:quotient-inheritance}) to the respective Levi quotients of rank one of $P_\alpha$ and $P_\beta$. Consequently, the images of $C$ satisfy the hereditary permutability condition in their respective quotient groups. Moreover, each quotient lies between $\PSL_2(q)$ and $\PGL_2(q)$, and the image of $C$ lies in a Borel subgroup of that quotient. Since $q\geq3$, Lemma \ref{lem:pgl2-borel} shows that both images are trivial.

Hence the toral part of $C$ is trivial under $\alpha$ and $\beta$. Therefore, the toral part of $C$ is projectively central and hence trivial in $\PSp_4(q)$. The $U_\alpha,U_\beta$ components of $C$ also vanish. Thus,
$$
        C\leq U_{\alpha+\beta}U_{2\alpha+\beta}.
$$
Conjugating $P_\beta$ by $s_\alpha$ gives a conjugate parabolic whose Levi factor has rank one. A root of this Levi factor is $s_\alpha(\beta)=2\alpha+\beta$. Therefore, inheritance of hereditary permutability by quotients, together with Lemma \ref{lem:pgl2-borel}, shows that the $U_{2\alpha+\beta}$ component of $C$ is trivial. Similarly, conjugating $P_\alpha$ by $s_\beta$ produces $s_\beta(\alpha)=\alpha+\beta$ and shows that the $U_{\alpha+\beta}$ component of $C$ is trivial. Hence $C=1$.
\end{proof}

\begin{theorem}\label{thm:symplectic-borel}
Let $S=\PSp_{2m}(q)\leq\widehat E\leq\PCSp_{2m}(q)$ with $m\geq2$, and let $B_{\widehat E}$ be the normalizer in $\widehat E$ of a Borel subgroup $B_S$ of $S$. If $C\leq B_{\widehat E}$ is hereditarily $\widehat E$-permutable, then $C=1$.
\end{theorem}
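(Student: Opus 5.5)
We argue by induction on $m$, following the pattern of Lemma \ref{lem:sl-borel}. The base case $m=2$ is Lemma \ref{lem:symplectic-c2-borel}. Assume $m\geq3$ and let $C\leq B_{\widehat E}$ be hereditarily $\widehat E$-permutable. We work with a hyperbolic basis $e_1,\ldots,e_m,f_m,\ldots,f_1$ in which $B_S$ stabilizes the flag $\gen{e_1}<\gen{e_1,e_2}<\cdots<\gen{e_1,\ldots,e_m}$.

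\textbf{Kill the Levi components.} First we use the two maximal parabolics containing $B_{\widehat E}$ that have the largest semisimple Levi factors. The first is the normalizer in $\widehat E$ of the stabilizer $P_1$ of $\ell=\gen{e_1}$. Its projective Levi quotient on $\ell^\perp/\ell$ lies between $\PSp_{2m-2}(q)$ and $\PCSp_{2m-2}(q)$. The second is the normalizer of the Lagrangian stabilizer $P_m$. Its Levi quotient acting projectively on $L_m$ lies between $\PSL_m(q)$ and $\PGL_m(q)$.

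By Lemmas \ref{lem:overgroup-restriction} and \ref{lem:quotient-inheritance}, the image of $C$ in each of these quotients is hereditarily permutable there. Each image also lies in the normalizer of a Borel subgroup. For the symplectic quotient, induction shows the image is trivial. For the linear quotient, Lemma \ref{lem:sl-borel} shows the image is trivial, apart from the excluded $(2,2,\PGL_2(2))$ case, which cannot occur since $m\geq3$.

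The first conclusion kills the toral part on $\alpha_2,\ldots,\alpha_m$ and every positive root subgroup supported in the $C_{m-1}$ subsystem. The second kills $\alpha_1$ and every root subgroup of the $A_{m-1}$ subsystem. Hence the toral part of $C$ is projectively central, so it is trivial in $\widehat E$ modulo scalars; the conformal factor needs a short separate check. Moreover, $C$ lies in the product of the root subgroups $U_\gamma$ for positive roots $\gamma$ lying in neither subsystem. These are exactly the roots of the unipotent radical of $P_1$ not involving only $\alpha_1$, namely $e_1\pm e_j$ for $j\geq2$ and $2e_1$.

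\textbf{Kill the remaining root components.} To remove these components we repeat the trick from Lemma \ref{lem:symplectic-c2-borel}. We conjugate the two parabolics by suitable Weyl group elements. Conjugates of $P_1$ by $w\in W(C_m)$ still contain $C$ after we pass to a conjugate Borel subgroup, because the rest of $C$ has already been shown to be trivial.

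The cleanest way to do this uses the fact that $C$ is now contained in the abelian-by-small group $U_{2e_1}\prod_{j\geq2}U_{e_1\pm e_j}=R_u(P_1)$, which has the shape $q^{1+(2m-2)}$. Consider the long root $2e_1$ together with its opposite $-2e_1$. These generate an $\SL_2(q)$, and $C\cap U_{2e_1}$ is handled by projecting as in the end of Lemma \ref{lem:sl-borel}: for $q\geq4$ we quotient by the centre and apply \cite[Proposition 1]{GT2023}, and for $q\in\{2,3\}$ we use a small $\PSL_3(2)$ or $A_4$ argument. For the components in $U_{e_1\pm e_j}$, we pass to the image of $C$ in the Levi quotient of the conjugate parabolic $P_1^{w}$ with $w=s_{e_1-e_j}$ or the corresponding sign change. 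Inside that quotient these root subgroups become root subgroups of the $C_{m-1}$ or $A_{m-1}$ Levi factor. The induction hypothesis, or Lemma \ref{lem:sl-borel}, then kills them, provided we first know that $C$ lies in the relevant conjugate Borel subgroup.

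\textbf{Main obstacle.} The main obstacle is exactly this last point. The conjugate parabolic $P_1^w$ must contain $C$, and the image of $C$ in its Levi quotient must land in a Borel subgroup. To arrange this, we first kill the $U_{2e_1}$ component, so that $C\leq\prod_{j\geq2}U_{e_1\pm e_j}$. We then use a height argument. Let $\gamma$ be a root of maximal height among those in which $C$ has a nonzero component. Choose $w$ so that $w(\gamma)$ is a simple root in the Levi factor of a standard parabolic containing $C^w$. Then projecting onto that simple root gives a nontrivial hereditarily permutable subgroup of a Borel subgroup of a rank-one quotient. Lemma \ref{lem:pgl2-borel} contradicts this, with $q=2$ handled via the $\PSL_3(2)$ subsystem. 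Verifying that a suitable $w$ exists for each type of root $e_1\pm e_j$ is where I expect the real work, and the case $q=2$ where rank-one quotients degenerate needs extra care.
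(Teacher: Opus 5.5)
\textbf{Verdict: there is a genuine gap.} The final step, removing the components along $U_{2e_1}$ and the $U_{e_1+e_j}$, is left unresolved, and the ingredients you sketch for it do not work.

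Your first two projections are sound, and the paper uses them too. The image in the linear Levi quotient of the Lagrangian stabilizer is trivial by Lemma~\ref{lem:sl-borel}, and the image on $\ell^\perp/\ell$ is trivial by induction. Together these leave $C$, modulo scalars, inside $U_{2e_1}\prod_{j\geq2}U_{e_1+e_j}=R_u(P_1)\cap R_u(P_m)$. The components along $e_1-e_j$ were already removed by the $A_{m-1}$ projection, so this is an abelian group, not all of $R_u(P_1)$.

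The three ingredients you propose for the last step fail as follows.
\begin{enumerate}
\item Applying the $\PSL_2$ theorem to $C\cap U_{2e_1}$ is not legitimate. $C$ is not contained in $\gen{U_{2e_1},U_{-2e_1}}$, and a subgroup of a hereditarily permutable subgroup need not be hereditarily permutable. Even $C\cap U_{2e_1}=1$ would not make the $U_{2e_1}$-component of $C$ trivial.
\item The choice $w=s_{e_1-e_j}$ is wrong. $P_1^{w}$ is the stabilizer of $\gen{e_j}$, whose unipotent radical still contains $U_{e_1+e_j}$, so its Levi quotient does not detect that component.
\item A sign change gives the stabilizer of $\gen{f_1}$, which does not contain $C$.
\end{enumerate}
The height argument is not specified enough to evaluate.

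The missing observation is that the Lagrangian step already shows that $C$ acts by scalars on $L_m=\gen{e_1,\ldots,e_m}$. Hence $C$ stabilizes every line $\gen{e_i}$, not only $\gen{e_1}$. For each $i$:
\begin{itemize}
\item $C$ stabilizes the complete totally isotropic flag $\bigl((L_k+\gen{e_i})/\gen{e_i}\bigr)_k$ of $W_i=\gen{e_i}^\perp/\gen{e_i}$.
\item So its image in the symplectic Levi quotient on $W_i$ lies in a Borel subgroup, and is trivial by Lemmas~\ref{lem:overgroup-restriction}, \ref{lem:quotient-inheritance} and induction.
\end{itemize}
The Levi subsystem for $\gen{e_i}$ consists of the roots avoiding the index $i$. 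So $U_{e_1+e_j}$ is killed by any $i\notin\{1,j\}$, and $U_{2e_1}$ by any $i\neq1$; this is exactly where $m\geq3$ is needed. In your Weyl-group language, the right conjugating element for $e_1+e_j$ is $s_{e_1-e_i}$ with a third index $i$, not $s_{e_1-e_j}$.

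This is the paper's argument. It makes the $\SL_2(q)$ detour, the height argument and any separate treatment of $q=2$ unnecessary. The small-$q$ cases are absorbed by the base case Lemma~\ref{lem:symplectic-c2-borel} and by Lemma~\ref{lem:sl-borel}.
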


\begin{proof}
We induct on $m$. For $m=2$, we use Lemma \ref{lem:symplectic-c2-borel}. Assume that $m\geq3$ and take a hyperbolic basis $e_1,\ldots,e_m,f_m,\ldots,f_1$ of the natural symplectic module of $S$, with $E_0=\gen{e_1,\ldots,e_m}$ the Lagrangian subspace whose complete flag is stabilized by $B_S$. This flag is $0=L_0<L_1<\cdots<L_m=E_0$, with $\dim L_i=i$ for $0\leq i\leq m$. The vectors $e_1,\ldots,e_m$ are chosen so that $L_i=\gen{e_1,\ldots,e_i}$ for $1\leq i\leq m$. Each $L_i$ is totally isotropic, and the last space $E_0$ is Lagrangian.

We consider the projective action of the stabilizer of $E_0$ in $\widehat E$ on $E_0$, and quotient that stabilizer by the elements acting as scalars on $E_0$. Write $P_{E_0}$ for this stabilizer and $\rho_0:P_{E_0}\to\PGL(E_0)$ for its projective action. The quotient is $P_{E_0}/\ker\rho_0$, identified with $\rho_0(P_{E_0})$. This quotient is linear, of socle $\PSL_m(q)$, between $\PSL_m(q)$ and $\PGL_m(q)$ \cite[Proposition 26.1(b)]{MalleTesterman2011}. The image of $C$ lies in a Borel subgroup of this Levi quotient. Therefore, Lemma \ref{lem:sl-borel} shows that this image of $C$ is trivial. In particular, the group $C$ acts projectively trivially on $E_0$. Hence all components corresponding to the roots $e_i-e_j$ vanish and all toral ratios on the $e_i$ are trivial. The common toral value is considered through the images of the projective symplectic actions on the spaces $W_i$.

For each $i$ let $W_i=\gen{e_i}^\perp/\gen{e_i}$. Consider the stabilizer in $\widehat E$ of $\gen{e_i}$ and its symplectic quotient on $W_i$. Denote this stabilizer by $P_i$ and its projective action on $W_i$ by $\rho_i$. We identify the quotient $P_i/\ker\rho_i$ with $\rho_i(P_i)$. Applying Lemma \ref{lem:quotient-inheritance} to this quotient shows that $\rho_i(C)$ is hereditarily $\rho_i(P_i)$-permutable. Moreover, it lies in the induced Borel subgroup of this quotient and hence is trivial by induction. The remaining positive roots are $e_a+e_b$ ($a<b$) and $2e_a$. Since $m\geq3$, each of these remaining positive roots occurs in the root system of the symplectic group on some $W_i$ ($i\notin\{a,b\}$ for $e_a+e_b$, and $i\neq a$ for $2e_a$). Hence all remaining unipotent components of $C$ vanish. The images of the projective symplectic actions on the spaces $W_i$ also force the toral characters corresponding to long roots to be trivial. So the toral part of $C$ is projectively central, hence trivial in $S$, and $C=1$.
\end{proof}

\begin{proposition}\label{prop:symplectic-psp4}
For the simple group $G=\PSp_4(q)$ there is no proper nontrivial hereditarily $G$-permutable subgroup.
\end{proposition}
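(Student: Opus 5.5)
The plan is to split by $q$. For $q=2$ the group $\PSp_4(2)\cong S_6$ is not simple, so $G$ is taken simple with $q\geq3$. For $q=3$ we use $\PSp_4(3)\cong\PSU_4(2)$, and Theorem \ref{thm:unitary} gives the conclusion at once.

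For $q\geq4$, let $A<G$ be proper, nontrivial and hereditarily $G$-permutable. Lemma \ref{lem:symplectic-c2-borel-reduction} puts $A$, after conjugation in $G$, inside a Borel subgroup $B_S$ of $S=G$. Now apply Lemma \ref{lem:symplectic-c2-borel} with $E=S$, so that $B_E=N_S(B_S)=B_S$ and $C=A$. That lemma gives $A=1$, which is a contradiction.

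The main step to check is that Lemma \ref{lem:symplectic-c2-borel-reduction} really covers every case for $q\geq4$. Its proof uses Proposition \ref{prop:symplectic-no-p1-factorization}, which is valid because $(m,q)=(2,q)\neq(2,3)$. It also uses Lemma \ref{lem:pgl2-borel} on the rank-one Levi quotient of $P_1$, which needs the quotient not to be $\PGL_2(2)$; this holds since $q\geq4$. The case of a full semisimple image is excluded by testing $A$ against $P_2$. For this I would check directly that $|G:P_2|=(q^4-1)/(q-1)$ is not a divisor of $|P_1|=q^4(q-1)(q^2-1)/(2,q-1)$. Here $q^2+1$ is coprime to $q(q^2-1)$ apart from a factor $2$ when $q$ is odd, and $(q^2+1)/(2,q-1)>1$, so some odd prime divisor of $q^2+1$ divides $|G:P_2|$ but not $|P_1|$. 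This replaces the primitive prime divisor of $q^{2m}-1$ used for general $m$. If any gap remains, I would re-run the $P_1$/$P_2$ flag argument explicitly in $\PSp_4(q)$ to place $A$ in the stabilizer of a flag $\ell<X$ with $X$ a Lagrangian plane, and then finish with Lemma \ref{lem:symplectic-c2-borel}.
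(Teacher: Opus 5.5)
Your proposal is correct and matches the paper's proof exactly. The paper also excludes $q=2$, handles $q=3$ via $\PSp_4(3)\cong\PSU_4(2)$ and Theorem \ref{thm:unitary}, and for $q\geq4$ uses Lemma \ref{lem:symplectic-c2-borel-reduction} to place $A$ in a Borel subgroup and then Lemma \ref{lem:symplectic-c2-borel} to get $A=1$. Your check that an odd prime divisor of $q^2+1$ divides $|G:P_2|$ but not $|P_1|$ is an elementary substitute for the primitive prime divisor of $q^4-1$ used inside that reduction lemma, and it is valid.
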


\begin{proof}
The case $q=2$ is not simple. For $q=3$, $\PSp_4(3)\cong\PSU_4(2)$ \cite[\S3.6.5]{Wilson2009} is covered by Theorem \ref{thm:unitary}. For $q\geq4$, Lemma \ref{lem:symplectic-c2-borel-reduction} shows that every proper nontrivial hereditarily $G$-permutable subgroup is contained in a Borel subgroup of $G$. In this range, Lemma \ref{lem:symplectic-c2-borel} shows that such a subgroup is trivial.
\end{proof}

\subsection{Proof of \texorpdfstring{Theorem \ref{thm:symplectic}}{the symplectic theorem}}

\begin{proof}[Proof of Theorem \ref{thm:symplectic}]
For $m=1$, $\PSp_2(q)\cong\PSL_2(q)$ is simple only for $q\geq4$ and is then the known base case of Theorem \ref{thm:unitary-low-bases}. For $m=2$, use Proposition \ref{prop:symplectic-psp4}. Assume that $m\geq3$ and let $A<G$ be proper, nontrivial, and hereditarily $G$-permutable. The $P_1$-factorization case $G = AP_1 ^x$ is impossible (Proposition \ref{prop:symplectic-no-p1-factorization}), so $A\leq P_1$ after conjugating $A$ in $G$. The alternatives for the image of $A$ in the Levi quotient of $P_1$ (Lemma \ref{lem:symplectic-actual-levi-alternatives}) lead to descent unless the semisimple image of $A$ is full or trivial. When the semisimple image of $A$ is full, suppose that $A$ were contained in $P_1\cap P_2^y$. Then the image of $A$ in the Levi quotient of $P_1$ would lie in a proper parabolic of that Levi quotient and could not be full. Therefore, $A$ can lie in no conjugate of the stabilizer $P_2$ in $G$ of a totally singular plane. By hereditary permutability of $A$ and maximality of the stabilizer $P_2$ of a totally singular plane, we have a factorization $G=AP_2^y$ with $|G:P_2|\mid|A|$. Analysing the index of $P_2$ in $G$ contradicts this factorization. If the semisimple image of $A$ is trivial, then $A$ lies in the inverse image in $P_1$ of the central torus of dimension one in its Levi quotient. This inverse image is $R_u(P_1)$ extended by this torus. We then use Theorem \ref{thm:symplectic-borel}. Repeating this descent through Levi quotients lowers the symplectic rank to the base $\PSp_4(q)$. For $q\geq3$ this is Proposition \ref{prop:symplectic-psp4}. By contrast, for $q=2$ the Levi factor $\PSp_4(2)\cong S_6$ is not simple, so Lemma \ref{lem:symplectic-actual-levi-alternatives} cannot be used. Instead, Lemma \ref{lem:symplectic-psp4-2-levi} shows that the proper hereditarily $S_6$-permutable image $C$ of $A$ is either $A_6$ or fixes a singular line. When $C=A_6$, the image is transitive on the totally singular planes, so $A$ lies in no conjugate of $P_2$. As in the case of a full semisimple image, the comparison with the index of $P_2$ contradicts this possibility. When the image fixes a singular line, we extend the flag and the rank drops. When the image $C$ of $A$ in $S_6$ is trivial, $A$ lies in the inverse image of a Borel subgroup under the quotient map from the singular line stabilizer used at this descent step to its Levi quotient $\PSp_4(2)\cong S_6$. In this case, $A$ is trivial by Theorem \ref{thm:symplectic-borel}. The $m=2$ base of that theorem covers $q=2$ via Appendix \ref{app:sp4-2-borel-certificate}. So $A=1$, a contradiction.
\end{proof}
\section{Orthogonal groups}\label{sec:orthogonal}

The space $V$ is defined over $\F_q$ and has a nondegenerate quadratic form $Q$. For orthogonal complements we use the associated bilinear form $b(u,v)=Q(u+v)-Q(u)-Q(v)$. In the notation of Kleidman and Liebeck \cite[\S2.1]{KleidmanLiebeck1990}, we write $X^\perp=\{v\in V:b(v,x)=0\text{ for every }x\in X\}$. A nonzero vector $v$ is \emph{singular} if $Q(v)=0$. A subspace $X$ is \emph{totally singular} if $Q(x)=0$ for every $x\in X$. The Witt index $r$ of $V$ is the largest dimension of a totally singular subspace.

With $r$ the Witt index, a complete totally singular flag has the form
$$
        0=F_0<F_1<\cdots<F_r.
$$
Each $F_i$ has dimension $i$ for $0\leq i\leq r$ and is totally singular. In particular, $F_r$ is maximal totally singular. For a singular line $\ell$, we have $Q(z)=0$ and $b(v,z)=0$ whenever $z\in\ell$ and $v\in\ell^\perp$. Hence $Q(v+z)=Q(v)$. Consequently, $Q_\ell(v+\ell)=Q(v)$ defines the induced quadratic form on $\ell^\perp/\ell$.

In a Witt decomposition $V=E\oplus D\oplus E^*$, both $E$ and $E^*$ are maximal totally singular subspaces of $V$. Since the pairing $b:E\times E^*\to\F_q$ is nondegenerate, the map $x\mapsto(e\mapsto b(e,x))$ identifies $E^*$ with the dual space $\Hom(E,\F_q)$. The subspace $D=(E\oplus E^*)^\perp$ is anisotropic. This means that $Q(d)\neq0$ for every nonzero $d\in D$. In dimension $2m$, plus type has $r=m$ and $D=0$, whereas minus type has $r=m-1$. In odd dimension $2m+1$, the Witt index is $m$.

A Witt basis consists of bases $e_1,\ldots,e_r$ of $E$ and $f_r,\ldots,f_1$ of $E^*$, together with a basis of $D$. For minus type, $D$ has dimension two. In odd dimension it has dimension one. For plus type, $D=0$, so no basis vectors from $D$ occur. The pairings satisfy $b(e_i,f_i)=1$ and $b(e_i,f_j)=0$ for $i\neq j$. Consequently, $H_i=\gen{e_i,f_i}$ is a hyperbolic plane, with $Q(xe_i+yf_i)=xy$ for $x,y\in\F_q$. The planes $H_i$ for different indices are orthogonal. Each is also orthogonal to $D$. For the complete flag $0=F_0<\cdots<F_r$, we choose $E=F_r$ and a Witt basis with $F_i=\gen{e_1,\ldots,e_i}$ for $1\leq i\leq r$.

For orthogonal groups, we use the two alternatives for a singular line stabilizer and the classification by Li and Xia \cite{LiXia2022} of factorizations with a solvable factor. We also use a geometric argument against an image of $A$ containing the orthogonal socle of the projective Levi quotient of that singular line stabilizer, together with the orthogonal Borel case.

\begin{theorem}\label{thm:orthogonal}
For every simple projective orthogonal group $G=\POmega(V)$ there is no proper nontrivial hereditarily $G$-permutable subgroup.
\end{theorem}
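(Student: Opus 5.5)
We follow the symplectic template of Section \ref{sec:symplectic}. First we dispose of the small and exceptional cases through isomorphisms and earlier theorems. These are $\Omega_3(q)\cong\PSL_2(q)$, $\POmega_4^-(q)\cong\PSL_2(q^2)$, $\POmega_5(q)\cong\PSp_4(q)$, $\POmega_6^+(q)\cong\PSL_4(q)$ and $\POmega_6^-(q)\cong\PSU_4(q)$, handled by Theorems \ref{thm:split-linear}, \ref{thm:unitary} and \ref{thm:symplectic}. In even characteristic we also use $\Omega_{2m+1}(q)\cong\PSp_{2m}(q)$ and Theorem \ref{thm:symplectic}. So we may assume that the Witt index is $r\geq2$, with $\dim V\geq7$, or $\dim V\geq8$ in the even-dimensional cases, and that $q$ is odd when $\dim V$ is odd. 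Let $A<G$ be proper, nontrivial and hereditarily $G$-permutable.

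\textbf{Step 1 (singular line).} Let $P_1$ be a singular line stabilizer. By Lemma \ref{lem:maximal-product}, either $A\leq P_1^x$ or $G=AP_1^x$. In the product case $A$ is transitive on singular lines, so it lies in no proper parabolic. Testing $A$ against a Borel subgroup, as in Proposition \ref{prop:symplectic-no-p1-factorization}, gives a factorization $G=MB^y$ with solvable factor $B^y$. We then compare $p$-parts using the orthogonal rows of Li and Xia \cite{LiXia2022}. The Borel $p$-part is $q^{r(r-1)}$, $q^{r^2}$ or $q^{r(r+1)}$ for types $D_r$, $B_r$ and ${}^2D_{r+1}$ respectively, and it exceeds every listed solvable factor. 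The few sporadic rows, such as $\Omega_8^+(2)$ and $\Omega_7(3)$, are checked by order. Hence $A\leq P_1$.

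\textbf{Step 2 (Levi descent).} The projective Levi quotient of $P_1$ acting on $\ell^\perp/\ell$ is a group $E$ with socle $\POmega(\ell^\perp/\ell)$ of the same type and dimension $\dim V-2$. The image $C$ of $A$ in $E$ is hereditarily $E$-permutable by Lemma \ref{lem:quotient-inheritance}. An analogue of Lemma \ref{lem:symplectic-actual-levi-alternatives} gives three cases.

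If $C$ is proper, nontrivial and does not contain the socle, the Borel factorization argument places $C$ in the normalizer of a proper parabolic. We then extend the singular flag and the rank drops.

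If $C$ contains the socle, we test $A$ against $P_2$. Containment in a conjugate of $P_2$ would make $C$ fix a singular line, which is impossible. So $|G:P_2|$ divides $|A|$, and hence divides $|P_1|$. A Zsigmondy prime refutes this: take a primitive divisor of $q^{2r-2}-1$ for plus type, $q^{2r}-1$ for odd dimension, or $q^{2r+2}-1$ for minus type. Exceptions such as $q=2$ with exponent $6$ are settled by explicit index computations.

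When the descent reaches a Levi factor of low rank, the factor is $\POmega_5$, $\POmega_6^\pm$ or $\POmega_4^\pm$, and we use the corresponding linear, unitary or symplectic theorems or Lemma \ref{lem:pgl2-borel}. For $\POmega_4^+(q)\cong\PSL_2(q)\times\PSL_2(q)$, we project to each factor. Terminal Levi factors of type $\GL$ attached to a maximal totally singular subspace are handled by Lemma \ref{lem:sl-borel} and Theorem \ref{thm:split-linear}.

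\textbf{Step 3 (Borel case).} The descent ends with $A$ in a Borel subgroup, and we prove an orthogonal analogue of Theorem \ref{thm:symplectic-borel}. Projecting to the stabilizer of $F_r$, the linear Levi factor on $F_r$ kills the toral ratios and the $e_i-e_j$ components, by Lemma \ref{lem:sl-borel}. Projecting to the Levi factors on $\gen{e_i}^\perp/\gen{e_i}$ and inducting on rank kills the roots $e_a\pm e_b$, and $e_a$ in type $B$. This works because each positive root lies in some such Levi factor once $r\geq3$. The rank-two bases are types $B_2=C_2$ (Lemma \ref{lem:symplectic-c2-borel}), $D_2=A_1A_1$ and ${}^2D_3={}^2A_3$, and we reduce these via Lemma \ref{lem:pgl2-borel} and the unitary toral and unipotent arguments.

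\textbf{Main obstacle.} We expect the main obstacle to be the full-socle case at small $q$ and small rank, together with type $D_4$. There the Zsigmondy primes degenerate, for example at $\POmega_8^+(2)$ and $\POmega_8^+(3)$. Triality also produces genuine factorizations, such as $\Omega_8^+(q)=\Omega_7(q)P_1$, which can interfere with Step 1. For these cases we plan to use the Li--Wang--Xia tables \cite{LiWangXia2024} directly, testing $A$ against $P_1$, $P_3$ and $P_4$ simultaneously.
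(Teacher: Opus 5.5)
Your overall route is the paper's. You reduce $\dim V\le 6$, and odd dimension in characteristic $2$, by isomorphisms. You exclude $G=AP_1^x$ by forcing a factorization with a Borel factor and comparing $p$-parts against the Li--Xia rows. You then run the Levi trichotomy on $\ell^\perp/\ell$, dispose of the full-socle case by testing against $P_2$, and finish with the Borel case using the linear Levi factor on $F_r$ and the quotients $\gen{e_i}^\perp/\gen{e_i}$.

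There are two harmless differences. First, in the full-socle product case you use a Zsigmondy prime to get $|G:P_2|\nmid|P_1|$. This is valid; at the one exception $\POmega_8^+(2)$ one has $|G:P_2|=1575=3^2\cdot5^2\cdot7$ and $|P_1|=2^{12}\cdot3^2\cdot5\cdot7$. The paper's argument in Lemma~\ref{lem:orthogonal-full-levi-obstruction} is shorter and has no exceptions: $A\le P_1$ fixes $\ell$, so it preserves the nonempty proper family of totally singular planes through $\ell$ and cannot be transitive on totally singular planes. Second, the obstacle you anticipate from triality does not arise, because Step~1 never uses the absence of factorizations $G=MP_1$. A subgroup transitive on singular points, such as a triality image of $\Omega_7(q)$ in $\POmega_8^+(q)$, lies in no proper parabolic. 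It therefore yields a factorization with a Borel factor, which the Li--Xia $p$-part comparison already excludes. This is why the paper needs no special treatment of $D_4$.

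One step fails as written: the base $D_2=A_1A_1$ of your Borel-case induction when $q=2$. Lemma~\ref{lem:pgl2-borel} excludes $\PGL_2(2)\cong S_3$, and this is not an artefact: a Borel subgroup $C_2$ of $S_3$ is hereditarily $S_3$-permutable. Since $\POmega_4^+(2)\cong S_3\times S_3$, projecting to the two factors cannot kill the component in the highest root subgroup $U_{e_1+e_2}$ of a $D_3(2)$ Borel subgroup. That root lies neither in the linear Levi factor on $F_3$ nor on $\gen{e_1,e_2,f_3}$. It appears only as the root of one $S_3$ factor of $\gen{e_3}^\perp/\gen{e_3}$. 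Every plus-type induction over $\F_2$ passes through $D_3$, so this affects all $\POmega^+_{2r}(2)$.

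The repair is the paper's: stop the plus-type induction at $\POmega_6^+(q)\cong\PSL_4(q)$ and apply Lemma~\ref{lem:sl-borel}, whose proof handles $q=2$ through a $\PSL_3(2)\cong\PSL_2(7)$ subsystem (see Theorem~\ref{thm:orthogonal-borel}). For the same reason, the ${}^2D_3$ base at $q=2$ should not go through rank-one projections, since the unitary rank-one factor is again $S_3$. Use instead that $\PGU_4(2)=\PSU_4(2)$ is simple, so a subgroup of its Borel subgroup is proper and Theorem~\ref{thm:unitary} applies, as in Lemma~\ref{lem:pgu4-borel-interface}.
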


By \cite[(3.57)]{Wilson2009}, we have the following isomorphisms in low rank
\begin{equation}\label{eq:orthogonal-low-rank-isomorphisms}
\begin{aligned}
        \POmega_5(q)&\cong\PSp_4(q), & \POmega_6^+(q)&\cong\PSL_4(q),\\
        \POmega_6^-(q)&\cong\PSU_4(q), & \POmega_4^-(q)&\cong\PSL_2(q^2),
\end{aligned}
\end{equation}
which reduce the orthogonal cases of low rank to the linear, unitary, and symplectic families. The plus case in dimension four is a central product of two factors of rank one and introduces no new simple family.

\subsection{The alternatives for a singular line stabilizer}

Let $P_1$ be the stabilizer in $G$ of a singular line of $V$. For a hereditarily $G$-permutable subgroup $A<G$, by Lemma \ref{lem:maximal-product} we have $A\leq P_1^x$ or $G=AP_1^x$, the equality $G=AP_1^x$ making $A$ transitive on singular lines of $V$.

Because the dimensions $\leq6$ are the isomorphism bases \eqref{eq:orthogonal-low-rank-isomorphisms}, treated through the linear, unitary and symplectic theorems, we may assume that $\dim V\geq7$. For $\dim V\geq7$ the stabilizer of a singular line $P_1$ is a maximal parabolic of $G$ in all three types $B_m,D_m^+,D_m^-$ \cite[Table 3.5.B]{KleidmanLiebeck1990}, \cite[Proposition 26.1(b)]{MalleTesterman2011}, and the Witt index is at least $3$. In characteristic $2$ the groups of odd dimension satisfy $\POmega_{2m+1}(q)\cong\PSp_{2m}(q)$ \cite[\S3.8]{Wilson2009} and are covered by Theorem \ref{thm:symplectic}, so for type $B_m$ we take $q$ odd. For $D_4=\POmega_8^+(q)$, a graph automorphism of order $3$ cyclically permutes the three end nodes of the Dynkin diagram. This automorphism also permutes the corresponding standard parabolic subgroups $P_1,P_3,P_4$. The stabilizer $P_1$ is nevertheless a maximal parabolic, and the descent through singular line stabilizers uses $P_1$ alone. This reduces $\POmega_8^+(q)$ through $\ell^\perp/\ell$ of type $D_3=\POmega_6^+(q)\cong\PSL_4(q)$. No special treatment is needed.

\begin{proposition}\label{prop:orthogonal-no-p1-factorization}
With $P_1$ the stabilizer of a singular line in $G$, the equality $G=AP_1^x$ cannot hold.
\end{proposition}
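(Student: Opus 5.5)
We follow the symplectic model of Proposition \ref{prop:symplectic-no-p1-factorization}. Suppose that $G=AP_1^x$, so that $A$ is transitive on the singular lines of $V$. Then $A$ lies in no proper parabolic subgroup of $G$. Indeed, every proper parabolic subgroup stabilizes a nonzero totally singular subspace $W$, and hence maps the nonempty proper set of singular lines inside $W$ to itself. Test hereditary permutability of $A$ against a Borel subgroup $B$ of $G$. This gives a conjugate $B^y$ with $AB^y$ a subgroup. If $AB^y<G$, then any maximal overgroup of $AB^y$ contains $B^y$ and is therefore parabolic, which is impossible. Hence $G=AB^y$. Since $A<G$, choose a maximal subgroup $M\geq A$. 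Then $G=MB^y$ is a factorization of the simple group $G$ with solvable factor $B^y$, and both factors are core-free.

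Next, invoke Li and Xia \cite[Theorem 1.1, Tables 1.1--1.2]{LiXia2022} and extract the rows whose socle is orthogonal of dimension at least $7$. Here $q$ is odd in type $B_m$, and the isomorphism cases of \eqref{eq:orthogonal-low-rank-isomorphisms} have already been set aside. Compare the $p$-part of each listed solvable factor with $|B|_p$. This $p$-part is $q^{N}$, where $N$ is the number of positive roots: $N=m^2$ for $B_m$, $N=m(m-1)$ for $D_m^+$, and $N=m(m-1)$ for ${}^2D_m$ with $\dim V=2m$. As in the linear and symplectic tables, we expect the generic orthogonal rows to have solvable factors contained in a parabolic normalizer or a Singer-type normalizer, so that their $p$-parts are at most about $q^{m(m-1)/2}$. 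Examples are $\POmega_{2m}^+(q)$ with solvable factor inside $P_m$ (the $q^{m(m-1)/2}$ radical times a torus), and $\Omega_{2m+1}(q)$ or $\POmega^-_{2m}(q)$ with a factor of the type coming from $\mathrm{GL}_1(q^m)$. The sporadic small rows, such as $\POmega_8^+(2)$, $\POmega_8^+(3)$, $\Omega_7(3)$ and $\POmega_8^-(2)$ if they appear, are handled by reading their listed orders. In every row the $p$-part is strictly less than $q^N$, so $B^y$ cannot be the solvable factor. This contradiction proves the proposition.

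The main obstacle is the bookkeeping against the Li--Xia tables. We must make sure that every orthogonal row in dimension at least $7$ is accounted for, including the triality-related $\POmega_8^+(q)$ rows, where the solvable factor may sit in $P_1$, $P_3$ or $P_4$ rather than a Levi-related subgroup. We must also confirm that each listed solvable factor, taken together with its possible outer part, has strictly smaller $p$-part than a full Borel subgroup. If a row turns out to involve a factor whose $p$-part equals $q^N$ (it would then have to contain a Sylow $p$-subgroup), we would instead argue that such a factor normalizes a unipotent radical. That would place it in a parabolic subgroup, and the transitivity of $A$ on singular lines would then force $|G:P_1|\mid |M|$. A primitive prime divisor of $q^{2m}-1$ (for $B_m$ and ${}^2D_m$) or of $q^{2m-2}-1$ (for $D_m^+$), obtained from Zsigmondy's theorem, would then rule out the corresponding $M$.
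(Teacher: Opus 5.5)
Your proof follows the paper's: transitivity on singular lines rules out every proper parabolic, the Borel test forces a factorization $G=MB^y$ with solvable factor $B^y$, and a $p$-part comparison against the orthogonal rows of Li--Xia (Table 1.1, rows 7--9, and Table 1.2, rows 22--28, with no minus-type rows at all) gives the contradiction. The only step you leave undone is the row-by-row check itself, and it succeeds with strict inequalities in every row (for instance $q^{m(m+1)/2}|m|_p<q^{m^2}$ for type $B_m$ and $q^{m(m-1)/2}|m|_p<q^{m(m-1)}$ for type $D_m^+$), so your fallback paragraph is never needed.
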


\begin{proof}
Suppose that $G=AP_1^x$. A proper parabolic of $G$ stabilizes a flag $0<U_1<\cdots<U_t$ of nonzero totally singular subspaces \cite[Table 3.5.B]{KleidmanLiebeck1990}, \cite[Proposition 26.1(b)]{MalleTesterman2011}. Therefore, this parabolic subgroup maps the set of singular lines contained in $U_1$ to itself. This set is a proper subset of the singular lines of $V$. Hence $A$ lies in no proper parabolic of $G$. By testing hereditary $G$-permutability of $A$ with a Borel subgroup $B$ of $G$, we have $B^y$ with $AB^y$ a subgroup. If $AB^y<G$, then a maximal subgroup of $G$ containing $AB^y$ contains the Borel subgroup $B^y$ and is parabolic, a contradiction. So $G=AB^y$, and for a maximal subgroup $M\geq A$ of $G$, the product $G=MB^y$ is a factorization with solvable factor $B^y$, which is covered by the classification of Li and Xia \cite{LiXia2022}.

For the orthogonal factorization $G=MB^y$, the Li and Xia rows are Table 1.1, rows 7--9, and Table 1.2, rows 22--28. We list the solvable factor and compare its $p$-part with the $p$-part of a Borel subgroup of the orthogonal socle $G$ in that row.
\begingroup
\small\setlength{\tabcolsep}{4pt}\renewcommand{\arraystretch}{1.18}
\setlength{\LTpre}{3pt}\setlength{\LTpost}{3pt}
\Needspace{24\baselineskip}
\begin{longtable}[c]{G{0.14\textwidth}G{0.16\textwidth}G{0.28\textwidth}G{0.16\textwidth}G{0.09\textwidth}}
\hline
row & socle $G$ & listed solvable factor & solvable $p$-part & Borel $p$-part\\
\hline
\endhead
\hline
\endfoot
1.1, 7 & $\POmega_{2m+1}(q)$, $m\geq3$, $q$ odd & $(q^{m(m-1)/2}\cdot q^m):((q^m-1)/d).m<P_m$ & $q^{m(m+1)/2}|m|_p$ & $q^{m^2}$\\
1.1, 8 & $\POmega_{2m}^{+}(q)$, $m\geq5$ & $q^{m(m-1)/2}:((q^m-1)/d).m<P_k$, $k=m,m-1$ & $q^{m(m-1)/2}|m|_p$ & $q^{m(m-1)}$\\
1.1, 9 & $\POmega_8^+(q)$ & $q^6:((q^4-1)/d).4<P_k$ & $q^6|4|_p$ & $q^{12}$\\
1.2, 22 & $\Omega_7(3)$ & $3^5:2^4.\AGL_1(5)$ & $3^5$ & $3^9$\\
1.2, 23 & $\Omega_7(3)$ & $3^{3+3}:13:3$ & $3^7$ & $3^9$\\
1.2, 24 & $\Omega_9(3)$ & $3^{6+4}:2^{1+4}.\AGL_1(5)$ & $3^{10}$ & $3^{16}$\\
1.2, 25 & $\Omega_8^+(2)$ & $2^2:15.4<A_9$ & $\leq2^4$ & $2^{12}$\\
1.2, 26 & $\Omega_8^+(2)$ & $2^6:15.4$ & $2^8$ & $2^{12}$\\
1.2, 27 & $\POmega_8^+(3)$ & $3^6:2^4.\AGL_1(5)$ & $3^6$ & $3^{12}$\\
1.2, 28 & $\POmega_8^+(3)$ & $3^6:(3^3:13:3)$, $3^{3+6}:13.3$ & $\leq3^{10}$ & $3^{12}$\\
\end{longtable}
\endgroup
In Li and Xia's classification \cite{LiXia2022}, the orthogonal rows with a solvable factor listed in the table are of odd type ($B_m$, row $7$) or plus type ($D_m^+$, rows $8$ to $9$, $25$ to $28$). The classification contains no factorization of a group of minus type $\POmega^-_{2m}(q)$ with a solvable factor, so for minus type the product $G=MB^y$ with solvable $B^y$ cannot occur. The factor $|m|_p$ or $|4|_p$ in the infinite rows comes from the permutation part of the table. The ratio of Borel $p$-part to listed $p$-part is $q^{m(m-1)/2}/|m|_p>1$ in rows 7 ($m\geq3$) and 8 ($m\geq5$). In row 9 the ratio is $q^6/|4|_p>1$. The entries in each small row also show that the listed solvable factor has strictly smaller $p$-part than the Borel subgroup. These permutation factors do not change the strict order comparisons. Consequently, no listed solvable factor contains $B^y$, contradicting $G=MB^y$.
\end{proof}

\subsection{Subgroups of parabolic subgroups and descent through Levi quotients}

By Proposition \ref{prop:orthogonal-no-p1-factorization}, after conjugating $A$ in $G$, we have $A\leq P_1$, where $P_1$ is the stabilizer in $G$ of a singular line of $V$. Throughout this subsection, let $\ell$ be the stabilized singular line, $R=R_u(P_1)$, and $W=\ell^\perp/\ell$. We use the image of $P_1$ in its projective action on $W$, i.e., the quotient of $P_1$ by the kernel of its projective action on $W$. Write $\rho_\ell:P_1\to\PGL(W)$ for this action. Its image is identified with $P_1/\ker\rho_\ell$. This projective Levi quotient of $P_1$ has orthogonal socle $\POmega(W)$ together with diagonal toral factors \cite[Proposition 26.1(b)]{MalleTesterman2011}. By Lemma \ref{lem:quotient-inheritance}, the image $C$ of $A$ on $W$ in this quotient is hereditarily $\POmega(W)$-permutable. In the case where the image of $A$ in $\POmega(W)$ is trivial, the image of $A$ lies in the diagonal quotient.

\begin{lemma}\label{lem:orthogonal-actual-levi-alternatives}
With $W$ and $P_1$ as defined at the start of this subsection, let $S=\POmega(W)$ be the orthogonal socle of the Levi quotient of the singular line stabilizer $P_1$. Let $E$ be the image of the projective action of $P_1$ on $W$. This projective Levi quotient includes the diagonal factors and contains $S$. Let $C\leq E$ be hereditarily $E$-permutable. Then one of the following holds.
\begin{enumerate}[label=\textup{(\arabic*)}]
\item $C\geq S$.
\item $C$ has trivial image in the orthogonal socle $S$.
\item $C$ lies in the normalizer in $E$ of a proper parabolic subgroup of $S$.
\end{enumerate}
Suppose that $C$ lies in the normalizer in $E$ of a proper parabolic subgroup of $S$. In case {\rm (3)}, taking the inverse image in $P_1$ of this proper parabolic subgroup of $S$ extends the singular flag in $V$.
\end{lemma}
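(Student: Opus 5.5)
The plan follows the proofs of Lemma \ref{lem:sl-actual-quotient-trichotomy} and Lemma \ref{lem:symplectic-actual-levi-alternatives}. Assume that (1) and (2) fail, so $C\not\geq S$ and $C$ has nontrivial image in $S$. Suppose also that $C$ lies in no normalizer in $E$ of a proper parabolic subgroup of $S$. We derive a contradiction. Let $B_S$ be a Borel subgroup of $S$ and set $B_E=N_E(B_S)$. Every diagonal or toral element of the projective Levi quotient can be represented by an element of a maximal split torus normalizing $B_S$, so $B_E$ maps onto $E/S$. This is the same surjectivity used in the linear and symplectic cases, and I will justify it by the Frattini argument: $E$ permutes the Borel subgroups of $S$, and $S$ is transitive on them.

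By hereditary $E$-permutability, there is $x\in E$ with $CB_E^x$ a subgroup. Suppose $CB_E^x<E$, and let $M$ be a maximal subgroup of $E$ containing it. Since $B_E^x$ maps onto $E/S$, $M$ does not contain $S$. Then $M\cap S$ is a proper subgroup of $S$ containing $B_S^x$, so it is parabolic. Because $M\cap S\unlhd M$, we get $C\leq N_E(M\cap S)$, which contradicts our assumption. Therefore $E=CB_E^x$. Since $C\not\geq S$ and $B_E^x\not\geq S$, both factors are core-free by Lemma \ref{lem:almost-simple-core-free}. Taking a maximal subgroup of $E$ containing $C$ then gives a factorization of the almost simple group $E$ with solvable factor $B_E^x$. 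The solvable factor $B_E^x$ contains a full Borel subgroup of $S$, so its $p$-part is at least that of a Borel subgroup of $S$.

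Now compare with Li and Xia \cite{LiXia2022}. Since $\dim V\geq7$, we have $\dim W\geq5$. The socle $S=\POmega(W)$ is either orthogonal of dimension at least $7$, whose rows were compared in the table of Proposition \ref{prop:orthogonal-no-p1-factorization}, or one of the low-rank isomorphism types in \eqref{eq:orthogonal-low-rank-isomorphisms}. The latter are $\POmega_5(q)\cong\PSp_4(q)$, $\POmega_6^+(q)\cong\PSL_4(q)$ and $\POmega_6^-(q)\cong\PSU_4(q)$. For the symplectic and linear socles, the $p$-part comparisons in Proposition \ref{prop:symplectic-no-p1-factorization} and Proposition \ref{prop:sl-parabolic-containment} already rule out a solvable factor containing a Borel subgroup. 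For $\PSU_4(q)$, I will read off the unitary rows of Li and Xia and check the same strict inequality: the Borel subgroup has $p$-part $q^6$, while the listed solvable factors have smaller $p$-part. Minus-type socles of larger dimension have no factorization with a solvable factor at all. In every case we get a contradiction, so one of (1)--(3) holds.

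The final assertion is formal. If $C\leq N_E(Q)$ for a proper parabolic $Q$ of $S$, then $Q$ stabilizes a nonzero totally singular flag in $W$. The inverse image in $\ell^\perp$ of each term, under $\ell^\perp\to W$, is totally singular in $V$ and contains $\ell$. Because $C$ normalizes $Q$, it stabilizes that flag, so $A$ stabilizes the extended flag. I expect the main obstacle to be the small or degenerate cases. These are $\PSp_4(q)$ with $q\in\{2,3\}$, where the socle is not simple or uses $\PSp_4(3)\cong\PSU_4(2)$, and the unitary rows for $\PSU_4(q)$. For these I would argue as in Lemma \ref{lem:symplectic-actual-levi-alternatives}: only $E=S$ arises, and the result is handed off to Theorem \ref{thm:unitary}.
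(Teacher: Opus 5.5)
Your argument for $\dim W\geq7$ is the paper's proof: test against the Borel normalizer, note that a maximal overgroup not containing $S$ meets $S$ in a parabolic, conclude $E=CB_E^x$, and finish with the Li--Xia $p$-part comparison. Your Frattini argument is a clean justification that $B_E$ maps onto $E/S$. You are also right that the orthogonal table of Proposition \ref{prop:orthogonal-no-p1-factorization} does not cover $\dim W\in\{5,6\}$; the paper's proof cites it there anyway. The gap is in how you dispose of those low-rank socles. Two of your claims there are false, and in both cases the factorization step yields no contradiction.

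\emph{The case $G=\Omega_8^+(2)$.} Here $P_1=2^6{:}A_8$ and $E=S=\Omega_6^+(2)\cong\PSL_4(2)\cong A_8$. The Borel subgroup of $\PSL_4(2)$ is a Sylow $2$-subgroup $P$ of $A_8$, and $P$ is transitive on the $8$ points, so $A_8=A_7P$. This is an alternating-type factorization ($A_n=A_{n-1}H$ with $H$ transitive), so the linear table of Proposition \ref{prop:sl-parabolic-containment} does not exclude it. Consequently $C=A_7$, which lies in no parabolic, survives your argument. Since $E=S$ here, the repair is to apply \cite{VT2012} to $C\leq A_8$.

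\emph{The case $G=\Omega_7(3)$.} Your claim that only $E=S$ arises is false, because the symplectic reason does not transfer. The element $-1$ on the hyperbolic plane $\langle e_1,f_1\rangle$ has spinor norm $-1$, a nonsquare mod $3$. It can therefore be paired with any $g\in\mathrm{SO}(W)\setminus\Omega(W)$ to give an element of $P_1$. Indeed $P_1=3^5{:}U_4(2){:}2$ and $E=\mathrm{SO}_5(3)\cong\PSp_4(3).2$. Theorem \ref{thm:unitary} only handles $C\leq S$, so it does not finish this case.

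Moreover, a Sylow $3$-subgroup of $U_4(2)$ is transitive on the $27$ cosets of $2^4{:}A_5$ (the $27$ lines of a cubic surface). Hence $E=CB_E^x$ holds for $C=2^4{:}S_5$. This $C$ normalizes no parabolic of $S$, because $5$ does not divide the order $2^4\cdot3^4$ of either parabolic normalizer in $E$.

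You need an extra step for $C\not\leq S$:
\begin{itemize}
\item By Lemma \ref{lem:normal-intersection-transfer}, $C\cap S$ is ordinarily $S$-permutable.
\item An ordinary-permutability result for $\PSU_4(2)$ (e.g.\ \cite[Corollary 4.17]{ZhangOrdinaryPreprint}, or a direct check) then gives $C\cap S\in\{1,S\}$.
\item If $C\cap S=1$, then $|C|\leq2$. Every involution of $\mathrm{SO}_5(3)$ fixes a singular line of $W$, so alternative (3) holds.
\end{itemize}
Separately, the case $\PSp_4(2)$ that you mention never arises, since type $B_m$ is only used for odd $q$.
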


\begin{proof}
Suppose that $C\not\geq S$, $C$ has nontrivial image in the orthogonal socle $S$, and $C$ does not lie in the normalizer in $E$ of a proper parabolic of $S$. We let $B_S$ be a Borel subgroup of $S$, and $B_E=N_E(B_S)$. The quotient $E/S$ corresponding to the diagonal part comes from a torus of the Levi quotient of $P_1$, so is represented by elements normalizing $B_S$ \cite[Example 24.22, Theorem 24.24]{MalleTesterman2011}, \cite[Proposition 26.1(b)]{MalleTesterman2011}. Therefore, the group $B_E$ maps onto $E/S$ and is solvable (an extension of $B_S$ by a toral quotient). By hereditary $E$-permutability of $C$, we have $B_E^x$ with $CB_E^x$ a subgroup. If $CB_E^x<E$, then a maximal subgroup $M$ of $E$ containing $CB_E^x$ cannot contain $S$. Then $M\cap S$ is proper and contains $B_S^x$, so it is parabolic in $S$. Hence $M\leq N_E(M\cap S)$, a contradiction. So $E=CB_E^x$, a factorization with solvable factor $B_E^x$ and a core-free subgroup $C$ of $E$ (Lemma \ref{lem:almost-simple-core-free}), with $B_E^x\cap S$ containing $B_S^x$. The orthogonal Li and Xia comparison \cite{LiXia2022} of Proposition \ref{prop:orthogonal-no-p1-factorization} contradicts this factorization. Finally, a proper parabolic of $\POmega(W)$ stabilizes a flag of nonzero totally singular subspaces in $W$. The quotient map is $p_\ell:\ell^\perp\to W$, given by $p_\ell(v)=v+\ell$. Consequently, the inverse image of this flag in $\ell^\perp$ under the quotient map onto $W$ is a totally singular flag in $V$ containing $\ell$, extending the singular flag.
\end{proof}

By Lemma \ref{lem:orthogonal-actual-levi-alternatives}, when the semisimple image of $A$ is proper and nontrivial, we extend the singular flag stabilized by $A$ in the descent. The cases reached at the end of this descent are the linear, unitary, and symplectic groups and the groups of rank one in \eqref{eq:orthogonal-low-rank-isomorphisms}.

\begin{lemma}\label{lem:orthogonal-full-levi-obstruction}
The case in which the image of $A$ contains the full orthogonal socle on $W=\ell^\perp/\ell$ cannot occur.
\end{lemma}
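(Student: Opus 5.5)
Assume, for a contradiction, that $A\leq P_1$ fixes the singular line $\ell$ and that its image $C$ on $W=\ell^\perp/\ell$ contains $S=\POmega(W)$. The idea is the same as in the symplectic case with a full semisimple image: test hereditary permutability of $A$ against the stabilizer $P_2$ in $G$ of a totally singular plane. Since $\dim V\geq7$, the Witt index is at least $3$, so $P_2$ is a maximal parabolic of $G$ (for $\POmega_8^+(q)$ it is the stabilizer of a totally singular plane, which is still maximal). Lemma \ref{lem:maximal-product} then gives two alternatives: $A\leq P_2^x$, or $G=AP_2^x$.

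\textbf{Excluding $A\leq P_2^x$.} Suppose $A$ fixes a totally singular plane $X$, and consider its position relative to $\ell$.
\begin{itemize}
\item If $\ell\leq X$, then $X/\ell$ is a singular line of $W$ fixed by $C$.
\item If $\ell\not\leq X$, then $X\cap\ell^\perp$ is nonzero, since $\ell^\perp$ is a hyperplane. It is either a singular line $\ell'\neq\ell$ or all of $X$. Its image $(X\cap\ell^\perp+\ell)/\ell$ in $W$ is then a nonzero totally singular subspace of dimension at most $2$, and it is fixed by $C$.
\end{itemize}
In every case $C$ fixes a nonzero totally singular subspace of $W$. Because $W$ has Witt index at least $2$, this subspace is proper, so $S$ would lie in a proper parabolic of $S$, a contradiction.

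\textbf{Excluding $G=AP_2^x$.} Here $|G:P_2|$ divides $|A|$, and $|A|$ divides $|P_1|$. I would find a primitive prime divisor $r$ that divides $|G:P_2|$ but not $|P_1|$. The prime $r$ should have $\ord_r(q)$ equal to the largest cyclotomic exponent in $|G|$ that does not appear in the Levi factor $\Omega(W)\times\GL_1(q)$ of $P_1$. By type:
\begin{itemize}
\item For $\Omega_{2m+1}(q)$, take $e=2m$. Then $|P_1|$ involves only $q^{2i}-1$ with $i\leq m-1$, together with $q-1$.
\item For $\Omega^+_{2m}(q)$, take $e=2m-2$. The factor $q^m-1$ in $|G|$ has to be handled: $|\Omega^+_{2m-2}|$ contains $q^{m-1}-1$ and $q^{2i}-1$ for $i\leq m-2$. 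The exponent $2m-2$ occurs in $|G|$ but not in the $p'$-part of $|P_1|$.
\item For $\Omega^-_{2m}(q)$, take $e=2m$, coming from the factor $q^m+1$. It is absent from $|P_1|$, whose Levi contains $q^{m-1}+1$.
\end{itemize}
In each case I need to check that $r$ divides $|G:P_2|=|G|/|P_2|$. This holds because the Levi factor $\GL_2(q)\times\Omega(\dim V-4)$ of $P_2$ also avoids exponent $e$.

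\textbf{Main obstacle.} The difficulty is the divisibility bookkeeping together with the Zsigmondy exceptions $(q,e)=(2,6)$ and $e=2$. These arise in $\Omega_7(q)$ with $q$ odd, where $e=6$ is fine; in $\Omega_8^+(2)$ with $e=6$; and in $\Omega_8^-(2)$ with $e=8$, which is fine. For $\Omega_8^+(2)$ I would compute the indices directly from the ATLAS: $|G:P_2|=1575$, and compare with $|P_1|=|G|/135$. The prime $7$ divides $|P_1|$, so instead I would use $|G:P_2|_3=3^2$ or $|G:P_2|_5=5^2$ against $|P_1|$, or otherwise switch to testing against $P_3$ or $P_4$. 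If necessary, any remaining small case would be settled by a direct order comparison in this way.
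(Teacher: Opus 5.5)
Your proposal is correct. It follows the paper for one alternative and takes a different route for the other.

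\textbf{The alternative $A\leq P_2^x$.} This part is the same as the paper's. The three positions of the plane relative to $\ell$ and $\ell^\perp$ each give a nonzero totally singular subspace of $W$ fixed by the image of $A$. That is impossible when the image contains $\POmega(W)$.

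\textbf{The alternative $G=AP_2^x$.} Here you argue arithmetically, through $|G:P_2|\mid|A|\mid|P_1|$ and a primitive prime divisor, which is the template of the symplectic section. The paper instead uses a one-line orbit argument. The equality $G=AP_2^x$ would make $A$ transitive on the totally singular planes of $V$. But $A$ fixes $\ell$, so it preserves the set of totally singular planes through $\ell$. That set is nonempty because the Witt index is at least $3$, and it is proper.

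\textbf{Checking your bookkeeping.} The primes you choose work.
\begin{itemize}
\item For $B_m$, a primitive prime divisor of $q^{2m}-1$ divides $|G:P_2|$ but not $|P_1|$.
\item For $D_m^+$, take a primitive prime divisor of $q^{2m-2}-1$; it divides $q^{m-1}+1$, hence $|G:P_2|$, but not $|P_1|$.
\item For $D_m^-$, take a primitive prime divisor of $q^{2m}-1$; it divides $q^m+1$, hence $|G:P_2|$, but not $|P_1|$.
\end{itemize}
The only Zsigmondy exception in range is $\Omega_8^+(2)$.

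\textbf{The case $\Omega_8^+(2)$.} Your hedge there is partly wrong: the $3$-part does not help. We have $|P_1|=|2^6{:}A_8|=2^{12}\cdot3^2\cdot5\cdot7$ and $|G:P_2|=1575=3^2\cdot5^2\cdot7$, so the $3$-parts agree. It is the $5$-part that gives $|G:P_2|\nmid|P_1|$.

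\textbf{Comparison.} Your route works, but it costs order formulas, Zsigmondy's theorem and a special case. The paper's orbit argument is uniform and needs no arithmetic. It also shows that every hereditarily $G$-permutable subgroup of $P_1$ lies in a conjugate of $P_2$, without using the hypothesis on the image in $\POmega(W)$.
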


\begin{proof}
Suppose that the image of $A$ on $W$ contains $\POmega(W)$. By Witt's lemma \cite[Proposition 2.1.6]{KleidmanLiebeck1990}, the orthogonal group of $W$ is transitive on singular subspaces of a fixed dimension and type. Hence the image of $A$ stabilizes no nonzero proper totally singular subspace of $W$. Let $P_2$ be the stabilizer in $G$ of a totally singular plane of $V$, a maximal parabolic of $G$ \cite[Proposition 26.1(b)]{MalleTesterman2011}. By hereditary $G$-permutability of $A$ and Lemma \ref{lem:maximal-product}, we have $A\leq P_2^g$ or $G=AP_2^g$.

If $A\leq P_2^g$, then let $Y$ be the totally singular plane of $V$ fixed by $P_2^g$. Since $A\leq P_1$ also fixes $\ell$ and $\ell^\perp$, we consider the positions of $Y$ relative to $\ell$ and $\ell^\perp$. If $\ell\leq Y$, then $Y/\ell$ is a singular line of $W$ fixed by the image of $A$ on $W$. If $\ell\not\leq Y\leq\ell^\perp$, then the image of $Y$ in $W$ is a totally singular subspace fixed by the image of $A$ on $W$. If $Y\not\leq\ell^\perp$, then $Y\cap\ell^\perp$ is an $A$-stable singular line $\neq\ell$, and $((Y\cap\ell^\perp)+\ell)/\ell$ is a singular line of $W$. Each position contradicts the hypothesis that the image of $A$ contains $\POmega(W)$. In the product case, $G=AP_2^g$ makes $A$ transitive on totally singular planes, but $A\leq P_1$ fixes $\ell$ and maps the family of totally singular planes containing $\ell$ to itself. Thus, $A$ is contained in no conjugate of $P_2$, and the factorization $G=AP_2^g$ is also impossible.
\end{proof}

\begin{lemma}\label{lem:orthogonal-semisimple-trivial-borel}
If $A\leq P_1$ and every semisimple image of $A$ in the successive Levi quotients of singular line stabilizers is trivial, then $A$ lies in a Borel subgroup of $G$.
\end{lemma}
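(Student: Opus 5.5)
The plan is to read the hypothesis as a statement about a chain of singular-line stabilizers and build a complete totally singular flag inductively. Set $\ell_1=\ell$ and $W_1=\ell^\perp/\ell$. The hypothesis says that the image of $A$ in the orthogonal socle $\POmega(W_1)$ of the projective Levi quotient of $P_1$ is trivial. We first show that $A$ then fixes every singular line of $W_1$, or at least a singular line of $W_1$ that can be lifted. We then iterate in $W_2=\ell_2^\perp/\ell_2$ inside $W_1$, and so on. Once the Witt index of the current quotient is exhausted, the inverse images form a complete totally singular flag $0<F_1<\cdots<F_r$ of $V$ fixed by $A$. Its stabilizer is a Borel subgroup of $G$, since proper parabolics of $G$ are exactly stabilizers of totally singular flags (as used in the proof of Proposition \ref{prop:orthogonal-no-p1-factorization}).

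For the key single step, the image of $A$ on $W_1$ lies in $E\leq\PGL(W_1)$ and has trivial image in $S=\POmega(W_1)$. So it lies in a subgroup $C$ meeting $S$ trivially, and $C$ embeds in $E/S$, the diagonal/toral quotient described in Lemma \ref{lem:orthogonal-actual-levi-alternatives}. In the proof of that lemma, $E/S$ is represented by elements of a torus normalizing a Borel subgroup $B_S$. I would like to conclude that $C$ itself normalizes a Borel subgroup of $S$, hence fixes a complete totally singular flag of $W_1$. The direct route is to note that the Levi quotient of $P_1$ is $\GL_1(q)\times$(orthogonal group on $W_1$). Elements acting trivially on $\POmega(W_1)$, i.e.\ centralizing it, act on $W_1$ as scalars, by irreducibility of the natural module for $\POmega(W_1)$ once $\dim W_1\geq3$. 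Projectively, $C$ is therefore trivial on $W_1$. Hence $A$ acts on $\ell^\perp/\ell$ by scalars, and so stabilizes every subspace of $\ell^\perp$ containing $\ell$. In particular $A$ stabilizes any totally singular flag of $V$ beginning with $\ell$ and lying in $\ell^\perp$. This is the same observation used in the symplectic section for $R_u(P_1)\rtimes\GL_1(q)$.

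With that in hand, the induction is almost immediate. Choose any complete totally singular flag $\ell=F_1<F_2<\cdots<F_r$ with $F_r\leq\ell^\perp$. Since every totally singular subspace containing $\ell$ lies in $\ell^\perp$, $A$ fixes each $F_i$ and therefore lies in the Borel subgroup stabilizing this flag. The phrase ``successive Levi quotients'' is then consistent: at each later stage the image is again trivial.

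The main obstacle is justifying that trivial image in the socle forces a projectively scalar action on $W_1$. There are two separate difficulties. First, the projective Levi quotient $E$ may contain outer diagonal (similitude or spinor-norm) elements that normalize but do not centralize $S$. Such elements of $E\setminus S$ act nontrivially, so ``trivial image in $S$'' must be read as ``image in $E$ trivial modulo $S$''. Second, the low-dimensional $W_1$ ($\dim W_1\leq 2$, or $\POmega_4^+$ not simple) must be checked separately. I would handle the first difficulty as follows. I would use Lemma \ref{lem:normal-intersection-transfer} with $N=S$ to see that $C\cap S=1$, so $C$ is isomorphic to a subgroup of the abelian $p'$-group $E/S$. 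Such a $C$ fixes a point of the building of $S$ by Borel--Tits/coprime action, since a $p'$-subgroup normalizing $S$ and acting by inner-diagonal automorphisms lies in a maximal torus of some Borel. Hence $C$ normalizes a Borel subgroup of $S$, and lifting that flag gives the extended singular flag. In the small cases I would fall back on the isomorphisms \eqref{eq:orthogonal-low-rank-isomorphisms}.
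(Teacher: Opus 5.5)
Your direct route is correct and is essentially the paper's proof. The paper reads the hypothesis as saying that, modulo $R_u(P_1)$, the image of $A$ acts projectively by scalars on $W=\ell^\perp/\ell$. Then $A$ fixes every totally singular subspace of $V$ between $\ell$ and $\ell^\perp$, and lifting singular lines gives a complete totally singular flag through $\ell$. Your remark that scalar action on $W$ fixes every such flag at once, because every totally singular subspace containing $\ell$ lies in $\ell^\perp$, just removes the paper's iteration.

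The patch you propose for the ``first difficulty'' should be discarded, because both of its steps fail.

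\emph{First step.} Lemma \ref{lem:normal-intersection-transfer} only shows that $C\cap S$ is ordinarily $S$-permutable. It never gives $C\cap S=1$.

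\emph{Second step.} A $p'$-subgroup of $E$ meeting $S$ trivially need not normalize a Borel subgroup of $S$. A semisimple element lies in a Borel subgroup only if it is conjugate into the maximally split torus, and neither Borel--Tits nor coprime action forces this.

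\emph{Counterexample.} Let $q\equiv1\pmod4$, let $W$ have plus type, let $D'\leq W$ be an anisotropic plane, and set $c=-1_{D'}\oplus1_{D'^\perp}$.
\begin{itemize}
\item Both $c$ and $-c$ have nonsquare spinor norm (the discriminant of $D'$), so $c$ gives an element of $E\setminus S$.
\item Let a nonsquare $\lambda$ act on $\ell$ and $\lambda^{-1}$ on a complementary singular line. Together with $c$ this gives an element $a\in P_1\cap\Omega(V)$ with the four distinct eigenvalues $\lambda,\lambda^{-1},-1,1$.
\item An $a$-invariant totally singular subspace is the sum of its intersections with the eigenspaces, so it meets $D'$ trivially. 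It therefore lies in a subspace whose Witt index is two less than that of $V$.
\end{itemize}
Thus $\gen{a}\leq P_1$ has image $C$ with $C\cap S=1$, yet lies in no Borel subgroup of $G$. Under your weak reading the lemma, which assumes no permutability, is false.

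The hypothesis must therefore be read in the strong sense, as the paper and your direct route do: the image of $A$ on $W$ is projectively trivial. A nontrivial image $C$ with $C\cap S=1$ is handled elsewhere, not by this lemma. Since $|C|\,|B_E|<|E|$, the Borel test in the proof of Lemma \ref{lem:orthogonal-actual-levi-alternatives} places $C$ in the normalizer of a proper parabolic subgroup of $S$, and the flag is extended there.

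Your low-dimensional worry also disappears. Only the first quotient $W=\ell^\perp/\ell$ is used, and $\dim W\geq5$ there because $\dim V\geq7$.
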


\begin{proof}
For the chosen singular line $\ell$, the image of $A$ on $W=\ell^\perp/\ell$ lies in the diagonal quotient, so modulo $R_u(P_1)$ the image of $A$ acts projectively by scalars on $W$ and stabilizes every singular subspace of $W$. We choose a singular line $M$ of $W$ and lift it to a totally singular plane of $V$ containing $\ell$. This plane is the inverse image of $M$ under the quotient map $\ell^\perp\to W$. Repeating the operation of lifting a singular line to extend the stabilized flag in $V$, we build a complete totally singular flag stabilized by $A$. More explicitly, this flag is $0=F_0<F_1<\cdots<F_r$, where $\dim F_i=i$ for $0\leq i\leq r$ and $r$ is the Witt index of $V$. Each $F_i$ is totally singular, and $F_r$ is maximal totally singular. The Witt index drops at each step. Since the totally singular flag constructed in this way is complete, its stabilizer in $G$ is a Borel subgroup of $G$ \cite{Carter1985}, \cite[Proposition 26.1(b)]{MalleTesterman2011}.
\end{proof}

It remains to treat $A$ contained in a Borel subgroup of $G$.

\begin{lemma}\label{lem:pgu4-borel-interface}
Let $\PSU_4(q)\leq E\leq\PGU_4(q)$, and let $B_E$ be the normalizer in $E$ of a Borel subgroup of $\PSU_4(q)$. If $C\leq B_E$ is hereditarily $E$-permutable, then the image of $C$ in the unitary socle of $E$ is trivial. Moreover, the remaining diagonal image is the image of $C$ in $E/\PSU_4(q)$, and it is toral. This toral image is considered when taking the intersection of the kernels of the projective actions on the spaces $W_i$ in {\rm Theorem \ref{thm:orthogonal-borel}}.
\end{lemma}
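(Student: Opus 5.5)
I would prove Lemma \ref{lem:pgu4-borel-interface} by a rank-one descent, modelled on Lemma \ref{lem:symplectic-c2-borel} and Lemma \ref{lem:unitary-flag-toral}. The relative root system of $\PSU_4(q)$ is of type $C_2$ (equivalently $B_2$), with two simple roots. The first is a long root $\beta$, whose Levi factor of rank one is $\SL_2(q^2)$, acting on a totally singular plane $X$ through the linear action. The second is a short root $\alpha$, whose Levi factor of rank one is $\SU_2(q)\cong\SL_2(q)$, acting on $\ell^\perp/\ell$ for a singular line $\ell$. The positive root subgroups are $U_\alpha,U_\beta,U_{\alpha+\beta},U_{2\alpha+\beta}$. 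Since $C\leq B_E\leq P_\alpha,P_\beta$, restriction (Lemma \ref{lem:overgroup-restriction}) makes $C$ hereditarily permutable in each maximal parabolic. Passing to the projective Levi quotient of rank one (Lemma \ref{lem:quotient-inheritance}) lands the image of $C$ in a Borel subgroup of a group $E_\gamma$ with $\PSL_2(q_\gamma)\leq E_\gamma\leq\PGL_2(q_\gamma)$.

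The key steps, in order, are as follows.

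\begin{enumerate}[label=\textup{(\arabic*)}]
\item Apply Lemma \ref{lem:pgl2-borel} in each of the two rank-one quotients to kill the images there. This shows that the $U_\alpha$ and $U_\beta$ components of $C$ vanish, and that the simple-root characters $\alpha,\beta$ are trivial on the toral image of $C$ inside the socle.
\item Conjugate the parabolics by the simple reflections $s_\alpha$ and $s_\beta$, exactly as in Lemma \ref{lem:symplectic-c2-borel}. This yields rank-one Levi factors with roots $2\alpha+\beta$ and $\alpha+\beta$. Rerunning step (1) in those conjugate parabolics, and using that $C$ already lies in $U_{\alpha+\beta}U_{2\alpha+\beta}T$, kills the remaining unipotent components.
\item Conclude that the image of $C$ in the socle $\PSU_4(q)$ lies in the intersection of the kernels of the simple-root characters on the torus. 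That intersection is projectively central, so the image is trivial.
\item The remaining image of $C$ is then a subgroup of $B_E/B_S$, which maps into $E/\PSU_4(q)$. This quotient is cyclic of order dividing $(4,q+1)$ and is represented by diagonal (toral) elements normalizing $B_S$. Hence the diagonal image is toral, which is all the statement asks; it is deferred to Theorem \ref{thm:orthogonal-borel}.
\end{enumerate}

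The main obstacle is the exceptional small case $q=2$ on the short root, where $E_\alpha\cong\PGL_2(2)\cong S_3$ is excluded from Lemma \ref{lem:pgl2-borel}. The same issue arises for the $s_\beta$-conjugate parabolic. For this case I would argue as in Lemma \ref{lem:unitary-flag-toral}: the diagonal torus of $\PGL_2(2)$ is trivial, so the toral character vanishes automatically. The unipotent component is the real difficulty. There I would first try the $\PSU_4(2)$ argument already used in Proposition \ref{prop:unitary-even}, where the image sits in a single long root subgroup and the transvection case is handled by Appendix \ref{app:psu4-2}. Failing that, I would fall back on a direct computation.

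A secondary care point is that $C$ sits in $B_E$ rather than $B_S$. This means one must check that the rank-one quotients still lie between $\PSL_2$ and $\PGL_2$ when $E>\PSU_4(q)$, using that the diagonal part normalizes each standard parabolic.
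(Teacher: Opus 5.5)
For $q\geq3$ your argument is the paper's. You restrict to the two maximal parabolics containing $B_E$, pass to the rank-one Levi quotients, kill the simple-root components and characters with Lemma~\ref{lem:pgl2-borel}, and then conjugate by $s_\alpha$ and $s_\beta$ as in Lemma~\ref{lem:symplectic-c2-borel}.

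One labelling correction. In the relative $C_2$ system of ${}^2A_3$, take $\alpha$ short and $\beta$ long, with positive roots $\alpha,\beta,\alpha+\beta,2\alpha+\beta$. The long roots have root groups of order $q$, and their rank-one Levi is $\SU_2(q)$, coming from the singular line stabilizer acting on $\ell^\perp/\ell$. The short roots have root groups of order $q^2$, and their Levi is $\SL_2(q^2)$, coming from the totally singular plane stabilizer. You have these swapped. This is harmless for $q\geq3$.

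The genuine gap is $q=2$. The statement does not exclude this case, and your proposed repair does not close it.
\begin{itemize}
\item The $\SL_2(4)$ quotient still kills $U_\alpha$ and the torus.
\item The $\SU_2(2)$ quotient is $\PGL_2(2)\cong S_3$. There a point stabilizer of order $2$ \emph{is} hereditarily $S_3$-permutable, so the $U_\beta$ and $U_{2\alpha+\beta}$ components survive.
\item A surviving $U_\beta$ component also blocks the $U_{\alpha+\beta}$ step. The conjugate $P_\alpha^{s_\beta}$ contains $U_{-\beta}$ and meets $U_\beta$ trivially, so $C$ need not lie in it.
\end{itemize}
All the rank-one method yields is $C\leq R_u(P_2)$, an elementary abelian group of order $2^4$.

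Neither of your fallbacks covers this. The argument in Proposition~\ref{prop:unitary-even} placed $A$ in a single long root subgroup only because $A$ also fixed a nondegenerate line $U$ pointwise, and no such line is available here. Appendix~\ref{app:psu4-2} only treats subgroups generated by a single involution. The ``direct computation'' you mention is left unspecified.

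The missing observation is how the paper settles this case in one line. For $q=2$ we have $(4,q+1)=1$, so $E=\PGU_4(2)=\PSU_4(2)$ is simple. Then $C\leq B_E$ is a proper hereditarily $E$-permutable subgroup, so $C=1$ by the result already proved for this simple group: Theorem~\ref{thm:unitary}, or equivalently Theorem~\ref{thm:symplectic} via $\PSU_4(2)\cong\PSp_4(3)$. There is no circularity, since those sections precede the orthogonal one.
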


\begin{proof}
For $q=2$, the quotient $\PGU_4(2)/\PSU_4(2)$ is trivial \cite[p. 66]{Wilson2009} and $\PSU_4(2)\cong\PSp_4(3)$ \cite[\S3.6.5]{Wilson2009}, so Theorem \ref{thm:symplectic} proves the conclusion. Assume that $q\geq3$. The relative root system of rank two of $\SU_4(q)$ is of type $\mathcal{C}_2$. We use the relative root system for ${}^2A_3$ \cite{Carter1985} and the finite Levi quotients \cite[Proposition 26.1(b)]{MalleTesterman2011}. With simple relative roots $\alpha,\beta$, the positive roots are
$$
        \alpha,~\beta,~\alpha+\beta,~2\alpha+\beta.
$$
The two maximal parabolics of $E$ containing $B_E$ have Levi quotients of rank one of type $\PSU_2(q)\leq E_1\leq\PGU_2(q)$ or $\PSL_2(q^2)\leq E_2\leq\PGL_2(q^2)$, with $\PSU_2(q)\cong\PSL_2(q)$ \cite[p. 66]{Wilson2009}. By Lemma \ref{lem:quotient-inheritance}, the images of $C$ in $E_1$ and $E_2$ are hereditarily $E_i$-permutable and lie in Borel subgroups of the respective quotients. Since $q\geq3$, Lemma \ref{lem:pgl2-borel} shows that both images corresponding to the simple roots are trivial.

Hence the components corresponding to the simple roots and the two toral characters vanish, and the remaining possible image of $C$ lies in the product of the two root subgroups corresponding to $\alpha+\beta$ and $2\alpha+\beta$. As in Lemma \ref{lem:symplectic-c2-borel}, the roots $2\alpha+\beta$ and $\alpha+\beta$ occur in Levi factors of rank one after we conjugate these two parabolics of $E$ by the simple reflections. In these Levi factors of rank one, Lemma \ref{lem:quotient-inheritance} with Lemma \ref{lem:pgl2-borel} shows that both components of the image of $C$ corresponding to these two positive roots are trivial. So the image of $C$ in the unitary socle is trivial, and any diagonal toral image of $C$ is treated in Theorem \ref{thm:orthogonal-borel}.
\end{proof}

\begin{theorem}\label{thm:orthogonal-borel}
Let $G=\POmega(V)$ be of type $B_m$, $D_m^+$, or $D_m^-$, and let $B$ be a Borel subgroup of this orthogonal group. If $A\leq B$ is hereditarily $G$-permutable, then $A=1$.
\end{theorem}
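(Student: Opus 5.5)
We argue by induction on the Witt index $r$ of $V$, following the pattern of Lemma \ref{lem:sl-borel} and Theorem \ref{thm:symplectic-borel}. Fix a Witt basis $e_1,\ldots,e_r,f_r,\ldots,f_1$ (with a basis of the anisotropic part $D$) such that $B$ stabilizes $F_i=\gen{e_1,\ldots,e_i}$. The base cases are the orthogonal groups of small dimension, which we transport through \eqref{eq:orthogonal-low-rank-isomorphisms}. For $\POmega_5(q)\cong\PSp_4(q)$ we use Lemma \ref{lem:symplectic-c2-borel}. For $\POmega_6^+(q)\cong\PSL_4(q)$ we use Lemma \ref{lem:sl-borel}. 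For $\POmega_6^-(q)\cong\PSU_4(q)$ we use Lemma \ref{lem:pgu4-borel-interface}, and for $\POmega_4^-(q)\cong\PSL_2(q^2)$ we use Lemma \ref{lem:pgl2-borel}. In each case the isomorphism carries Borel subgroups to Borel subgroups, and the relevant projective Levi quotient lies between the socle and its diagonal extension. For the inductive step, take $\dim V\geq7$, so $r\geq3$ except for minus type with $r=2$, which we fold into the base cases through $\POmega_6^-$.

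The first step handles the linear part. Let $P_{F_r}$ be the stabilizer of the maximal totally singular subspace $F_r$, and consider its projective action on $F_r$. By Lemmas \ref{lem:overgroup-restriction} and \ref{lem:quotient-inheritance}, the image of $A$ in the resulting group $E$ with $\PSL_r(q)\leq E\leq\PGL_r(q)$ is hereditarily $E$-permutable. This image lies in the normalizer of the upper triangular Borel subgroup, so Lemma \ref{lem:sl-borel} makes it trivial. The exception $(r,q)=(2,2)$ arises only in the minus-type base. Consequently, all toral ratios $e_i/e_j$ vanish, and so do all components of $A$ in the root subgroups $U_{e_i-e_j}$.

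The second step handles the remaining roots. For each $i$, consider the stabilizer $P_i$ of $\gen{e_i}$ and its projective action on $W_i=\gen{e_i}^\perp/\gen{e_i}$, which is an orthogonal space of the same type and Witt index $r-1$. The image of $A$ lies in the induced Borel subgroup of the projective Levi quotient. We would like this image to be trivial by induction, which requires the statement in the slightly stronger form allowing $\POmega(W_i)\leq\widehat E\leq$ its projective conformal group, as in Theorem \ref{thm:symplectic-borel}. The remaining positive roots are $e_a+e_b$ (in types $B$ and $D$) and $e_a$ (type $B$, and the roots on $D$ in minus type). When $r\geq3$, each such root is a root of the orthogonal group on some $W_i$ with $i\notin\{a,b\}$. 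Hence every unipotent component of $A$ vanishes. The triviality of the images on the $W_i$, together with the first step, then forces the toral part of $A$ to act by a scalar on $V$, so $A=1$ in $\POmega(V)$.

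I expect the main obstacle to be the bookkeeping of diagonal and toral factors. Specifically, we must check that the induction hypothesis applies to the full projective Levi image $\widehat E$, not only to the socle $\POmega(W_i)$, since the conformal and diagonal parts do not normalize in the same way as in the linear case. We must also check that in the base case $\POmega_6^-\cong\PSU_4$ the residual diagonal image left over by Lemma \ref{lem:pgu4-borel-interface} is killed, by intersecting the kernels of the actions on several $W_i$ and on $F_r$. A secondary difficulty is small fields such as $q=2,3$, and $\POmega_8^+(2)$, where the rank-one factors degenerate to $\PGL_2(2)$ or to nonsimple groups. There I would first try to invoke the linear step on $F_r$ with $r\geq3$, which avoids $\PGL_2(2)$, and fall back on direct computation only if that fails.
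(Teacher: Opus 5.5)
Your proposal follows the paper's proof essentially step for step. The paper inducts on $\dim V$, with the cases $\dim V\le 6$ as bases via \eqref{eq:orthogonal-low-rank-isomorphisms}. It then kills the image on the maximal totally singular subspace with Lemma~\ref{lem:sl-borel}, applies induction to the quotients $W_i=\gen{e_i}^\perp/\gen{e_i}$ (using Lemma~\ref{lem:pgu4-borel-interface} when $W_i$ has type $\POmega_6^-$), removes each remaining root subgroup through some $W_i$ with $i\notin\{a,b\}$ using $r\geq 3$, and ends with the toral computation $e_i\mapsto\lambda e_i$, $f_i\mapsto\lambda^{-1}f_i$, $D\mapsto\mu$, which forces a scalar.

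You flag two bookkeeping points: applying induction to the full Levi image $\widehat E$ rather than only to $\POmega(W_i)$, and the leftover diagonal image from Lemma~\ref{lem:pgu4-borel-interface}. The paper also defers exactly these points to that final intersection-of-kernels computation, so they are not a departure from its argument.
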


\begin{proof}
We induct on $\dim V$. The quotients $W_i$ associated with singular line stabilizers have dimension $\dim V-2$. The bases are the cases in dimension at most six identified by the isomorphisms \eqref{eq:orthogonal-low-rank-isomorphisms}, $\POmega_5\cong\PSp_4$, $\POmega_6^+\cong\PSL_4$, $\POmega_6^-\cong\PSU_4$, $\POmega_4^-\cong\PSL_2(q^2)$, and the group of rank one $\POmega_3\cong\PSL_2(q)$. These are settled by Theorems \ref{thm:split-linear}, \ref{thm:unitary}, \ref{thm:symplectic} and the known result for $\PSL_2$. In particular, this includes their $q=2$ instances ($\PSp_4(2)$, $\PSL_4(2)\cong A_8$, $\PSU_4(2)$). Since $\POmega_{2m+1}(2)\cong\PSp_{2m}(2)$ is symplectic, for type $B_m$ we take $q$ odd. So we assume that $\dim V\geq7$, whence the Witt index is $r\geq3$. Let $E$ be the maximal totally singular subspace of $V$ whose complete flag is stabilized by $B$. This flag is $0=F_0<F_1<\cdots<F_r=E$, with $\dim F_i=i$ for $0\leq i\leq r$. Each $F_i$ is totally singular, and the flag ends at $E$ of dimension $r$. We take a Witt decomposition \cite[pp. 26--39]{KleidmanLiebeck1990}
$$
        V=E\oplus D\oplus E^*,~ E=\gen{e_1,\ldots,e_r},~ E^*=\gen{f_r,\ldots,f_1},
$$
with $D=0$ (even dimension of plus type), $D=\gen{d}$ an anisotropic line (odd dimension), or $D$ an anisotropic plane (even dimension of minus type). The vectors $e_1,\ldots,e_r$ are chosen so that $F_i=\gen{e_1,\ldots,e_i}$ for $1\leq i\leq r$.

We consider the stabilizer $P_E$ of $E$ in $G$ and the projective factor of its Levi quotient acting on $E$. We form this projective factor by taking the quotient of the Levi quotient by the kernel of its projective action on $E$. Write $\rho_E$ for this projective action of the Levi quotient of $P_E$ on $E$. The projective factor is the image of $\rho_E$. This quotient has linear socle $\PSL(E)\cong\PSL_r(q)$ with toral and anisotropic factors \cite[Proposition 26.1(b)]{MalleTesterman2011}. The image of $A$ lies in a Borel subgroup of this Levi quotient. Therefore, Lemma \ref{lem:sl-borel} shows that this image of $A$ is trivial. In particular, the group $A$ acts projectively by scalars on $E$ and stabilizes each $\gen{e_i}$.

For each $i$ let $W_i=\gen{e_i}^\perp/\gen{e_i}$. We have $A\leq P_{\gen{e_i}}$, where this parabolic is the stabilizer of $\gen{e_i}$ in $G$. By Lemma \ref{lem:quotient-inheritance}, the image of $A$ is hereditarily permutable in the orthogonal quotient of this stabilizer acting on $W_i$. The quotient is again a projective orthogonal quotient \cite[Proposition 26.1(b)]{MalleTesterman2011}, and the image of $A$ lies in its induced Borel subgroup. By induction on $\dim V$, the image of $A$ in each orthogonal quotient whose space has dimension greater than six is trivial. For the quotients of low rank reached at the end of the descent, we use Lemma \ref{lem:pgl2-borel}, Lemma \ref{lem:sl-borel}, Theorem \ref{thm:symplectic-borel}, and the $\PGU_4(q)$ case of Lemma \ref{lem:pgu4-borel-interface} for the minus case in dimension six. The plus case in dimension four gives two quotients of rank one, handled by Lemma \ref{lem:pgl2-borel}. Any diagonal or toral part is left to the intersection of the kernels of the projective actions on all the spaces $W_i$. So the orthogonal image of $A$ on $W_i$ is trivial for each $i$.

It remains to determine the intersection of the kernels of the projective actions on the spaces $W_i$. In the standard root notation \cite{Carter1985} and with a Witt basis as in \cite[pp. 26--39]{KleidmanLiebeck1990}, the root subgroups involving two coordinates have supports corresponding to the roots $e_i-e_j$ and $e_i+e_j$ ($i<j$). In odd dimension there are groups corresponding to short roots with support $e_i$, and in even dimension of minus type the root subgroups involving $D$ also have support on one coordinate. A root subgroup involving the coordinates indexed by $a$ and $b$ acts nontrivially on $W_i$ for some $i\notin\{a,b\}$ (possible as $r\geq3$). Similarly, a root subgroup involving only the coordinate indexed by $a$ acts nontrivially on some $W_i$ with $i\neq a$. Hence all unipotent components vanish. For the toral part of $A$, the projective factor of the Levi quotient of $P_E$ acting on $E$ shows that $A$ acts on $E$ as a single projective scalar, so $e_i\mapsto\lambda e_i$ for all $i$. The dual flag in $E^*$ has terms $F_j^*=\gen{f_r,\ldots,f_{r-j+1}}$ for $1\leq j\leq r$. Since the elements of $A$ leave the values of the quadratic form unchanged and stabilize the dual flag, we have $f_i\mapsto\lambda^{-1}f_i$. The anisotropic part $D$ lies in $\gen{e_i}^\perp$ for every $i$ and meets $\gen{e_i}$ trivially, so $D$ embeds in each $W_i$. Since the orthogonal image of $A$ on $W_i$ is trivial, $A$ acts on $W_i$, and hence on $D\subseteq W_i$, as a scalar. Write $\nu_i$ for the scalar on $W_i$ and $\mu$ for the scalar by which $A$ acts on $D$. In the standard coordinate description of a maximal torus of the orthogonal group of $V$ \cite[p. 70]{Wilson2009}, an element in all the kernels for the projective actions on the spaces $W_i$ is, modulo the projective centre,
$$
        e_i\mapsto\lambda e_i,~ f_i\mapsto\lambda^{-1}f_i~(1\leq i\leq r),~ x\mapsto\mu x~(x\in D).
$$
The hyperbolic pair $e_j,f_j$ ($j\neq i$) inside $W_i$ shows that $\nu_i=\lambda=\lambda^{-1}$, so $\lambda^2=1$, while the image of $D$ in $W_i$ shows that $\nu_i=\mu$, hence $\mu=\lambda$. In odd dimension $\det A=\mu$, so $A\in\operatorname{SO}(V)$ forces $\mu=1$. Therefore, $\lambda=1$ and hence $A=1$. By contrast, in even dimension of minus type $\mu=\lambda=\pm1$, so $A$ acts as the scalar $\lambda$ on all of $V$ and is projectively trivial. In particular, the action on the anisotropic plane is projectively trivial. The case of even dimension and plus type ($D=0$) is identical with no anisotropic part. For each of these three orthogonal types, the kernels for the projective actions on all the spaces $W_i$ intersect in projective scalars. Hence $A=1$.
\end{proof}

\subsection{Proof of \texorpdfstring{Theorem \ref{thm:orthogonal}}{the orthogonal theorem}}

\begin{proof}[Proof of Theorem \ref{thm:orthogonal}]
The cases of low rank in \eqref{eq:orthogonal-low-rank-isomorphisms} are handled by Theorems \ref{thm:split-linear}, \ref{thm:unitary} and \ref{thm:symplectic} and the base of rank one. Otherwise, let $A<G$ be proper, nontrivial, and hereditarily $G$-permutable. By the alternatives obtained from hereditary permutability with the singular line stabilizer $P_1$, the factorization case $G = A P_1 ^x$ is impossible (Proposition \ref{prop:orthogonal-no-p1-factorization}), so $A\leq P_1$ after conjugating $A$ in $G$. The image of $A$ on $W = \ell^\perp/\ell$ is governed by Lemma \ref{lem:orthogonal-actual-levi-alternatives}. A proper nontrivial semisimple image $C$ of $A$ lies in a proper parabolic of the Levi quotient of the singular line stabilizer in that descent step, so $A$ lies in the corresponding smaller parabolic of $G$ and the flag stabilized by $A$ extends in the descent. The case in which the semisimple image of $A$ is full cannot occur (Lemma \ref{lem:orthogonal-full-levi-obstruction}). If the semisimple image of $A$ is trivial, then $A$ is contained in a Borel subgroup of $G$ (Lemma \ref{lem:orthogonal-semisimple-trivial-borel}). In this case, $A$ is trivial by Theorem \ref{thm:orthogonal-borel}. So $A=1$.
\end{proof}

\section{Proof of the main theorem}\label{sec:assembly}

\begin{proof}[Proof of Theorem \ref{thm:main}]
By the classification of finite simple groups, a finite simple group is cyclic of prime order, alternating, classical, exceptional of Lie type (including the Tits group), or sporadic. Since $G$ is nonabelian, the cyclic case cannot occur. The alternating groups are covered by Vasil'ev and Tyutyanov \cite{VT2012}. For the sporadic groups and the Tits group, we use the result of Galt and Tyutyanov \cite[Corollary]{GT2022}. Galt and Tyutyanov \cite[Theorem 1 and Proposition 1]{GT2023} also obtained the required conclusion for the exceptional groups of Lie type and the groups $\PSL_2(q)$. The groups $\PSU_3(q)$ are covered by Theorem \ref{thm:unitary-low-bases}, with \cite[Theorem 2]{GT2025} as the source for $q\geq3$. The remaining classical groups of higher rank are covered by the four theorems proved in this paper, namely $\PSL_n(q)$ by Theorem \ref{thm:split-linear}, $\PSU_n(q)$ by Theorem \ref{thm:unitary}, $\PSp_{2m}(q)$ by Theorem \ref{thm:symplectic}, and $\POmega(V)$ by Theorem \ref{thm:orthogonal}. Hence $G$ has no proper nontrivial hereditarily $G$-permutable subgroup.
\end{proof}

\appendix

\section{Computational verification of the small cases}\label{app:computational-certificates}

Six steps in the proof are settled by a direct computation in a single small group. The first two steps are Lemma \ref{lem:pgl2-borel} for $\PGL_2(q)$ with $q\in\{5,7,9,11\}$ and the exceptional value $Q=2$ of Lemma \ref{lem:unitary-rank-three-toral-collapse} for $\PGU_3(2)$. The next two are the nonsimple quotient $\PSp_4(2)\cong S_6$ of Lemma \ref{lem:symplectic-c2-borel} and the prime degree $5$ of Lemma \ref{lem:s5-no-solvable-hereditary} for $S_5$. The final two are the degree-$9$ base of Lemma \ref{lem:symmetric-solvable-hereditary-trivial} for $S_9$ and the unipotent case of $\PSU_4(2)$ (Appendix \ref{app:psu4-2}). Each computation uses a short program with permutation operations or matrix operations over finite fields. The six programs are included as supplementary material. This material also contains the $S_4$ enumeration, the check that the failure of permutability persists when fixed points are added, and the $\PSU_4(2)$ toral enumeration. We describe the conclusion established by each computation.

The checks in the linear, unitary toral, and symplectic Borel cases test permutability with cyclic subgroups inside the group generated by the candidate subgroup and the chosen cyclic subgroup. For $\PGL_2(q)$, we use permutations of the projective line over $\F_q$. For $\PGU_3(2)$ and $\Sp_4(2)$, we use matrices over $\F_4$ and $\F_2$, respectively. We enumerate the subgroups of a point stabilizer in $\PGL_2(q)$, a diagonal torus in $\PGU_3(2)$, or a Borel subgroup in $\Sp_4(2)$ by closure under multiplication and inverses. For each nontrivial candidate $C$ we find a cyclic subgroup $H=\gen{h}$ for which
$$
        CH^x\neq H^xC~\text{for every }x\in F=\gen{C,h}.
$$
By Lemma \ref{lem:generated-hereditary-formulation}, if no conjugate of $H$ permutes with $C$ in the generated subgroup $F$, then $C$ is not hereditarily permutable in the group used in the computation. Hence none of the nontrivial candidates is hereditarily permutable. The fourth check (Appendix \ref{app:s5-solvable-certificate}) uses this permutability test with cyclic subgroups for every subgroup of $S_5$. It then lists the subgroups of $S_5$ that pass the test for every subgroup $H$ of $S_5$. The programs enumerate all the specified subgroups using the matrix and permutation operations described above. The accompanying transcripts list the subgroup counts and examples of the cyclic subgroups used to test permutability. The tables for the individual small groups summarise the totals. A subgroup $C$ passes the check for a subgroup $H$ if $CH^x=H^xC$ for some $x\in\gen{C,H}$. The column \emph{Passed} lists or counts the nontrivial subgroups $C$ that pass this check for every subgroup $H$ used in the computation.

\subsection{The small \texorpdfstring{$\PGL_2(q)$}{PGL2(q)} cases}\label{app:pgl2-small-certificate}

The group $\PGL_2(q)$ is realised on the projective line over $\F_q$, with $\F_9=\F_3[\omega]/(\omega^2+1)$, and $B_E$ is a point stabilizer in $\PGL_2(q)$. We enumerate every subgroup of $B_E$ and choose an element $h$ for each nontrivial subgroup $C$ such that no conjugate of $H=\gen{h}$ in $\gen{C,H}$ permutes with $C$. The table lists the group orders, orders of point stabilizers, and subgroup counts. Its last column counts the nontrivial subgroups $C\leq B_E$ for which, for every cyclic subgroup $H$ chosen to test $C$, some $x\in\gen{C,H}$ satisfies $CH^x=H^xC$.
$$
\begin{array}{c|c|c|c|c}
q & |\PGL_2(q)| & |B_E| & \#\{\text{subgroups of }B_E\} & \text{Passed}\\\hline
3 & 24 & 6 & 6 & 0\\
5 & 120 & 20 & 14 & 0\\
7 & 336 & 42 & 26 & 0\\
9 & 720 & 72 & 48 & 0\\
11 & 1320 & 110 & 38 & 0
\end{array}
$$
The value $q=3$, where $\PGL_2(3)\cong S_4$, is listed for completeness. We also treat the case $q=3$ by hand in Lemma \ref{lem:pgl2-borel}.

\subsection{The \texorpdfstring{$\PGU_3(2)$}{PGU3(2)} diagonal torus}\label{app:pgu3-2-toral-certificate}

For the exceptional value $Q=2$ in Lemma \ref{lem:unitary-rank-three-toral-collapse}, we construct $\GU_3(2)$ over $\F_4=\F_2[\alpha]/(\alpha^2+\alpha+1)$ with the Hermitian form whose Gram matrix is the identity. The quotient of $\GU_3(2)$ by its scalar centre gives $\PGU_3(2)$ of order $216$. We then take the projective diagonal torus $D$ of order $9$. There are five nontrivial subgroups $A$ of $D$, four of order $3$ and one of order $9$. For each such subgroup, we find a cyclic subgroup $H$ such that no conjugate of $H$ in $\gen{A,H}$ permutes with $A$. The table lists the orders of $A$, $H$ and $\gen{A,H}$. Its final column gives the number of distinct conjugates of $H$ in $\gen{A,H}$.
$$
\begin{array}{c|c|c|c}
|A| & |H| & |\gen{A,H}| & \#\{H^x:x\in\gen{A,H}\}\\\hline
3 & 2 & 18 & 3\\
3 & 3 & 27 & 3\\
3 & 2 & 18 & 3\\
3 & 4 & 36 & 9\\
9 & 4 & 216 & 27
\end{array}
$$
So no nontrivial subgroup of the diagonal torus $D$ is hereditarily $\PGU_3(2)$-permutable.

\subsection{The \texorpdfstring{$\PSp_4(2)$}{PSp4(2)} Borel case}\label{app:sp4-2-borel-certificate}

For the nonsimple quotient $\PSp_4(2)\cong S_6$ \cite{Wilson2009} in Lemma \ref{lem:symplectic-c2-borel}, we construct $\Sp_4(2)$ as the $4\times4$ matrices over $\F_2$ that leave the standard symplectic form unchanged. We take the Borel subgroup $B$ stabilizing the flag $\gen{e_1}<\gen{e_1,e_2}$. This subgroup has order $16$. We enumerate all $35$ subgroups of $B$, and for each of the $34$ nontrivial subgroups $C$ we find a cyclic subgroup $H$ such that no conjugate of $H$ in $\gen{C,H}$ permutes with $C$. Therefore, none of these subgroups is hereditarily $\Sp_4(2)$-permutable. The table lists the group order, the order of the Borel subgroup, and the subgroup count. Its last column counts the nontrivial subgroups $C\leq B$ for which, for every cyclic subgroup $H$ chosen to test $C$, some $x\in\gen{C,H}$ satisfies $CH^x=H^xC$.
$$
\begin{array}{c|c|c|c}
|\Sp_4(2)| & |B| & \#\{C\leq B\} & \text{Passed}\\\hline
720 & 16 & 35 & 0
\end{array}
$$

\subsection{Solvable subgroups of \texorpdfstring{$S_5$}{S5}}\label{app:s5-solvable-certificate}

The prime degree $5$ of Lemma \ref{lem:s5-no-solvable-hereditary} is checked directly. We enumerate all $156$ subgroups of $S_5$ and test each for hereditary $S_5$-permutability in the generated formulation of Lemma \ref{lem:generated-hereditary-formulation}. A subgroup $B$ passes the test precisely when, for every subgroup $H\leq S_5$, some $x\in\gen{B,H}$ satisfies $BH^x=H^xB$. The only nontrivial subgroups that pass every test are $A_5$ and $S_5$.
$$
\begin{array}{c|c|c}
|S_5| & \#\{\text{subgroups}\} & \text{Passed}\\\hline
120 & 156 & A_5,\ S_5
\end{array}
$$
The groups $A_5$ and $S_5$ are both nonsolvable, so no nontrivial solvable subgroup of $S_5$ is hereditarily $S_5$-permutable.

\subsection{The base case in degree nine}\label{app:degree-nine-base}

The computation in degree nine proves the base case of Lemma \ref{lem:symmetric-solvable-hereditary-trivial}. Moreover, this computation establishes the inductive step because adding fixed points does not change the failure of permutability. By Lemma \ref{lem:solvable-simplex-prime-bound}, a solvable hereditarily $S_9$-permutable subgroup stabilizes a partition $9=4+5$ and acts trivially on the part of size $5$. Hence it lies, up to conjugacy, in the symmetric group $S_4$ on a fixed $4$-set, fixing the other five points. We calculate products and conjugates of permutations in $S_9$ on $\{0,\dots,8\}$. With these permutation operations, we enumerate all $30$ subgroups of the copy of $S_4$ on the fixed set of four points. The $8$ nontrivial isomorphism types are $C_2,C_3,C_2^2,C_4,S_3,D_8,A_4,S_4$. For each nontrivial subgroup $B$, we seek a cyclic subgroup $H=\gen{h}$, where $h$ is a cycle of length $5$ or a $7$-cycle mixing the $4$-set with the fixed points. The required condition is that no $x\in\gen{B,H}$ makes $BH^x$ a subgroup (the generated formulation, Lemma \ref{lem:generated-hereditary-formulation}). For every nontrivial subgroup of this copy of $S_4$, we find a cyclic subgroup satisfying this condition.
$$
\begin{array}{c|c|c}
\#\{\text{subgroups of }S_4\} & \text{Failed types} & \text{Passed}\\\hline
30 & C_2,\,C_3,\,C_2^2,\,C_4,\,S_3,\,D_8,\,A_4,\,S_4 & 0
\end{array}
$$
For instance, for $B=\gen{(0\,1\,2)}\cong C_3$ we test permutability with the cyclic subgroup $H$ generated by $h=(0\,1\,2\,4\,5)$. We have $\gen{B,H}\cong A_5$ on $\{0,1,2,4,5\}$, which has no subgroup of order $15$ (cf.\ Lemma \ref{lem:symmetric-three-cycle-not-hereditary}). The groups $A_4$ and $S_4$ each fail the hereditary permutability test with the subgroup generated by the $7$-cycle $(0\,1\,2\,3\,4\,5\,6)$. Hence no nontrivial subgroup of this copy of $S_4$ is hereditarily $S_9$-permutable. By the reduction to a set of four points with a pointwise fixed set of five points, no nontrivial solvable subgroup of $S_9$ is hereditarily $S_9$-permutable. Moreover, each cyclic subgroup used in these tests is supported on at most seven points. Therefore, for a subgroup of $S_4$ fixing additional points, the group $\gen{B,H}$ lies inside a fixed $S_7$. This symmetric group acts on a set $\Omega_7$ of seven points containing the four points on which $B$ acts and the support of $H$, and fixes every point outside $\Omega_7$. The same failure of permutability occurs in this group and is unchanged in any larger $S_N$. The fact that adding fixed points does not change these failures of permutability establishes the inductive step of Lemma \ref{lem:symmetric-solvable-hereditary-trivial}. Consequently, $\PSU_9(q)$ with $3\mid q+1$, in particular $U_9(2)$, has no proper nontrivial hereditarily permutable subgroup among the cases considered through this action on the nine simplex lines.

A companion enumeration over $S_4$ itself (underlying Lemma \ref{lem:symmetric-solvable-hereditary-trivial}) lists the hereditarily $S_4$-permutable subgroups of $S_4$ as $1$, $V_4$, the three subgroups isomorphic to $D_8$, $A_4$, $S_4$. Every nontrivial subgroup in this list is transitive on the four points, so none lies in a point stabilizer $S_3$ or a group generated by two disjoint transpositions $\gen{(a\,b),(c\,d)}$. We also check the $20$ nontrivial subgroups contained in a point stabilizer $S_3$ or in a group $\gen{(a\,b),(c\,d)}$ generated by two disjoint transpositions of $S_4$. For each such subgroup $B$, we find a cyclic subgroup $H$ of $S_4$ such that $BH^x\neq H^xB$ for every $x\in\gen{B,H}$.

\subsection{The \texorpdfstring{$\PSU_4(2)$}{PSU4(2)} unipotent case}\label{app:psu4-2}

This computation settles the remaining case in even dimension of the unitary analysis of subgroups in a unipotent radical, at $q=2$. Since Proposition \ref{prop:unitary-unipotent} requires odd dimension, it does not cover this case. In the situation of Proposition \ref{prop:unitary-even} the group $A$ fixes a nondegenerate line $U$ and lies in the unipotent radical $R_u(P_2)$ of the plane stabilizer $P_2$ in $G$, with $G=\PSU_4(2)$. Hence $A$ fixes the totally singular plane $X$ stabilized by $P_2$ pointwise. Since $\dim(X\oplus U)=3$, the space $X\oplus U$ is a hyperplane fixed by every nonidentity element of $A$. Therefore, every nonidentity element of $A$ is an element corresponding to a long root. These elements corresponding to long roots share the common centre $(X\oplus U)^\perp$, so $A$ lies in a single subgroup corresponding to a long root $T_v$. The vector $v$ is a nonzero vector spanning the line $(X\oplus U)^\perp$. At $q=2$ a subgroup corresponding to a long root has order $q=2$, so $A=\gen{t}$ is generated by a single involution corresponding to a long root. It suffices to show that no involution of $\PSU_4(2)$ generates a hereditarily $\PSU_4(2)$-permutable subgroup. Now $\PSU_4(2)$ has exactly two classes of involutions, distinguished in the degree-$45$ action on isotropic lines by their number of fixed points, $13$ (elements corresponding to long roots) and $5$. We construct $\PSU_4(2)=\SU_4(2)$ on the $45$ isotropic lines using matrix operations over $\F_4$. Using the generated formulation of Lemma \ref{lem:generated-hereditary-formulation}, we verify that
$$
\begin{array}{c|c|c}
\text{involution class} & \text{fixed points} & \text{cyclic subgroup }H\\\hline
\text{long root involutions} & 13 & \text{a cyclic subgroup }C_5\\
\text{other} & 5 & \text{cyclic of order }9
\end{array}
$$
in each case no $x\in\gen{\gen{t},H}$ makes $\gen{t}H^x$ a subgroup. Hence no involution of $\PSU_4(2)$, and in particular no involution corresponding to a long root, generates a hereditarily $\PSU_4(2)$-permutable subgroup. No such involution subgroup is hereditarily permutable. This contradicts hereditary permutability of $A=\gen{t}$, so the unipotent subgroup $A$ must be trivial. For the class of involutions corresponding to long roots, the failure of hereditary permutability can also be seen from the dimensions of fixed spaces. A transvection has a hyperplane as its fixed space. If a transvection conjugates an element $g$ of order $5$ with no nonzero fixed vector to $g^{-1}$, then the transvection would have a fixed space of dimension two. Hence a transvection cannot normalize the cyclic subgroup generated by an element of order five with no nonzero fixed vector. (The toral part of the Levi quotient of the stabilizer in $\PSU_4(2)$ of the singular flag fixed by $A$ is handled by Lemma \ref{lem:unitary-flag-toral}. As an independent check, a companion enumeration confirms that the cyclic and elementary abelian toral subgroup types $C_3$, $C_5$, $C_9$, $C_3\times C_3$ of $\PSU_4(2)$ each fail the hereditary permutability test. For each toral subgroup $C$ tested, the program finds a cyclic subgroup $H$ such that $CH^x\neq H^xC$ for every $x\in\gen{C,H}$.)
\section*{Acknowledgement}
I am grateful to Baoyu Zhang at the university of Birmingham, who introduced me to mathematics, gave voluntary lectures for first year maths undergraduates and taught me group theory, which was the reason why I transferred my major to mathematics and have kept doing mathematics since I met him in my first year as an undergraduate. See also the acknowledgements of  \cite{Zhang2025}. I thank Baoyu for suggesting this problem to me and giving illuminating ideas when I travelled with him during his invited visits to Jilin University, Peking University and Nankai University at the end of last year. This paper is not up to Baoyu’s standards of writing papers or formalizing paper in Lean. AI is not used in proving the conjecture, and ChatGPT is used in revising the paper due to my language limitation. I am happy to defend every piece of details in the paper via online meeting if anyone interested in this paper contacts me.

\end{document}